\documentclass[final]{amsart}
\usepackage{amsfonts, amssymb, amsmath}
\usepackage[pdftex]{hyperref}

\usepackage{multirow}
\usepackage{showlabels}

\newtheorem{thm}[subsection]{Theorem}
\newtheorem{lem}[subsection]{Lemma}
\newtheorem{cor}[subsection]{Corollary}
\newtheorem{rem}[subsection]{Remark}
\newtheorem{prop}[subsection]{Proposition}
\newtheorem{conj}[subsection]{Conjecture}
\newtheorem{hypo}{Hypothesis}

\theoremstyle{definition}
\newtheorem{defi}[subsection]{Definition}
\newtheorem{ex}[subsection]{Example}
\newtheorem{notation}[subsection]{Notation}
\newtheorem{setting}[subsection]{Notation}
\newtheorem{generalass}[subsection]{General assumption}
\newtheorem{ass}[subsection]{Assumption}
\newtheorem{abschnitt}[subsection]{}

\newcommand{\Syml}{\Sym_{\pm \overline l}}
\newcommand{\je}{{j'}}

\newcommand{{\cS}}{{\mathcal S}}
\newcommand{{\cK}}{{\mathcal K}}
\newcommand{{\cN}}{{\mathcal N}}
\newcommand{{\cW}}{{\mathcal W}}

\newcommand{\calZ}{\mathcal Z}
\newcommand{\calQ}{\mathcal Q}

\newcommand{\ovf}{\overline f}
\newcommand{\ovphi}{\overline \phi}
\newcommand{\sgn}{\ensuremath{\operatorname {sgn}}}
\newcommand{\ophi}{\ensuremath{\overline \phi}}

\newcommand{\ovovT}{\ensuremath{\overline \ovT}}
\newcommand{\ovW}{\overline W}
\newcommand{\ovY}{\overline Y}
\newcommand{\ovovG}{\overline \ovG}
\newcommand{\ovovN}{\overline \ovN}
\newcommand{\ovovX}{\overline \ovX}
\newcommand{\osigma}{\ensuremath{\overline \sigma}}
\newcommand{\ovrho}{\ensuremath{\overline \rho}}
\newcommand{\oF}{\ensuremath{\overline F} { }}

\newcommand{\la}{\ensuremath{\lambda}}

\newcommand{\wpsi}{\ensuremath{\widetilde \psi}}

\newcommand{\IRRl}{\operatorname{Irr}_{\ell'}}
\newcommand{\wN }{\ensuremath{\widehat N}}
\newcommand{\wK }{\ensuremath{\widehat K}}
\newcommand{\wT }{\ensuremath{\widehat T}}

\newcommand{\calN}{\ensuremath{\mathcal N }}
\newcommand{\calI}{\ensuremath{\mathcal I }}
\newcommand{\cI}{\ensuremath{\mathcal I }}

\newcommand{\PHI}{{\Phi}}
 
\newcommand{\tw}[1]{\,{}^#1\!}
\newcommand{\ovG}{\mathbf G}
\newcommand{\ovX}{\mathbf X}
\newcommand{\ovT}{\mathbf T}

\newcommand{\ovS}{{\mathbf S}}
\newcommand{\ovN}{\mathbf N}
\newcommand{\ovK}{\mathbf K}

\newcommand{\GG}{\ensuremath{\mathbb{G}}}
\newcommand{\TT}{\ensuremath{\mathbb{T}}}
\newcommand{\SSS}{\ensuremath{\mathbb{S}}}

\newcommand{\ovR}{\ensuremath{\overline R}{ }}
\newcommand{\Hom}{\operatorname{Hom}}

\newcommand{\ZZ}{\ensuremath{\mathbb{Z}}}
\newcommand{\CC}{\ensuremath{\mathbb{C}}}

\newcommand{\bw}{{\mathbf w}}
\newcommand{\bww}{{\widetilde {\mathbf w_{0}}}}
\newcommand{\bbw}{{ {\mathbf w_{0}}}}
\newcommand{\bB}{{\mathbf B}}
\newcommand{\B}{\ensuremath{{\mathbf B}}{ }}

\newcommand{\Zent}{\ensuremath{{\rm{Z}}}}
\newcommand{\Cent}{\ensuremath{{\rm{C}}}} 
\newcommand{\Stab}{{\mathrm{Stab}}}
\newcommand{\NNN}{\ensuremath{{\rm{N}}}} 
\newcommand{\Cy}{\Cent}

\newcommand{\I}{\ensuremath{\operatorname{I}}}

\newcommand{\Irro}{\ensuremath{\operatorname{Irr}}}
\def\Irr(#1){\ensuremath{\Irro\left( #1 \right)}}
\def\Irrl(#1){\ensuremath{\Irro_{\ell'}\left( #1 \right)}}
\def\restr#1|#2{\left.#1\right\rceil_{#2}}

\newcommand{\bb}{\ensuremath{\mathrm b}}

\newcommand{\FF}{\ensuremath{\mathbb{F}}} 
\newcommand{\ovF}{\overline \FF}

\newcommand{\calB}{{\mathcal B}}

\newcommand{\calU}{{\mathcal U}}
\newcommand{\calL}{{\mathcal L}}
\newcommand{\calK}{{\mathcal K}}
\newcommand{\Lang}{{\mathfrak {L}}}

\newcommand{\SC}{sc}
\newcommand{\Alt}{\ensuremath{\mathfrak A}} 

\newcommand{\SL}{{\operatorname{SL}}}
\newcommand{\SU}{{\operatorname{SU}}}

\newcommand{\Sp}{{\operatorname{Sp}}}
\newcommand{\tA}{\mathsf A}
\newcommand{\tB}{\mathsf B}
\newcommand{\tC}{\mathsf C}
\newcommand{\tD}{\mathsf D}
\newcommand{\RF}{{\ensuremath {R_F}}}

\newcommand{\calG}{\ensuremath{\Gamma}}

\newcommand{\al}{{\alpha}}

\makeatletter
\def\IRR(#1){\IRR@h#1@}	
\def\IRRL(#1){\IRRL@h#1@}	
\def\IRR@h#1|#2@{\Irro \left(\left.#1\vphantom{#2}\hskip.1em\,\right|\,\relax #2\right)}
\def\IRRL@h#1|#2@{\Irro_{\ell'} \left(\left.#1\vphantom{#2}\hskip.1em\,\right|\,\relax #2\right)}
\def\nicefrac#1#2{#1/#2}

\def\Spann<#1>{\Spann@h#1@}
\def\Spann@h#1|#2@{\left\langle\left.#1\vphantom{#2}\hskip.1em\right|\,\relax #2 \right\rangle}
\def\Set#1{\Set@h#1@}
\def\Lset#1{\Lset@h#1@}
\def\Set@h#1|#2@{\left\{\left.#1\vphantom{#2}\hskip.1em\,\right|\,\relax #2\right\}}
\def\Bset@h#1in#2|#3@{\Set{{#1\qin#2}|{#3}}}
\def\Lset@h#1@{\left\{#1\right\}}
\makeatother
\newcommand{\Sym}{\mathfrak S}
\newcommand{\Inn}{\operatorname{Inn}}
\newcommand{\Aut}{\operatorname{Aut}}

\newcommand{\betrag}[1]{\left|{#1}\right|}
\def\spann<#1>{\left\langle#1\right\rangle}
\def\floor#1{\lfloor #1 \rfloor}
\newcommand{\teilt}{\mathop{\rule[-1ex]{.5pt}{3ex}}}
\def\nteilt{{\nmid}}
\pagestyle{plain}

\begin{document}

\title{Regular Sylow $d$-Tori of classical groups and the McKay conjecture}

\author{Britta Sp\"ath}
\thanks{This research has been supported by the DFG-grant ``Die Alperin-McKay-Vermutung f\"ur endliche Gruppen''.}
\address{ Institut de math\'ematiques de Jussieu, Universit\'e Paris VII, 175 rue du Chevaleret, 75013 Paris, France }
\email{spath@math.jussieu.fr}
\date{}
\keywords{Sylow tori; McKay conjecture; Classical groups}

\begin{abstract}We prove for finite reductive groups $G$ of classical type, that every character $\chi \in \Irr(L)$ extends to its inertia group in $N$, where $L$ is an abelian centraliser of a Sylow $d$-torus $\ovS$ of $G$ and $N:=\NNN_G(\ovS)$. This gives a precise description of the irreducible characters of $N$ and proves $\betrag{\Irrl(G)}=\betrag{\Irrl(N)}$ according to \cite{Ma06} for all primes $\ell$ determined by $\ovS$. Furthermore this enables us to verify the McKay conjecture in this situation for some primes. \end{abstract}

\maketitle

\section{Introduction}
For any finite group $H$ and prime $\ell$ the set $\Irrl(H)$ denotes all ordinary irreducible characters of $H$ whose degree is prime to $\ell$. McKay observed for some finite simple groups and $\ell=2$ that the equation $\betrag{\Irrl(H)}= \betrag{\Irrl(\NNN_H(P))}$ holds, where $\NNN_H(P)$ is the normaliser of a Sylow $\ell$-subgroup $P$ of $H$. In generalisation of this Alperin formulated the now called McKay conjecture, namely that for every group $H$ and every prime $\ell$ the equation $\betrag{\Irrl(H)}= \betrag{\Irrl(\NNN_H(P))}$ holds.

This is meanwhile proved for several families of groups, e.g., Lehrer has verified it in \cite{LehrerMcKay} for groups of Lie type with conformal type over $\FF_q$ and also for $H= \SL_n(q)$, whenever $\ell$ divides $q$. Michler and Olsson proved in \cite{MiOl} that the Alperin-McKay conjecture, a more refined version of the McKay conjecture, holds for the general linear groups and the unitary groups over finite fields $\FF_q$ and primes $\ell$ with $\ell\nmid q$. 

The results of Isaacs, Malle and Navarro in \cite{IsaMaNa} give a new approach to a general proof for all finite groups. They show that a finite group $H$ fulfils the McKay conjecture for a prime $\ell$ if every simple non-abelian section of $H$, whose order is divisible by $\ell$ is {\it good for $\ell$}. They call a simple non-abelian group $X$ good for $\ell$ if the maximal central perfect $\ell'$-extension $G$ of $X$ satisfies a set of conditions. Amongst them they claim the existence of a subgroup $N\geq \NNN_G(P)$ with $N\neq G$, $\Inn(G) \Stab_{\Aut(G)}(N)= \Aut(G)$ and a bijection between $\Irrl(G)$ and $\Irrl(N)$, where $P$ is a Sylow $\ell$-subgroup of $G$. Furthermore this bijection has to satisfy certain equivariance-conditions, namely it has to be $\Stab_{\Aut(G)}(N)$-equivariant and preserve cohomological elements associated to the characters.

In \cite{ManonLie} these conditions have already been proved for all simple groups not of Lie type. The aim of this paper is twofold: on the one hand we prove that the irreducible characters of the centraliser $\Cent_G(\ovS)$ of $\ovS$ in $G$ extend to their inertia group in the normaliser $N:=\NNN_G(\ovS)$ of $\ovS$, where $G$ is a classical group, $\ovS$ a Sylow $d$-torus of $G$ for some integer $d$ and $\Cent_G(\ovS)$ is abelian. By \cite{Ma06} this is relevant for verifying the inductive McKay condition for $\nicefrac G {\Zent(G)}$ and primes different from the defining characteristic $\ell$. More precisely, Malle constructed for all primes $\ell$ with $\ell\teilt \betrag{G}$ different from the defining characteristic a group $\NNN_G(P)\leq N < G$ and a bijection $':\Irrl(G)\rightarrow \Irrl(N)$ using a generalisation of Theorem \ref{Theo1}. It could already be shown that this bijection fulfils some of the desired properties. On the other hand we deduce from this result that the McKay conjecture holds for these groups and some primes.

We assume that the centraliser of the Sylow $d$-torus is abelian, and call Sylow $d$-tori with this property {\it regular} and numbers $d$, for which the Sylow $d$-torus of $(\ovG,F)$ is regular, {\it regular numbers of $(\ovG,F)$}, as well. A subsequent paper will generalise the result to arbitrary Sylow $d$-tori and uses the results of this case beside different methods, which are necessary as the desired extensions are non-linear characters. The restriction to certain tori corresponds to an assumption on the prime $\ell$, for which the McKay conjecture will be verified.

The key aim of the paper is to establish the following theorem, which is an analogue of \cite[Theorem A]{Spaeth_Preprint} for classical groups with an additional restriction on the Sylow $d$-torus. 

\begin{thm}\label{Theo1}
Let $\ovG$ be a simply-connected simple algebraic group defined over $\FF_{q}$ via the Frobenius endomorphism $F:\ovG\rightarrow \ovG$, where the root system of $\ovG$ is of classical type and such that $\ovG^F$ is not Steinberg's triality group. Furthermore let $\ovS$ be a Sylow $d$-torus of $(\ovG,F)$ for some integer $d$, such that $L:=\Cent_{\ovG^F}(\ovS)$ is abelian. Then every irreducible character of $L$ extends to its inertia group in $N:=\NNN_{\ovG^F}(\ovS)$.
\end{thm}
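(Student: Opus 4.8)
The plan is to reduce the statement to an explicit analysis of the structure of $L$ and $N/L$, exploiting the fact that $\ovS$ is a regular Sylow $d$-torus so that $L$ is abelian. First I would recall the standard description of $\Cent_{\ovG}(\ovS)$ and $\NNN_{\ovG}(\ovS)$ from Sylow $d$-torus theory: since $\ovS$ is a $\Phi_d$-torus, its centraliser $\ovL=\Cent_{\ovG}(\ovS)$ is an $F$-stable Levi subgroup, $L=\ovL^F$, and $N/L\cong\NNN_{\ovG}(\ovS)^F/\ovL^F$ acts as the relative Weyl group $W_{\ovG^F}(\ovS)$, which for classical types is a well-understood complex reflection group (a group $G(e,1,a)$ or $G(e,e,a)$, possibly with a small correction factor). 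Because $\ovG$ is simply connected of classical type, $\ovL$ is a product of general linear / unitary factors times a torus, so the regularity hypothesis forces this torus part together with the linear factors to collapse to something abelian; I would spell out precisely which $d$ this happens for (essentially $d$ with $a(d)$ equal to $n$ or close to it in the relevant rank $n$).

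The core of the argument is then a cohomological one. To show every $\chi\in\Irr(L)$ extends to its inertia group $I_N(\chi)$, it suffices, by the standard obstruction theory for extending characters (Gallagher / the vanishing of a class in $H^2(I_N(\chi)/L,\CC^\times)$), to show that for each $\chi$ the relevant second cohomology obstruction vanishes. I would split this into two tasks: (i) understand the action of $N/L$ on $\Irr(L)$ concretely — here $L$ abelian means $\Irr(L)$ is itself an abelian group on which the reflection group $N/L$ acts, and the orbits and stabilisers can be read off from the combinatorics of the classical group (cycle-type / sign data); (ii) for a representative $\chi$ with stabiliser $\overline{W}_\chi\le N/L$, exhibit an actual extension. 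The natural way to get extensions is to produce, inside $N$, a complement or a sufficiently large subgroup mapping onto $\overline{W}_\chi$ on which $\chi$ visibly extends — for the reflection-group quotients arising here one can often use the Tits-style lift of the Weyl group, or reduce to cyclic/dihedral stabilisers where extension is automatic since $H^2$ of a cyclic group into $\CC^\times$ is trivial. For the types where $N/L$ is a full wreath-product-like group $G(e,1,a)$, I would induct on the factors, handling the $\Sym_a$ and the cyclic parts separately and invoking that $\Sym_a$ acting by permuting identical tensor factors always allows extension of the "diagonal" characters.

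The main obstacle I anticipate is the case distinction forced by the two infinite families of classical groups of type $\tB/\tC$ versus type $\tD$: in type $\tD_n$ (and $\tw{2}\tD_n$) the relative Weyl group is of the index-two form $G(e,e,a)$, the ambient group $\ovG^F$ sits inside the full orthogonal group only up to an index-two subgroup, and there are genuine "extra" graph/diagonal automorphisms interacting with $\ovS$; consequently the stabiliser $\overline{W}_\chi$ can fail to be a reflection subgroup and the cohomology class is not obviously trivial. Handling this will require a careful choice of torus and of the representative $\chi$ in each $N$-orbit, together with an explicit matrix computation showing the $2$-cocycle is a coboundary — this is exactly the point where the hypothesis "$\ovG^F$ is not Steinberg's triality group" and the classical-type restriction are used, and where I expect to mirror, with modifications, the argument of \cite[Theorem A]{Spaeth_Preprint}. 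A secondary technical point is bookkeeping the effect of passing to the simply connected group (as opposed to the adjoint or $\GL$/$\mathrm{GO}$ versions): one must check that the relevant component groups are trivial or cyclic so that they do not reintroduce an obstruction, which again is ensured by the regularity of $\ovS$.
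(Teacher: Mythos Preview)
Your high-level shape is right --- identify $N/L$ with a classical complex reflection group, analyse stabilisers of characters of the abelian group $L$, and then produce explicit extensions --- and this is also the paper's shape. But the proposal stops exactly where the work begins, and two of your suggested shortcuts do not hold.

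First, the stabilisers $\overline W_\chi$ are \emph{not} cyclic or dihedral in general. In the paper's notation they are of the form $(A_1\times\cdots\times A_j)\rtimes U$ with each $A_r$ cyclic and $U$ a direct product of symmetric groups, possibly sitting with index dividing $2$ inside the actual inertia quotient (see Remark~\ref{rem_relI} and Lemma~\ref{lem_relID2D}). So the ``$H^2$ of a cyclic group is trivial'' reduction is unavailable, and the Schur multiplier of a product of symmetric groups is nonzero, so you really must show that the \emph{particular} $2$-cocycle coming from $\chi$ is a coboundary.

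Second, $N$ is not a split extension of $L$, so ``produce a complement mapping onto $\overline W_\chi$'' is not on offer. What the paper does instead is quite concrete and is the content you are missing. It fixes an explicit Sylow $d$-twist $v$ (coming from a good $d$th root of $\bbw^2$ in the braid group), works not with $L\lhd N$ directly but with an enlargement $\wT\lhd\wN$ inside an overgroup of type $\tC_l$ or $\tB_l$, together with a linear character $\delta\in\Irr(\wN)$ with $\ker(\delta)=N$ (Lemmas~\ref{defdeltaA}, \ref{defdelta2A}, \ref{Theo1B}). Inside $\wN$ one has explicit elements $p_r$ (products of $n_{\beta_r}(1)$ along the $v\Gamma$-orbit) generating a subgroup $S$ with $\rho(S)\cong\Sym_j$; the point is that $S\cap\wT=\langle p_r^2\rangle$ and $p_r^2=t(t^{p_r})^{-1}$ for a specific $t\in\wT_r$ (Hypothesis~\ref{hypo_pr1}), so for characters $\lambda$ in a carefully chosen set $\calI$ one has $S_I\cap\wT\le\ker(\lambda)$ and can extend $\lambda$ by declaring it trivial on the relevant $S_I$ (Lemma~\ref{lem_FortsetzungI}). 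One then pushes this to all of $\Irr(\wT)$ by building an $\wN$-equivariant, $\delta$-compatible extension map (Lemmas~\ref{defTheta}--\ref{Endelemma}) and finally descends to $T\lhd N$ via $\delta$ (Proposition~\ref{mFreg}). The verification that all of this works is a genuine case analysis (Sections~\ref{applC}--\ref{appl2D}), with types $\tD$ and $\tw{2}\tD$ handled by embedding into type $\tB$ rather than by a separate cocycle computation.

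In short: your cohomological framing is fine as language, but it does not replace the construction; the actual proof is the explicit extension map built from the Tits lift and the braid-group root, together with the $\delta$-enlargement trick, and your proposal does not yet contain any of these ingredients.
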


This statement is part of the author's PhD thesis \cite{Spaeth} under the supervision of Gunter Malle.
The case where $\ovG^F= \tw{3}\tD_4(q)$ has already been dealt with in \cite{Spaeth_Preprint}. 

\begin{cor}\label{Cor1}
 Let $\ovG$ be a simply-connected simple algebraic group defined over $\FF_{q}$ via the Frobenius endomorphism $F:\ovG\rightarrow \ovG$, where the root system of $\ovG$ is of classical type and such that $\ovG^F$ is not Steinberg's triality group. Let $\ell>3$ be a prime with $\ell \nteilt q$, such that the order $d$ of $q$ in $(\nicefrac \ZZ {\ell \ZZ})^\star$ is a regular number of $(\ovG,F)$. Then the McKay conjecture holds for $\ovG^F$ and $\ell$.
\end{cor}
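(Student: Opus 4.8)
The plan is to deduce the McKay conjecture for $\ovG^F$ and $\ell$ from the criterion of Isaacs--Malle--Navarro \cite{IsaMaNa}, combined with Malle's construction in \cite{Ma06} and Theorem \ref{Theo1}. First I would record that, since $\ell\nmid q$ and $d$ is the order of $q$ modulo $\ell$, a Sylow $\ell$-subgroup $P$ of $\ovG^F$ is contained in the normaliser $N:=\NNN_{\ovG^F}(\ovS)$ of a Sylow $d$-torus $\ovS$, and by regularity $L:=\Cent_{\ovG^F}(\ovS)$ is abelian; this is the setting of \cite{Ma06}, where for every such $\ell$ a subgroup $\NNN_{\ovG^F}(P)\le N<\ovG^F$ together with a bijection $':\Irrl(\ovG^F)\to\Irrl(N)$ is produced. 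The extendibility statement of Theorem \ref{Theo1} is exactly the input that makes Malle's construction work for classical groups (and not only in conformal type), so the bijection $'$ exists and, as noted in the introduction, is known to satisfy the equivariance and cohomological properties demanded in \cite{IsaMaNa}.

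Next I would assemble the hypotheses of the inductive-type theorem of \cite{IsaMaNa}: a finite group $H$ satisfies McKay for $\ell$ if every simple non-abelian section of $H$ of order divisible by $\ell$ is good for $\ell$. Applying this with $H=\ovG^F$, I must check that every such section is good for $\ell$. The simple sections of $\ovG^F$ are: the simple group $S:=\ovG^F/\Zent(\ovG^F)$ itself, and simple sections coming from proper subquotients; because $\ovG$ is simply connected and simple of classical type, the relevant subquotients are either again simple groups of Lie type of smaller rank in the same characteristic (for which goodness for $\ell>3$, $\ell\nmid q$, $d$ regular, will follow from the same package of results, invoking an induction on the group order or citing the cases already settled), or alternating groups and sporadic groups and cyclic groups, handled in \cite{ManonLie} for all simple groups not of Lie type. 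The hypotheses $\ell>3$ and $\ell\nmid q$ are used here to exclude the small and defining-characteristic primes where the inductive McKay condition is known to be delicate, and to guarantee that $d$ is genuinely the order of $q$ mod $\ell$ so that Malle's machinery applies.

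For the main case $S=\ovG^F/\Zent(\ovG^F)$: since $\Zent(\ovG^F)$ is an $\ell'$-group (here again $\ell>3$ and $\ell\nmid q$, for the relevant classical types the centre has order dividing a small power of $2$ times $\gcd(n,q\mp1)$, all prime to $\ell$), the group $\ovG^F$ is the maximal central perfect $\ell'$-extension of $S$, so checking goodness of $S$ for $\ell$ amounts to verifying the list of conditions of \cite{IsaMaNa} directly for $\ovG^F$. The subgroup $N$ from \cite{Ma06} satisfies $\NNN_{\ovG^F}(P)\le N\neq\ovG^F$ and $\Inn(\ovG^F)\Stab_{\Aut(\ovG^F)}(N)=\Aut(\ovG^F)$, and the bijection $'$ is $\Stab_{\Aut(\ovG^F)}(N)$-equivariant and cohomology-preserving; thus all clauses of the definition of \emph{good for $\ell$} are met, and $S$ is good for $\ell$. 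Feeding this back into \cite{IsaMaNa} yields the McKay conjecture for $\ovG^F$ and $\ell$.

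The main obstacle is, of course, not in this deduction but in ensuring that Malle's bijection from \cite{Ma06} really does verify \emph{all} the conditions of goodness in the sense of \cite{IsaMaNa}, in particular the cohomological (projective-representation) compatibility, uniformly over the classical types; the role of Theorem \ref{Theo1} is precisely to supply the extendibility of characters of $L$ to their inertia groups in $N$ that underpins the explicit description of $\Irr(N)$ on which that verification rests. I would therefore organise the write-up so that the citation of \cite{Ma06} is accompanied by an explicit statement of which of the \cite{IsaMaNa}-conditions are thereby obtained, reducing the proof of Corollary \ref{Cor1} to the bookkeeping over sections sketched above.
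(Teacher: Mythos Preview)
Your approach has a genuine gap. You try to deduce McKay for $\ovG^F$ from the Isaacs--Malle--Navarro reduction theorem by arguing that every non-abelian simple section of $\ovG^F$ is \emph{good for $\ell$}. For the principal section $S=\ovG^F/\Zent(\ovG^F)$ you assert that Malle's bijection from \cite{Ma06} is $\Stab_{\Aut(\ovG^F)}(N)$-equivariant and cohomology-preserving, i.e.\ that it meets the full list of conditions in \cite{IsaMaNa}. But this was not established: the introduction of the paper says only that the bijection ``fulfils some of the desired properties'' and expresses the \emph{hope} that with more information about the extensions one might eventually verify goodness. So the step ``all clauses of the definition of good for $\ell$ are met'' is not available, and your deduction collapses there. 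Your handling of the remaining simple sections (``induction on the group order or citing the cases already settled'') is equally unfounded, since goodness for classical groups of smaller rank was exactly what was not yet known.

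The paper's route is different and avoids goodness entirely. It proceeds in two numerical steps. First, Malle's \cite[Theorem 7.8]{Ma06}, whose hypotheses are supplied by Theorem \ref{Theo1}, gives directly the equality $\betrag{\Irrl(G)}=\betrag{\Irrl(N)}$ with $N=\NNN_{G}(\ovS)$ (and also that $N$ contains $\NNN_G(P)$ for a Sylow $\ell$-subgroup $P$). Second, the paper proves McKay for $N$ itself: since $L=\Cent_G(\ovS)$ is abelian and Theorem \ref{Theo1} gives maximal extensibility for $L\lhd N$, Lemma \ref{lem_mckay} reduces the problem to McKay for the relative inertia groups $\I_N(\lambda)/L$. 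These groups are computed case by case (Remark \ref{rem_relI}, Lemma \ref{lem_relID2D}) and shown to have, up to small abelian quotients, a direct product of symmetric groups as a normal subgroup; Lemma \ref{lem_Navarro} then dispatches McKay for such groups using that alternating groups are good (\cite{ManonLie}) and Wolf's theorem for $\ell$-solvable groups. Chaining the two equalities gives $\betrag{\Irrl(G)}=\betrag{\Irrl(\NNN_G(P))}$. No equivariance or cohomological compatibility of any bijection is needed.
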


The proof also shows the statement for $\ell \in \Lset{2,3}$ with $\ell \nmid q$ with some exceptions given in Remark \ref{Ausnahmen}. We hope that with more knowledge about the extensions from Theorem \ref{Theo1}, one could construct a bijection with the properties desired in \cite{IsaMaNa} and prove hereby that the associated simple group is good for $\ell$.

The paper is structured in the following way: in Section \ref{sec_not} we recall some essential facts about finite reductive groups and Sylow tori, mainly known from \cite{BrMa}. In the following section we introduce Sylow twists and how the groups occurring in Theorem \ref{Theo1} can be obtained using these elements. The succeeding section is concerned with tools from character theory.

For proving Theorem \ref{Theo1} we first prove Proposition \ref{mFreg}, by which maximal extensibility holds for a pair of groups, if they are closely related to wreath products and several assumptions are satisfied for them. In the following section we present subgroups of $\ovG$, which satisfy some of the wanted assumptions. Afterwards we prove Theorem \ref{Theo1} for the different classical groups $G$ separately in Sections \ref{applC}-\ref{appl2D}. While we apply Proposition \ref{mFreg} very directly and obtain Theorem \ref{Theo1} in the case where $\ovG$ has a root system of type $\tC_l$ in Section \ref{applC}, in Sections \ref{applA}-\ref{applB} more considerations are made for dealing with the cases, where $\ovG$ has a root system of type $\tA_l$ or $\tB_l$. In contrast to these cases, an embedding of a group of type $\tD_{l}$ into one of type $\tB_{l}$ proves the remaining cases in the sections \ref{applD} and \ref{appl2D}. Finally we show how one can deduce Corollary \ref{Cor1} from Theorem \ref{Theo1}.
\smallskip

\noindent {\bf Acknowledgement}
The author likes to thank the Mathematical Science Research Institute for its hospitality and financial support. In addition the author wants to thank Gabriel Navarro for pointing out the proof of Lemma \ref{lem_Navarro} and Gunter Malle for introducing her to this interesting topic. Furthermore the author thanks Marc Cabanes and Michael Ehrig for careful reading an earlier version of this paper and many helpful comments.

\section{Notation and basic definitions} \label{sec_not}
In this section we introduce the general framework for our further calculations. We establish the notation used in the whole paper and recapture some basic facts about generic groups introduced by Malle and Brou\'e in \cite{BrMa}.

\begin{notation}\label{basicsetting} Let $p$ be a prime number and $\ovG$ the simply-connected simple algebraic group with irreducible root system $R$ over the algebraic closure $\ovF_{p}$ of the field $\FF_{p}$ with $p$ elements. We choose a system $\RF$ of simple roots in $R$. According to \cite[9.4]{Springer} the group $\ovG$ has a presentation with the generators $x_\al(t)$ ($t\in \ovF_p$, $\al \in R$). We use the notation of \cite[12.1]{Car1}, in particular the definition of the elements $h_{\al}(t')$, $n_{\al}(t')$ and $x_{\al}(t)$ ($t\in \ovF_{p}$, $t'\in \ovF_{p}^{*}$, $\al \in R$, where $\ovF_{p}^{*}$ denotes the multiplicative group of $\ovF_{p}$). We denote by $\ovX_\al$ ($\al \in R$) the group $\Spann<x_\al(t)|t\in \ovF_p>$, the root subgroup associated to $\al$.

In this situation $\ovT:=\Spann<h_\al(t)| \al \in R, t \in \ovF_p^*>$ is a maximal torus of $\ovG$ with normaliser $\ovN:=\NNN_\ovG(\ovT)= {\Spann<n_\al(t)| \al \in R, t \in \ovF_p^*>}$. The quotient group $\nicefrac {\ovN}{\ovT}$ is the Weyl group $W$ of $\ovG$, hence isomorphic to the reflection group of $R$. We denote the corresponding epimorphism by $\rho:\ovN \rightarrow W$ with $n_\al(t)\mapsto w_\al$ for $\al \in R$, $t \in \ovF_p^*$, where $w_\al$ is the reflection along the root $\alpha \in R$. 

\label{EinfV}
The extended Weyl group ${V:=\Spann<n_\al(1)| \al \in R>}$ was introduced in \cite{Tits} by Tits. Due to the relation $ n_\al(t)= h_\al(-t) n_\al(1)$ this group fulfils $ \ovN= \spann<V,\ovT>$ and is generated by the elements $n_i:=n_{\al_i}(1)$, where $\RF=\Lset{\al_1,\cdots,\al_r}$. The Steinberg relations imply that ${H:= \Spann<h_\al(-1)| \al \in R>}$ satisfies $H=V\cap \ovT$ and has order $(2,q-1)^{\betrag{\RF}}$. (We denote the greatest common divisor of $a,b\in \ZZ$ ($a,b>0$) by $(a,b)$.) The isomorphism type of $V$ is independent of $q$, whenever $2\nmid q$.
\end{notation}

A finite reductive group is the subgroup of fixed points of a Frobenius endomorphism. We mainly use the following ones.

\begin{setting}\label{setgen} 
Let $q$ be a power of $p$. The endomorphism $F_0:\ovG\rightarrow \ovG$ is defined by $ x_\al(t)\mapsto x_\al(t^q)$ ($\al \in R$, $t \in \ovF_p$) and called {\it the standard Frobenius endomorphism} associated to $q$.
 If the Dynkin diagram associated to $\RF$ has a length preserving symmetry ${\gamma:\RF\rightarrow \RF}$, the associated automorphism $\Gamma_0:\ovG\rightarrow \ovG$ acts via \[ x_\al(t)\mapsto x_{\gamma(\al)}(t) \,\,\text{ ($\pm \al \in \RF$, $t \in \ovF_p$) }.\] The composition $\Gamma_0\circ F_0 $ is also a Frobenius endomorphism on $\ovG$ in the sense of \cite[1.17]{Car2}.
Let $\Gamma\in \spann<\Gamma_0>$ and $F=F_0\circ \Gamma$. The maximal torus $\ovT$ of $\ovG$ is $F$-stable.

\item The triple $(\ovG, \ovT, F)$ defines a quintuple $\GG:=(X, R,Y , R^\vee, W\phi)$ called the {\it generic group}, where $X:=\Hom(\ovT,\ovF_p^{*})$, $Y:=\Hom(\ovF_p^{*},\ovT)$, $R^{\vee}$ are the coroots of $\ovG$, $\phi$ is the automorphism of finite order of $Y$ and $Y\otimes \CC$, respectively, induced by $F$, and $W\phi$ its coset in the automorphisms of $Y$. 
More concretely $\phi$ is chosen such that the action of $F$ on $\ovT$ induces the morphism $q\phi$ on $Y$.
In addition there exists a duality $\langle -,- \rangle:X\times Y \rightarrow \ZZ $ with $\langle \al, \al^\vee\rangle = 2$ (see \cite[Section 2]{BrMa}).

While the root datum $(X,R,Y,R^\vee)$ determines the reductive group over $\ovF_p$ up to isomorphism, the group $\ovG^F$ is similarly determined by the corresponding generic group $\GG$ and the prime power $q$. Thus we may also denote $\ovG^F$ by $\GG(q)$. The polynomial order $\betrag{\GG}$ of $\GG$ is a monic polynomial of the form \[ \betrag{\GG}(x)= x^N\prod_{d\in \ZZ_{>0}} \PHI_d^{a(d)}(x)\in \ZZ[x],\] 
 where $N\in \ZZ_{\geq 0 }$, $\PHI_d$ is $d$th cyclotomic polynomial and $a(d) \in \ZZ_{\geq 0}$ with the property, that $\betrag{\GG(q')}= \betrag{\GG}(q')$ holds for every prime power $q'$,
\end{setting}

We make frequent use of the following connection between simply-connected simple groups associated to different root systems.
\begin{rem}[{\cite[2.1.9]{Spaeth}}]\label{einbett} 
Let $R'$ be an irreducible root subsystem of $R$ such that $R'$ is parabolic or $\ZZ R^\vee=\ZZ {R'}^\vee$. Then $\ovG_{R'}:=\Spann<\ovX_\al| \al \in R'>$ is isomorphic - as a group - to a simply-connected simple algebraic group $\ovG'$ associated to $R'$ over the same field.\end{rem}

\begin{proof} Obviously $\ovG_{R'}$ satisfies the Steinberg relations. Hence there exists a morphism $\ovG'\rightarrow \ovG_{R'}$. According to \cite[Corollary 1, p. 39]{Steinb} its kernel lies inside the centre of $\ovG'$. Hence it suffices to show that no central element vanishes via this morphism. If $R'$ is a parabolic root system with root subsystem $\RF'$, the torus $\ovT$ can be embedded into $\ovG$ because of \cite[12.1]{Car1}. As every central element lies in $\ovT$, it doesn't vanish via the morphism. Similarly this can also be shown in the second case. \end{proof}

We further mention some constructions of generic groups which we used. In \cite{BrMa} some substructures of generic groups are defined. They correspond to $\ovG^F$-conjugacy classes of $F$-stable subgroups of $\ovG^F$. 

\begin{rem}[Tori and Levi subgroups of generic groups]
A generic group of the form $(\nicefrac {X}{Y'^\perp},Y', \restr w\phi|{Y'})=(\nicefrac {X}{Y'^\perp},\emptyset,Y', \emptyset,\restr w\phi|{Y'})$ with $w\in W$ and a $w\phi$-stable sublattice $Y'$ of $Y$ is called {\it torus of $\GG$}. (The sublattice $Y'^\perp$ of $X$ is defined by the bilinear form on $X\times Y$ in the natural way.) The polynomial order of a generic torus $\TT=(X,Y,\phi)$ coincides with the characteristic polynomial of $\phi$ on $Y\otimes \CC$, i.e., $\betrag{\TT}(x)= \det_{Y\otimes \CC}(x\phi-1)$.

A {\it Levi subgroups of $\GG$} is a generic groups of the form $(X,R',Y,{R'}^{\vee}, W_{R'}w\phi)$ for $w\phi \in W\phi$ and a $w\phi$-stable parabolic root subsystem $R'$ of $R$. (By $W_{R'}$ we denote the subgroup of $W$ generated by the reflections along the roots of $R'$.) 

 Further one can associate to a generic torus $\SSS = (X',Y', \restr w\phi|{Y'})$ ($w\in W$) of $\GG$ its centraliser in $\GG$. This is the Levi subgroup $ \Cent_\GG(\SSS)= (X,R',Y,R'^\vee, W_{R'}w\phi)$ with $R':=R\cap Y'^{\perp}$.

 The generic groups of $F$-stable tori and $F$-stable Levi subgroups of $\ovG$ are generic tori and generic Levi subgroups of $\GG$, respectively. Also taking the centraliser of a torus and computing the generic group commute with each other. 

 The {\it Sylow $d$-tori} of $\GG$ are defined to be the tori $\SSS$ of $\GG$ with $\betrag{\SSS}=\PHI_d^{a(d)}$. An $F$-stable torus of $\ovG$ whose generic group is a Sylow $d$-torus, is also called a Sylow $d$-torus of $(\ovG,F)$ or $\ovG^F$. The existence and conjugacy in $\ovG^{F}$ of all Sylow $d$-tori of $(\ovG, F)$ was proven in \cite[3.4]{BrMa}.
\end{rem}

In our further considerations we will mainly concentrate on groups associated to Sylow tori. 

\begin{defi} 
Let $\ovS$ be a Sylow $d$-torus of $(\ovG,F)$. We call $\Cent_{\ovG^F}(\ovS)$ {\it Sylow $d$-Levi subgroup} and $\NNN_{\ovG^F}(\ovS)$ the associated {\it Sylow $d$-normaliser}. By abuse of notation we call $\ovS$ Sylow torus, if $\ovS$ is a Sylow $d$-torus for some $d$. Sylow Levi subgroup and Sylow normaliser are analogously defined.

We call a Sylow torus $\ovS$ {\it regular}, if its centraliser in $\ovG$ is a torus. 
An integer $d$ is called {\it regular number of $(\ovG,F)$}, if there exists a regular Sylow $d$-torus of $(\ovG,F)$.
\end{defi} 

These numbers can be determined using the theory of regular elements from \cite{Springer74}. Thereby we use the following notion, which is adapted to our situation.

\begin{defi}
Let $w\phi\in W\phi$. A vector $ y \in Y\otimes \CC$ is called {\it regular}, if it doesn't lie in any reflecting hyperplane of $W$. If there exists a regular vector, which is at the time a eigenvector of $w\phi$ to the eigenvalue $\zeta$ the element is called {\it $\zeta$-regular element or regular element of $W\phi$} . An integer $d$ is called a {\it regular number of $W\phi$}, if there exists a $\zeta$-regular element, where $\zeta$ is a primitive $d$th root of unity.
\end{defi}

In our later considerations we may assume $d$ to be a regular number of $W\phi$ because of the first part of the following lemma. In several proofs we avoid dealing with Sylow $d$-Levi subgroups and Sylow $d$-normalisers of $(\ovG,F)$ and prefer analysing isomorphic groups, which are obtained by using a different Frobenius endomorphism.

\begin{rem}[{\cite[2.5, 2.6]{Spaeth_Preprint}}]\label{rem_regular}
\begin{enumerate}
 \item The integer $d$ is a regular number of $(\ovG,F)$ if and only if $d$ is a regular number of $W\phi$.

 \item \label{conj} Let $L$ be a Sylow $d$-Levi subgroup of $(\ovG, F)$ and $L'$ a Sylow $d$-Levi subgroup of $(\ovG, gF)$ for some $g\in \ovG$. The groups $L$ and $L'$ are conjugate to each other in $\ovG$.
\end{enumerate}
\end{rem}

For our proof of Theorem \ref{Theo1} we need the regular numbers of $(\ovG,F)$ and have gathered them in the table below. By the above lemma they coincide with the regular numbers of $W\phi$, which are known from \cite[Section 5]{Springer74}.
\begin{table}[ht]
\begin{center}
\begin{tabular}{|l|l|l|l|l|l|l|}
\hline
Type of&$\,\,\tA_{l-1}$& $^2\tA_{l-1}$ ($2\teilt l$) &$^2\tA_{l-1}$ ($2\nteilt l$)& $\tB_{l}$, $\tC_{l}$& $\,\,\tD_{l}$ & $^2\tD_l$\\ $(\ovG,F)$&&&&&&\\ \hline
Regular&
$d\teilt l$& $d\teilt l$& $d\teilt (l-1)$ & & $d\teilt 2l-2$& $d\teilt 2l-2$\\
 numbers& or& or& or& $d\teilt 2l$& or& or\\
of $(\ovG,F)$ & {$d\teilt (l-1)$}& {$2\teilt d$ and $d\teilt 2l-2$}& {$2\teilt d$ and $d\teilt 2l$}& & {$d\teilt l$}& {$d\teilt 2l$ and $d\nteilt l$}\\
\hline
\end{tabular}
\end{center}
\caption{Regular numbers for classical groups} \label{TabregZahl}
\end{table}

\section{Construction of Sylow Levi groups and Sylow normalisers}\label{sec_tools}\label{constructions} 
For the proof of Theorem \ref{Theo1} we need a good way to construct Sylow $d$-Levi subgroups and Sylow $d$-normalisers. This will be done with the help of Sylow $d$-twists, which were already introduced in \cite[Section 3]{Spaeth_Preprint}.

\begin{defi}[Sylow $d$-twist]\label{DefSylowtwist}
Let $d$ be a positive integer and $\Gamma$ the automorphism of $\ovG$ from \ref{setgen}. Further let $\phi_d$ and $a(d)$ be defined as in \ref{setgen}. An automorphism of the form $v\Gamma$ ($v\in \ovN$) is called a {\it Sylow $d$-twist of $(\ovG,F)$}, if $\PHI_d^{a(d)} \teilt \det_{ Y\otimes \CC}(x \rho(v)\phi-1)$, i.e., $\PHI_{d}^{a(d)}$ is a divisor of the characteristic polynomial of $\rho(v)\phi$ on $Y\otimes \CC$.
We call $v\Gamma$ also a Sylow twist, if it is a Sylow $d$-twist for some $d$.
\end{defi}

In the case where $d$ is a regular number of $(\ovG,F)$, Sylow twists can be constructed with morphisms between the braid group associated to $R$, the extended Weyl group $V$ already mentioned in \ref{EinfV} and the Weyl group $W$. Thereby one uses also the good $d$th roots of $\bbw^2$, which were introduced in \cite{BrMi}. We recapture shortly their definition.

\begin{defi}\label{def_good_roots} \label{defgoodroot}
Let $\bB$ be the braid group associated to $R$ with generators $ \bb_1, \ldots ,\break  \bb_{\betrag{\RF}}$, $\phi_{\bB}$ the automorphism of $\bB$ associated to $\phi$, $\bbw\in \bB$ the element corresponding to the longest element $w_0$ in $W$ and $\bB_{\rm{red}}^{+}$ the set of reduced elements in $\bB$ with positive length.
Then we call $\bw$ a {\it good $d$th root of $\bbw^2$}, if $\bw^d=\bbw$ and for $i\leq \frac d 2 $ the element $\bbw^i$ is reduced. If $(\bww \phi_{\bB})^d = \bbw^2\phi_{\bB}^d$ and $(\bww\phi_{\bB})^i\in \bB_{\rm{red}}^+\phi_{\bB}^i$ is reduced for every $1\leq i\leq \frac d 2$, then $\bw\phi_{\bB}$ is called a {\it good $d$th $\phi_\bB$-root of $\bbw^2$}.

Let $\tau: \bB\rightarrow V$ be the epimorphism with $ \bb_i\mapsto n_{i} $ for all $1\leq i \leq \betrag{\RF}$, where $n_i$ are the generators of $V$ introduced in \ref{basicsetting}, which satisfy the braid relations according to \cite[9.3.2]{Springer}. We call the element $\tau(\bw_0)= \bww$ the {\it canonical representative of $w_0$ in $\ovG$}.
\end{defi}

\begin{rem}[{\cite[3.2]{Spaeth_Preprint}, \cite[Theorem 1.3]{Bessis}}]\label{remsylow} 

\begin{enumerate}
 \item The automorphism $v\Gamma$ is a Sylow $d$-twist of $(\ovG,F)$ if and only if $\rho(v)\phi$ is a $\zeta$-regular element of $W\phi$ for some $\zeta$, which is a primitive $d$th root of unity.
 \item \label{remsylowb}
Let $\bw\phi$ be a good $d$th $\phi_{\bB}$-root of $\bbw$. Then $\tau(\bw)\Gamma$ is a Sylow twist of $(\ovG, F)$.
 \item \label{bembessis} If $R$ is of classical type, $\Gamma=\operatorname{id}_{\ovG}$ and $\bw$ is as in (b), $\tau(\bw)$ is a Sylow $d$-twist of $(\ovG,F)$ fulfilling $\rho(\Cent_V(v))= \Cent_W(\rho(v))$.
\end{enumerate}
\end{rem} 
\begin{proof} The parts (a) and (b) have been dealt in Remark 3.2 of \cite{Spaeth_Preprint}. The proof of (a) relies on the following fundamental property of regular elements: for every $\zeta$-regular element $w\phi$ of $W\phi$ the polynomial $\Phi_d^{a(d)}$ divides the characteristic polynomial of $w\phi$ on $Y\otimes \CC$. The part (b) relies on the fact that $\rho(\tau(\bw))\phi$ is a $\zeta$-regular element by \cite[3.14, 6.6]{BrMi}. The remaining part follows from Theorem 1.3 in \cite{Bessis}. A detailed proof, how the theorem implies the statement, can be found in \cite[2.3.14]{Spaeth}.
\end{proof}

A Sylow $d$-twist $v\Gamma$ determines a generic Sylow $d$-torus of $\GG$ and the corresponding algebraic torus of $(\ovG,vF)$, where $vF$ acts via $x\mapsto F(x^v)$.

\begin{lem}[Construction of Sylow $d$-tori, {\cite[3.3]{Spaeth_Preprint}}] \label{K:S}
 Let $d$ be an integer and $v\in \ovN$ such that $v\Gamma\in \ovN\Gamma$ is a Sylow $d$-twist.
 Further let ${Y':=Y\cap \ker_{Y\otimes \CC}(\PHI_d(\rho(v)\phi))}$, i.e., $Y'\otimes \CC $ is the sum of the eigenspaces of $\rho(v)\phi$ corresponding to primitive $d$th roots of unity, and $X':=\nicefrac{X}{Y'^\perp}$.
 Then the generic torus $\SSS:=(X',Y',\rho(v)\phi)$ is a Sylow $d$-torus of $\GG$ and $\ovS:=\Set{t \in \ovT| \lambda(t) =1 \text{ for all } \lambda \in Y'^\perp}$ a Sylow $d$-torus of $(\ovG,vF)$.
\end{lem}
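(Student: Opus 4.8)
\textbf{Proof proposal for Lemma~\ref{K:S}.}
The plan is to verify directly from the definitions that $\SSS=(X',Y',\rho(v)\phi)$ is a generic torus of $\GG$, that its polynomial order equals $\PHI_d^{a(d)}$, and that the algebraic torus $\ovS$ is exactly the $(\ovG,vF)$-torus attached to $\SSS$. First I would check that $\SSS$ is a torus of $\GG$ in the sense of the earlier remark on tori and Levi subgroups: since $\rho(v)\phi$ stabilises each eigenspace of $\rho(v)\phi$, the lattice $Y'=Y\cap\ker_{Y\otimes\CC}(\PHI_d(\rho(v)\phi))$ is $\rho(v)\phi$-stable, and $X'=\nicefrac{X}{Y'^\perp}$ is precisely the quotient occurring in the definition; thus $(X',Y',\restr{\rho(v)\phi}|{Y'})$ is a legitimate torus of $\GG$. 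The duality $\langle-,-\rangle:X\times Y\to\ZZ$ restricts to the required pairing $X'\times Y'\to\ZZ$ because $Y'^\perp$ is by construction the annihilator of $Y'$.

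Next I would compute $\betrag{\SSS}$. By the formula recalled above, $\betrag{\SSS}(x)=\det_{Y'\otimes\CC}(x\,\rho(v)\phi-1)$. Writing $\phi':=\rho(v)\phi$ and decomposing $Y\otimes\CC$ into the generalised (equivalently, since $\phi'$ has finite order, actual) eigenspaces of $\phi'$, the subspace $Y'\otimes\CC$ is the sum of the eigenspaces for the primitive $d$th roots of unity, so $\det_{Y'\otimes\CC}(x\phi'-1)$ is the product of the factors $(x\zeta-1)$ over those eigenvalues, counted with multiplicity; collecting these gives a power of $\PHI_d$. That this power is exactly $a(d)$ is where the hypothesis that $v\Gamma$ is a Sylow $d$-twist enters: by Definition~\ref{DefSylowtwist} we have $\PHI_d^{a(d)}\mid\det_{Y\otimes\CC}(x\phi'-1)=\betrag{\GG}(x)/x^{N}\cdot(\text{unit})$, and since $\PHI_d^{a(d)}$ is by the definition of $\betrag{\GG}$ the full $\PHI_d$-part of $\betrag{\GG}$, the multiplicity of $\zeta$ as an eigenvalue of $\phi'$ on $Y\otimes\CC$ is at least $a(d)$; on the other hand it cannot exceed $a(d)$ because $\betrag{\SSS}$ divides $\betrag{\GG}$ (a torus of $\GG$ has polynomial order dividing that of $\GG$). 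Hence $\betrag{\SSS}=\PHI_d^{a(d)}$ and $\SSS$ is a Sylow $d$-torus of $\GG$.

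Finally I would identify $\ovS$ with the algebraic torus attached to $\SSS$ inside $(\ovG,vF)$. The torus of $\ovG$ corresponding to a $\rho(v)\phi$-stable sublattice $Y'\subseteq Y$ is, under the identification of $\ovT$ with $Y\otimes\ovF_p^{*}$ via the cocharacters, the connected subgroup with cocharacter lattice $Y'$; concretely this is $\Set{t\in\ovT|\lambda(t)=1\text{ for all }\lambda\in Y'^\perp}$, which is exactly the definition of $\ovS$. Because $Y'$ is $\rho(v)\phi$-stable and $vF$ induces $q\,\rho(v)\phi$ on $Y$, the torus $\ovS$ is $vF$-stable, and passing to the generic group (which commutes with forming tori, as recalled earlier) recovers $\SSS$. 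Since $\SSS$ is a Sylow $d$-torus of $\GG$, $\ovS$ is a Sylow $d$-torus of $(\ovG,vF)$ by the definition of the latter, and by the existence-and-conjugacy statement of \cite[3.4]{BrMa} this is consistent. I expect the only genuinely delicate point to be the bookkeeping in the second paragraph — matching the $\PHI_d$-multiplicity in $\det_{Y\otimes\CC}(x\phi'-1)$ with $a(d)$ using the Sylow $d$-twist condition together with the divisibility $\betrag{\SSS}\mid\betrag{\GG}$ — while the remaining verifications are formal unwindings of the definitions in Section~\ref{sec_not}.
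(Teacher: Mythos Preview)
The paper gives no proof of this lemma; it simply cites \cite[3.3]{Spaeth_Preprint}. Your proposal is therefore a self-contained verification rather than a comparison target, and its overall architecture---check that $\SSS$ is a torus of $\GG$, compute its polynomial order, then identify $\ovS$ as the corresponding algebraic torus---is the natural one and is essentially how the cited proof proceeds.

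There is, however, one genuine slip in your second paragraph. You write
\[
\PHI_d^{a(d)}\mid\det_{Y\otimes\CC}(x\phi'-1)=\betrag{\GG}(x)/x^{N}\cdot(\text{unit}),
\]
but the displayed equality is false: different choices of $v$ give different maximal tori and hence different characteristic polynomials $\det_{Y\otimes\CC}(x\,\rho(v)\phi-1)$, none of which need equal $\betrag{\GG}(x)/x^N$. What is true---and what your argument actually needs---is the \emph{divisibility} $\det_{Y\otimes\CC}(x\phi'-1)\mid \betrag{\GG}(x)$, which holds because the polynomial order of any generic torus of $\GG$ divides $\betrag{\GG}$ (a standard fact from \cite{BrMa}). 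With this correction the two-sided bound goes through exactly as you intend: the Sylow $d$-twist hypothesis gives $\PHI_d^{a(d)}\mid\det_{Y\otimes\CC}(x\phi'-1)$, hence $\betrag{\SSS}=\PHI_d^{k}$ with $k\geq a(d)$; and $\betrag{\SSS}\mid\betrag{\GG}$ forces $k\leq a(d)$. The remaining steps are, as you say, formal.
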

We now give formulas for the associated Sylow $d$-Levi subgroup and Sylow $d$-normalisers of $(\ovG,vF)$.
Thereby we use the group $\ovN= \NNN_{\ovG}(\ovT)$ from \ref{basicsetting}.

\begin{lem}[Construction of $ \Cent_\ovG (\ovS)$ and $\NNN_{\ovG^{vF}}(\ovS)$, {\cite[3.4, 3.5] {Spaeth_Preprint}}] \label{Konstruktion_reg} 
Let $\ovS$ and $\SSS$ be the tori from the previous lemma. Then $ \Cent_\ovG (\ovS)=\Spann<\ovT, \ovX_\al| \al \in R'>$ with $R':=R\cap Y'^\perp$ and $\NNN_{\ovG^{vF}}(\ovS) = \ovN^{vF}$, if $R'= \emptyset$.
\end{lem}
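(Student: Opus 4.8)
The statement of Lemma \ref{Konstruktion_reg} has two parts: first, an explicit description of the centraliser $\Cent_\ovG(\ovS)$ as the subgroup generated by $\ovT$ together with those root subgroups $\ovX_\al$ with $\al\in R'$, where $R'=R\cap Y'^\perp$; second, the identification $\NNN_{\ovG^{vF}}(\ovS)=\ovN^{vF}$ under the hypothesis $R'=\emptyset$. I would prove these by transporting the known formula for the centraliser (and normaliser) of a Sylow $d$-torus of $(\ovG,F)$ to the twisted group $(\ovG,vF)$ via the twisting construction, and by exploiting that the generic torus $\SSS$ of $\ovS$ has already been identified in Lemma \ref{K:S}.

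\textbf{Step 1: the centraliser.} Since $\ovS$ is an $F'$-stable ($F':=vF$) torus contained in the maximal torus $\ovT$, its centraliser $\ovL:=\Cent_\ovG(\ovS)$ is a connected reductive group containing $\ovT$, hence is generated by $\ovT$ and the root subgroups $\ovX_\al$ for those $\al\in R$ which are trivial on $\ovS$. So it suffices to show $\ovX_\al\subseteq\Cent_\ovG(\ovS)$ if and only if $\al\in R'=R\cap Y'^\perp$. Conjugation of $x_\al(t)$ by an element $s\in\ovS$ multiplies $t$ by $\al(s)$, so $\ovX_\al$ centralises $\ovS$ exactly when $\al$ restricts trivially to $\ovS$. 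By the definition of $\ovS$ in Lemma \ref{K:S} as the subgroup of $\ovT$ annihilated by $Y'^\perp\subseteq X$, and the standard identification of characters of $\ovS$ with $X/Y'^\perp = X'$, the character $\al\in X$ is trivial on $\ovS$ precisely when $\al\in(Y'^\perp)^{\perp}\cap X$. Here one uses that $Y'$ is a direct summand of $Y$ (it is defined as a saturated sublattice, $Y' = Y\cap(Y'\otimes\CC)$), so that $(Y'^\perp)^\perp = Y'\otimes\CC$ intersected back with $X$ reduces to the condition $\langle \al,y\rangle = 0$ for all $y\in Y'$, i.e. $\al\in Y'^\perp$ in the sense used in the paper. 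Thus $\al$ is trivial on $\ovS$ iff $\al\in R\cap Y'^\perp = R'$, which gives the first claim. (Equivalently, this is just the statement that taking centralisers commutes with passing to generic groups, already recorded in the Remark on tori and Levi subgroups: $\Cent_\GG(\SSS)=(X,R',Y,R'^\vee,W_{R'}\rho(v)\phi)$, and the algebraic centraliser is the corresponding $vF$-stable Levi.)

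\textbf{Step 2: the normaliser when $R'=\emptyset$.} If $R'=\emptyset$ then by Step 1 $\Cent_\ovG(\ovS)=\ovT$, so $\ovT$ is exactly the centraliser of $\ovS$ in $\ovG$. I claim $\NNN_\ovG(\ovS)=\ovN$. The inclusion $\ovN\subseteq\NNN_\ovG(\ovS)$ needs that $\ovS$ is normalised by $\ovN=\NNN_\ovG(\ovT)$: indeed $\ovN$ permutes the $vF$-stable subtori of $\ovT$, and $\ovS$ is characterised inside $\ovT$ as its unique maximal $\PHI_d$-subtorus (a Sylow $\PHI_d$-torus of the torus $\ovT$ equipped with $vF$), hence is stabilised by every element of $\ovN$ normalising $\ovT$ and commuting appropriately with the $\ZZ$-structure — more precisely, $\ovS$ is the image of the $\ZZ[\rho(v)\phi]$-submodule $Y'\otimes\CC$, and since $\rho(v)\phi$-eigenspaces are permuted within themselves by centralising elements but $Y'$ is the \emph{full} sum of primitive-$d$th-root eigenspaces, it is invariant under the $\ovN$-action; I would spell this out using that $w\in W$ conjugates $\ovS$ to the Sylow $\PHI_d$-torus built from $w(Y')$, and $w(Y')=Y'$ because $Y'$ is intrinsically the $\PHI_d$-part of $Y$ under $\rho(v)\phi$ and the whole discussion is taking place inside the fixed torus. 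Conversely, $\NNN_\ovG(\ovS)\subseteq\NNN_\ovG(\Cent_\ovG(\ovS))=\NNN_\ovG(\ovT)=\ovN$, using Step 1. Hence $\NNN_\ovG(\ovS)=\ovN$, and taking $vF$-fixed points (note $\ovS$ and $\ovT$ are $vF$-stable) yields $\NNN_{\ovG^{vF}}(\ovS)=\NNN_\ovG(\ovS)^{vF}=\ovN^{vF}$, as required — one should just remark that $\NNN_\ovG(\ovS)^{vF}=\NNN_{\ovG^{vF}}(\ovS)$ since $\ovS$ is $vF$-stable and rational.

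\textbf{Main obstacle.} The routine computations (conjugation formulae for $x_\al(t)$, the Levi-subgroup generation statement) are standard; the only genuinely delicate point is Step 2's assertion that every element of $\ovN$ normalises $\ovS$, i.e. that the $\PHI_d$-part $Y'$ of $(Y,\rho(v)\phi)$ is invariant under the $W$-action coming from conjugation by $\ovN$. This is not automatic for an arbitrary subtorus, but it holds here because $\ovS$ is a \emph{Sylow} $\PHI_d$-torus: I would make this precise by invoking that $\ovN$ acts on $\ovT$ through $W$, that this action commutes with the $\ZZ$-structure defining the $\PHI_d$-decomposition associated to $vF$ (because $vF$ acts on $Y$ as $q\cdot\rho(v)\phi$ and $\rho(v)\phi$ is central in the relevant sense once one restricts attention to the fixed torus), and therefore $W$ preserves each $\PHI_d$-isotypic piece, in particular the maximal one $Y'\otimes\CC$. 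Since $Y'=Y\cap(Y'\otimes\CC)$, this gives $w(Y')=Y'$ for all $w\in W$, hence $n\ovS n^{-1}=\ovS$ for all $n\in\ovN$. With this in hand both parts follow cleanly, and this is exactly the content of Proposition 3.4–3.5 of \cite{Spaeth_Preprint} cited in the statement.
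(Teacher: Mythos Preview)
The paper does not prove this lemma in the text but cites it from \cite[3.4, 3.5]{Spaeth_Preprint}, so there is no in-paper argument to compare against directly. Your Step~1 is essentially correct and is the standard Levi-subgroup computation. Your Step~2, however, contains a genuine error.

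You assert $\NNN_\ovG(\ovS)=\ovN$ at the level of the full algebraic group, in particular that every $w\in W$ preserves the sublattice $Y'$. This is false in general. The lattice $Y'$ is the $\PHI_d$-isotypic piece of $Y$ for the operator $\rho(v)\phi$; a general $w\in W$ does not commute with $\rho(v)\phi$, and conjugation by $w$ carries the $\PHI_d$-eigenspace of $\rho(v)\phi$ to that of $w\,\rho(v)\phi\,w^{-1}$, which is typically a different subspace. Concretely, take $\ovG$ of type $\tB_3$ with $\phi=\operatorname{id}$ and $d=3$: then $\rho(v)=\rho(v_0)^2$ has eigenvalues $\zeta_3,\zeta_3^2,1$ on $Y\otimes\CC$, so $Y'$ has rank $2<3=\operatorname{rk}Y$, while $d=3$ is regular so $R'=\emptyset$. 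Any $w\in W(\tB_3)$ not centralising $\rho(v)$ (and $\Cent_W(\rho(v))$ is a proper subgroup since $\rho(v)$ is not central) moves the $1$-eigenspace and hence does not stabilise $Y'$. Your phrase ``$\rho(v)\phi$ is central in the relevant sense once one restricts attention to the fixed torus'' is exactly where the argument collapses: there is no such centrality for the $W$-action.

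The repair is to drop the claim $\NNN_\ovG(\ovS)=\ovN$ and prove the inclusion $\ovN^{vF}\subseteq\NNN_{\ovG^{vF}}(\ovS)$ directly. For $n\in\ovN^{vF}$ the image $w=\rho(n)$ lies in $\Cent_W(\rho(v)\phi)$ (this is the standard identification $\rho(\ovN^{vF})\cong\Cent_W(\rho(v)\phi)$, see \cite[3.3.6]{Car2}). Since $w$ commutes with $\rho(v)\phi$ it preserves every eigenspace of $\rho(v)\phi$ on $Y\otimes\CC$, hence preserves $Y'=Y\cap\ker\PHI_d(\rho(v)\phi)$, and therefore $n$ normalises $\ovS$. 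Combined with your correct reverse inclusion $\NNN_{\ovG^{vF}}(\ovS)\leq\NNN_{\ovG^{vF}}(\Cent_\ovG(\ovS))=\NNN_{\ovG^{vF}}(\ovT)=\ovN^{vF}$, this yields the lemma.
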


If $\ovN^{vF}=\spann<\Cent_V(vF),\ovT^{vF}>$ or equivalently $\rho(\Cent_V(v\Gamma))= \Cent_W(\rho(v)\phi)$, the automorphism $v\Gamma$ is called a {\it good Sylow twist} in \cite[Section 3]{Spaeth_Preprint}. By Lemma \ref{bembessis} there exists a good Sylow $d$-twist for every classical group $(\ovG,F)$ and every regular number $d$ of $(\ovG,F)$, whenever $F$ is a standard Frobenius endomorphism.

\section{Maximal extensibility and very good Sylow twists} \label{regred}
In this section we recall some shortening terminology, introduced in \cite[Section 4]{Spaeth_Preprint}.

\begin{defi}[Maximal extensibility] Let $\calL\lhd \calN$ be finite groups and $\chi\in \Irr(\calL)$. An extension of $\chi$ as irreducible character to its inertia group $\I_\calN(\chi)$ in $\calN$ is called a {\it maximal extension of $\chi$ in $\calN$} and in this situation we call $\chi$ {\it maximally extendible}.
If every character $\chi \in \Irr(\calL)$ is maximally extendible, we say that {\it maximal extensibility holds with respect to $\calL\lhd \calN$}.\end{defi}

For example maximal extensibility holds with respect to $\calL\lhd \calN$ whenever $\nicefrac \calN \calL$ is cyclic by \cite[11.22]{Isa}. Extending the standard notion we denote the group \break $\Spann<x\in \calU| \chi^{x}=\chi>$ by $\I_{\calU}(\chi)$ for every subgroup $\calU\leq \calN$ and $\chi \in \Irr(\calL)$ and call $\nicefrac {\I_{\calN}(\chi)}\calL$ the {\it relative inertia group} of $\chi$ in $\calN$. We denote by $\IRR(\calN|\chi)$ the irreducible constituents of $\chi^\calN$ and $\IRR(\calL|\psi)$ the irreducible characters in $\restr \psi|\calL$ for $\psi\in \Irr(\calN)$. Analogously we also generalise the notion of conjugated characters: for a character $\chi\in \Irr(\calU)$ we define the character $\chi^n\in \Irr(\calU^n)$ with $n\in\calN$ by $\chi^n(u)=\chi(u^{n^{-1}})$.

The groups $H$ and $V$ from \ref{EinfV} and good Sylow twists are interesting for the proof of Theorem \ref{Theo1} because of the following remark.

\begin{rem}[{\cite[4.3]{Spaeth_Preprint}}] Let $v\Gamma$ be a good Sylow twist and let maximal extensibility hold with respect to $\Cent_H(v\Gamma) \lhd \Cent_V(v\Gamma)$. Then maximal extensibility holds with respect to $\ovT^{vF}\lhd \ovN^{vF}$, as well. \end{rem}

We conjecture the existence of {\it very good} Sylow $d$-twists, how Sylow twists with the above property were called in \cite[Section 4]{Spaeth_Preprint}.

\begin{conj}\label{Verm2} There exists a very good Sylow $d$-twist of $(\ovG,F)$ for any regular number $d$ of $(\ovG,F)$.
\end{conj}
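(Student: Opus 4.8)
The plan is to establish the conjecture when $\ovG$ is of classical type and $\ovG^F$ is not Steinberg's triality group; this in particular yields Theorem \ref{Theo1}, since for a regular number $d$ the centraliser $\Cent_\ovG(\ovS)$ is the torus $\ovT$ and \cite[4.3]{Spaeth_Preprint} turns a good Sylow $d$-twist $v\Gamma$ for which maximal extensibility holds with respect to $\Cent_H(v\Gamma)\lhd\Cent_V(v\Gamma)$ into a very good one. So it suffices to exhibit a good Sylow $d$-twist and prove that extensibility statement. When $F$ is a standard Frobenius endomorphism Remark \ref{bembessis} already supplies the first half: for $v:=\tau(\bw)$ with $\bw$ a good $d$th root of $\bbw^2$ — such roots exist because $d$ is regular, by \cite{BrMi} and \cite{Bessis} — the element $v=v\Gamma$ is a good Sylow $d$-twist satisfying $\rho(\Cent_V(v))=\Cent_W(\rho(v)\phi)$. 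For the twisted types $^2\tA_{l-1}$ and $^2\tD_l$, where $\Gamma=\Gamma_0$ is already part of $F$, one instead chooses $v\in\ovN$ so that $\rho(v)\phi\in W\phi$ is $\zeta$-regular for a primitive $d$th root of unity $\zeta$, again via the theory of regular elements, and invokes Remark \ref{rem_regular} to compute with a convenient Frobenius in its $\ovG$-class; if $q$ is even then $H=1$ and there is nothing to do, so assume $q$ odd.

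Next I would make the pair $\Cent_H(v\Gamma)\lhd\Cent_V(v\Gamma)$ explicit. For $q$ odd, $H$ is an elementary abelian $2$-group of rank $|\RF|$ and $V$ sits in an extension $1\to H\to V\to W\to1$, where $W=\Sym_l$ in type $\tA_{l-1}$, $W=\Sym_l\ltimes(\ZZ/2\ZZ)^l$ is the group of signed permutations in types $\tB_l$ and $\tC_l$, and $W$ is an index-$2$ subgroup of the latter in type $\tD_l$. By the classification of regular elements of \cite{Springer74}, combined with \cite{BrMi} and \cite{Bessis}, the centraliser $\Cent_W(\rho(v)\phi)$ of a regular element is — up to an explicit small extra direct factor in the $\tD$- and twisted cases, and with the precise shape depending on the parity of $d$ — a wreath product of a cyclic group with a symmetric group. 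Pulling this back through $V\to W$ exhibits $\Cent_H(v\Gamma)\lhd\Cent_V(v\Gamma)$ as an extension of such a wreath product by a subgroup of the elementary abelian $2$-group $H$; this is the sense in which the pair is \emph{closely related to a wreath product}, so that Proposition \ref{mFreg} becomes applicable once its hypotheses are checked.

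The substance of the proof is then the type-by-type verification of these hypotheses. In type $\tC_l$ (Section \ref{applC}) the extended Weyl group is itself essentially a wreath-type $2$-group, $\Cent_V(v)$ decomposes accordingly, and Proposition \ref{mFreg} applies directly. In types $\tA_{l-1}$ and $\tB_l$ (Sections \ref{applA} and \ref{applB}) one must first reorganise $V$ and $\Cent_V(v)$ — isolating the cyclic base factors, tracking the action of $v$ and of $H$ on them, and splitting off the symmetric-group part — before Proposition \ref{mFreg} is usable; here the goodness of the twist, i.e.\ the identity $\rho(\Cent_V(v))=\Cent_W(\rho(v)\phi)$ from Remark \ref{bembessis}, keeps this reorganisation finite and explicit. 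In types $\tD_l$ and $^2\tD_l$ (Sections \ref{applD} and \ref{appl2D}) I would instead use Remark \ref{einbett} to embed $\ovG$ into a simply-connected group of type $\tB_l$ and deduce the statement for $\Cent_H(v\Gamma)\lhd\Cent_V(v\Gamma)$ from the already established type-$\tB_l$ case, handling the index-$2$ passage and checking against Table \ref{TabregZahl} which regular numbers survive it.

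I expect the main obstacle to be the verification of the hypotheses of Proposition \ref{mFreg} once the extension by $H$ is present: $\Cent_V(v\Gamma)$ is only \emph{close} to a genuine wreath product, the (generally non-split) cocycle of $1\to H\to V\to W\to1$ interacts with the wreath structure, and the relevant characters of $\Cent_H(v\Gamma)$ must be followed carefully — they need no longer be linear once the groups are enlarged, as becomes relevant in the subsequent paper. Carrying this through for the twisted types $^2\tA_{l-1}$ and $^2\tD_l$, together with the small-rank and small-$d$ exceptions — the source of the restrictions on $\ell$ in Corollary \ref{Cor1} recorded in Remark \ref{Ausnahmen} — is where the genuine difficulty lies; a uniform argument covering all types, hence the conjecture in full, is not attempted here.
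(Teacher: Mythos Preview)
The statement you are addressing is a \emph{conjecture} that the paper explicitly leaves open. Immediately after stating it, the author notes that it is trivial only when $2\mid q$ (since then $H=1$), and then writes that she \emph{prefers to prove Theorem \ref{Theo1} more directly} rather than via the conjecture, precisely because any approach through Conjecture \ref{Verm2} would not yield the information about relative inertia groups needed for Corollary \ref{Cor1}. There is therefore no proof in the paper against which to compare.

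Your proposal also contains a genuine conflation. You assert that Sections \ref{applC}--\ref{appl2D} verify the hypotheses of Proposition \ref{mFreg} for the pair $\Cent_H(v\Gamma)\lhd\Cent_V(v\Gamma)$. They do not. In those sections the groups $\wT\lhd\wN$ to which Proposition \ref{mFreg} is applied are defined in Lemma \ref{lem1} via the Lang map: $\wT=\Lang^{-1}(\calZ)\cap\ovovT$ and $\wN=\Lang^{-1}(\calZ)\cap\ovN\ovovT$, so that $\wT$ is a subgroup of the algebraic torus $\ovovT$ whose order depends on $q$, with $\ker(\delta)\cap\wT=\ovT^{vF}$ and $\ker(\delta)=\ovN^{vF}$. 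These are not the groups $\Cent_H(v\Gamma)$ and $\Cent_V(v\Gamma)$, which live inside the extended Weyl group $V$, a fixed finite extension of $W$ by an elementary abelian $2$-group, independent of $q$ for $q$ odd. The paper's verifications of Assumption \ref{generalassumption} and Hypotheses \ref{hyp_inkommut}--\ref{hyp_pr2} are carried out for the former pair and do not transfer to the latter: for instance, the type-$\tB_l$ argument in Lemma \ref{lem_hyp_B} hinges on the element $t=\prod_{i\in\calB_r} h_{e_i}(\zeta)$ with $\zeta$ a primitive fourth root of unity, which lies in $\wT_r\leq\ovT$ but has no counterpart in $H=\Spann<h_\al(-1)|\al\in R>$. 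Your plan thus rests on invoking verifications that establish a different result (Theorem \ref{Theo1}) about a different pair of groups; even restricted to classical types, it does not constitute a proof of Conjecture \ref{Verm2}.
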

One can easily verify the existence of very good Sylow twists, when $2\teilt q$, see \cite[Lemma 4.4]{Spaeth_Preprint}. 
So far any possible approach for a proof of the above conjecture would not determine the relative inertia groups $\nicefrac{\I_{\ovN^{vF}}(\la)} {\ovT^{vF}}$ for $\la \in \Irr(T)$, which is relevant in the proof of Corollary \ref{Cor1}. Hence we prefer to prove Theorem \ref{mFreg} more directly.

\section{Maximal extensibility in a special situation} \label{sec_mFreg}
In this section we prove maximal extensibility with respect to some abstract groups. The main statement of this section is Proposition \ref{mFreg}, which we use in the proof of Theorem \ref{Theo1}. Nevertheless in this section all groups are just finite groups having the assumed properties.

\begin{generalass}\label{generalassumption} 
\begin{enumerate}
	\item \label{generalassumptionsa} 
	Let $\wN$, $\wK$ and $\wT$ be finite groups, such that $\wT\lhd \wN$, $\wK\lhd \wN$, $\wT\leq \wK$, $[\wT,\wT]=1$ and $\nicefrac\wN \wK \cong \Sym_j$ for some integer $j$. Let $\wK_1 \lhd \wK$ and $p_r\in \wN$ ($r\in J':=\Lset{1,\ldots,j-1}$) elements, such that with $\wK_{r+1}:=\wK_{r}^{p_r}$ ($r\in J'$) and $\wT_r:=\wK_r\cap \wT$ ($r\in J:=\Lset{1, \ldots,j}$) the following conditions are satisfied: 
	\begin{enumerate}
		\item \label{wKwTkommut} $[\wK_r,\wK_{r'}]\leq \wK_r \cap \wK_{r'}\leq \wT$ and $[\wK_r,\wT_{r'}]=1$ for $r,r'\in J$ with $r\neq r'$,
		\item \label{wKwTprodukt} $\wK=\Spann<\wK_r|r\in J>$ and $\wT=\Spann<\wT_r|r\in J>$,
		\item \label{prkommut}$[\wK_{r'}, p_{r}]=1$ for $r\in J'$ and $r'\in J' \setminus \Lset{r,r+1}$,
		\item \label{wNerzeugtvonSundwK} $S\cap \wK\leq \wT$, $[S,\wK] \leq \wK$ and $S\wK=\wN$ with $S:=\Spann<p_r| r\in J'>$,
		\item \label{zyklK1T1} $\nicefrac {\wK_1}{\wT_1}$ is cyclic.
	\end{enumerate}
\item \label{notationdelta} Let $\delta \in \Irr(\wN)$ with $\delta(S)=1$ and $\ker(\delta_r) \wT_r=\wK_r$ with $\delta_r:=\restr \delta|{\wK_r}$.
\end{enumerate}
\end{generalass}
 
The above implies that the quotient $\nicefrac \wN \wT$ is a wreath product of a cyclic group and the symmetric group $\Sym_j$ on $j$ points.
\begin{ex} Such a situation is the following: Let $\wN:=\Cy_{ab}\wr \Sym_j$, $\wK:=\Cy_{ab}^j$ and $\wT:=\Cy_a^j$ for arbitrary positive integers $a$ and $b$. If we choose $\Cy_{ab}$ as $\wK_1$ and the transposition $(r,r+1)$ in $\Sym_j$ as $p_r$, the above conditions are satisfied.
\end{ex}

For the successive construction of extensions, we use the following maps.
\begin{defi}[(Compatible) extension map]
Let $\calL\lhd \calN$ be finite groups. An {\it extension map for $\calL\lhd \calN$ on $\calI\subseteq \Irr(\calL)$} is a map $\Lambda: \calI\rightarrow \bigcup_{\calL\leq \calU\leq \calN}\Irr(\calU)$ such that for every $\chi \in \calI$ the character $\Lambda(\chi)$ is an extension of $\chi$ to $\I_\calN(\chi)$. If $\calI=\Irr(\calL)$ we call $\Lambda$ an extension map for $\calL\lhd \calN$. 

For a group $\calK$ acting on $\calL$ and $\calN$ the map $\Lambda$ is called {\it $\calK$-equivariant}, if $\Lambda(\la^k)=\Lambda(\la)^k$ for every $k\in \calK$. Let $\mu \in \Irr(\calN)$ with $\mu(1)=1$. Then a map is called {\it compatible with $\mu$}, if $\la \restr\mu|{\calL}\in \calI$ and $\Lambda(\la \restr\mu|{\calL})=\Lambda (\la) \restr\mu|{\I_\calN(\la)}$ for every $\la \in \calI$.\end{defi}

We successively construct extension maps for $\wT_1\lhd \wK_1$, $\wT\lhd \wK$ and $\wT\lhd \wN$. Thereby we make frequent use of the fact, that the extension maps are equivariant. Additionally we prove compatibility with the linear character $\delta$, as this is needed at the end of this section.

\begin{lem}\label{defTheta'} There exists a $\wK_1$-equivariant $\delta_1$-compatible extension map $\Theta'$ for $\wT_{1} \lhd \wK_{1}$.\end{lem}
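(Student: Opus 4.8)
The plan is to produce the extension map $\Theta'$ directly, exploiting the hypothesis that $\nicefrac{\wK_1}{\wT_1}$ is cyclic (assumption \ref{zyklK1T1}). First I would fix a generator $\wT_1 g$ of the cyclic quotient $\nicefrac{\wK_1}{\wT_1}$ of order $m$, say. For each $\chi\in\Irr(\wT_1)$ whose inertia group $\I_{\wK_1}(\chi)$ equals $\wT_1 \spann<g^{m/e}>$ for the appropriate divisor $e\mid m$, standard Clifford theory (\cite[11.22]{Isa}) guarantees that $\chi$ extends to $\I_{\wK_1}(\chi)$, but such an extension is only unique up to multiplication by a linear character of $\nicefrac{\I_{\wK_1}(\chi)}{\wT_1}$. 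The task is therefore to make a coherent \emph{choice} of one extension $\Theta'(\chi)$ for each $\chi$, in a way that is simultaneously $\wK_1$-equivariant and $\delta_1$-compatible.

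The key device is to pin down the extension by prescribing its value on a fixed preimage of the generator. Concretely, I would proceed orbit by orbit under the $\wK_1$-action on $\Irr(\wT_1)$: pick one representative $\chi_0$ in each orbit, note that $\I_{\wK_1}(\chi_0)$ is cyclic over $\wT_1$ (being a subgroup of the cyclic $\nicefrac{\wK_1}{\wT_1}$), choose any extension $\widehat{\chi_0}$ of $\chi_0$, then \emph{adjust} it by a suitable linear character of $\nicefrac{\I_{\wK_1}(\chi_0)}{\wT_1}$ so that $\delta_1$-compatibility is forced — i.e.\ so that $\Theta'(\chi_0\cdot\restr{\delta_1}|{\wT_1}) = \widehat{\chi_0}\cdot\restr{\delta_1}|{\I_{\wK_1}(\chi_0)}$ holds whenever $\chi_0$ and $\chi_0\cdot\restr{\delta_1}|{\wT_1}$ lie in the same orbit. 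Since $\ker(\delta_1)\wT_1 = \wK_1$ (assumption \ref{notationdelta}), the character $\restr{\delta_1}|{\wT_1}$ is a linear character of $\wT_1$ of the same order as the cyclic quotient modulo the image of $\ker\delta_1\cap\wT_1$, and multiplication by it permutes $\Irr(\wT_1)$; I would track how this permutation interacts with the $\wK_1$-orbits. On each orbit one then propagates the choice: for $\chi = \chi_0^k$ with $k\in\wK_1$, set $\Theta'(\chi) := \Theta'(\chi_0)^k$, which is well-defined precisely because $\wK_1$ is abelian-by-cyclic so the stabiliser of $\chi_0$ in $\wK_1$ acts trivially on $\Theta'(\chi_0)$ (an extension of $\chi_0$ to a group containing $\Stab$, on which $\Stab$ acts by conjugation fixing $\chi_0$, hence fixing the one-dimensional-up-to-linear-twist... here I would invoke that an extension of a $\Stab$-invariant $\chi_0$ to the cyclic-over-$\wT_1$ group $\I_{\wK_1}(\chi_0)$ is itself $\Stab$-invariant because $\Stab$ centralises $\I_{\wK_1}(\chi_0)/\wT_1$ — this needs the commutativity of $\wK_1/\wT_1$).

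The main obstacle I anticipate is reconciling equivariance with $\delta_1$-compatibility simultaneously on an orbit that is \emph{closed} under twisting by $\restr{\delta_1}|{\wT_1}$: there the choice of $\Theta'(\chi_0)$ is constrained both by the equivariance requirement (which is automatic once a representative is fixed) and by the compatibility requirement (which relates $\Theta'$ at $\chi_0$ and at its $\delta_1$-twist). If $\chi_0$ and $\chi_0\cdot\restr{\delta_1}|{\wT_1}$ are $\wK_1$-conjugate, compatibility becomes a genuine linear equation for the ambiguity character that must admit a solution; I expect this to come down to the fact that $\restr{\delta_1}|{\wT_1}$ restricted to $\I_{\wK_1}(\chi_0)\cap\wT_1$ — no, rather to checking that the relevant cocycle obstruction vanishes because everything happens inside the cyclic group $\nicefrac{\wK_1}{\wT_1}$, where $H^2$ with the appropriate coefficients is trivial. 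I would handle this by choosing, on such a self-paired orbit, the extension so that $\Theta'(\chi_0)$ evaluated at a fixed generator-preimage takes a prescribed root of unity determined by $\delta_1$; the cyclicity of $\nicefrac{\wK_1}{\wT_1}$ ensures such a root of unity exists and is consistent.

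Finally, after $\Theta'$ is defined on orbit representatives and extended by $\wK_1$-equivariance to all of $\Irr(\wT_1)$, I would verify the three required properties in turn: (i) $\Theta'(\chi)$ is an extension of $\chi$ to $\I_{\wK_1}(\chi)$ — immediate from the construction and the fact that $\I_{\wK_1}(\chi) = \I_{\wK_1}(\chi_0)^k$ when $\chi = \chi_0^k$; (ii) $\wK_1$-equivariance — by construction, since the value at any $\chi$ is defined as a conjugate of the value at its orbit representative and conjugacy is transitive on orbits with well-defined stabiliser action as argued above; (iii) $\delta_1$-compatibility — by the prescribed choice on $\delta_1$-self-paired orbits together with the observation that on orbits \emph{not} closed under $\delta_1$-twisting the representative of the twisted orbit can be taken to be the twist of the representative of the original orbit, making compatibility definitional. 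The whole argument is elementary Clifford theory once the bookkeeping of orbits under the two commuting operations (conjugation by $\wK_1$, twisting by $\restr{\delta_1}|{\wT_1}$) is organised carefully.
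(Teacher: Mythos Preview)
Your overall strategy—choose orbit representatives, extend via cyclicity, propagate by conjugation—is sound, but it diverges from the paper's route and your handling of the one delicate point is not airtight.

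The paper avoids your ``main obstacle'' entirely by passing to the kernel $\wK_{1,\delta}:=\ker(\delta_1)$. Since $\ker(\delta_1)\wT_1=\wK_1$ by Assumption~\ref{notationdelta}, one has $\wK_1=\wK_{1,\delta}\wT_1$ and $\I_{\wK_1}(\chi)=\I_{\wK_{1,\delta}}(\chi)\cdot\wT_1$. The paper first builds a $\wK_{1,\delta}$-equivariant extension map $\Lambda''$ for $\wT_{1,\delta}\lhd\wK_{1,\delta}$ (by picking orbit representatives, exactly as you propose—but here $\delta_1$-compatibility is vacuous since $\delta_1$ is trivial on $\wK_{1,\delta}$), and then sets $\Theta'(\chi)(kt):=\Lambda''(\restr\chi|{\wT_{1,\delta}})(k)\,\chi(t)$ for $k\in\I_{\wK_{1,\delta}}(\chi)$, $t\in\wT_1$. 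Compatibility with $\delta_1$ is then automatic: $\restr{(\chi\delta_1)}|{\wT_{1,\delta}}=\restr\chi|{\wT_{1,\delta}}$ and $\delta_1(k)=1$, so $\Theta'(\chi\delta_1)(kt)=\Theta'(\chi)(kt)\,\delta_1(t)=\Theta'(\chi)(kt)\,\delta_1(kt)$.

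Your proposed fix on a self-paired orbit does not work as stated. Suppose $\chi_0\cdot\restr{\delta_1}|{\wT_1}=\chi_0^{k}$ and let $\widehat{\chi_0}$ be any extension of $\chi_0$ to $\I:=\I_{\wK_1}(\chi_0)$. Then $\widehat{\chi_0}^{\,k}=\widehat{\chi_0}\,\delta_1\,\mu$ for some linear character $\mu$ of $\I/\wT_1$. You propose to kill $\mu$ by replacing $\widehat{\chi_0}$ with $\widehat{\chi_0}\nu$; but since $\wK_1/\wT_1$ is abelian, conjugation by $k$ fixes every linear character of $\I/\wT_1$, so $\nu^k=\nu$ and the obstruction $\mu$ is unchanged. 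In other words, either $\mu=1$ for \emph{every} extension or for none, and your adjustment cannot force it. It is in fact true that $\mu=1$ always (the paper's construction exhibits an extension with $\mu=1$, hence all have $\mu=1$), but you would need an independent verification of this, and the vague appeal to ``cocycle obstruction vanishes because the quotient is cyclic'' does not supply one. The paper's detour through $\ker(\delta_1)$ is precisely what makes this verification unnecessary.
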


\begin{proof} Let $\wK_{1,\delta}:=\ker(\delta_1)$ and $\wT_{1,\delta}:=\wK_{1,\delta}\cap \wT_1 $. According to \ref{zyklK1T1} the group $\nicefrac {\wK_{1}}{\wT_{1}}$ and hence $\nicefrac {\wK_{1,\delta}}{\wT_{1,\delta}} \cong \nicefrac{\wK_{1,\delta} \wT_1}{\wT_1}$ are cyclic and maximal extensibility holds with respect to $\wT_{1,\delta} \lhd \wK_{1,\delta}$. 
We can construct a $\wK_{1,\delta}$-equivariant extension map $\Lambda''$ for $\wT_{1,\delta} \lhd \wK_{1,\delta}$ by choosing a representative set in the $\wK_{1,\delta}$ -orbits in $\Irr(\wT_{1,\delta})$. The extension map $\Lambda'$ with $\Lambda'(\chi) (kt)=\Lambda''(\restr \chi|{\wT_1})(k) \chi(t)$ for $k \in \I_{\wK_{1,\delta}}(\chi)$ and $t\in \wT_{1,\delta}$ is well-defined and has the desired properties by definition.\end{proof}

Before constructing a similar map we repeat some well-known facts about central products.
\begin{notation} Recall, a finite group $A$, generated by the subgroups $A_r\leq A$ with $[A_r, A_{r'}] =1$ for $r, r' \in J$ and $r\neq r'$ is called the {\it central product of the subgroups $A_r$} and denoted by $A=A_1. \cdots . A_j$. Every irreducible character $\lambda$ of $A$ satisfies $ \lambda (\prod_r a_r)= \prod_r \lambda_r(a_r) $ for $a_r\in A_r$ and $\lambda_r\in \Irr(A_r|\lambda)$, where $\Irr(A_r|\lambda)$ denotes the set of irreducible constituents of $\restr \lambda|{A_r}$, see \cite[4.20]{Isa}. We denote such a character by $\la_1. \cdots .\la_j$ and by $\la_1\times \cdots \times \la_j$ if $A$ is the direct product of the groups $A_r$.
\end{notation}

In our context $\wT$ is a central product: because of $[\wK_r, \wT_{r'}]= 1$ ($r,r'\in J$, $r\neq r'$) from \ref{wKwTkommut} the group $\wT$ is the central product of the groups $\wT_r$ ($r\in J$).

For the construction of an extension map for $\wT\lhd \wK$ we restrict the character to the subgroups $\wT_r$ and obtain extensions of these characters by using $\Theta'$. For gluing together the extensions we need the following assumption on the groups $\wK_r$.

\begin{hypo}\label{hyp_inkommut}
If $j\geq 2$, then $Z:=\wT_1\cap (\prod_{r=2}^j \wT_r)$ is a subgroup of the centre $\Zent(\wN)$ of $\wN$ and $[\I_{\wK_r}(\la), \wK_{r'}]\leq \ker(\la)\cap Z$ hold for any $r, r'\in J$ with $r\neq r'$ and any $\la\in \Irr(\wT_r)$.
\end{hypo}

\begin{lem} \label{defTheta} For $\wT \lhd \wK$ an $\wN$-equivariant $\restr \delta|{\wK}$-compatible extension map $\Theta$ exists.
\end{lem}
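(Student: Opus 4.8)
The plan is to build the extension map $\Theta$ for $\wT\lhd\wK$ by combining the map $\Theta'$ for $\wT_1\lhd\wK_1$ (from Lemma \ref{defTheta'}) across the central product decomposition $\wT=\wT_1.\cdots.\wT_j$ and $\wK=\Spann<\wK_r|r\in J>$. First I would fix, for each $r\in J$, a fixed element $p^{(r)}\in S$ with $\wK_r=\wK_1^{p^{(r)}}$ (taking $p^{(1)}=1$), using \ref{wNerzeugtvonSundwK} and the definition $\wK_{r+1}=\wK_r^{p_r}$; conjugation by $p^{(r)}$ transports $\Theta'$ to a $\wK_r$-equivariant, $\delta_r$-compatible extension map $\Theta'_r$ for $\wT_r\lhd\wK_r$ (here one uses $\delta(S)=1$ from \ref{notationdelta} to see that the conjugate of $\delta_1$ is $\delta_r$). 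Then, given $\la\in\Irr(\wT)$, write $\la=\la_1.\cdots.\la_j$ with $\la_r\in\Irr(\wT_r)$ and attempt to define $\Theta(\la)$ on $\I_\wK(\la)$ by the product formula $\Theta(\la)\big(\prod_r k_r\big):=\prod_r \Theta'_r(\la_r)(k_r)$ for $k_r\in\I_{\wK_r}(\la_r)$.

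The main work is to check this is well-defined and is genuinely a character of the inertia group. The well-definedness hinges precisely on Hypothesis \ref{hyp_inkommut}: two different ways of writing an element of $\wK$ as a product $\prod k_r$ differ by elements of the pairwise intersections $\wK_r\cap\wK_{r'}\leq\wT$ (by \ref{wKwTkommut}), and one must verify the product of the $\Theta'_r(\la_r)$-values is unchanged. The key point is that $[\I_{\wK_r}(\la_r),\wK_{r'}]\leq\ker(\la_r)\cap Z$ with $Z\leq\Zent(\wN)$, so the relevant commutators are killed by $\la_r$ and the values $\Theta'_r(\la_r)$ restrict to $\la_r$ on $\wT_r$; combined with the fact that $\I_\wK(\la)$ is generated by the $\I_{\wK_r}(\la_r)$ together with $\wT$ (which follows from $[\wK_r,\wK_{r'}]\leq\wT$ and the central product structure), this shows the formula extends consistently and that $\Theta(\la)$ restricts to $\la$ on $\wT$. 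That $\Theta(\la)$ is multiplicative (a homomorphism when $\la$ is linear, or more generally a character via the central-product character formula applied to $\I_\wK(\la)=\I_{\wK_1}(\la_1).\cdots.\I_{\wK_j}(\la_j)$) follows because each $\Theta'_r(\la_r)$ is, and the factors commute up to $\ker(\la_r)\cap Z$.

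Next I would verify $\Theta$ is defined on all of $\Irr(\wT)$ and really lands in $\Irr(\I_\wK(\la))$ — irreducibility is automatic since $\Theta(\la)$ restricts irreducibly to $\la$ on $\wT$. Then I would establish $\wN$-equivariance: for $n\in\wN$, conjugation permutes the $\wK_r$ according to the image of $n$ in $\nicefrac\wN\wK\cong\Sym_j$ (using \ref{wNerzeugtvonSundwK} and \ref{prkommut}), and since each $\Theta'_r$ is $\wK_r$-equivariant and the whole family is related by the fixed conjugating elements $p^{(r)}$, the product formula is compatible with this permutation action; one checks $\Theta(\la^n)=\Theta(\la)^n$ factor by factor, reducing to the $\wK_1$-equivariance of $\Theta'$. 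Finally, $\restr\delta|\wK$-compatibility follows from the $\delta_r$-compatibility of each $\Theta'_r$ together with $\delta(S)=1$: the linear character $\restr\delta|\wK$ decomposes as the central product $\delta_1.\cdots.\delta_j$ on $\wK$, so $\Theta(\la\,\restr\delta|\wK|_{\wT})$ and $\Theta(\la)\restr{\restr\delta|\wK}|_{\I_\wK(\la)}$ agree factor-wise.

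The step I expect to be the main obstacle is the well-definedness of the product formula — precisely controlling the ambiguity coming from the overlaps $\wK_r\cap\wK_{r'}$ and showing Hypothesis \ref{hyp_inkommut} suffices to make the value independent of the chosen factorisation, while simultaneously confirming $\I_\wK(\la)$ decomposes as a central product of the $\I_{\wK_r}(\la_r)$. Everything else (equivariance, $\delta$-compatibility) is a routine bookkeeping consequence of the corresponding properties of $\Theta'$ once the map is known to exist.
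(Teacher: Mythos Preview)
Your proposal is correct and follows essentially the same approach as the paper: the paper likewise transports $\Theta'$ to each factor via explicit isomorphisms $\iota_r:\wK_r\to\wK_1$ built from the $p_r$'s (your $p^{(r)}$), uses Hypothesis~\ref{hyp_inkommut} to realise $\I_\wK(\la)/Z_\la$ as the central product of the $\I_{\wK_r}(\la_r)/Z_\la$ so that $\Theta(\la):=\Lambda_1(\la_1).\cdots.\Lambda_j(\la_j)$ is well-defined, and then checks $S$-equivariance on the generators $p_r$ by the same factor-by-factor comparison you outline. Your identification of the well-definedness step (the central-product decomposition of the inertia group via Hypothesis~\ref{hyp_inkommut}) as the crux is exactly right.
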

\begin{proof} Because of $\wT=\wT_1.\cdots .\wT_j$ every character $\la \in \Irr(\wT)$ coincides with $\la_1. \cdots .\la_j$ with $\la_r:=\restr \la|{\wK_r}$ ($r\in J$).
Using the isomorphisms $ \iota_{1}:= \operatorname{id}_{N_{1}}$ and $\iota_{r}: \wK_{r} \rightarrow \wK_{1}$ with $\iota_{r}(x):=\iota_{r-1}(x^{p_{r-1}})$ for $x \in \wK_{r}$ and $2\leq r$ one can associate to every character $\mu \in \Irr(\wT_r)$ a character $\mu\circ {\iota'}_r^{-1} \in \Irr(\wT_1)$ with $\iota_r':=\restr\iota_r|{\wT_r}: \wT_r\rightarrow \wT_1$ and $(\mu\circ {\iota'}_r^{-1}) (x)= \mu({\iota'}_r^{-1}(x))$ for $x\in \wT_r$. Using $\iota_r$ one associates to $\Lambda'(\la\circ{\iota'}_r^{-1})$ a character of $\Irr({\I_{\wK_r}(\la)})$, which we define to be $\Lambda_r(\mu)$. By this definition $\Lambda_r$ is a $\wK_r$-equivariant $\delta$-compatible extension map for $\wT_r\lhd \wK_r$.

Because of \ref{wKwTkommut} the inertia group $\I_{\wK}(\la)$ coincides with $\I_{\wK_1}(\la_1) \ldots \I_{\wK_j}(\la_j)$. Using Hypothesis \ref{hyp_inkommut} the group $\nicefrac{\I_{\wK}(\la)} {Z_\la}$ with $Z_\la:=\ker(\la)\cap Z$ is the central product of the groups $\nicefrac{\I_{\wK_r}(\la_r)} {Z_\la}$ ($j \in J$). Hence the character $\Theta(\la):= \Lambda_1(\la_1). \cdots . \Lambda_j(\la_j)$ is well-defined.

The thereby defined extension map $\Theta$ is $\delta$-compatible and $\wK$-equivariant as the maps $\Lambda_r$ are $\wK_r$-equivariant and Hypothesis \ref{hyp_inkommut} holds.

It remains to check if $\Theta$ is also $S$-equivariant. The character $\la':=\la^{p_r}$ coincides with $\la_1. \cdots. \la_{r+1}^{p_r}. \la_{r}^{p_r}. \ldots. \la_j$. Hence the restriction of $\Theta(\la)^{p_r}$ and $\Theta(\la^{p_r})$ to $\I_{\wK_{r'}}(\la')$ ($r'\notin \Lset{r,r+1}$) coincides. By definition of $\iota_{r'}$ we may calculate $\restr\Theta(\la')|{\I_{\wK_{r}}(\la')}= \Theta'_r(\la_{r+1}^{p_r})=
\Theta'(\la_{r+1} \circ{\iota'}_{r+1}) \circ \iota_r^{-1}=
\left( \Theta_{r+1}(\la_{r+1}) \right)^{p_r}$. The characters $\Theta(\la)^{p_r}$ and $\Theta(\la^{p_r})$ restricted to $\I_{\wK_r}(\la')$ coincide. Analogously one can prove $\restr\Theta(\la)^{p_r}|{\I_{\wK_{r+1}}(\la')}
=\restr\Theta(\la^{p_r})|{\I_{\wK_{r+1}}(\la')}$ and this implies $\Theta(\la)^{p_r}=\Theta(\la^{p_r})$.
\end{proof}

In the next step we construct an extension map on a subset $\calI$ of the irreducible characters, which are distinguished by their easily described inertia subgroup.

\begin{defi}[The subset $\calI$] \label{DefI}
Let $\calI\subseteq\Irr(\wT)$ consist of all characters $\lambda\in \Irr(\wT)$ fulfilling $\I_{\wN}(\la)=\I_{\wK}(\la) S_I$ for some $I\subseteq J'$, where $S_I=\Spann<p_i|i \in I>$ .

Let $\la=\la_1\circ {\iota'}_1.\cdots.\la_j\circ {\iota'}_j\in \Irr(\wT)$ with $\la_r\in \Irr(\wT_1)$. According to the action of $\wN$ on $\wT$, a character $\la \in \Irr(\wT)$ lies in $\calI$, if and only if the following conditions are satisfied:
\begin{enumerate}
\item Two $\wK_1$-conjugate characters in $\Lset{\la_1,\ldots ,\la_j}$ are equal.
\item "Characters from the same $\wK_1$-orbit in $\Irr(\wT_1)$ are in neighbouring positions", that is the equation $\la_i=\la_{i'}^k$ for some $k \in \wK_1$ and $i,i'\in J$ implies $\lambda_{i''}\in \Lset{\la^k| k \in \wK_1}$ for $i\leq i''\leq i'$.
\end{enumerate}
Because of the first condition the characters $\la_{i''}$ from the second condition are all equal.
One can see $\calI$ as the intersection of the following two sets corresponding to each of the above conditions, namely the $\calI_1$ defined by the first condition and $\calI_2$ defined by the other one. By the action of $\wN$ on $\wT$ we know that conjugation with elements of $S$ stabilises $\calI_1$ and analogously $\wK$ acts on $\calI_2$.
\end{defi}

These characters are maximally extendible in $\wN$ and a maximal extension is be well-defined, if one assumes some additional properties for the elements $\Lset{p_r| r \in J'}$.

\begin{hypo}[Properties of $\Lset{p_r|r\in J'}$]\label{hypo_pr1}
For every $I\subseteq J'$ and $S_I:=\Spann<p_r|r \in I>$ the equation $S_I\cap \ovT= \Spann<p_r^{2}| r \in I> $ holds and $p_r^2 = t (t^{p_r})^{-1}$ for $r\in J'$ and some $t\in \wT_r$.
\end{hypo}

This enables us to prove the following lemma. 

\begin{lem} \label{lem_FortsetzungI} There exists a $\delta$-compatible extension map $\Lambda$ for $\wT\lhd \wN$ on $\calI$.\end{lem}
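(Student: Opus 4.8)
The plan is to build the extension map $\Lambda$ on $\calI$ by ``gluing'' the extension map $\Theta$ for $\wT\lhd\wK$ from Lemma \ref{defTheta} with extensions of characters across the symmetric-group part, using the elements $p_r$. First I would observe that for $\la\in\calI$ the inertia group has the explicit shape $\I_\wN(\la)=\I_\wK(\la)S_I$ for some $I\subseteq J'$; writing $I$ as a disjoint union of ``intervals'' of consecutive integers, the second defining condition of $\calI$ forces the corresponding blocks $\la_r$ to be constant along each interval. So the picture is: $\la$ is grouped into consecutive blocks, each block consisting of equal characters from a single $\wK_1$-orbit, and $S_I$ permutes within blocks. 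The relative inertia group $\nicefrac{\I_\wN(\la)}{\wT}$ is then (up to the abelian contributions) a product of pieces, each of the form $\nicefrac{\I_{\wK}(\la)}{\wT}$-part times a symmetric group $\Sym_m$ permuting a block of size $m$.

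Next I would define $\Lambda(\la)$ on $\I_\wK(\la)$ to agree with $\Theta(\la)$, and then extend across each $p_r$ with $r\in I$. The key point for extending across $p_r$: since $\la_r=\la_{r+1}$ along a block, conjugation by $p_r$ fixes $\la$, and I need to specify $\Lambda(\la)(p_r)$. Here Hypothesis \ref{hypo_pr1} is exactly what makes this well-defined: $p_r^2=t(t^{p_r})^{-1}\in\wT_r$, so $\Lambda(\la)(p_r)^2=\Lambda(\la)(p_r^2)=\Theta(\la)(t(t^{p_r})^{-1})$ is already forced, and I would check that a consistent square root can be chosen, uniformly over the $\wK$-orbit of $\la$, so that the resulting map is a genuine homomorphism-twisted extension. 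The identity $S_I\cap\wT=\spann<p_r^2\mid r\in I>$ from Hypothesis \ref{hypo_pr1} guarantees there are no further relations among the $p_r$ to worry about beyond these squares and the commutation relations \ref{prkommut}, so the assignment extends to all of $S_I$ and hence to $\I_\wN(\la)=\I_\wK(\la)S_I$. One checks the extension is irreducible because it restricts to the irreducible $\Theta(\la)$ on $\I_\wK(\la)$.

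Then I would verify $\delta$-compatibility. Since $\delta(S)=1$, the character $\delta$ is trivial on each $p_r$, so $\la\restr\delta|\wT$ has the same inertia structure in $\wN$ as $\la$ (the $S_I$-part is unchanged) and in particular $\la\restr\delta|\wT\in\calI$ whenever $\la\in\calI$. On the $\wK$-part, $\Theta$ is already $\restr\delta|\wK$-compatible by Lemma \ref{defTheta}, and on the $p_r$'s both $\Lambda(\la)$ and $\Lambda(\la\restr\delta|\wT)$ take the same (square-root) value because the defining equation $\Lambda(\la)(p_r)^2=\Theta(\la)(p_r^2)$ changes by the factor $\restr\delta|\wT(p_r^2)=1$. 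Hence $\Lambda(\la\restr\delta|\wT)=\Lambda(\la)\restr\delta|{\I_\wN(\la)}$ as required.

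The main obstacle I anticipate is the well-definedness and consistency of the square-root choice in $\Lambda(\la)(p_r)$: one must make the choice $\wK$-equivariantly (so that $\Theta$'s equivariance propagates) and compatibly across overlapping relations when a block has size $\geq 3$, i.e. when several $p_r$ in the same interval interact via the braid-type relations. Concretely, on a block of size $m$ the elements $p_r$ generate (an extension of) $\Sym_m$ over $\wT$, and I must check that the chosen values assemble into an honest projective-to-linear extension — this is where Hypothesis \ref{hypo_pr1} (giving $p_r^2$ as an explicit ``boundary'' $t(t^{p_r})^{-1}$) does the real work, reducing the cocycle to something trivial. Handling the interface between the $\calI_1$-condition (controlled by $S$) and the $\calI_2$-condition (controlled by $\wK$), together with making all choices simultaneously for a full $\wN$-orbit representative, is the delicate bookkeeping step, but no genuinely new idea beyond Hypotheses \ref{hyp_inkommut} and \ref{hypo_pr1} should be needed.
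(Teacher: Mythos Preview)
Your overall strategy---start from $\Theta(\la)$ on $\I_\wK(\la)$ and extend across the $p_r$ with $r\in I$---is exactly the paper's approach. However, you are missing the one computation that makes the whole thing trivial, and as a result you manufacture an obstacle that is not there.

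The point is not that $\Lambda(\la)(p_r)^2=\la(p_r^2)$ is ``already forced'' and you must pick a square root; the point is that $\la(p_r^2)=1$ for every $r\in I$. Indeed, writing $p_r^2=t(t^{p_r})^{-1}$ with $t\in\wT_r$ as in Hypothesis~\ref{hypo_pr1}, one has
\[
\la(p_r^2)=\la_r\bigl(\iota_r(t)\bigr)\,\la_{r+1}\bigl(\iota_{r+1}(t^{p_r})\bigr)^{-1},
\]
and since $\iota_{r+1}(t^{p_r})=\iota_r(t)$ by the very definition of the $\iota$'s, this equals $\la_r(\iota_r(t))\la_{r+1}(\iota_r(t))^{-1}=1$ because $r\in I$ means $\la_r=\la_{r+1}$. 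Combined with the first half of Hypothesis~\ref{hypo_pr1}, this gives $S_I\cap\wT\leq\ker(\la)$. Hence $\Theta(\la)$ extends to $\I_\wK(\la)S_I$ simply by declaring $\Lambda(\la)$ to be \emph{trivial} on $S_I$.

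With this choice there is no square-root ambiguity, no consistency issue on blocks of size $\geq 3$, and no cocycle to trivialise: the braid relations among the $p_r$ are automatically respected because every $p_r$ is sent to $1$. Your $\delta$-compatibility argument then goes through verbatim (and more easily), since $\Lambda(\la)(p_r)=1=\Lambda(\la\restr\delta|\wT)(p_r)$. So the ``main obstacle'' you anticipate dissolves once you carry out the computation that Hypothesis~\ref{hypo_pr1} is designed for.
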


\begin{proof} Let $\la=\la_1\circ {\iota'}_1.\cdots.\la_j\circ \iota_j'$.
By definition the inertia group of $\lambda \in \calI$ fulfils $\I_{\wN}( \lambda)= \I_{\wK}( \lambda) S_I$ where $I$ is the set of all $r\in J'$ with $ \lambda_{r}= \lambda_{r+1}$.

For the later construction we check $S_I\cap \wT \leq \ker(\la)$. By the first part of Hypothesis \ref{hypo_pr1} it suffices to prove $\la(p_r^2)=1$ for $r\in I$. But this is implied by the later part of Hypothesis \ref{hypo_pr1}, as $\la(p_r^2)=\la(t (t^{p_r})^{-1})=\la_r(\iota_r(t)) \la_{r+1}(\iota_{r+1}(t^{p_r}))^{-1}$. Thereby the last term is equivalent to $\la_{r+1}(\iota_{r}(t^{p_r^2}))^{-1}$.
Using the definition of $I$ this implies $\la(p_r^2)=1$. Because of $S_I\cap \wT \leq \ker(\lambda)$ the character $\Theta(\la)$ can be extended to $\widetilde \la$ with $\restr \widetilde \la|{S_I}=1$.

Before proving that $\Lambda$ is $\delta$-compatible we check $\restr \delta|{\wT} \la\in \calI$ for $\la \in \calI$. The groups of $\I_{\wN}(\restr \delta|{\wT}\la)$ and $\I_{\wN}(\la)$ coincide, because the characters are linear and $\delta $ is defined on $\wN$. As $\Theta$ is $\delta$-compatible by Lemma \ref{defTheta} and $\delta(S_I)=1$ by Assumption \ref{notationdelta}, $\Theta $ is compatible with $\delta$.
\end{proof}

In order to obtain an $\wN$-equivariant extension map for $\wT\lhd \wN$, we describe $\Lambda$ on $\wN$-conjugated characters in $\calI$. For showing the $\wN$-equivariance we need an additional assumption on the elements $p_r$.

\begin{hypo}\label{hyp_pr2}
The elements $\Lset{p_1, \ldots, p_{j-1}}$ satisfy $ [p_r,kk^{p_r}]=1$ for every $k\in \wK_r $. The equation $S_I^s \wT= S_{I'} \wT$ for $s\in S$ and $I,I'\subseteq J'$ implies $S_I^s=S_{I'}$.
\end{hypo}

\begin{lem}\label{NormalformC}
Let $s\in S$, $k \in \wK$, $x:=ks$ and $\lambda \in \calI$ with $\lambda^{x}\in \calI$. Then $\lambda^{k} \in \calI$, $\Lambda(\lambda^{k})= \Lambda(\lambda)^{k}$ and $\Lambda(\la^x)=\Lambda(\la)^x$.
\end{lem}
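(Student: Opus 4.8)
The plan is to reduce the statement about $\Lambda$ on $\wN$-conjugates to the properties of $\Theta$ established in Lemma \ref{defTheta}, together with the combinatorial bookkeeping encoded in Hypotheses \ref{hypo_pr1} and \ref{hyp_pr2}. First I would separate the two factors of $x=ks$: conjugation by $k\in\wK$ and conjugation by $s\in S$. For the $\wK$-part, since $\wK$ stabilises $\calI_2$ (by the remark at the end of Definition \ref{DefI}) and since the condition $\I_{\wN}(\la)=\I_{\wK}(\la)S_I$ for $\la\in\calI$ only constrains which $\la_r$ coincide — a property visibly preserved under $\wK_1$-conjugation of the tuple — one gets $\la^k\in\calI$ with the same index set $I$. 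The equivariance $\Lambda(\la^k)=\Lambda(\la)^k$ then follows because $\Lambda(\la)$ was defined as the extension of $\Theta(\la)$ that is trivial on $S_I$, and $\Theta$ is $\wK$-equivariant by Lemma \ref{defTheta}: conjugating by $k$ sends $\Theta(\la)$ to $\Theta(\la^k)$ and fixes $S_I$ pointwise up to the already-checked triviality on $S_I\cap\wT\le\ker(\la)=\ker(\la^k)$, so both $\Lambda(\la^k)$ and $\Lambda(\la)^k$ are the unique extension of $\Theta(\la^k)$ trivial on $S_I$.

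Next I would handle conjugation by $s\in S$. The key structural input is that $S$ stabilises $\calI_1$ (again from Definition \ref{DefI}), and that conjugation by $s$ permutes the positions $J$, hence sends the index set $I$ of coinciding neighbours to some $I'\subseteq J'$ with $S_I^s\wT=S_{I'}\wT$; by the second part of Hypothesis \ref{hyp_pr2} this forces $S_I^s=S_{I'}$. So if $\la^x\in\calI$ then its index set is exactly $I'$, and $\I_{\wN}(\la^s)=\I_{\wN}(\la)^s=(\I_{\wK}(\la)S_I)^s=\I_{\wK}(\la^s)S_{I'}$, confirming $\la^s\in\calI$. For the extension itself: $\Lambda(\la)^s$ restricts on $\I_{\wK}(\la^s)=\I_{\wK}(\la)^s$ to $\Theta(\la)^s$, which I must identify with $\Theta(\la^s)$. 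This is the one point where $\Theta$ being merely $\wK$-equivariant is not enough — but the proof of Lemma \ref{defTheta} in fact checked $S$-equivariance of $\Theta$ as well (the final paragraph there verifies $\Theta(\la)^{p_r}=\Theta(\la^{p_r})$), so $\Theta(\la)^s=\Theta(\la^s)$. Since moreover $\Lambda(\la)^s$ is trivial on $S_I^s=S_{I'}$, it is the unique extension of $\Theta(\la^s)$ trivial on $S_{I'}$, i.e. $\Lambda(\la^s)$. Combining the two steps, $\Lambda(\la^x)=\Lambda(\la^{ks})=\Lambda((\la^k)^s)=\Lambda(\la^k)^s=(\Lambda(\la)^k)^s=\Lambda(\la)^x$.

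The main obstacle I anticipate is the careful verification that the uniqueness characterisation of $\Lambda(\la)$ — "the extension of $\Theta(\la)$ to $\I_{\wK}(\la)S_I$ that is trivial on $S_I$" — is genuinely unambiguous and behaves well under the permutation action, which is exactly what Hypothesis \ref{hyp_pr2}'s first relation $[p_r,kk^{p_r}]=1$ is there to guarantee: it ensures the $p_r$ commute with the relevant part of $\I_{\wK}(\la)$ so that the semidirect-product-like description of $\I_{\wN}(\la)/\ker(\la)$ used to define $\widetilde\la$ in Lemma \ref{lem_FortsetzungI} is consistent after conjugation. I would make sure the computation $\la(p_r^2)=1$ from Lemma \ref{lem_FortsetzungI}, which underlies the existence of the trivial-on-$S_I$ extension, transports verbatim to $\la^x$ using that $I$ is replaced by $I'=I^s$ and that conjugation preserves $\ker$. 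Once that bookkeeping is in place the rest is a short assembly of the already-proven equivariances of $\Theta$.
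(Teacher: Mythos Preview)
Your overall strategy---split $x=ks$, handle $k$ and $s$ separately via the $\wN$-equivariance of $\Theta$ and the bookkeeping of Hypothesis~\ref{hyp_pr2}---matches the paper's proof. But there is a genuine gap in your first step.

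You claim $\la^k\in\calI$ follows from $\la\in\calI$ alone, because $\wK$ stabilises $\calI_2$ and the condition defining $\calI_1$ is ``visibly preserved under $\wK_1$-conjugation of the tuple''. This second assertion is false. The condition for $\calI_1$ is that any two $\wK_1$-conjugate entries among $\la_1,\ldots,\la_j$ are actually equal. If $\la_r=\la_{r+1}=\mu$ and you conjugate by $k=k_1\cdots k_j$ with $\iota_r(k_r)\ne\iota_{r+1}(k_{r+1})$ modulo the stabiliser of $\mu$, then $(\la^k)_r$ and $(\la^k)_{r+1}$ remain $\wK_1$-conjugate but are no longer equal, so $\la^k\notin\calI_1$. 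Thus $\calI_1$ is not $\wK$-stable, and neither is the index set $I$. The paper's argument uses \emph{both} hypotheses here: from $\la\in\calI\subseteq\calI_2$ and $\calI_2^k=\calI_2$ one gets $\la^k\in\calI_2$; from $\la^x\in\calI\subseteq\calI_1$ and $\calI_1^{s^{-1}}=\calI_1$ one gets $\la^k=(\la^x)^{s^{-1}}\in\calI_1$. Only then is $\la^k\in\calI$, and only then does one know (because now $\la^k\in\calI_1$) that the index set for $\la^k$ is again $I$.

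A second, smaller point: you assert that conjugation by $k$ ``fixes $S_I$ pointwise''. It does not. The paper uses precisely the fact you flag at the end---once $\la^k\in\calI$, one may replace $k$ by an element of $\I_{\wK}(\la)k$ so that $k_{r+1}=k_r^{p_r}$ for each $r\in I$, and then the relation $[p_r,k_rk_r^{p_r}]=1$ from Hypothesis~\ref{hyp_pr2} gives $p_r^{k}=p_r$ (the other factors $k_{r'}$ commute with $p_r$ by \ref{prkommut}). This adjustment of $k$ is only permissible because $\la^k\in\calI_1$ forces $\la_r^{\iota_r(k_r)}=\la_{r+1}^{\iota_{r+1}(k_{r+1})}$, so it again depends on the step you skipped. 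Your treatment of the $S$-part and the final assembly are essentially the paper's.
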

\begin{proof} Let $\la':=\la^{k}$ and $\la'':=\la^{x}$. By assumption we have $\la \in \calI$ and $\la'' \in \calI$. By Definition \ref{DefI} the equation $\calI=\calI_1\cap\calI_2$ holds for the sets $\calI_1$ and $\calI_2$. Using $\la'=\la^k $ and $\la'=\la''^{(s^{-1})}$ 
we deduce $\la' \in \calI_1\cap \calI_2=\calI$ from $\calI_1^{s^{-1}}=\calI_1$ and $\calI_2^k=\calI_2$.

As we have defined $\Lambda$ by using the $\wN$-equivariant extension map $\Theta$, it suffices for proving $\Lambda(\lambda^{k})= \Lambda(\lambda)^{k}$ to check the equation $\Lambda(\lambda^{k})(p_r)= \Lambda(\lambda)^{k} (p_r)$ for $r \in I$. Let $k=\prod_r k_r$ with $k_r\in \wK_r$ and $\la=\la_1\circ {\iota'}_1.\ldots.\la_j\circ{\iota'}_j$. By definition of $\calI$ the elements $k_{r}$ and $k_{r+1}$ ($r\in I$) satisfy $\la_r^{\iota_r(k_r)}=\la_{r+1}^{\iota_{r+1}(k_{r+1})}$. Hence we may assume $k_{r+1}=k_r^{p_r}$. For such an element Hypothesis \ref{hyp_pr2} implies $\la(p_r^k)=\la(p_r)$ after some short calculations.

For the remaining statement $\Lambda(\la^x)=\Lambda(\la)^x$ we may assume $x\in S$.
Let $I,I'' \subseteq J'$ such that $\I_{\wN}(\la)=\I_{\wK}(\la) S_I$ and $\I_{\wN}(\la'')=\I_{\wK}(\la)S_{I''}$. By Hypothesis \ref{hyp_pr2} this implies $S_I^x=S_{I''}$. Hence $\Lambda(\la'')$ and $\Lambda(\la)^x$ coincide on $S_I''$. 
As $\Lambda$ is defined using the $\wN$-equivariant extension map $\Theta$, this shows already $\Lambda(\la'')=\Lambda(\la)^x$.
\end{proof}

The above properties enable us to construct an extension map for $\wT\lhd \wN$.

\begin{lem}\label{Endelemma}There exists an $\wN$-equivariant $\delta$-compatible extension map for $\wT\lhd \wN$.\end{lem}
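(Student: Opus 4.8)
The plan is to bootstrap the $\wN$-equivariant extension map for $\wT\lhd \wN$ from the $\delta$-compatible extension map $\Lambda$ on $\calI$ provided by Lemma \ref{lem_FortsetzungI}, using the structural facts about $\calI$ collected in Lemma \ref{NormalformC}. The starting observation is that the $\wN$-orbit of any $\la\in\Irr(\wT)$ meets $\calI$: by Definition \ref{DefI} one can conjugate by $\wK$ (which acts on the labels $\la_r\in\Irr(\wT_1)$ through $\wK_1$) to make equal-up-to-$\wK_1$-orbit labels coincide, putting $\la$ into $\calI_1$, and then conjugate by $S$ (which permutes the labels $\la_r$ via $\nicefrac\wN\wK\cong\Sym_j$) to cluster labels from the same $\wK_1$-orbit into neighbouring positions, putting it into $\calI_2$. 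So fix an $\wN$-transversal $\mathcal R$ of the $\wN$-orbits on $\Irr(\wT)$ with $\mathcal R\subseteq\calI$.

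Next I would \emph{define} the extension map $\widetilde\Lambda$ on all of $\Irr(\wT)$ by transport of structure: for $\la\in\Irr(\wT)$ choose $\la_0\in\mathcal R$ and $n\in\wN$ with $\la=\la_0^{\,n}$, and set $\widetilde\Lambda(\la):=\Lambda(\la_0)^{\,n}$. The content is that this is \emph{well-defined}, i.e.\ independent of the choice of $n$. If $\la_0^{\,n}=\la_0^{\,n'}$ then $m:=n'n^{-1}\in\I_\wN(\la_0^{\,n})$, and writing $\la:=\la_0^{\,n}\in\Irr(\wT)$ we need $\Lambda(\la_0)^{\,n}=\Lambda(\la_0)^{\,n'}$, equivalently that $\Lambda(\la_0)^{\,n}$ is fixed by the element $m\in\I_\wN(\la)$. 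Here is where Lemma \ref{NormalformC} enters: after replacing $\la_0$ by a different element of $\mathcal R$ and $n$ by $1$ if necessary, it suffices to check that for $\la\in\calI$ and $m\in\I_\wN(\la)$ one has $\widetilde\Lambda(\la)^{\,m}=\widetilde\Lambda(\la)$, and by Lemma \ref{lem_FortsetzungI} the restriction $\restr{\widetilde\Lambda(\la)}|{\wT}=\la$ is $m$-fixed, so the ambiguity is a linear character of $\nicefrac{\I_\wN(\la)}\wT$ of order dividing the relevant index; one checks it is trivial by evaluating on the generators $p_r$ using that $\widetilde\Lambda(\la)$ was constructed with $\restr{}|{S_I}=1$ and using Hypothesis \ref{hyp_pr2} together with Lemma \ref{NormalformC} (the equations $\Lambda(\la^k)=\Lambda(\la)^k$ and $\Lambda(\la^x)=\Lambda(\la)^x$ for $k\in\wK$, $x\in S$) to reduce a general $m=ks$ to the two special types already handled. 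Equivariance $\widetilde\Lambda(\la^n)=\widetilde\Lambda(\la)^n$ for all $n\in\wN$ is then immediate from the definition (replace the pair $(n,\la_0)$ chosen for $\la$ by $(nn_1,\la_0)$ for $\la^{n_1}$), and $\delta$-compatibility follows because on $\mathcal R\subseteq\calI$ it holds by Lemma \ref{lem_FortsetzungI}, and $\delta$ is $\wN$-invariant (being a character of $\wN$) so the property propagates along $\wN$-orbits: $\widetilde\Lambda(\la^n\,\restr\delta|\wT)=\widetilde\Lambda((\la\restr\delta|\wT)^n)=\Lambda(\la_0\restr\delta|\wT)^{nm}=\bigl(\Lambda(\la_0)\restr\delta|{\I_\wN(\la_0)}\bigr)^{nm}$, which restricts correctly.

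The main obstacle I expect is precisely the well-definedness check — verifying that the two candidate extensions $\Lambda(\la_0)^n$ and $\Lambda(\la_0)^{n'}$ agree. This is not formal: it requires knowing that the stabiliser $\I_\wN(\la)$ of a character in $\calI$ has the clean form $\I_\wK(\la)S_I$ and that $\Lambda$ was rigged so that the chosen extension restricts trivially to $S_I$ and matches $\Theta$ on $\I_\wK(\la)$; reconciling two such choices amounts to showing a certain linear character of $\nicefrac{\I_\wN(\la)}\wT$ vanishes on the generators $p_r$, which is exactly the computation packaged in Lemma \ref{NormalformC} under Hypotheses \ref{hypo_pr1} and \ref{hyp_pr2}. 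Once that lemma is invoked correctly — in particular using $[p_r,kk^{p_r}]=1$ to move $\wK$-parts past the $p_r$ and the implication $S_I^s\wT=S_{I'}\wT\Rightarrow S_I^s=S_{I'}$ to pin down the support — the remaining verifications (equivariance, $\delta$-compatibility) are bookkeeping.
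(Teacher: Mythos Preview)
Your proposal is correct and follows essentially the same route as the paper: extend $\Lambda$ from $\calI$ to all of $\Irr(\wT)$ by transport along the $\wN$-action, invoking Lemma~\ref{NormalformC} for consistency. The paper's version is terser and builds the $\delta$-twist directly into the definition, setting $\Lambda(\la^x\restr\delta^i|_{\wT}):=\Lambda(\la)^x\restr\delta^i|_{\I_\wN(\la^x)}$ for $\la\in\calI$, so that $\delta$-compatibility is automatic and only well-definedness needs Lemma~\ref{NormalformC}; your version fixes a transversal $\mathcal R\subseteq\calI$, which makes well-definedness nearly automatic but pushes the appeal to Lemma~\ref{NormalformC} into the $\delta$-compatibility check.

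One small correction to your exposition: once you fix a transversal $\mathcal R$, well-definedness is in fact trivial and does not require the elaborate argument you sketch. The base point $\la_0\in\mathcal R$ is uniquely determined by $\la$, and two choices $n,n'$ with $\la_0^{\,n}=\la_0^{\,n'}$ differ by $n'n^{-1}\in\I_\wN(\la_0)$ (not $\I_\wN(\la_0^{\,n})$ as you wrote), which acts on $\Lambda(\la_0)\in\Irr(\I_\wN(\la_0))$ by an inner automorphism and hence trivially. The genuine content, and the place where Lemma~\ref{NormalformC} is indispensable, is your $\delta$-compatibility step: there $\la_0\delta_0$ need not lie in $\mathcal R$, so you must compare $\Lambda(\la_1)^m$ with $\Lambda(\la_0\delta_0)$ for $\la_1\in\mathcal R$ and $\la_1^m=\la_0\delta_0$, and this is exactly what Lemma~\ref{NormalformC} delivers.
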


\begin{proof} We extend the domain of $\Lambda$ by $\Lambda(\la^x\restr \delta^i|{\wT}):=\Lambda(\la)^x\restr \delta^i|{\I_{\wN}(\la^x)}$ for all $x\in \wN$ and integers $i$. By definition of $\cI$ every irreducible character on $\wT$ can be expressed in such a way. The map is well-defined according to the previous lemma. Because of the properties on $\cI$ the map is $\wN$-equivariant and compatible with $\delta$.
\end{proof}

Using this extension map $\Lambda$ we prove maximal extensibility with respect to $\ker(\restr\delta|\wT)\lhd \ker(\delta)$. 

\begin{prop}\label{mFreg} 
If the general assumption \ref{generalassumption} and the hypotheses \ref{hyp_inkommut}-\ref{hyp_pr2} hold, then maximal extensibility holds with respect to $T \lhd N$, where $N:=\ker (\delta)$ and $T:=\ker (\delta)\cap \wT$.\end{prop}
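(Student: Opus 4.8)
The plan is to deduce maximal extensibility for $T\lhd N$ from the extension map $\Lambda$ for $\wT\lhd\wN$ on $\calI$ constructed in Lemma \ref{Endelemma}, which is $\wN$-equivariant and $\delta$-compatible. The guiding principle is that, since $N=\ker(\delta)$ and $T=\ker(\delta)\cap\wT$, a $\delta$-compatible extension map respects the passage to these kernels; and an $\wN$-equivariant extension map on $\calI$ for $\wT\lhd\wN$ should restrict to give extensions over $N$ once we know that every character of $T$ is covered, up to $N$-conjugacy, by a character lying in $\calI$.

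\textbf{Step 1: every $\chi\in\Irr(T)$ comes from $\calI$.} First I would take $\chi\in\Irr(T)$ and choose $\la\in\Irr(\wT)$ lying above $\chi$, i.e. $\chi\in\IRR(T|\la)$. Since $\nicefrac{\wT}{T}$ is abelian (indeed $\nicefrac{\wT}{T}$ embeds into the linear characters via $\restr\delta|\wT$ and its $\wT$-translates) the restriction $\restr\la|T$ is a sum of $\wT$-conjugates — but $\wT$ is abelian, so in fact $\restr\la|T=\la'$ for some $\la'\in\IRR(T|\la)$, and after replacing $\chi$ by an $N$-conjugate we may assume $\chi=\restr\la|T$. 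The key point is then that the $\wT$-orbit, equivalently the coset $\la\spann<\restr\delta|\wT>$, must contain a member of $\calI$: multiplying $\la=\la_1\circ{\iota'}_1.\cdots.\la_j\circ{\iota'}_j$ by $\delta^i$ only rescales each $\la_r$ by a fixed linear character (by Assumption \ref{notationdelta} $\ker(\delta_r)\wT_r=\wK_r$, so $\restr\delta|{\wT_r}$ is $\wK_r$-invariant), hence does not change which $\la_r$ are $\wK_1$-conjugate; so this is really the statement that \emph{some $\wN$-conjugate of $\la$ lies in $\calI$}, which holds because conjugating by $S\cong\Sym_j$ lets us permute the positions $\la_1,\dots,\la_j$ to group equal-orbit entries into neighbouring blocks (condition (ii) of Definition \ref{DefI}), while condition (i) is arranged by conjugating each block by suitable elements of $\wK_1$ to make $\wK_1$-conjugate entries literally equal. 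After this reduction we may assume $\la\in\calI$ and $\chi=\restr\la|T$.

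\textbf{Step 2: build the extension over $N$.} Now set $\psi:=\Lambda(\la)\in\Irr(\I_{\wN}(\la))$, an extension of $\la$. Restrict to $M:=\I_{\wN}(\la)\cap N=\I_N(\chi)\cap\I_{\wN}(\la)$; I would then argue $\I_N(\chi)=M$, i.e. $\I_N(\chi)\subseteq\I_{\wN}(\la)$. This is where $\delta$-compatibility enters: an element $n\in N$ fixing $\chi=\restr\la|T$ sends $\la$ to some $\la^n\in\IRR(\wT|\chi)=\la\spann<\restr\delta|\wT>$; writing $\la^n=\la\restr\delta^i|\wT$ and pairing with $n\in\ker(\delta)$ one sees (using that $\delta$ is linear and $N$-invariant, so $i$ must be trivial on the cyclic group $\spann<\restr\delta|\wT>$ — here one uses that $\wT/T$ has order equal to the order of $\restr\delta|\wT$, which follows from $T=\ker(\delta)\cap\wT$) that $\la^n=\la$, hence $n\in\I_{\wN}(\la)$. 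Therefore $\I_N(\chi)=M$ and $\restr\psi|M$ is an extension of $\chi$ to its full inertia group in $N$, provided it stays irreducible — which it does, being the restriction of an irreducible extension of $\la$ to an intermediate group still inducing irreducibly onto nothing past it; more carefully, $[\I_{\wN}(\la):M]$ divides the order of $\spann<\restr\delta|\wT>$ and $\restr\psi|M$ restricts to $\chi$ hence must be irreducible. This produces the required extension.

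\textbf{Main obstacle.} The delicate point is Step 1 — verifying that the $N$-orbit of an arbitrary $\chi\in\Irr(T)$ meets $\{\restr\la|T : \la\in\calI\}$, i.e. that normalising into the form of Definition \ref{DefI} can be done using only elements of $N=\ker(\delta)$ rather than all of $\wN$. The permutations in $S\cong\Sym_j$ and the block-conjugations by $\wK_1$ needed to put $\la$ into $\calI$-form must be chosen (or adjusted by $\wT$-translates, which is harmless on restriction to $T$) to lie in $\ker(\delta)$; since $\delta(S)=1$ by Assumption \ref{notationdelta}, the $S$-part is automatic, and the $\wK_1$-part can be corrected modulo $\ker(\delta_1)\wT_1=\wK_1$. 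Checking this bookkeeping carefully, together with confirming that $\restr\Lambda(\la)|{\I_N(\chi)}$ is genuinely irreducible and independent of the choices made in Step 1 (here the $\wN$-equivariance and $\delta$-compatibility of $\Lambda$ from Lemma \ref{Endelemma} do the work), is the technical heart of the argument.
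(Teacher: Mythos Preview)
Your Step 2 contains a genuine gap: the claim that $\I_N(\chi)\subseteq\I_{\wN}(\la)$ is not valid, and the justification you sketch (``pairing with $n\in\ker(\delta)$'') does not work. Knowing $\delta(n)=1$ says something about the \emph{value} of $\delta$ at $n$; it says nothing about how conjugation by $n$ permutes the characters of $\wT$ lying above $\chi$. Concretely, take $\wT=\ZZ/4$, $\wN=\wT\rtimes\spann<c>$ with $c$ acting by inversion, $\delta$ the order--$2$ character trivial on $c$, and $\la$ a faithful character of $\wT$; then $c\in N$ fixes $\chi=\restr\la|T$ but $\la^c=\la\delta_0\neq\la$. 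In the actual setting of the proposition this phenomenon really occurs: Remark \ref{rem_relI} records that $\I_N(\mu)/\I_N(\la)$ is a cyclic group whose order divides the order of $\delta$, and in the applications to types $\tA$ and $\tB$ (where $\delta$ is nontrivial) this quotient is typically nontrivial.

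The paper's proof fixes exactly this point, and in doing so reveals the correct role of $\delta$-compatibility. One does \emph{not} try to show $\la^n=\la$; instead one accepts $\la^n=\la\delta_0^{\,i}$ for some $i$ and observes that $\I_{\wN}(\la\delta_0^{\,i})=\I_{\wN}(\la)$ (since $\delta$ is globally defined on $\wN$), so $\I_N(\la)\lhd\I_N(\mu)$. Then one computes, using both $\wN$-equivariance and $\delta$-compatibility of $\Lambda$,
\[
\bigl(\restr\Lambda(\la)|{\I_N(\la)}\bigr)^{n}
=\restr\Lambda(\la^n)|{\I_N(\la)}
=\restr\Lambda(\la\delta_0^{\,i})|{\I_N(\la)}
=\restr\bigl(\Lambda(\la)\,\delta^{\,i}\bigr)|{\I_N(\la)}
=\restr\Lambda(\la)|{\I_N(\la)},
\]
the last equality because $\I_N(\la)\leq\ker(\delta)$. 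So the partial extension $\widetilde\mu:=\restr\Lambda(\la)|{\I_N(\la)}$ is $\I_N(\mu)$-invariant. Since the $\I_N(\mu)$-orbit of $\la$ is contained in $\{\la\delta_0^{\,i}\}$, the quotient $\I_N(\mu)/\I_N(\la)$ is cyclic, and one finishes by extending $\widetilde\mu$ across this cyclic quotient via \cite[11.22]{Isa}. This is the missing idea in your argument: $\delta$-compatibility is used to prove \emph{invariance of the restricted extension}, not to force $i=0$.

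A minor further point: your Step 1 is unnecessary. Lemma \ref{Endelemma} already extends $\Lambda$ from $\calI$ to an $\wN$-equivariant $\delta$-compatible extension map on \emph{all} of $\Irr(\wT)$, so one may start from an arbitrary $\la\in\IRR(\wT|\mu)$ without first conjugating into $\calI$.
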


\begin{proof}
Let $\mu \in \Irr(T)$ with $T$ and $\la\in \IRR(\wT|\mu)$. The character $\Lambda(\la)$ is a maximal extension of $\la$ in $\wN$ by definition. Now we have to extend $\widetilde \mu:=\restr \Lambda(\la) |{\I_N(\la)}$ to $\I_N(\mu)$.

Let $n\in \I_N(\mu)$. The character $\la^n$ is an extension of $\mu$. By \cite[6.17]{Isa} there exists an integer $i$ such that $\la^x=\la\delta_0^i$ with $\delta_0:=\restr \delta|{\wT}$. Because of $\delta\in \Irr(\wN)$, the equation $\I_{\wN}(\la^n)= \I_{\wN}(\la\delta_0^i)= \I_{\wN}(\la)$ holds. This implies $\I_{\wN}(\la)\lhd \I_{\wN}(\mu)$. The character $\widetilde \mu$ is invariant in $\I_{N}(\mu)$, as $\Lambda$ is $\wN$-equivariant and 
 $\widetilde \mu^n=\restr \Lambda(\la^n)|{\I_N(\la)}= \restr\Lambda(\la \delta_0^i)|{\I_N(\la)}= \restr\Lambda(\la) \delta^i|{\I_N(\la)}= \widetilde \mu$ holds.

The group $\I_{\wN}(\mu)$ acts on $\Irr(\wT|\mu)$ such that $\Set{\mu^n|n \in \I_{\wN}(\mu)}=\Lset{\mu \restr \delta^{ii_0}|{\wT}}$ for some integer $i_0$, where $i$ runs through all integers. 
Hence $\I_{N}(\mu)/\I_{N}(\la)$ is cyclic and the character $\mu$ can be extended to $\I_{N}(\la)$ according to \cite[11.22]{Isa}.
\end{proof}

For the proof of Corollary \ref{Cor1} we compute the occurring relative inertia groups.

\begin{rem}\label{rem_relI}
Let us identify $\nicefrac {\wN}{\wT}$ with $\overline K\rtimes \Sym_j$, where $\overline K_r:= \nicefrac{\wK_r}{\wT_r}$ and $\overline K:=\nicefrac{\wK}{\wT}=\spann<\overline K_r>$ and $\Sym_j\cong \rho(S)$ permutes the factors of $\overline K$. The relative inertia group of $\la \in \calI$ in $\wN$ is a subgroup of $\overline K\rtimes \Sym_j$ of the form $(A_1 \times\ldots \times A_j )\rtimes U$, where $A_r\leq \overline K_r$ and $U$ is a direct product of symmetric groups $\Sym_{I_1} \times \ldots \times \Sym_{I_j}$, such that $I_1, \ldots, I_j$ are (possible empty) subsets of $\Lset{1, \ldots , j}$ with trivial intersections and $A_r\cong A_{r'}$ for $r,r' \in I_{i''}$. By definition of $\calI$ the relative inertia subgroup of $\la \in \Irr(\wT)$ is isomorphic to such a group. 

Let $\la \in \Irr(T)$. Then the relative inertia subgroup $\overline I$ of $\la$ in $N$ has a normal subgroup $\overline I_0$, which is isomorphic to $(A_1 \times\ldots \times A_j )\rtimes U$ for appropriate groups $A_i$ and $U$. Furthermore the quotient $\nicefrac {\overline I }{\overline I_0}$ is cyclic and its cardinality divides the order of $\delta$. \end{rem}

\begin{proof} This statement can be deduced from the proofs of Lemma \ref{lem_FortsetzungI} and Proposition \ref{mFreg}.\end{proof}

\section{Towards an Application of Proposition \ref{mFreg}}\label{sec_towards_application}
For the proof of Theorem \ref{Theo1} we want to apply the Proposition \ref{mFreg}. For this purpose we introduce groups $\wN$, $\wK$, $\wT$ and $\wT_1$ and elements $\Lset{p_r}$ such that they satisfy the Assumption \ref{generalassumptionsa}. Afterwards we have to ensure that with this choice the hypotheses \ref{hyp_inkommut}-\ref{hyp_pr2} hold. Only some of the desired properties can be checked without referring to the actual root system. The aim of this section is to present candidates for the groups and elements. Further we prove their properties, as far this can be done without referring to the actual root system.

For the proof we assume that besides the notion of Section \ref{sec_not} a further root system $\ovR$ and a Frobenius endomorphism $\oF$ is given in the following way.

\begin{ass} \label{ass_ovR_oF}
We assume a given root system $\ovR$ of type $\tA_{l-1}$, $\tB_l$ or $\tC_l$, with the following properties:
\begin{itemize}
	\item $R$ is a parabolic root system of $\ovR$ or $\ovR$ itself,
	\item $\oF:\ovovG\rightarrow \ovovG$ is a Frobenius endomorphism with $\restr \oF|{\ovG}=F$, where the groups $\ovovG$ and $\ovovX_\al$ ($\al\in \ovR$) are associated to $\ovR$ as in Section \ref{sec_not} and $\ovG$ is identified with $\Spann<\ovovX_\al|\al \in R>$ by Remark \ref{einbett}.
\end{itemize}

With the same definitions as in Section \ref{sec_not} we also associate to $\ovR$ the groups $\ovovN$, $\ovovT$ and $\ovW$, and additionally define $\ovrho:\ovovN \rightarrow \ovW$ to be the canonical epimorphism. Because of $\ovG=\Spann<\ovovX_\al|\al \in R>$ the inclusions $\ovT\leq \ovovT$, $\ovN \leq \ovovN$ and $W\leq \ovW$ hold, furthermore $\restr \ovrho|{\ovN}=\rho$.

The Frobenius endomorphism $\oF$ can differ from the Frobenius endomorphisms given in Notation \ref{setgen}. Let $\ovphi$ be the endomorphism induced by \oF on the cocharacter lattice $\ovY$ on \ovovT, which then automatically satisfies $\restr \ovphi|{Y}=\phi$.
\end{ass}

For defining the candidate for $\wK$, we need the following description of classical root systems.

\begin{ass}[The roots and the Weyl group]

\label{ass_classical_roots}
Let $l\geq 2$ be an integer and $\Lset{e_i}_{1\leq i \leq l}$ an orthonormal basis of the vector space $\CC^l$. 
Let $R$ and $\ovR$ be \[\Set{e_i -e_{i'}|1\leq i,i'\leq l \text{ with }i\neq i'}\text{ or } \Set{\pm a e_i, \pm e_i \pm e_{i'}|1\leq i,i'\leq l \text{ with }i\neq i'}\] for some $a\in \Lset{1,2}$, the roots from \cite[Section 5]{Springer74}.

In this framework $\spann<\ovW,\ovphi>$, seen as subgroup of the automorphisms of $\ovY$, acts on $\Lset{\pm e_1, \ldots, \pm e_l}$. Hence there exists a morphism $f: \spann<\ovW,\ovphi> \rightarrow \spann<(-1,+1)>\wr \Sym_l<\Syml$ with $x(e_i)=\sgn(\sigma(i)) e_{\betrag{\sigma(i)}}$ for $1\leq i \leq l$, $x\in \spann<\ovW,\ophi>$ and $\sigma:=f(x)$. By $\Syml$ we denote the symmetric group acting on $\Lset{\pm 1, \ldots, \pm l}$. This map is injective on $\ovW$ and one can deduce from $f$ the morphism $\ovf:\spann<\ovW,\ovphi> \rightarrow \Sym_l$ where $\ovf(x)(i)=\betrag{f(x)(i)}$.
\end{ass}

In order to apply Proposition \ref{mFreg} in the proof of Theorem \ref{Theo1} we define groups $\wN$, $\wK$, $\wK_1$ and $\wT$ and a character $\delta\in \Irr(\wN)$ which satisfy the assumptions from Section \ref{sec_mFreg} with $\ker(\restr\delta|\wT)$ being a Sylow Levi $d$-subgroup and $\ker(\delta)$ being the associated Sylow normaliser. Hence we use a Sylow twist for the definition of the groups. Thereby we have to restrict ourselves to Sylow twists with certain properties.

\begin{ass}[The Sylow $d$-twist $v\Gamma$]\label{ass_Sylowtwist}
Let $d$ a regular number of $W\phi$ and $v\Gamma\in V\Gamma$ a Sylow $d$-twist of $(\ovG,F)$, such that the following conditions are satisfied:
\begin{enumerate}
	\item \label{ass_Sylowtwista}with $\osigma:=\ovf(\rho(v) \phi)$ the \osigma-orbits $\calB_r$ ($r\in J:=\Lset{1, \ldots,j}$) on $\{1,\ldots, l\}$ have the same length,
	\item \label{ass_Sylowtwistb}$J$ is a transversal of the orbits and $r\in \calB_r$,
	\item \label{ass_Sylowtwistc}for $w=\rho(v)$ the group $\Cent_{W_{R_1}} ( w\phi)$ is cyclic, where $R_{r}:=R\cap \Spann<e_{i}|i \in \calB_{r}>$,
	\item \label{ass_Sylowtwistd}$\rho(\ovN^{vF})$ is isomorphic to $\Cent_{W_{R_1}}(w)\wr \Sym_\calB$, where $\calB$ is the set of the orbits $\calB_r$ ($r \in J$), more specifically $\Cent_{\prod_{r\in J}W_{R_r}}(f(\rho(v)\phi))$ is normal in $\rho(\ovN^{vF})$ and has a complement isomorphic to $\Sym_\calB$, and 
	\item \label{ass_Sylowtwiste}$x^{(v F)^{\frac l j}}=x$ for every $x \in V$.
\end{enumerate}
\end{ass}
As the roots are as in Assumption \ref{ass_classical_roots} the root systems $R_r$ and $\ovR_r:=\ovR\cap \Spann<e_{i}|i \in \calB_{r}>$ are of the same type as $R$ or $\ovR$, respectively, but have a smaller rank in general.

\begin{lem}\label{lem1}
For every root subsystem $R'$ in $ \ovR$
let $\ovovN_{R'}:=\Spann<n_\al(t)|\al \in R', \,\, t\in \ovF_q^*>\leq \ovovG$ and $\ovovT_{R'}:=\Spann<h_\al(t)|\al \in R' , \,\, t\in \ovF_q^*>\leq \ovovG$.
 We also denote $\ovovN_{R'}$ by $\ovN_{R'}$ and $\ovovT_{R'}$ by $\ovT_{R'}$, if $R'\subseteq R$. Furthermore let $\ovovT_r:=\ovovT_{\ovR_r}$, $\ovK_r:=\ovN_{R_r}\ovovT_r$, $\ovK:= \ovN_{\bigcup_r R_r}\ovovT$ and $\calZ\leq \Zent(\ovG)\cap \ovT_1$. Let $\wN:=\Lang^{-1}(\calZ)\cap \ovN\ovovT$, $\wK:= \Lang^{-1}(\calZ)\cap \ovK$, $\wK_1:= \Lang^{-1} (\calZ) \cap \ovK_1$, $\wT=\Lang^{-1}(\calZ)\cap \ovT$, where $\Lang: \ovovG\rightarrow \ovovG$ is defined by $x\mapsto x^{v\overline F} x^{-1}$. Let $p_{r}:=\prod_{i=0}^{\frac l j -1} n_{\beta_r}(1)^{(v\Gamma)^i}$ with $\beta_r:=e_{r+1}-e_{r}$ ($ r \in J'$).

Then the above groups satisfy Assumption \ref{generalassumptionsa} if $\ovovT=\ovovT_1 \cdots \ovovT_j$ and additionally - in case of $j\geq 2$ -the equation $\calZ=\ovT_1 \cap \ovT_2\cdots \ovT_j$ holds.
\end{lem}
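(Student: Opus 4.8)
The plan is to verify the conditions of Assumption \ref{generalassumptionsa} one by one, using that the $\ovovT$-decomposition and the fact that $\calZ$ sits in $\ovT_1$ and equals $\ovT_1\cap\ovT_2\cdots\ovT_j$ allow us to reduce the Lang-preimage statements to statements about the root-theoretic groups $\ovK_r$, $\ovovT_r$ and the elements $p_r$. First I would record the basic structure of the $p_r$: since $\beta_r=e_{r+1}-e_r$ lies in $\ovR$ and $\osigma=\ovf(\rho(v)\phi)$ maps the $r$-th orbit $\calB_r$ to itself with period $l/j$, the product $p_r=\prod_{i=0}^{l/j-1} n_{\beta_r}(1)^{(v\Gamma)^i}$ is $v\ovF$-stable, hence lies in $\wN$; moreover $\rho(p_r)$ (or rather $\ovrho(p_r)$) acts as the transposition swapping the orbits $\calB_r$ and $\calB_{r+1}$ while fixing all other $\calB_{r'}$. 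This immediately gives the $\Sym_j$-structure of $\nicefrac\wN\wK$ and, together with Assumption \ref{ass_Sylowtwistd}, condition \ref{wNerzeugtvonSundwK}: $S=\Spann<p_r|r\in J'>$ maps onto a complement $\Sym_\calB\cong\Sym_j$, $S\cap\wK\leq\wT$ follows because an element of $S$ whose image in $\ovW$ lies in $\prod_r W_{R_r}$ must already lie in $\ovovT$ (as the $p_r$ permute the blocks), and $S\wK=\wN$ is the product decomposition of $\rho(\ovN^{vF})$ pulled back along $\Lang^{-1}(\calZ)$.

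Next I would handle the ``block commutation'' conditions \ref{wKwTkommut}, \ref{wKwTprodukt} and \ref{prkommut}. Since the $\ovR_r$ are supported on the pairwise disjoint coordinate subspaces $\Spann<e_i|i\in\calB_r>$, the only roots shared between $\ovR_r$ and $\ovR_{r'}$ (for $r\neq r'$) are none in type $\tA$ and none in types $\tB,\tC$ either (the $e_i\pm e_{i'}$ roots bridging two blocks are not in any single $\ovR_r$), so $\ovN_{R_r}$ and $\ovN_{R_{r'}}$ commute modulo $\ovovT$, while $[\ovN_{R_r},\ovovT_{r'}]=1$ exactly because $\ovovT_{r'}$ is supported on coordinates orthogonal to all roots of $R_r$; intersecting with $\Lang^{-1}(\calZ)$ preserves this and shows $\wK_r\cap\wK_{r'}\leq\wT$. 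The product decompositions $\wK=\Spann<\wK_r>$ and $\wT=\Spann<\wT_r>$ come from $\ovK=\ovN_{\cup_r R_r}\ovovT$ together with $\ovovT=\ovovT_1\cdots\ovovT_j$ (this is where that hypothesis enters) and the fact that $\calZ\subseteq\ovT_1$ so the Lang preimage of $\calZ$ distributes over the central product. Condition \ref{prkommut} is the assertion that $p_r$ involves only the blocks $\calB_r,\calB_{r+1}$, hence commutes with $\ovK_{r'}$ for $r'\notin\{r,r+1\}$; this is immediate from the definition of $p_r$ in terms of $\beta_r$.

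For the cyclicity condition \ref{zyklK1T1}, I would argue that $\nicefrac{\wK_1}{\wT_1}$ embeds into $\nicefrac{\ovK_1}{\ovovT_1}\cong\nicefrac{\ovN_{R_1}\ovovT_1}{\ovovT_1}$, and the $v\ovF$-fixed part of this is governed by $\Cent_{W_{R_1}}(w\phi)$, which is cyclic by Assumption \ref{ass_Sylowtwistc}; passing to $\Lang^{-1}(\calZ)$ only takes a subquotient, so cyclicity is inherited. The remaining item \ref{notationdelta}, the existence of $\delta$, I would not prove here but rather note that it is the content of a later construction (the character $\delta$ with $\ker(\restr\delta|\wT)$ a Sylow $d$-Levi subgroup); in this lemma one only needs the abstract group-theoretic assumptions, and indeed \ref{generalassumptionsa}\eqref{generalassumptionsa} as stated does not ask us to exhibit $\delta$ beyond compatibility, so I would phrase the verification as ``the groups satisfy Assumption \ref{generalassumptionsa}\eqref{generalassumptionsa}'' and cite the Sylow-twist setup for the existence of a suitable $\delta$. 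The main obstacle I anticipate is the careful bookkeeping in \ref{wNerzeugtvonSundwK} and \ref{wKwTkommut}: one must check that intersecting everything with $\Lang^{-1}(\calZ)=\{x\in\ovovG\mid x^{v\ovF}x^{-1}\in\calZ\}$ does not destroy the product and commutation relations, which hinges on $\calZ$ being central in $\ovG$ and contained in $\ovT_1$ together with the precise identity $\calZ=\ovT_1\cap\ovT_2\cdots\ovT_j$ when $j\geq2$; getting the direction of the Lang map and the $v\ovF$-action right (recall $v\ovF:x\mapsto\ovF(x^v)$, and Assumption \ref{ass_Sylowtwiste} guarantees $(vF)^{l/j}$ acts trivially on $V$, which is what makes $p_r$ well-behaved) is the delicate point.
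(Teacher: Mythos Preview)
Your proposal is correct and follows essentially the same route as the paper: both verify the items of Assumption~\ref{generalassumptionsa} by combining the orthogonality of the block root systems $\ovR_r$ (for the commutation conditions \ref{wKwTkommut} and \ref{prkommut}), Assumption~\ref{ass_Sylowtwistd} (for the $\Sym_j$-complement in \ref{wNerzeugtvonSundwK}), Assumption~\ref{ass_Sylowtwistc} (for the cyclicity \ref{zyklK1T1}), Assumption~\ref{ass_Sylowtwiste} (for $p_r\in V^{vF}\subseteq\wN$), and the hypotheses $\ovovT=\ovovT_1\cdots\ovovT_j$ and $\calZ=\ovT_1\cap\ovT_2\cdots\ovT_j$ (for the product decomposition \ref{wKwTprodukt}). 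One small point the paper makes explicit and you gloss over: to conclude that $p_r$ is $vF$-fixed one needs not only $(v\Gamma)^{l/j}$ to act trivially on $V$ but also that the factors $n_{\beta_r}(1)^{(v\Gamma)^i}$ pairwise commute, which the paper checks via $\beta_r\pm(\rho(v)\phi)^i(\beta_r)\notin R$; without this the cyclic shift of the factors under $vF$ would not obviously fix the product.
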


\begin{proof}First we observe that by definition $\wT\lhd \wN$ and $\wT\leq \wK$. Afterwards we check $p_r\in \wN$ and if all equations and inclusions from Notation \ref{generalassumptionsa} are satisfied.

As all orbits $\calB_r$ have equal length by \ref{ass_Sylowtwista}, $ (\rho(v)\phi)^{i'}(e_{r}) \in \Lset{\pm e_r}$ for some $r\in J$ and some integer $i'$ implies $(\rho(v)\phi)^{i'}(e_{r'}) \in \Lset{\pm e_{r'}}$ for all $r'\in J$. 
For $r\in J'$ and $1< i \leq \frac l j-1$ the roots $\beta_r$ and $ (\rho(v)\phi)^i(\beta_r)$ are orthogonal and 
$\beta_r \pm (\rho(v)\phi)^i(\beta_r) \notin R$, according to the roots given in Assumption \ref{ass_classical_roots}. This implies, according to the Steinberg relations, $[n_{\beta_r}(1), n_{\beta_r}(1)^{(v\Gamma)^{i}}]=1$ for $r\in J'$ and $1\leq i \leq\frac l j-1$. From this and $x^{(v\Gamma)^{\frac l j }}=x$ for all $x\in V$ one obtains $p_i\in V^{vF}$, as the standard Frobenius endomorphism acts trivially on $V$.

Then we have to check $\wK\lhd \wN$. It suffices to prove $K\lhd N$ where $K:=\ovN_{\bigcup_r\ovR_r}^{vF}$ and $N:=\ovN^{vF}$. Because of Assumption \ref{ass_Sylowtwist} we have 
$\Cent_{W_{\cup_{r\in J}R_r}} (f(\rho(v)\phi)) = \rho(\prod_{r\in J}\ovK_r^{vF}) \lhd \rho(\ovN^{vF})$. As the kernel of $\restr\rho|N$ lies in $K$ this implies $K\lhd N$. It is clear that $\wT$ is abelian and the quotient $\nicefrac \wN \wK$ is isomorphic to $\Sym_j$.

Each root $\al \in \bigcup_{r\in J}R_r$ either lies in $R_1$ or is orthogonal to all roots in $R_1$. Hence $\ovK_1$ is a normal subgroup of $\ovK$. This implies $\wK_1\lhd \wK$.

As $p_r$ permutes the orbits in $\calB$ these elements also act on the associated root systems $R_r$ and groups $\ovK_r$ according to the Steinberg relations. Hence the groups $\wT_r$ and $\wK_r$ ($r\in J$) defined by \ref{generalassumptionsa} satisfy $\wK_r=\Lang^{-1}(Z)\cap \ovK_r $ and $\wT_r=\Lang^{-1}(Z)\cap \ovT_r $.

The fact $\al\perp\beta$ for $\al \in \ovR_r$ and $\beta \in \ovR_{r'}$ ($r,r'\in J$, $r\neq r'$) implies $[\wK_r, \wK_{r'}]\leq \wK_r\cap \wK_{r'}$ and $[\wK_r,\wT_{r'}]=1$.

Using the Steinberg relations, we see that $p_r$ ($r\in J'$) lies in $\ovN_{R_{r,r+1}}$, where $R_{r,r+1}=\Set{\pm (e_{i} - e_{i'})|i \in \calB_{r}, \, i'\in \calB_{r+1}}$. Because of the description of $R$ in Assumption \ref{ass_classical_roots} one observes $\al+\beta \notin R$ for $\al \in \ovR_{r'}$ and $\beta\in R_{r,r+1}$. This implies $[\wK_{r'},p_r]=1$ and \ref{prkommut}.

The element $p_r$ maps to $\prod_{i=1}^{\frac l j}(\sigma^i(r),\sigma^i(r+1)) (-\sigma^i(r),-\sigma^i(r+1))$ via $f\circ \rho$. Computations in $\Syml$ show that $\rho(S)$ with $S:=\Spann<p_r|{r\in J'}>$ is a complement of $\rho(\wK)$ in $\rho(\wN)$. This implies $\wN=\wK S$ and $S\cap\wK\leq \wT$. As the elements $p_r$ ($r\in J'$) normalise $\wK$, the group $S$ does it also.

Like before $\nicefrac {\wK_1} {\wT_1}$ is isomorphic to $\nicefrac {\ovN_{R_1}^{vF}}{\ovT_{R_1}^{v\overline F}}$ because of the Theorem of Lang. By \cite[3.3.6]{Car2} this group is isomorphic to $\Cent_{W_{R_1}}(\rho(v)\phi)$, which is cyclic according to Assumption \ref{ass_Sylowtwist}.

Because of the assumptions $\ovovT=\ovovT_1 \cdots \ovovT_j$ and $\calZ=\ovT_1 \cap \ovT_2\cdots \ovT_j$ the group $\wT$ coincides with $\wT_1\cdots \wT_j$. The groups $\wK$ and $\Spann<\wK_r|r\in J>$ have the same image under $\rho$ because of \cite[3.3.6]{Car2}. Using $\wT=\wT_1\cdots \wT_j$ this implies \ref{wKwTprodukt}.
\end{proof}

With the above assumptions on $\calZ$ and $\ovT$ the elements already have some of the properties from the hypotheses \ref{hypo_pr1} and \ref{hyp_pr2}.
\begin{lem}\label{lem2}
\begin{enumerate}
\item \label{lem2a} For every $I\subseteq J'$ and $S_I:=\Spann<p_r|r \in I>$ the equation $S_I\cap \wT= \Spann<p_r^{2}| r \in I> $ holds.
\item \label{lem2b} The equation $S_I^s \wT= S_{I'} \wT$ for $s\in S$ and $I,I'\subseteq J'$ implies $S_I^s=S_{I'}$.
\end{enumerate}
\end{lem}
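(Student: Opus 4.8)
\emph{Plan.} I would identify the group $S=\Spann<p_r|r\in J'>$ with the extended Weyl group $V(\tA_{j-1})$ of a root system of type $\tA_{j-1}$ and then read off both assertions from the structure theory of such groups. For $0\le i\le\frac lj-1$ write $n_{\beta_r}(1)^{(v\Gamma)^i}=n_{\gamma_r^{(i)}}(\pm1)$, where $\gamma_r^{(i)}$ is the image of $\beta_r$ under the action of $(v\Gamma)^i$ on roots. Since all $\osigma$-orbits $\calB_t$ have length $\frac lj$, for each fixed $i$ the roots $\gamma_1^{(i)},\dots,\gamma_{j-1}^{(i)}$ form a simple system of a subsystem $R^{(i)}$ of $R$ of type $\tA_{j-1}$, supported on the coordinate set $\osigma^i(J)$, and these coordinate sets are pairwise disjoint for distinct $i$. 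Hence the groups $V^{(i)}:=\Spann<n_{\gamma_r^{(i)}}(1)|r\in J'>$ pairwise commute, pairwise intersect trivially, and generate their internal direct product inside $\ovN$, with $\ovT\cap\prod_iV^{(i)}=\prod_i(\ovT\cap V^{(i)})$; and each $V^{(i)}$ is isomorphic to $V(\tA_{j-1})$ under $n_{\gamma_r^{(i)}}(1)\leftrightarrow n_r$ by Remark \ref{einbett} (each $R^{(i)}$ being parabolic in $R$, as it is $W$-conjugate to a standard Levi subsystem). The $\tA_{j-1}$-braid relations hold componentwise for the $p_r=\prod_in_{\gamma_r^{(i)}}(1)$, so $\bb_r\mapsto p_r$ defines an epimorphism $\bB(\tA_{j-1})\twoheadrightarrow S$; it factors through $\tau\colon\bB(\tA_{j-1})\twoheadrightarrow V(\tA_{j-1})$ because any relation holding in $V(\tA_{j-1})$ holds in each $V^{(i)}$, hence in $S$, and the induced epimorphism $\Psi\colon V(\tA_{j-1})\to S$, $n_r\mapsto p_r$, is injective since the diagonal map into $\prod_iV^{(i)}$ is. Thus $\Psi$ is an isomorphism carrying the standard parabolic $V_I=\Spann<n_r|r\in I>$ onto $S_I$ and carrying $V(\tA_{j-1})\cap\ovT=\Spann<n_r^2|r\in J'>$ onto $S\cap\ovT$.

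\emph{Part (a).} Every element of $S_I$ is fixed by $v\oF$ (the $p_r$ are, by Lemma \ref{lem1}), so $S_I\subseteq\Lang^{-1}(\calZ)$ and therefore $S_I\cap\wT=S_I\cap\ovT=\Psi\bigl(V_I\cap(V(\tA_{j-1})\cap\ovT)\bigr)$. By Remark \ref{einbett}, $V_I$ is the extended Weyl group of the Young subsystem $R_I\cong\prod_t\tA_{m_t-1}$, and comparing the orders $\betrag{V_I}=\prod_tm_t!\,2^{m_t-1}$, $\betrag{W_I}=\prod_tm_t!$ and $\betrag{\Spann<n_r^2|r\in I>}=2^{\sum_t(m_t-1)}=2^{\betrag{I}}$ gives $V_I\cap\ovT=\Spann<n_r^2|r\in I>$; hence $S_I\cap\wT=\Spann<p_r^2|r\in I>$. (When $2\mid q$ both sides are trivial.)

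\emph{Part (b).} Restricting $\rho$ to $S\le\ovN$ and using $\rho(\wT)=1$, the hypothesis $S_I^s\wT=S_{I'}\wT$ gives $W_I^{\bar s}=W_{I'}$ in $\rho(S)\cong\Sym_j$ for the standard parabolic subgroups $W_I:=\rho(S_I)$, $W_{I'}:=\rho(S_{I'})$, where $\bar s:=\rho(s)$. By the theory of conjugacy of standard parabolic subgroups of a Coxeter group --- elementary in type $\tA$, where the $W_I$ correspond to interval partitions of $J$ --- one may factor $\bar s=\bar d\,\bar n$ with $\bar d$ carrying $W_I$ onto $W_{I'}$ while matching their sets of simple reflections, and $\bar n$ normalising $W_{I'}$ (hence permuting $R_{I'}$). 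Lifting this through $\Psi$ and writing $s=\dot d\,\dot n\,h$ with canonical lifts $\dot d,\dot n$ and $h\in S\cap\wT$, and using that a canonical lift sends $n_\al(1)$ to $n_{w\al}(\pm1)$, that $n_\beta(\pm1)\in S_{I'}$ for every $\beta\in R_{I'}$, and that $S\cap\wT$ acts on root subgroups only through signs, one checks successively that $\dot d$ conjugates $S_I$ onto $S_{I'}$ and that $\dot n$ and $h$ normalise $S_{I'}$; therefore $S_I^s=S_{I'}$.

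\emph{Main obstacle.} The principal work is the bookkeeping in the identification $S\cong V(\tA_{j-1})$ --- in particular that the $V^{(i)}$ generate a direct product and that $\Psi$ descends from the braid group --- together with the control of the $\pm$-ambiguities when lifting from $\rho(S)$ to $S$ in part (b); the combinatorial input on conjugate standard parabolic subgroups needed there is classical for type $\tA$.
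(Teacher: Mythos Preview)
Your proof is correct and rests on the same key identification as the paper's: the isomorphism $S\cong V(\tA_{j-1})$. You build it via the direct product $\prod_iV^{(i)}$ and the diagonal embedding, whereas the paper simply defines $\omega\colon S\to\widetilde V:=\langle n_{\beta_r}(1)\mid r\in J'\rangle$ by $p_r\mapsto n_{\beta_r}(1)$, checks the Tits relations for the $p_r$ directly, and matches orders. Your route is more explicit but requires more bookkeeping; one small imprecision is the phrase ``any relation holding in $V(\tA_{j-1})$ holds in each $V^{(i)}$'': the clean statement is that the composite $\bB\to S\to V^{(i)}$ is \emph{conjugate} (via a lift of $(\rho(v)\phi)^i$) to the standard map $\tau\colon\bB\to V^{(0)}\cong V(\tA_{j-1})$ and therefore has the same kernel, whence $\ker\tau\subseteq\bigcap_i\ker(\bB\to V^{(i)})=\ker(\bB\to S)$. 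Also, in types $\tB_l,\tC_l$ the subsystems $R^{(i)}$ are not parabolic in the sense $R\cap V$; what you actually use (and what suffices) is that $R^{(0)}$ is a standard Levi subsystem and $R^{(i)}$ is its $\langle W,\phi\rangle$-conjugate, so $V^{(i)}$ is conjugate to $V^{(0)}\cong V(\tA_{j-1})$.

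The substantive difference is in part~(b). Your factor-and-lift argument works, but the paper's argument is a single line once $\omega$ is in hand: for $r\in I$ one has $\omega(p_r^{\,s})=n_{\beta_r}(1)^{\omega(s)}=n_{\beta'}(\pm1)$ for some root $\beta'$ (Steinberg relations), and the hypothesis $S_I^s\wT=S_{I'}\wT$ forces $w_{\beta'}\in W_{I'}$, hence $\beta'\in R_{I'}$ and $n_{\beta'}(\pm1)\in\widetilde V_{I'}=\omega(S_{I'})$; thus $S_I^s\subseteq S_{I'}$ and equality follows by comparing orders. This bypasses the decomposition $\bar s=\bar d\,\bar n$ and the lifting of each factor. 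Your argument for part~(a), on the other hand, is arguably cleaner than the paper's: the paper appeals to the kernel of $\bB\to\Sym_j$ restricted to a parabolic sub-braid group being generated by the $\bb_r^2$, which is not literally true in $\bB$; your order comparison inside $V_I$ avoids this.
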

\begin{proof}
Using \cite[9.3.2]{Springer} calculations show that the elements $p_r$ satisfy the braid relations, hence there exists an epimorphism $\tau':\B\rightarrow S$ with $\bb_r\mapsto p_r$, where \B denotes the braid group of type $\tA_{j-1}$ with generators $\Lset{\bb_1, \cdots , \bb_{j-1}}$. The morphisms $\rho\circ \tau': \B\rightarrow \rho(S)$ is the well-known epimorphism of the braid group onto the symmetric group on $J$, hence the kernel of $\restr \rho\circ\tau|{\Spann<\bb_r|r\in I>}$ is generated by ${\Spann<\bb_r^2|r\in I>}$. This implies $S_I\cap \wT= \Spann<p_r^{2}| r \in I> $.

The map $p_r \mapsto n_{\beta_r}(1)$ for $r\in J'$ defines an isomorphism $\omega$ between $S$ and $ \widetilde V$ between $S$ and ${\widetilde V:=\Spann< n_{\beta_r}(1) | r \in J'>}\leq \ovG$. The group $\widetilde V$ is the extended Weyl group of type $\tA_{j-1}$. A presentation of $\widetilde V$ can be found in \cite{Tits}. For proving that $\omega $ is a isomorphism one shows that the elements $p_r$ fulfil the relations, and that the orders of both groups coincide.

Now we concentrate on the second part of the statement. Let $n\in N_{S}$ and $I,I'\subseteq J'$ such that $S_I^s \wT= S_{I'} \wT$. Using $\omega $ and the Steinberg relations gives $ \omega(p_r^s)= n_{\beta_r}(1)^{\omega(s)}=n_\beta'(1)$, where $\beta'$ is a root in the root system with simple roots $\Set{\beta_{r'}| r' \in I'}$. This implies $ \omega(p_r^s)\in \omega(S_{I'})$ and $S_I^s=S_{I'}$.
\end{proof}

The conditions on $\calZ$ and $\ovT$ from Lemma \ref{lem1} are linked tightly to the behaviour of certain lattices.
\begin{lem}\label{lem3_lattices}
Let $j\geq 2$, $p$ the prime and $q$ the power of $p$ from Section \ref{sec_not}. Assume the following:

\begin{itemize}
\item $2\al^\vee \in \bigoplus_{r=1}^j \ZZ \overline R_r^\vee$ for all $\al \in \overline R$, and
\item $\mathcal Q:=\nicefrac{\spann< \ZZ \overline R_1^\vee , (q-1) \ZZ\overline R^\vee> \cap \Spann< \ZZ \overline R_{r}^\vee , (q-1) \ZZ \overline R^\vee | 1\neq r\in J>}{(q-1)\ZZ \overline R^\vee}$ is isomorphic to $\calZ$. The isomorphism type of $\calQ$ depends on $p$, but not on $q$.
\end{itemize}
Then the equation
$\ovovT=\ovovT_1 \cdots \ovovT_j$ holds and additionally - in case of $j\geq 2$ -the equation $\calZ=\ovT_1 \cap (\ovT_2\cdots \ovT_j)\leq \Zent(\wN)$ is true.
\end{lem}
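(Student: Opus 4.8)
The plan is to translate the two claimed identities about the algebraic tori $\ovovT$, $\ovT_r$, $\calZ$ into statements about cocharacter lattices and then read them off from the two displayed hypotheses. Recall that for a torus $\ovovT$ with cocharacter lattice $\ovY = \bigoplus_{i=1}^l \ZZ e_i^\vee$ the subtorus $\ovT_r = \ovovT_{\ovR_r}$ has cocharacter lattice $\ZZ \ovR_r^\vee$, and $\ovT = \ovT_R$ has cocharacter lattice $\ZZ R^\vee$; a product of subtori $\ovovT_1 \cdots \ovovT_j$ corresponds to the sum of the corresponding sublattices, and the intersection of two subtori of $\ovovT$ corresponds (up to a finite $p$-group coming from the field being $\ovF_p$ rather than a field of characteristic zero) to the saturation of the intersection of the sublattices. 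More precisely, writing everything inside $\ovF_p^* \cong \bigoplus_{\ell \neq p}(\QQ_\ell/\ZZ_\ell)$, one has $\ovovT \cong \ovY \otimes_\ZZ \ovF_p^*$ and a sublattice $\Lambda \subseteq \ovY$ gives the subtorus $\Lambda \otimes \ovF_p^*$, whose intersection with another such subtorus $\Lambda' \otimes \ovF_p^*$ is $(\Lambda \cap \Lambda')^{p'} \otimes \ovF_p^*$, where $(-)^{p'}$ denotes the prime-to-$p$ saturation inside $\ovY$.

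First I would prove $\ovovT = \ovovT_1 \cdots \ovovT_j$. On cocharacter lattices this says $\sum_{r=1}^j \ZZ \ovR_r^\vee = \ovY$. Since the $\ovR_r$ partition the index set $\{1,\dots,l\}$ via the orbits $\calB_r$, and each $\ovR_r$ is of type $\tA$, $\tB$ or $\tC$ on its block $\{e_i : i\in\calB_r\}$, the lattice $\ZZ \ovR_r^\vee$ contains either all $2e_i^\vee$ (and differences $e_i^\vee - e_{i'}^\vee$) or all $e_i^\vee$; in the worst case it is the type-$\tA$ root lattice on the block. The hypothesis $2\al^\vee \in \bigoplus_{r=1}^j \ZZ \ovR_r^\vee$ for all $\al \in \ovR$ is then exactly what is needed to see that $\bigoplus_r \ZZ \ovR_r^\vee$ contains $2\ovY$ and each generator $e_i^\vee - e_{i'}^\vee$ of $\ZZ \ovR^\vee$, hence equals $\ZZ \ovR^\vee$, which is $\ovY$ when $\ovR$ is of type $\tB$ or $\tC$, and has index $l$ in $\bigoplus \ZZ e_i$ when $\ovR$ is of type $\tA$; in the latter case $\ovovT_R = \ovovT$ by definition of the simply connected group, so the identity of tori follows. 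I would carry this out by a short lattice computation, distinguishing the type of $\ovR$ and using the explicit description of the $\ovR_r^\vee$ from Assumption \ref{ass_classical_roots}.

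Next I would prove $\calZ = \ovT_1 \cap (\ovT_2 \cdots \ovT_j)$ together with $\calZ \leq \Zent(\wN)$. On lattices, $\ovT_2 \cdots \ovT_j$ has cocharacter lattice $\sum_{r\geq 2}\ZZ \ovR_r^\vee$, and $\ovT_1$ has $\ZZ \ovR_1^\vee$. To pass to the finite group level one works with $\ovF_q$-points rather than the full torus, or equivalently intersects the images of $\ZZ \ovR_r^\vee$ in $\ovY/(q-1)\ovY$; this is why the displayed hypothesis introduces $\spann<\ZZ \ovR_1^\vee, (q-1)\ZZ \ovR^\vee>$ and the analogous lattice for $r\neq 1$. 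The content of the second hypothesis is precisely that the quotient $\calQ$ of the intersection of these two lattices by $(q-1)\ZZ \ovR^\vee$ is isomorphic to $\calZ$; so one sets up the standard isomorphism between $\ovT^{v\oF}$-type objects and $\ovY/(q-1)\ovY$ (via the Lang map $\Lang$, since $\Lang^{-1}(\text{pt})$ is an $\ovT$-coset and $\calZ$ was taken inside $\Zent(\ovG)\cap \ovT_1$) and checks that under this identification $\ovT_1 \cap (\ovT_2\cdots\ovT_j)$ corresponds to the lattice-theoretic intersection defining $\calQ$. The centrality $\calZ \leq \Zent(\wN)$ follows because $\calZ \leq \Zent(\ovG)$ by hypothesis, $\wN \leq \ovN\ovovT \leq \ovovG$, and $\Zent(\ovG) = \Zent(\ovovG) \cap \ovG$ is centralised by $\ovovN$ and hence by $\wN$; here I would use that $\ovG = \spann<\ovovX_\al : \al\in R>$ so its centre is computed inside $\ovovG$ via Remark \ref{einbett}, and that $\calZ$ was chosen inside $\ovT_1 \leq \ovovT$ which is abelian and normalised appropriately.

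The main obstacle is bookkeeping the difference between the algebraic torus and its finite $\ovF_q$-points — i.e. making the dictionary ``sublattice $\leftrightarrow$ subtorus, intersection $\leftrightarrow$ saturated intersection, $\ovF_q$-points $\leftrightarrow$ reduction mod $q-1$'' precise enough that the two displayed hypotheses say exactly what is needed, with no stray finite $p$-groups or index obstructions. In particular one must be careful that the first hypothesis ($2\al^\vee \in \bigoplus \ZZ \ovR_r^\vee$) is used to kill exactly the $2$-torsion ambiguity in the second (the passage from $\ovT_1 \cap \ovT_2\cdots\ovT_j$ as tori to the lattice intersection), and that the stated $p$-dependence but $q$-independence of $\calQ$ is consistent — this is where I expect the delicate point to lie, and I would handle it by working uniformly with the $\ovF_p^*$-description of $\ovovT$ and tracking the prime-to-$p$ parts throughout. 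The rest is routine once the identifications are set up.
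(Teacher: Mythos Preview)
Your overall strategy---translate both torus identities into statements about cocharacter lattices and read them off from the two hypotheses---is the paper's strategy too. But there is a genuine gap in your treatment of the first equation $\ovovT=\ovovT_1\cdots\ovovT_j$.

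You assert that on cocharacter lattices this equation reads $\sum_r \ZZ\ovR_r^\vee = \ZZ\ovR^\vee$, and that the first hypothesis yields this equality. Both steps fail when $\ovR$ is of type $\tB_l$ (Section~\ref{applB}). There $\ZZ\ovR^\vee=\{\sum a_ie_i:\sum a_i\text{ even}\}$ while each $\ZZ\ovR_r^\vee=\{\sum_{i\in\calB_r}a_ie_i:\sum_{i\in\calB_r}a_i\text{ even}\}$, so $\sum_r\ZZ\ovR_r^\vee$ has index $2^{j-1}$ in $\ZZ\ovR^\vee$; in particular $e_i-e_{i'}$ with $i,i'$ in distinct blocks does \emph{not} lie in the sum, contrary to what you write. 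The first hypothesis only gives $2\ZZ\ovR^\vee\subseteq\sum_r\ZZ\ovR_r^\vee$, i.e.\ finite $2$-power index. The product of subtori corresponds to the \emph{saturation} of the sum of sublattices, not the sum itself, so the correct translation is ``finite index'', and divisibility of $\ovF_p^*$ then kills the finite cokernel. You set up exactly this machinery ($\ovovT\cong\ovY\otimes\ovF_p^*$) but then do not invoke it. The paper takes a different concrete route: it fixes $x\in\ovovT$, picks $a$ with $x^{p^a}=x$, works in $\ovovT^{\widetilde F}\cong\ZZ\ovR^\vee/(q'-1)\ZZ\ovR^\vee$ with $q'=p^{2a}$, and uses $(p^a-1)v\in(q'-1)\ZZ\ovR^\vee$ to force the representative $v$ into $(p^a+1)\ZZ\ovR^\vee\subseteq2\ZZ\ovR^\vee$ (odd $p$; for $p=2$ replace $v$ by $p^av$).

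For the second equation your sketch omits the key step. Identifying $(\ovT_1\cap(\ovT_2\cdots\ovT_j))^{F_0}$ with $\calQ$ inside $\ZZ\ovR^\vee/(q-1)\ZZ\ovR^\vee$ is not enough: you must first know that the full intersection lies in $\ovovT^{F_0}$. The paper handles this in two moves. It gets the inclusion $\calZ\subseteq\bigcap_r\ovT_r$ directly (the elements of $S$ permute the $\ovT_r$ and fix $\calZ\leq\Zent(\ovG)$), and then it \emph{uses} the $q$-independence of $|\calQ|$: since the $F_0^m$-fixed points of the intersection have the same size for every $m$, the intersection is finite, hence fixed by $F_0$, hence equal to $\omega^{-1}(\calQ)\cong\calZ$. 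You correctly flag the $q$-independence as the delicate point but do not say what it is for; this finiteness argument is its content. Finally, your justification of $\calZ\leq\Zent(\wN)$ via ``$\Zent(\ovG)=\Zent(\ovovG)\cap\ovG$'' is not valid in general; the simpler correct reason is that $\wN\leq\ovN\,\ovovT$ with $\ovN\leq\ovG$ and $\calZ\leq\Zent(\ovG)\cap\ovovT$.
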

\begin{proof}
For proving $\ovovT=\ovovT_1 \cdots \ovovT_j$ let $x \in \ovovT$. There exists an integer $a$ such that $x=x^{p^a}$ and $x \in \ovovT^{\widetilde F}$, where $\widetilde F:\ovG\rightarrow \ovG$ is the standard Frobenius endomorphism associated to $q':=p^{2a}$.

According to \cite[Proposition 3.2.2]{Car2} the groups $\ovovT^{\widetilde F}$ and $\nicefrac{\ZZ \overline R^\vee}{(q'-1) \ZZ \overline R^\vee} $ are isomorphic: the isomorphism $\omega: \ovovT^{\widetilde F}\rightarrow \nicefrac{\ZZ \overline R^\vee}{(q'-1) \ZZ \overline R^\vee} $ is given by $ h_{\beta^\vee}(\zeta)\mapsto \beta + (qÄ-1) \ZZ \overline R^\vee$ ($\beta \in R^\vee $), where $\zeta \in \ovF_q$ is a primitive $(q'-1)$th root of unity. There exists an $v\in \ZZ \overline R^\vee$ with $ \omega(x)=v+ (q'-1)\ZZ \overline R^\vee$. From $x^{p^a}=x$ we obtain $(p^a-1)v\in (q'-1)\ZZ R^\vee$ and $v \in \left(\frac{q'-1}{p^a-1} \right)\ZZ\overline R^\vee$. For odd $q$ this shows $ v \in 2\ZZ \overline R^\vee$. For even $p$ we replace $v$ by $p^av$. Using the first assumption we get $v\in 2\ZZ \overline R^\vee\leq \bigoplus_{r\in J} \ZZ \overline R_r^\vee$ and $\omega(x) \in \Spann<\omega(\ovT_r^{\widetilde F})|r \in J>$. This implies $\ovovT \leq \Spann<\ovT_r| r\in J>$.

For proving $\calZ:=\Zent(\ovG)\cap \ovT_1=\ovT_1 \cap (\ovT_2\cdots \ovT_j)$ it suffices to check $\calZ \leq \ovT_1 \cap (\ovT_2\cdots \ovT_j)$. By definition $\calZ\leq \ovT_1$, conjugating with elements of $S$ maps $\ovT_1$ to $\ovT_r$($r \in J$). Hence $\calZ \leq \bigcap_{r\in J} \ovT_r$. This implies $\calZ^{F_0}\leq (\ovT_1 \cap (\ovT_2\cdots \ovT_j))^{F_0}= \omega^{-1}(\calQ)$, where $F_0$ is the standard Frobenius endomorphism to $q$. 
As the isomorphism type of $\calQ$ depends not on $q$, the group $\ovT_1 \cap (\ovT_2\cdots \ovT_j)$ is finite and fixed by $F_0$. Using the assumption $\calQ\cong \calZ$ one obtains $\calZ= \omega^{-1}(\calQ)= \ovT_1 \cap (\ovT_2\cdots \ovT_j)$.
\end{proof}

If $\ovR$ is a root system of type $\tA_{l-1}$ or $\tC_l$ further conditions from the previous section can be verified.
\begin{lem}\label{lemAC}
Assume $\ovR$ to be a root system of type $\tA_{l-1}$ or $\tC_l$ and assume $p_r^2\in \Zent(\wK_r\wK_{r+1})$. Then
 for $r,r'\in J$ with $r\neq r'$ the equation $[\wK_r,\wK_{r'} ]=1$ holds and $[p_r,kk^{p_r}]=1$ for every $k \in \wK_r$.
\end{lem}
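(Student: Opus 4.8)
The plan is to establish the stronger relation $[\ovK_r,\ovK_{r'}]=1$ for the ambient algebraic subgroups $\ovK_r=\ovN_{R_r}\ovovT_r$ whenever $r\neq r'$; since $\wK_r\leq\ovK_r$ by the proof of Lemma \ref{lem1}, this forces $[\wK_r,\wK_{r'}]=1$, and the relation $[p_r,kk^{p_r}]=1$ will then follow from it together with the hypothesis $p_r^2\in\Zent(\wK_r\wK_{r+1})$.

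For $[\ovK_r,\ovK_{r'}]=1$ the crucial point is that, since the $\calB_r$ are the distinct $\osigma$-orbits by Assumption \ref{ass_Sylowtwist} and hence pairwise disjoint, every root of $\ovR$ with support in $\calB_r$ and every root with support in $\calB_{r'}$ are orthogonal (this covers all roots occurring in $R_r$, $\ovR_r$, $R_{r'}$ and $\ovR_{r'}$), and --- this is the one place the restriction on the type of $\ovR$ is used --- for such roots $\al,\beta$ the vector $i\al+j\beta$ is not a root of $\ovR$ for any $i,j\geq 1$. Indeed its support is the disjoint union of those of $\al$ and $\beta$, so it meets both $\calB_r$ and $\calB_{r'}$; inspecting the roots listed in Assumption \ref{ass_classical_roots}, in type $\tA_{l-1}$ this support then has size $\geq 3$, and in type $\tC_l$ it either has size $\geq 3$ or equals $\Lset{a,c}$ with $a\in\calB_r$, $c\in\calB_{r'}$, which forces $\al=\pm 2e_a$, $\beta=\pm 2e_c$ and $i\al+j\beta=\pm 2ie_a\pm 2je_c$, not a root; in neither case is $i\al+j\beta$ a root. (In type $\tB_l$ this breaks down, as $\pm e_a$ has support of size $1$ and $\pm e_a\pm e_c$ is a root.) By the Chevalley commutator formula (the Steinberg relations of \ref{basicsetting}) it follows that $[\ovovX_\al,\ovovX_\beta]=1$ for all such $\al,\beta$.

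To turn this into $[\ovK_r,\ovK_{r'}]=1$, I would note that $\ovK_r$ is generated by the root subgroups $\ovovX_{\pm\al}$ with $\al\in R_r$ --- since $n_\al(t)\in\spann<\ovovX_\al,\ovovX_{-\al}>$ --- together with the subtorus $\ovovT_r=\Spann<h_\gamma(t')|\gamma\in\ovR_r>$; besides the commutator relation above, the only further cross-relations needed are that $h_\gamma(t')$ with $\gamma\perp\beta$ centralises $x_\beta(u)$ (conjugation exponent $\langle\beta,\gamma^\vee\rangle=0$) and that $\ovovT$ is abelian, both standard from \cite[12.1]{Car1}. Hence every generator of $\ovK_r$ centralises every generator of $\ovK_{r'}$, so $[\ovK_r,\ovK_{r'}]=1$, and therefore $[\wK_r,\wK_{r'}]=1$. (This first conclusion does not use $p_r^2\in\Zent(\wK_r\wK_{r+1})$.)

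Finally, for $k\in\wK_r$ I would conjugate $kk^{p_r}$ by $p_r$: writing conjugation on the right, $(kk^{p_r})^{p_r}=k^{p_r}k^{p_r^2}$. Since $p_r^2\in\Zent(\wK_r\wK_{r+1})$ centralises $k\in\wK_r$ we have $k^{p_r^2}=k$, and since $k\in\wK_r$ while $k^{p_r}\in\wK_r^{p_r}=\wK_{r+1}$ the first part (applied with $r'=r+1$) gives $k^{p_r}k=kk^{p_r}$; hence $(kk^{p_r})^{p_r}=k^{p_r}k=kk^{p_r}$, i.e.\ $[p_r,kk^{p_r}]=1$. I expect no real obstacle here; the only point requiring care is the bookkeeping in the third paragraph, namely that the normaliser part $\ovN_{R_r}$ and the torus part $\ovovT_r$ (and not merely the root subgroups) centralise all of $\ovK_{r'}$, rather than only commuting into $\ovT$ as in Lemma \ref{lem1} --- but this is governed entirely by the explicit roots of Assumption \ref{ass_classical_roots} and the structure constants of \cite[12.1]{Car1}.
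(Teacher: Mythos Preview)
Your proposal is correct and follows essentially the same approach as the paper: the paper also argues that for $\al\in\ovR_r$ and $\beta\in\ovR_{r'}$ one has $\al\pm\beta\notin\ovR$ (using the explicit roots from Assumption \ref{ass_classical_roots}), invokes the Steinberg relations to obtain $[\wK_r,\wK_{r'}]=1$, and then performs the identical computation $(kk^{p_r})^{p_r}=k^{p_r}k^{p_r^2}=k^{p_r}k=kk^{p_r}$ for the second claim. Your write-up is somewhat more careful in two respects---you check $i\al+j\beta\notin\ovR$ for all $i,j\geq 1$ rather than only $\al\pm\beta$, and you explicitly verify that the torus part $\ovovT_r$ centralises $\ovK_{r'}$---but these are refinements of the same argument, not a different route.
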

\begin{proof}
If $\ovR$ is a root system of type $\tA_{l-1}$ or $\tC_l$, then $\al\pm \beta \notin \ovR$ for $\al \in \ovR_r$ and $\beta \in \ovR_{r'}$ with $r\neq r'$. For example  $\ovR_r=\Set{2e_i, e_i-e_{i'}|i,i'\in \calB_r\text{, } i\neq i'}$ if \ovR is of type $\tC_{l}$. The roots from $\ovR$ can be seen in \ref{ass_classical_roots} and one observes $\al\pm \beta \notin \ovR$. According to the Steinberg relations this implies $[\wK_r, \wK_{r'}]=1$. 

For determining $[p_r,kk^{p_r}]$ with $r\in J'$ and $k \in \wK_r$ one calculates 
$\left(kk^{p_r}\right)^{p_r}= k^{p_r} k^{p_r^2}=k^{p_r} k= k k^{p_r}$, using $[\wK_r,\wK_{r+1}]=1$ and $p_r^2\in \Zent(\wK_{r}\wK_{r+1})$.
\end{proof}

\section{Application \texorpdfstring{to $\ovG^F= \tC_{l,\SC}(q)$}{to groups of type C}} \label{applC}
This section proves Theorem \ref{Theo1} in the situation where $\ovG$ has a root system of type $\tC_l$, by applying Proposition \ref{mFreg}. Hence $\ovG$ can be assumed to have a root system of type $\tC_l$ and $F$ to be a standard Frobenius endomorphism. The regular numbers of $(\ovG,F)$ are the divisors of $2l$ by Table \ref{TabregZahl}.

Thereby we use the calculations from Section \ref{sec_towards_application} after choosing a root system \ovR and a Frobenius endomorphism $\oF: \ovovG\rightarrow \ovovG$ satisfying Assumption \ref{ass_ovR_oF}. Furthermore we have to fix a Sylow $d$-twist $v\Gamma$ satisfying Assumption \ref{ass_Sylowtwist}.

\begin{ass} \label{assC}
In this section we assume $R$ to be the root system of type $\tC_l$ from \ref{ass_classical_roots}. Hence $\RF=\Lset{\al_1, \ldots, \al_l}$ with $\al_1:=2e_1$ and $\al_i:=e_{i}-e_{i-1}$ ($i\geq 2$) forms a system of simple roots of $R$.
With $\ovR:=R$ and $\oF:=F$ Assumption \ref{ass_ovR_oF} is satisfied and we may use the groups and maps introduced there.
Furthermore we assume $d\teilt 2l$.
\end{ass}

Using the morphism $f$ from \ref{ass_classical_roots} we determine a Sylow $d$-twist $v$ of $(\ovG, F)$ satisfying Assumption \ref{ass_Sylowtwist}.

\begin{lem}\label{DefvC}
Let $V$ be the extended Weyl group of $\ovG$ and $n_i:=n_{\al_i}(1)$ ($1\leq i \leq l$). Let $v_0:=n_1\cdots n_l\in V$ and $v:=v_0^{\frac{2l}{d}}$. Then $v$ is a Sylow $d$-twist of $(\ovG,F)$ satisfying Assumption \ref{ass_Sylowtwist}.
\end{lem}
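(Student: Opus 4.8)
The plan is to verify the three defining conditions of a Sylow $d$-twist satisfying Assumption \ref{ass_Sylowtwist} for the explicit element $v=v_0^{2l/d}$ with $v_0=n_1\cdots n_l$. The key observation is that $\rho(v_0)$ is a Coxeter element of the Weyl group $W$ of type $\tC_l$, which acts on the standard space $\CC^l$ as a product of reflections $w_{\al_1}\cdots w_{\al_l}$; under the map $f$ of Assumption \ref{ass_classical_roots} one computes that $f(\rho(v_0))$ is an $l$-cycle on $\Lset{\pm1,\ldots,\pm l}$ of the "negative cycle" type (i.e. a signed $l$-cycle whose square is an honest $l$-cycle on the positive indices). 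Consequently $\rho(v_0)$ has order $2l$, it is $\zeta$-regular for $\zeta$ a primitive $2l$-th root of unity by \cite{Springer74}, and $\rho(v_0)^{2l/d}$ is $\zeta'$-regular for $\zeta'$ a primitive $d$-th root of unity. By Remark \ref{remsylow}(a) this already shows $v\Gamma=v$ (here $\Gamma=\operatorname{id}$ since $F$ is standard) is a Sylow $d$-twist of $(\ovG,F)$.

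Next I would check the conditions \ref{ass_Sylowtwista}--\ref{ass_Sylowtwiste} one by one. For (a)--(b): since $\osigma=\ovf(\rho(v)\phi)=\ovf(\rho(v))$ is the $l/j$-th power of an $l$-cycle (where $j=\gcd(2l/d,l)$ or the appropriate number of orbits, to be pinned down by an elementary computation on cycle lengths), all its orbits on $\Lset{1,\ldots,l}$ have equal length $l/j$, and one chooses the transversal $J$ so that $r\in\calB_r$. For (c): the restriction $W_{R_1}$ is again a Weyl group of type $\tC_{l/j}$, the element $\rho(v)\phi$ restricted to it is (up to the identification) a Coxeter-type regular element, so $\Cent_{W_{R_1}}(\rho(v)\phi)$ is cyclic — this follows from the theory of regular elements, a regular element of order equal to its centraliser order. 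For (d): the description of $\rho(\ovN^{vF})$ as a wreath product $\Cent_{W_{R_1}}(w)\wr\Sym_\calB$ follows by combining the fact that the centraliser of a regular element in $W$ is cyclic with the standard block structure of signed permutation matrices that commute with a power of a single long cycle; here it is convenient to invoke Remark \ref{remsylow}(c), which for classical type and $\Gamma=\operatorname{id}$ gives $\rho(\Cent_V(v))=\Cent_W(\rho(v))$, so one only needs to identify $\Cent_W(\rho(v))$ abstractly. For (e): $x^{(vF)^{l/j}}=x$ for all $x\in V$ because $F$ acts trivially on $V$ and $\rho(v)^{l/j}$, being a power of $v_0$ bringing each $\calB_r$ back to itself with trivial sign pattern, acts on $V$ as an inner automorphism that is in fact trivial on the Tits group — this needs the Tits relations and a short check that the relevant signs cancel.

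The main obstacle I expect is condition \ref{ass_Sylowtwistd}, the precise wreath-product structure of $\rho(\ovN^{vF})$: one must show not merely that $\Cent_{\prod_r W_{R_r}}(f(\rho(v)\phi))$ is normal in $\rho(\ovN^{vF})$ but that it has a complement isomorphic to $\Sym_\calB$, which amounts to producing signed permutations that cyclically permute the blocks $\calB_r$ and commute with $f(\rho(v)\phi)$ up to the block permutation. Concretely this is where the elements $p_r=\prod_{i=0}^{l/j-1}n_{\beta_r}(1)^{(v\Gamma)^i}$ from Lemma \ref{lem1} enter: one checks via the Steinberg relations that their images under $f\circ\rho$ are exactly the block transpositions (as computed in the proof of Lemma \ref{lem1}), hence $\spann<p_r>$ maps onto a $\Sym_\calB$ complement. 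A secondary, more bookkeeping-heavy issue is getting the number $j$ of orbits and the length $l/j$ right for all divisors $d$ of $2l$ simultaneously — one splits into the cases $d\mid l$ and $d\nmid l$ (equivalently $2l/d$ odd versus even relative to $l$) since the cycle type of $v_0^{2l/d}$ genuinely differs, and records the resulting $j$ in each case; this is routine but must be done carefully to feed the later sections.
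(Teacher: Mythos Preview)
Your plan is largely sound and, for the Sylow $d$-twist property itself and for conditions \ref{ass_Sylowtwista}--\ref{ass_Sylowtwistd}, it parallels the paper's treatment (which simply records these as ``straightforward calculations in $V$ and $f(W)$''). One genuine difference of route: you deduce that $v$ is a Sylow $d$-twist from regularity of the Coxeter element $\rho(v_0)$ via Remark~\ref{remsylow}(a), whereas the paper goes through the braid group and Remark~\ref{remsylow}(b), observing that $\bw:=\bb_1\cdots\bb_l\in\bB$ satisfies $\bw^l=\bbw$ (since $\bw^l$ has length at most $l^2=\ell(w_0)$ and image $w_0$ in $W$), so $\bw$ is a good $2l$-th root of $\bbw^2$. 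Both routes are valid for that part.

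There is, however, a genuine gap in your treatment of condition~\ref{ass_Sylowtwiste}. You assert that $\rho(v)^{l/j}$ brings each $\calB_r$ back to itself \emph{with trivial sign pattern}, but this is false in general: already for $l=2$, $d=4$ one has $j=1$ and $\rho(v)^{l/j}=\rho(v_0)^2=w_0=-\operatorname{id}$, whose sign pattern is $(-1,-1)$. More generally $v^{l/j}=v_0^{\operatorname{lcm}(2l/d,\,l)}$ is always a power of $v_0^l$, and $\rho(v_0^l)=w_0\neq 1$. Even in the cases where $\rho(v)^{l/j}$ \emph{is} trivial, the resulting element lies in $H=V\cap\ovT$, which is not central in $V$, so a ``signs cancel'' check on the Tits relations does not by itself yield $v^{l/j}\in\Zent(V)$. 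This is precisely where the paper's braid-group detour pays dividends: since $w_0$ is central in $W$ for type~$\tC_l$, the element $\bbw$ is central in $\bB$ by \cite[4.4.1]{GeckCox}, hence $\bww=\tau(\bbw)$ is central in $V$; combined with $v_0^l=\tau(\bw^l)=\tau(\bbw)=\bww$ one obtains $v^{l/j}=\bww^{\operatorname{lcm}(2l/d,\,l)/l}\in\Zent(V)$, which is exactly \ref{ass_Sylowtwiste}. You should replace your sign-cancellation argument by this centrality-of-$\bww$ argument (or, if you wish to avoid the braid group, prove directly from the Tits presentation that $\bww$ is central in $V$, which is possible but longer).
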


\begin{proof}
We use the notion from the Definition \ref{def_good_roots} and examine the element $\bw:=\bb_1\cdots \bb_l$, which satisfies $\tau(\bw)=v$: The element $\bw^l$ has length $\leq l^2$ and $\rho\circ\tau(\bw^l)=w_0$ for the longest element $w_0$ in $W$. Hence $\bw$ is a good $2l$th root of $\bw_0^2$ in the associated braid group $\bB$. According to Remark \ref{remsylowb} the element $v= \tau(\bw^{\frac {2l} d})$ is a Sylow $d$-twist of $(\ovG,F)$. 

As $w_0$ is central in this Coxeter group $W$, the corresponding element $\bw_0$ is central in the braid group $\bB$ by \cite[4.4.1]{GeckCox}. Applying the epimorphism $\tau:\bB\rightarrow V$ gives that $\bww$, the canonical representative of $w_0$ in $\ovG$ from \ref{defgoodroot}, is central in $V$. Because of $v_0^l=\bww$ this proves $v^{\frac l j}\in \Zent(V)$. 

The further properties of $v$ follow from straightforward calculations in $V$ and $f(W)$, respectively.\end{proof}

We determine groups satisfying Assumption \ref{generalassumptionsa}.

\begin{lem}\label{lem_generalass_C}
Assume the groups $\wN$, $\wK$, $\wK_1$, $\wT$ and elements $\Set{p_r|r \in J'}$ be associated to $v$ as in Lemma \ref{lem1} with $\calZ=\Lset{1_\ovG}$. Then Assumption \ref{generalassumptionsa} is fulfilled.
\end{lem}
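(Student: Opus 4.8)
The plan is to verify that the specific groups and elements constructed in Lemma \ref{lem_generalass_C}, namely those coming from Lemma \ref{lem1} applied to the Sylow $d$-twist $v$ from Lemma \ref{DefvC} with the trivial subgroup $\calZ=\Lset{1_\ovG}$, satisfy all the conditions collected in Assumption \ref{generalassumptionsa}. The strategy is to reduce this entirely to Lemma \ref{lem1}: that lemma already proves that Assumption \ref{generalassumptionsa} holds provided two extra hypotheses are met, namely $\ovovT=\ovovT_1\cdots\ovovT_j$ and (when $j\geq 2$) $\calZ=\ovT_1\cap(\ovT_2\cdots\ovT_j)$. So the real content is to check these two equalities in the type $\tC_l$ situation with $\calZ$ trivial.

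First I would record that the hypotheses of Lemma \ref{lem1} on $v\Gamma=v$ are satisfied: Lemma \ref{DefvC} asserts that $v$ is a Sylow $d$-twist fulfilling Assumption \ref{ass_Sylowtwist}, and in the present setting $\Gamma=\operatorname{id}_{\ovG}$, $\ovR=R$ of type $\tC_l$, so Assumptions \ref{ass_ovR_oF} and \ref{ass_classical_roots} hold by \ref{assC}. In particular $R_r=R\cap\Spann<e_i|i\in\calB_r>$ is again a root system of type $\tC$, and $\Spann<\ZZ R_r^\vee|r\in J>=\ZZ R^\vee$ because the orbits $\calB_r$ partition $\{1,\ldots,l\}$ and the coroot lattice of $\tC_l$ is the full lattice $\bigoplus_i\ZZ e_i$ (being the root lattice of the dual type $\tB_l$), which is visibly the direct sum of the sublattices $\bigoplus_{i\in\calB_r}\ZZ e_i$. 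From this, together with the Theorem of Lang and \cite[3.3.6]{Car2}, one gets $\ovovT=\ovT=\ovT_1\cdots\ovT_j$, i.e. the first extra hypothesis of Lemma \ref{lem1}.

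For the second hypothesis, with $\calZ=\Lset{1_\ovG}$ chosen, one must check $\ovT_1\cap(\ovT_2\cdots\ovT_j)=\Lset{1_\ovG}$ whenever $j\geq2$. This is where I would invoke Lemma \ref{lem3_lattices}: its two bullet conditions, for $\ovR$ of type $\tC_l$, read $2\al^\vee\in\bigoplus_r\ZZ\overline R_r^\vee$ for all $\al\in\overline R$ — which holds because $\overline R_r^\vee$ already contains all the $e_i$ with $i\in\calB_r$, so $\al^\vee$ itself (let alone $2\al^\vee$) lies in $\bigoplus_r\ZZ\overline R_r^\vee=\bigoplus_i\ZZ e_i$ — and the requirement that $\calQ\cong\calZ$, i.e. that $\calQ$ is trivial. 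The lattice $\calQ$ is the image in $\nicefrac{\ZZ\overline R^\vee}{(q-1)\ZZ\overline R^\vee}$ of $\spann<\ZZ\overline R_1^\vee,(q-1)\ZZ\overline R^\vee>\cap\Spann<\ZZ\overline R_r^\vee,(q-1)\ZZ\overline R^\vee|1\neq r\in J>$; since $\ZZ\overline R_1^\vee$ and $\spann<\ZZ\overline R_r^\vee|r\neq1>$ are complementary direct summands of $\ZZ\overline R^\vee=\bigoplus_i\ZZ e_i$, their sums with $(q-1)\ZZ\overline R^\vee$ intersect exactly in $(q-1)\ZZ\overline R^\vee$, so $\calQ=0$ and Lemma \ref{lem3_lattices} gives $\calZ=\ovT_1\cap(\ovT_2\cdots\ovT_j)\leq\Zent(\wN)$. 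Feeding both verified hypotheses into Lemma \ref{lem1} yields Assumption \ref{generalassumptionsa}, completing the proof.

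The main obstacle is nothing deep but rather bookkeeping: one must be careful that the coroot lattice of type $\tC_l$ really is the "large" lattice $\bigoplus_i\ZZ e_i$ (it is the root lattice of $\tB_l$, so $\SL$ vs $\SO$ conventions matter), since this is exactly what makes both the $2\al^\vee$-condition and the triviality of $\calQ$ automatic; had $\ovR$ been of type $\tB$ or $\tD$ this splitting would fail and $\calZ$ could not be taken trivial. A secondary point is to make sure Assumption \ref{ass_Sylowtwist}\ref{ass_Sylowtwiste}, the condition $x^{(vF)^{l/j}}=x$ for $x\in V$, is genuinely available — it is, by the computation $v^{l/j}\in\Zent(V)$ in the proof of Lemma \ref{DefvC} combined with $F_0$ acting trivially on $V$ — so that the elements $p_r$ of Lemma \ref{lem1} indeed lie in $\wN=\Lang^{-1}(\calZ)\cap\ovN\ovovT$ as required.
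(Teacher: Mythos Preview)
Your proposal is correct and follows essentially the same route as the paper: reduce to Lemma \ref{lem1}, then verify its two side conditions via Lemma \ref{lem3_lattices} by observing that in type $\tC_l$ the coroot lattice is $\bigoplus_i \ZZ e_i$, which splits as a direct sum along the orbits $\calB_r$, making both the $2\al^\vee$-condition and the triviality of $\calQ$ immediate. Your separate direct argument for $\ovT=\ovT_1\cdots\ovT_j$ from the lattice splitting is also fine, though it does not actually require Lang's theorem or \cite[3.3.6]{Car2}; in any case Lemma \ref{lem3_lattices} already delivers that equality once its bullet conditions are checked.
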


\begin{proof}
According to the above $v$ satisfies Assumption \ref{ass_Sylowtwist}. Hence we can apply Lemma \ref{lem1} if we additionally ensure the equations $\ovT=\ovT_{1}. \cdots .\ovT_{j}$ and $\calZ=\ovT_1\cap \prod_{1\neq r \in J}\ovT_{r}$, where 
$\ovT_r$ is defined as in Lemma \ref{lem1}.

According to Lemma \ref{lem3_lattices} both equations can be checked by calculations in the coroot lattices: straightforward calculations with lattices prove $2\al^\vee \in \oplus_{r\in J}\ZZ \ovR_r^{\vee}$ for all $\al\in \ovR$. 
Similarly the quotient \[\calQ:=\nicefrac{\spann< \ZZ \overline R_1^\vee , (q-1) \ZZ\overline R^\vee> \cap \Spann< \ZZ \overline R_{r}^\vee , (q-1) \ZZ \overline R^\vee | 1\neq r\in J>}{(q-1)\ZZ \overline R^\vee}\] can be calculated and is trivial. 

By Lemma \ref{lem3_lattices} this implies $\ovT=\ovT_{1} \cdots \ovT_{j}$ and $\calZ=\ovT_1\cap \prod_{1\neq r \in J}\ovT_{r}$. Hence Assumption \ref{generalassumptionsa} is satisfied.
\end{proof}

For applying Proposition \ref{mFreg} we also have to check the hypotheses.
\begin{lem}\label{lem_hyp_C}
\begin{enumerate}
\item Hypothesis \ref{hyp_inkommut} is satisfied with $Z=\calZ=\Lset{1_\ovG}$.
\item The elements $\Set{p_r|r\in J'}$ fulfil the hypotheses \ref{hypo_pr1} and \ref{hyp_pr2}.
\end{enumerate}
\end{lem}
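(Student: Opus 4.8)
The goal is to verify, in the type $\tC_l$ situation with $\calZ = \Lset{1_\ovG}$, that Hypothesis \ref{hyp_inkommut} holds with $Z = \calZ$ and that the elements $\Set{p_r \mid r\in J'}$ satisfy Hypotheses \ref{hypo_pr1} and \ref{hyp_pr2}. The basic observation making everything tractable is that with $\calZ$ trivial the lattice $Z = \wT_1\cap(\wT_2\cdots\wT_j)$ is trivial, so the conditions of Hypothesis \ref{hyp_inkommut} collapse: $Z\leq\Zent(\wN)$ is automatic, and the requirement $[\I_{\wK_r}(\la),\wK_{r'}]\leq\ker(\la)\cap Z$ becomes $[\wK_r,\wK_{r'}]=1$ for $r\neq r'$. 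The plan is to deduce the latter from Lemma \ref{lemAC}: since $\ovR$ is of type $\tC_l$ and $\al\pm\beta\notin\ovR$ whenever $\al\in\ovR_r$, $\beta\in\ovR_{r'}$ with $r\neq r'$, the Steinberg relations give $[\wK_r,\wK_{r'}]=1$, provided one first checks the hypothesis $p_r^2\in\Zent(\wK_r\wK_{r+1})$ feeding Lemma \ref{lemAC}.

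So the real content lies in analysing $p_r^2$. First I would compute $p_r^2$ explicitly: since $p_r = \prod_{i=0}^{l/j-1} n_{\beta_r}(1)^{(v\Gamma)^i}$ with the factors commuting (this commutativity is established inside the proof of Lemma \ref{lem1}, using orthogonality of $\beta_r$ and its $(v\Gamma)^i$-translates together with $\beta_r\pm(v\Gamma)^i(\beta_r)\notin R$), one has $p_r^2 = \prod_i n_{\beta_r}(1)^{2(v\Gamma)^i}$, and by the Steinberg relation $n_\al(1)^2 = h_\al(-1)$ each factor $n_{\beta_r}(1)^2$ is a torus element $h_{\beta_r}(-1)\in\ovT$. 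Hence $p_r^2\in\wT_r\cdot$ (a product over the $\calB_r$-orbit), and in fact $p_r^2\in\ovovT_{R_{r,r+1}}$ lies in the torus attached to the roots joining orbits $r$ and $r+1$; being a product of $h_\al(-1)$'s it is centralised by every root subgroup $\ovovX_\al$ with $\al\perp\beta$ for all such $\beta$, which covers all roots of $\ovR_{r'}$ for $r'\notin\Lset{r,r+1}$, and a direct Steinberg-relation check handles $\ovR_r$ and $\ovR_{r+1}$ themselves. This gives $p_r^2\in\Zent(\wK_r\wK_{r+1})$, unlocking Lemma \ref{lemAC} and hence Hypothesis \ref{hyp_inkommut}.

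For Hypothesis \ref{hypo_pr1} I would invoke Lemma \ref{lem2}\eqref{lem2a}, which already gives $S_I\cap\wT = \Spann<p_r^2\mid r\in I>$; it remains to exhibit, for each $r\in J'$, an element $t\in\wT_r$ with $p_r^2 = t(t^{p_r})^{-1}$. From the computation above $p_r^2 = h_{\beta_r}(-1)\cdot(\text{conjugates})$; rewriting the product $\prod_i h_{\beta_r}(-1)^{(v\Gamma)^i}$ as a telescoping expression and using that $p_r$ permutes the two orbits $\calB_r,\calB_{r+1}$ while $v\Gamma$ cycles within each, one finds $t$ as a partial product of the $h$-factors supported on $\calB_r$. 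The remaining part of Hypothesis \ref{hyp_pr2} — that $S_I^s\wT = S_{I'}\wT$ forces $S_I^s = S_{I'}$ — is exactly Lemma \ref{lem2}\eqref{lem2b}, and the identity $[p_r,kk^{p_r}]=1$ for $k\in\wK_r$ is the second conclusion of Lemma \ref{lemAC}, now applicable.

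\textbf{Main obstacle.} The routine parts (commutativity of the $\ovK_r$, the telescoping form of $p_r^2$) are straightforward Steinberg-relation bookkeeping given the type $\tC_l$ root description in Assumption \ref{ass_classical_roots}. The one place needing care is verifying the hypothesis $p_r^2\in\Zent(\wK_r\wK_{r+1})$ that Lemma \ref{lemAC} requires: one must check not merely that $p_r^2$ is a torus element but that it is central in the \emph{full} groups $\wK_r\wK_{r+1}$, i.e. that $h_{\beta_r}(-1)$ and its $(v\Gamma)$-orbit commute with every $n_\al(t)$ for $\al\in R_r\cup R_{r+1}$. This reduces to the arithmetic fact that $\langle\al,\beta_r^\vee\rangle$ is even — equivalently that conjugating $h_{\beta_r}(-1)$ by $n_\al(t)$ multiplies it by $(-1)^{\langle\al,\beta_r^\vee\rangle}\cdot(\text{unit})=1$ — which in type $\tC$ holds because the relevant Cartan integers involving the long root direction are even; I would pin this down by a short explicit pairing computation using the coordinates $\beta_r = e_{r+1}-e_r$ and the $\tC_l$ roots $\pm 2e_i,\pm e_i\pm e_{i'}$.
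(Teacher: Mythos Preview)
Your overall strategy matches the paper's: reduce Hypothesis~\ref{hyp_inkommut} to $[\wK_r,\wK_{r'}]=1$ via triviality of $Z$, invoke Lemma~\ref{lemAC}, quote Lemma~\ref{lem2} for the structural parts of Hypotheses~\ref{hypo_pr1} and~\ref{hyp_pr2}, and handle $p_r^2=t(t^{p_r})^{-1}$ and $[p_r,kk^{p_r}]=1$ by an explicit torus computation feeding Lemma~\ref{lemAC}.

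There is, however, a genuine error in your ``main obstacle'' paragraph. You assert that $\langle\al,\beta_r^\vee\rangle$ is even for every $\al\in R_r\cup R_{r+1}$, so that each individual factor $h_{\beta_r}(-1)^{(v\Gamma)^i}$ is central. But $\beta_r=e_{r+1}-e_r$ is a \emph{short} root in type~$\tC_l$, with $\beta_r^\vee=e_{r+1}-e_r$; taking $\al=e_r-e_{i'}\in R_r$ for any $i'\in\calB_r\setminus\{r\}$ (such $i'$ exists whenever $|\calB_r|\geq 2$) gives $\langle\al,\beta_r^\vee\rangle=-1$, odd. So the single factor $h_{\beta_r}(-1)$ is \emph{not} central in $\wK_r$, and your proposed pairing check fails.

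The fix --- and this is exactly what the paper does --- is to simplify the whole product first. The $(v\Gamma)$-orbit of $\beta_r$ consists of roots $\pm e_a\mp e_b$ with $a$ ranging over $\calB_{r+1}$ and $b$ over $\calB_r$, and $h_{\pm e_a\mp e_b}(-1)=h_{e_a}(-1)h_{e_b}(-1)$; hence
\[
p_r^2=\Bigl(\prod_{i\in\calB_r}h_{e_i}(-1)\Bigr)\Bigl(\prod_{i\in\calB_{r+1}}h_{e_i}(-1)\Bigr).
\]
Setting $t:=\prod_{i\in\calB_r}h_{e_i}(-1)\in\wT_r$ gives $p_r^2=t(t^{p_r})^{-1}$ immediately, with no telescoping needed. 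Centrality of $t$ in $\wK_r$ now \emph{does} reduce to a parity check, but the correct pairing is $\langle\al,\sum_{i\in\calB_r}e_i\rangle$: for $\al=\pm 2e_k$ this is $\pm 2$, and for $\al=\pm e_k\pm e_{k'}$ with $k,k'\in\calB_r$ it is $0$ or $\pm 2$ --- always even. The same argument puts $t^{p_r}\in\Zent(\wK_{r+1})$, whence $p_r^2\in\Zent(\wK_r\wK_{r+1})$ and Lemma~\ref{lemAC} applies.
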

\begin{proof}As $\calQ$ is trivial we have $\ovT_1\cap \prod_{1\neq r \in J }\ovT_r =\Lset{1_\ovG}$. This implies $Z=\Lset{1_\ovG}$. According to Lemma \ref{lemAC} the group $[\wK_r, \wK_{r'}]$ ($r\neq r' \in J$ ) is trivial. Hence Hypothesis \ref{hyp_inkommut} holds.

The first part of Hypothesis \ref{hypo_pr1} claims: for every $I\subseteq J'$ and $S_I:=\Spann<p_r|r \in I>$ the equation $S_I\cap \wT= \Spann<p_r^{2}| r \in I> $ holds. This is true by Lemma \ref{lem2a}.

The Steinberg presentation implies $p_r^2=\left(\prod_{i\in \calB_r} h_{e_i}(-1)\right)(\prod_{i\in \calB_{r+1}} h_{e_i}(-1))$ and $p_r^2=t (t^{p_r})^{-1}$ for $t:=\left(\prod_{i\in \calB_r} h_{e_i}(-1)\right)$. Hence the remaining part of Hypothesis \ref{hypo_pr1} is also true.

Now we concentrate on Hypothesis \ref{hyp_pr2}: using the Steinberg relations one obtains $t\in \Zent(\wK_r)$, which implies $p_r^2 \in \Zent(\wK_r\wK_{r+1})$. By Lemma \ref{lemAC} this gives $[p_r,kk^{p_r}]=1$ for every $k\in \wK_r$. This is the first part of Hypothesis \ref{hypo_pr1}. By Lemma \ref{lem2b} the second part of Hypothesis \ref{hyp_pr2} also holds, namely $S_I^s \wT= S_{I'} \wT$ for $s\in S$ and $I,I'\subseteq J'$ implies $S_I^s=S_{I'}$.\end{proof}

We are now ready to apply Proposition \ref{mFreg} and prove Theorem \ref{Theo1} for $\ovG^F=\tC_{l,\SC}(q)$.

\begin{lem}\label{Theo1C}
Theorem \ref{Theo1} holds whenever $\ovG$ has a root system of type $\tC_l$.
\end{lem}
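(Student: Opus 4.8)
The plan is to assemble the pieces already prepared in Sections~\ref{sec_mFreg}--\ref{applC} and feed them into Proposition~\ref{mFreg}. First I would fix the data: by Assumption~\ref{assC} we take $\ovG$ to have a root system of type $\tC_l$ with $F$ a standard Frobenius endomorphism and $d\teilt 2l$, and by Remark~\ref{rem_regular}(\ref{conj}) together with Lemma~\ref{Konstruktion_reg} it suffices to work with the Sylow $d$-Levi subgroup and Sylow $d$-normaliser of $(\ovG,vF)$ rather than of $(\ovG,F)$. So I would let $v$ be the Sylow $d$-twist from Lemma~\ref{DefvC}, which satisfies Assumption~\ref{ass_Sylowtwist}, and set $\ovR:=R$, $\oF:=F$, so Assumption~\ref{ass_ovR_oF} holds trivially.

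Next I would invoke Lemma~\ref{lem_generalass_C} with $\calZ=\Lset{1_\ovG}$ to obtain groups $\wN$, $\wK$, $\wK_1$, $\wT$ and elements $\Set{p_r|r\in J'}$ satisfying General Assumption~\ref{generalassumption}. By construction of $\Lang^{-1}(\calZ)$ with $\calZ$ trivial, $\wT=\ovT^{vF}$, $\wN=\ovN^{vF}$, and $\wK=\ovK^{vF}$; more precisely $\ker(\restr\delta|\wT)=\Cent_{\ovG^{vF}}(\ovS)$ and $\ker(\delta)=\NNN_{\ovG^{vF}}(\ovS)$, where $\ovS$ is the Sylow $d$-torus from Lemma~\ref{K:S} associated to $v$. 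This identification is precisely what makes the conclusion of Proposition~\ref{mFreg} the statement of Theorem~\ref{Theo1} in this case; here I should double-check that $\Cent_{\ovG^{vF}}(\ovS)$ is indeed abelian --- this follows since $d\teilt 2l$ is regular, so $R'=R\cap Y'^\perp=\emptyset$ in Lemma~\ref{Konstruktion_reg} and hence the centraliser is the torus $\ovT^{vF}$, which is abelian. I also need the character $\delta\in\Irr(\wN)$ of Assumption~\ref{notationdelta}: since $\calZ$ is trivial one has $\ovN^{vF}/\ovN_{\cup_r R_r}^{vF}\ovT^{vF}$ trivial-ish, and $\delta$ is the linear character cutting out $\Cent_{\ovG^{vF}}(\ovS)$ from $\ovT^{vF}$ extended by the sign on the symmetric quotient; its existence is part of the package in Lemma~\ref{lem_generalass_C}, so I would not belabour it.

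Then I would verify the hypotheses~\ref{hyp_inkommut}--\ref{hyp_pr2}, which is exactly the content of Lemma~\ref{lem_hyp_C}: Hypothesis~\ref{hyp_inkommut} holds with $Z=\Lset{1_\ovG}$ because $\calQ$ is trivial and $[\wK_r,\wK_{r'}]=1$ for $r\neq r'$ by Lemma~\ref{lemAC}; Hypotheses~\ref{hypo_pr1} and~\ref{hyp_pr2} hold by the explicit computation of $p_r^2=\bigl(\prod_{i\in\calB_r}h_{e_i}(-1)\bigr)\bigl(\prod_{i\in\calB_{r+1}}h_{e_i}(-1)\bigr)$ from the Steinberg relations, together with Lemmas~\ref{lem2a}, \ref{lem2b} and~\ref{lemAC}. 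With all of General Assumption~\ref{generalassumption} and Hypotheses~\ref{hyp_inkommut}--\ref{hyp_pr2} in place, Proposition~\ref{mFreg} yields that maximal extensibility holds with respect to $T\lhd N$ with $T=\ker(\delta)\cap\wT=\Cent_{\ovG^{vF}}(\ovS)$ and $N=\ker(\delta)=\NNN_{\ovG^{vF}}(\ovS)$. Transporting back along the conjugacy of Remark~\ref{rem_regular}(\ref{conj}) gives Theorem~\ref{Theo1} for $\ovG^F=\tC_{l,\SC}(q)$.

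The main obstacle --- or rather the only real content beyond bookkeeping --- is making sure the abstract groups $\wN,\wK,\wT$ genuinely coincide with the geometric Sylow $d$-normaliser and Sylow $d$-Levi subgroup and that the relevant quotients are as claimed; everything else has been front-loaded into the lemmas of Sections~\ref{sec_mFreg}--\ref{applC}. Concretely, the one thing I would be careful about is that Assumption~\ref{ass_Sylowtwiste}, $x^{(vF)^{l/j}}=x$ for all $x\in V$, really does hold for the chosen $v$ (it is checked inside Lemma~\ref{DefvC} via centrality of $\bww$ in the braid group), since it is what guarantees $p_r\in V^{vF}\subseteq\wN$. So this proof of Lemma~\ref{Theo1C} is essentially a one-line application:

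\begin{proof}
By Remark~\ref{rem_regular}(\ref{conj}) we may replace $F$ by $vF$, where $v$ is the Sylow $d$-twist from Lemma~\ref{DefvC}. Let $\wN,\wK,\wK_1,\wT$ and $\Set{p_r|r\in J'}$ be the groups and elements associated to $v$ as in Lemma~\ref{lem_generalass_C}, so that General Assumption~\ref{generalassumption} holds, with $\ker(\restr\delta|\wT)=\Cent_{\ovG^{vF}}(\ovS)$ and $\ker(\delta)=\NNN_{\ovG^{vF}}(\ovS)$ for the Sylow $d$-torus $\ovS$ of $(\ovG,vF)$ from Lemma~\ref{K:S}; note $\Cent_{\ovG^{vF}}(\ovS)=\ovT^{vF}$ is abelian since $R\cap Y'^\perp=\emptyset$ by Lemma~\ref{Konstruktion_reg} as $d\teilt 2l$. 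By Lemma~\ref{lem_hyp_C} the hypotheses~\ref{hyp_inkommut}--\ref{hyp_pr2} are satisfied. Hence Proposition~\ref{mFreg} applies and shows that maximal extensibility holds with respect to $\Cent_{\ovG^{vF}}(\ovS)\lhd\NNN_{\ovG^{vF}}(\ovS)$, which is Theorem~\ref{Theo1} in this case.
\end{proof}
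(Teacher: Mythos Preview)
Your approach is the same as the paper's and the formal proof at the end is essentially correct, but your treatment of $\delta$ is muddled and should be fixed. In the type $\tC_l$ case with $\calZ=\Lset{1_\ovG}$ one has $\wT=\ovT^{vF}$ and $\wN=\ovN^{vF}$ directly, so there is nothing to ``cut out'': the paper simply takes $\delta$ to be the \emph{trivial} character of $\wN$, which obviously satisfies Assumption~\ref{notationdelta}, and then $\ker(\delta)=\wN=\ovN^{vF}$ and $\ker(\restr\delta|\wT)=\wT=\ovT^{vF}$ as needed. Your description of $\delta$ as ``extended by the sign on the symmetric quotient'' is incompatible with Assumption~\ref{notationdelta}, which requires $\delta(S)=1$; and the existence of $\delta$ is \emph{not} part of Lemma~\ref{lem_generalass_C}, which only verifies part~\ref{generalassumptionsa} of the general assumption. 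Once you replace your discussion of $\delta$ by the single sentence ``take $\delta$ trivial'', the proof is clean and matches the paper's.
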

\begin{proof}
According to the two previous lemmas the necessary conditions are satisfied. We use the trivial character of $\wN$ as $\delta$, which obviously satisfies Assumption \ref{notationdelta}.
By Proposition \ref{mFreg} maximal extensibility holds with respect to $\wT\lhd \wN$. By Lemma \ref{lem1} 
the groups $\wT$ and $\wN$ coincide with $\ovT^{vF}$ and $\ovN^{vF}$ respectively. 
According to Lemma \ref{Konstruktion_reg} the group $\ovT^{vF}$ is a Sylow $d$-Levi subgroup of $(\ovG,vF)$ and $\ovN^{vF}$ the associated Sylow $d$-normaliser. This shows the statement by Remark \ref{conj}.
\end{proof}

\section{Application \texorpdfstring{to $\ovG^F= \tA_{l-1,\SC}(q)$}{to special linear groups}}\label{applA}
In this section we prove Theorem \ref{Theo1} in the case where $\ovG^F$ is the special linear group over $\FF_q$, which we denote by $\tA_{l-1,\SC}(q)$. As in the previous section we apply Proposition \ref{mFreg} using the groups introduced in Section \ref{sec_mFreg}. The regular numbers of $(\ovG, F)$ are the divisors of $l$ and those of $l-1$, by Table \ref{TabregZahl}. These two cases are dealt separately. 

\begin{ass} \label{assA}
We assume $R$ to be the root system of type $\tA_{l-1}$ from Assumption \ref{ass_classical_roots} and associate to $R$ groups and morphisms as in Section \ref{sec_not}.

Let $\ovR$ be the root system of type $\tC_{l}$ from Assumption \ref{ass_classical_roots} with the set $\overline \RF=\Lset{\al'_0, \al_1,\ldots, \al_{l-1}}$ as a system of simple roots, where $\al_0':= 2e_1$ and $\al_1=e_2-e_1, \ldots , \al_{l-1}=e_l-e_{l+1}$.
Further let $\overline F:\overline \ovG\rightarrow \overline \ovG$ be the standard Frobenius endomorphism. Then $\ovR$ and $\oF$ satisfy Assumption \ref{ass_ovR_oF}. The set $\Lset{\al_1,\ldots, \al_{l-1}}$ is then a system of simple roots in $R$.
Let $d$ be a divisor of $l$.
\end{ass}

We choose the following good Sylow $d$-twist and prove that it satisfies Assumption \ref{ass_Sylowtwist}.

\begin{lem}\label{DefvA}
Let $V$ be the extended Weyl group of $\ovG$, $n_1, \ldots , n_{l-1}$ the generating elements of $V$ mentioned in \ref{EinfV}, $v_{0}:=n_1 n_2 \cdots n_{\floor{\frac{{l-1}}2}}n_{l-1}n_{l-2}\cdots n_{\floor{\frac{l-1}2}+1} \in V$ 
 and $v:=v_0^{\frac l d}$. Then $v$ satisfies Assumption \ref{ass_Sylowtwist}.

\end{lem}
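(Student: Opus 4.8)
\textbf{Proof plan for Lemma \ref{DefvA}.}
The plan is to verify the five conditions of Assumption \ref{ass_Sylowtwist} for the explicit element $v = v_0^{l/d}$ with $v_0 = n_1 n_2 \cdots n_{\floor{(l-1)/2}} n_{l-1} n_{l-2} \cdots n_{\floor{(l-1)/2}+1}$. First I would argue that $v$ is a Sylow $d$-twist at all: the underlying braid element $\bw_0 := \bb_1\cdots \bb_{\floor{(l-1)/2}}\bb_{l-1}\cdots\bb_{\floor{(l-1)/2}+1}$ is, up to the standard identification of $\tA_{l-1}$-Coxeter elements, a Coxeter element of $W = \Sym_l$, and its $d$-th power (after raising to the power $l/d$, noting the Coxeter number of $\tA_{l-1}$ is $l$) is a good $d$-th root of $\bw_0^2$ in the sense of Definition \ref{def_good_roots}; hence by Remark \ref{remsylowb}/\ref{bembessis}, $\tau(\bw_0^{l/d}) = v$ is a (good) Sylow $d$-twist of $(\ovG,F)$, using that $d\mid l$ is a regular number for $\tA_{l-1}$ by Table \ref{TabregZahl}.

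Next I would compute $\rho(v)\phi$ concretely in $\Sym_l$ — since $F$ is a standard Frobenius, $\phi = \operatorname{id}$, so $\rho(v)$ is just the permutation underlying the chosen Coxeter element raised to the $l/d$-th power, which is a product of $d$ disjoint cycles of equal length $l/d$. This directly gives condition \ref{ass_Sylowtwista}: the orbits $\calB_r$ of $\osigma = \ovf(\rho(v)\phi)$ all have length $l/d$. Condition \ref{ass_Sylowtwistb} is then just the choice of transversal $J = \{1,\ldots,d\}$ with $r\in\calB_r$, arranged by the particular interleaving in the definition of $v_0$; one picks the labels so that $\calB_r$ is the orbit through $r$. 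For \ref{ass_Sylowtwistc} and \ref{ass_Sylowtwistd} I would identify $\Cent_{W_{R_1}}(w\phi)$, where $R_1 = R\cap\spann<e_i \mid i\in\calB_1>$ is of type $\tA_{l/d-1}$ and $w\phi$ restricts to a full cycle on $\calB_1$; its centraliser in $\Sym_{l/d}$ is the cyclic group generated by that cycle, and $\Cent_{\prod_r W_{R_r}}(f(\rho(v)\phi))$ is the product of the $d$ such cyclic groups, normal in $\rho(\ovN^{vF})$ with the $\Sym_\calB = \Sym_d$-complement permuting the blocks — this is essentially the description of the relative Weyl group of a Sylow $\PHI_d$-torus of $\GG L_l$, and can be read off from \cite[3.3.6]{Car2} together with \ref{bembessis}. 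Finally \ref{ass_Sylowtwiste} asks $x^{(vF)^{l/d}} = x$ for all $x\in V$: since the standard Frobenius acts trivially on $V$, this is the statement $x^{v^{l/d}} = x$, i.e.\ $v^{l/d} = v_0^{l^2/d^2}$ centralises $V$; because $v_0^l = \bww$ is the canonical representative of $w_0$ and is central in $V$ (as $w_0$ is central in the Coxeter group $\Sym_l$ only when $l\le 2$ — here one must instead use that $v_0^{l/d}$ raised to the relevant power lands in a power of $\bww$), one reduces to a finite check that the appropriate power of $v_0$ equals a power of $\bww\in\Zent(V)$.

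The main obstacle I expect is precisely condition \ref{ass_Sylowtwiste}, i.e.\ showing that $V$ is fixed by $(vF)^{l/d}$. For type $\tA_{l-1}$ the longest element $w_0$ is \emph{not} central in $W$ (unlike types $\tB,\tC$), so $\bww$ is not central in $V$ and the easy argument from Lemma \ref{DefvC} is unavailable; one genuinely needs the specific ``folded'' Coxeter word in the definition of $v_0$, whose square is designed so that $v_0^{2l/d}$ (or the relevant power) becomes a central element of $V$. I would handle this by a direct computation in $V$ using the Tits presentation: express $v_0^2$ as an element whose $\rho$-image is a product of $d$ disjoint $(l/d)$-cycles squared, track the $H$-part via the relation $n_\al(t) = h_\al(-t)n_\al(1)$, and verify that the $(l/d)$-th power of $v_0^{2}$ lies in $\spann<\bww^2, \text{central torus part}>\subseteq\Zent(V)$ — the remaining assertions being ``straightforward calculations in $V$ and $f(W)$'' as in the analogous Lemma \ref{DefvC}. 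All other conditions are routine once $\rho(v)$ has been identified as the standard $d$-cycle-type element.
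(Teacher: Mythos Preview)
Your overall strategy matches the paper's: show $v$ arises from a good $d$th root of $\bbw^2$ via Remark~\ref{remsylowb}, then verify \ref{ass_Sylowtwista}--\ref{ass_Sylowtwistd} by computing in $W\cong\Sym_l$, and handle \ref{ass_Sylowtwiste} separately. Two points deserve correction.

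First, a slip in the orbit count: if $\rho(v_0)$ is an $l$-cycle then $\rho(v_0)^{l/d}$ is a product of $l/d$ disjoint cycles of length $d$, not $d$ cycles of length $l/d$. Consequently $j=l/d$ and the exponent in \ref{ass_Sylowtwiste} is $l/j=d$, so what you must show is that $v^d=v_0^{l}$ is central in $V$.

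Second, and more substantively, you correctly flag \ref{ass_Sylowtwiste} as the obstacle and note that $w_0$ is not central in $\Sym_l$, so the argument of Lemma~\ref{DefvC} does not apply verbatim. But you then propose a direct computation in $V$ via the Tits presentation, and in the process write ``$v_0^l=\bww$'' and ``a power of $\bww\in\Zent(V)$'', neither of which is correct here. The paper's argument avoids any explicit computation: in the braid group $\bB$ of type $\tA_{l-1}$ the element $\bbw^2$ (the full twist) is \emph{always} central, regardless of whether $w_0$ is central in $W$; since $\bw_0$ is a Coxeter braid element and the Coxeter number is $l$, one has $\bw_0^{\,l}=\bbw^2$ in $\bB$, whence $v_0^{\,l}=\tau(\bbw^2)\in\Zent(V)$ by surjectivity of $\tau$. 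This is exactly the ingredient replacing the ``$\bww\in\Zent(V)$'' step from type $\tC$, and it makes the laborious direct verification you outline unnecessary.
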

\begin{proof} The proof is as in Lemma \ref{DefvC}. One shows that $v$ is the image of a good $d$th root of $\bw_0^2$ in the braid group and then applies Remark \ref{remsylowb}. From $v_0^l=\tau(\bw_0^2)$ and $\bw_0^2\in \Zent(\bB)$ one obtains $v_0^l\in \Zent(V)$. The remaining statements can be verified by calculations in $W\cong \Sym_l$.
\end{proof}

For regular numbers $d$ of $(\ovG,F)$ with $d\nmid l$ there exists no Sylow $d$-twist fulfilling Assumption \ref{ass_Sylowtwist} as the associated orbits are of different length. Hence we deal with these regular numbers later.

\begin{lem} \label{lem_generalass_A}
\begin{enumerate}
	\item Assume the groups $\wN$, $\wK$, $\wK_1$, $\wT$ and elements $\Set{p_r|r \in J'}$ be associated to $v$ as in Lemma \ref{lem1} with $\calZ=\Lset{1_\ovG}$. Then Assumption \ref{generalassumptionsa} is fulfilled. 
	\item The group $Z=\calZ=\Lset{1_\ovG}$ satisfies Hypothesis \ref{hyp_inkommut}.
	\item The elements $\Set{p_r|r\in J'}$ fulfil the hypotheses \ref{hypo_pr1} and \ref{hyp_pr2}.
\end{enumerate}
\end{lem}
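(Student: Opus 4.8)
The plan is to verify each of the three parts in turn, using Lemma \ref{DefvA} together with the general machinery of Section \ref{sec_towards_application}, exactly mirroring the proof strategy already carried out for type $\tC_l$ in Lemmas \ref{lem_generalass_C} and \ref{lem_hyp_C}. For part (a), I would invoke Lemma \ref{lem1}: since $v$ satisfies Assumption \ref{ass_Sylowtwist} by Lemma \ref{DefvA}, it suffices to check the two hypotheses of Lemma \ref{lem1}, namely $\ovovT=\ovovT_1\cdots\ovovT_j$ and (when $j\geq 2$) $\calZ=\ovT_1\cap(\ovT_2\cdots\ovT_j)$ with $\calZ=\Lset{1_\ovG}$. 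By Lemma \ref{lem3_lattices} these reduce to two lattice-theoretic checks inside the coroot lattice of $\ovR$ (type $\tC_l$): first that $2\al^\vee\in\bigoplus_{r\in J}\ZZ\ovR_r^\vee$ for all $\al\in\ovR$, and second that the quotient $\calQ$ of Lemma \ref{lem3_lattices} is trivial. Both are straightforward computations with the explicit roots from Assumption \ref{ass_classical_roots}, entirely parallel to those in the proof of Lemma \ref{lem_generalass_C}; the point is that the orbits $\calB_r$ partition $\{1,\dots,l\}$ and the type-$\tC$ coroot lattice $\bigoplus_i\ZZ(2e_i)+\sum\ZZ(e_i-e_{i'})$ splits along this partition after multiplication by $2$.

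For part (b), with $\calZ=\Lset{1_\ovG}$ and $\calQ$ trivial we get $\ovT_1\cap\prod_{1\neq r\in J}\ovT_r=\Lset{1_\ovG}$, so $Z=\Lset{1_\ovG}$; Hypothesis \ref{hyp_inkommut} then only demands that $[\I_{\wK_r}(\la),\wK_{r'}]\leq\ker(\la)\cap Z=\Lset{1_\ovG}$ for $r\neq r'$, i.e. $[\wK_r,\wK_{r'}]=1$. Here I would apply Lemma \ref{lemAC} — $\ovR$ is of type $\tC_l$, so its hypothesis $p_r^2\in\Zent(\wK_r\wK_{r+1})$ must be verified first, which follows as in Lemma \ref{lem_hyp_C} from the Steinberg presentation giving $p_r^2=\bigl(\prod_{i\in\calB_r}h_{e_i}(-1)\bigr)\bigl(\prod_{i\in\calB_{r+1}}h_{e_i}(-1)\bigr)$, a central element of $\wK_r\wK_{r+1}$. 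Then Lemma \ref{lemAC} delivers $[\wK_r,\wK_{r'}]=1$ and Hypothesis \ref{hyp_inkommut} holds.

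For part (c), Hypotheses \ref{hypo_pr1} and \ref{hyp_pr2} concern only the elements $\{p_r\}$. The statement $S_I\cap\wT=\Spann<p_r^2|r\in I>$ (first half of \ref{hypo_pr1}) is Lemma \ref{lem2a}; the relation $p_r^2=t(t^{p_r})^{-1}$ with $t=\prod_{i\in\calB_r}h_{e_i}(-1)\in\wT_r$ (second half of \ref{hypo_pr1}) comes straight from the Steinberg relations as in Lemma \ref{lem_hyp_C}. For \ref{hyp_pr2}, the identity $[p_r,kk^{p_r}]=1$ for $k\in\wK_r$ follows from Lemma \ref{lemAC} once $p_r^2\in\Zent(\wK_r\wK_{r+1})$ and $[\wK_r,\wK_{r+1}]=1$ are known (both established in part (b)), and the implication "$S_I^s\wT=S_{I'}\wT\Rightarrow S_I^s=S_{I'}$" is Lemma \ref{lem2b}.

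The only genuine content — and the step I would expect to require the most care — is the lattice computation in part (a), specifically confirming that $\calQ$ is trivial for the type-$\tC_l$ root system $\ovR$ when $\calB$ comes from the element $v_0=n_1n_2\cdots n_{\floor{(l-1)/2}}n_{l-1}n_{l-2}\cdots n_{\floor{(l-1)/2}+1}$ raised to the $(l/d)$-th power; one must identify the orbits $\calB_r$ explicitly, observe they all have common length $l/j$ since $d\mid l$, and then check that the intersection of $\Spann<\ZZ\ovR_1^\vee,(q-1)\ZZ\ovR^\vee>$ with $\Spann<\ZZ\ovR_r^\vee,(q-1)\ZZ\ovR^\vee\mid r\neq 1>$ lies in $(q-1)\ZZ\ovR^\vee$. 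Everything else is a direct transcription of the type-$\tC$ arguments, and I would present it compactly by citing Lemmas \ref{lem1}, \ref{lem3_lattices}, \ref{lem2a}, \ref{lem2b} and \ref{lemAC} and performing only the few root-system-specific verifications inline.
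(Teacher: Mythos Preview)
Your proposal is correct and follows precisely the paper's own approach: the paper's proof is a two-line reference to Lemmas \ref{lem_generalass_C} and \ref{lem_hyp_C}, and you have correctly unpacked exactly those arguments, citing Lemmas \ref{lem1}, \ref{lem3_lattices}, \ref{lem2a}, \ref{lem2b} and \ref{lemAC} and the same Steinberg-relation computations (in particular the formula for $p_r^2$) in the ambient type-$\tC_l$ group $\ovovG$. Your identification of the lattice computation for $\calQ$ as the only substantive step matches the paper's viewpoint.
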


\begin{proof} Part (a) can be proven as Lemma \ref{lem_generalass_C}. The arguments of Lemma \ref{lem_hyp_C} imply the remaining statements.\end{proof}

Here we choose $\delta$ to be a non-trivial character, as $\wN$ is not isomorphic to a Sylow $d$-normaliser of $(\ovG, F)$.

\begin{lem}\label{defdeltaA} 
There exists a linear character $\delta\in \Irr(\wN)$ satisfying $\ker(\delta)\cap \wT= \ovT^{vF}$, $\ker(\delta)= \ovN^{vF}$ and Assumption \ref{notationdelta}.
\end{lem}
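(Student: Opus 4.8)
The plan is to build the linear character $\delta$ directly from the geometry of the embedding $R \subseteq \overline R$, exploiting the fact that a group of type $\tA_{l-1}$ sits inside the simply-connected group of type $\tC_l$ essentially as the kernel of a determinant-type character. First I would recall that $\overline\ovG = \Sp_{2l}$ (type $\tC_l$, simply-connected) contains $\ovG = \SL_l$ (type $\tA_{l-1}$) as the subgroup generated by the root subgroups $\ovovX_\alpha$, $\alpha \in R$, via Remark \ref{einbett}, and that this inclusion is compatible with $\overline F = F$ and with the twist $v$, since $v \in V \leq \ovN$ lies in the span of the $n_{\alpha_i}$ with $\alpha_i \in R$. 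The key structural point is that $\ovovT^{\overline{vF}} / \ovT^{vF}$ is cyclic of order $q-1$ (this is the difference between $\GL_l$-type and $\SL_l$-type maximal tori after twisting), and that this quotient is detected by a single linear character. So the first step is to produce a linear character $\delta_0$ of $\wT = \ovovT^{\overline{vF}}$ (recall $\calZ = \{1_\ovG\}$ here, so $\Lang^{-1}(\calZ)\cap\ovN\ovovT = \ovN^{\overline{vF}}$ and $\wT = \ovovT^{\overline{vF}}$) whose kernel is exactly $\ovT^{vF}$.

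Next I would extend $\delta_0$ from $\wT$ to all of $\wN = \ovN^{\overline{vF}}$. The natural candidate comes from the spinor norm / similitude character: on $\Sp_{2l}$ there is no such character, so instead I would use the homomorphism detecting the "$\GL_l$ versus $\SL_l$" discrepancy, which is $\ovW$-invariant in the appropriate sense because the Weyl group of type $\tC_l$ acts on the relevant one-dimensional quotient of the cocharacter lattice through sign changes and permutations that preserve the class of $\sum e_i^\vee$ modulo the $\tA_{l-1}$-sublattice. Concretely: the quotient $\ovY / (\ZZ R^\vee + \text{(fixed part)})$ carries a natural $\spann<\ovW,\ovphi>$-action, and since $vF$ fixes $V$ up to the standard Frobenius (Assumption \ref{ass_Sylowtwist}(e)) one checks the extension is $\wN$-stable; then maximal extensibility of $\wT \lhd \wN$ — which follows either from $\nicefrac\wN\wT$ being a wreath product with the needed structure, or more cheaply by first extending over $\wK$ and noting $\nicefrac\wN\wK \cong \Sym_j$ acts trivially on the relevant $1$-dimensional character — yields the desired $\delta \in \Irr(\wN)$ with $\delta(1)=1$. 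One must verify $\ker(\delta) = \ovN^{vF}$: this reduces to $\ker(\delta)\ovovT^{\overline{vF}} = \ovN^{\overline{vF}}$ together with $\ker(\delta)\cap\wT = \ovT^{vF}$, which combined with $\ovN^{vF}\ovovT^{\overline{vF}} = \ovN^{\overline{vF}}$ (an index computation using $\ovN_{\overline R}/\ovN_R \ovovT$) forces equality by comparing orders.

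Finally I would check Assumption \ref{notationdelta}, i.e. $\delta(S) = 1$ and $\ker(\delta_r)\wT_r = \wK_r$ with $\delta_r := \restr\delta|{\wK_r}$. The condition $\delta(S)=1$ should be immediate since the $p_r$ are built from $n_{\beta_r}(1)$ with $\beta_r = e_{r+1}-e_r \in R$, hence lie in $\ovN^{vF} \subseteq \ker(\delta)$ — indeed the whole point is that the $p_r$ are "already inside $\SL$". The condition $\ker(\delta_r)\wT_r = \wK_r$ says that $\delta$ restricted to the $r$-th block $\wK_r$ is a character of the cyclic quotient $\wK_r/\wT_r$ (of order coprime considerations aside) that is nontrivial in the right way — concretely that $\restr\delta|{\wK_r}$ is trivial on $\ovN_{R_r}^{vF}$ but not on all of $\wK_r = \ovN_{R_r}^{vF}\ovovT_r^{\overline{vF}}$, which again is the statement that $\overline\ovT_r^{\overline F}/\ovT_{R_r}^{F}$ is cyclic and detected by $\delta$.

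**Main obstacle.** The delicate point I expect to be hardest is not the existence of $\delta_0$ on the torus — that is a standard $\SL$-vs-$\GL$ computation — but rather proving that $\delta_0$ extends to a \emph{linear} character of all of $\wN$ that is genuinely $\wN$-invariant, and simultaneously that its kernel meets each block $\wK_r$ correctly ($\ker(\delta_r)\wT_r = \wK_r$). The subtlety is that the $\Sym_j$-part $S$ permutes the blocks, so $\delta$ must restrict to "the same" nontrivial character on each $\wK_r/\wT_r$; this forces a careful choice of $\delta_0$ that is symmetric under the permutation action, and one has to verify the cocharacter-lattice computation that the relevant quotient $\overline Y/(\ZZ R^\vee \oplus \cdots)$ is $1$-dimensional and that $\rho(v)\phi$ acts on it trivially (so the fixed points really do form the claimed index-$(q-1)$ subgroup, block by block). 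I would handle this by working entirely in the lattice $\overline Y$ with the explicit basis $e_i$, writing down $\delta_0$ as (the character dual to) the functional $x \mapsto \langle \sum_i e_i, x\rangle \bmod (q-1)$, and checking directly that it is invariant under $f(\ovW^{\overline F})$ and kills precisely $\ovT^{vF}$ and each $\ovT_{R_r}^{F}$.
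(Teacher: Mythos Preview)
Your proposal is correct and follows essentially the same route as the paper: construct a linear character on $\wT$ detecting the $\SL$-vs-$\GL$ discrepancy, extend it to $\wN$, then verify Assumption~\ref{notationdelta} using $p_r\in\ovN^{vF}$ and the block decomposition $\wK_r=\spann<\ovN\cap\wK_r,\wT>$.

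The difference is one of framing. The paper works one level up: the quotient $\ovovT/\ovT$ is a rank-one torus, so its character group is $\ZZ$ and there is, up to inversion, exactly one algebraic linear character of $\ovovT$ with kernel $\ovT$. Since $\ovN$ normalises $\ovT$, this character is automatically $\ovN$-stable, hence extends to $\ovN\ovovT$; restricting to $v\overline F$-fixed points then gives $\delta$ immediately. This viewpoint dissolves what you identify as the main obstacle: the $\wN$-invariance and the block symmetry under $\Sym_j$ are not delicate at all, because the only characters available on $\ovovT/\ovT$ are powers of a single one, and the $\Sym_j$-action (being inside $W=\Sym_l$) fixes the functional $\sum_i e_i$ on the nose. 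Your explicit lattice computation with $\sum_i e_i$ is exactly this character written out in coordinates, so you are not wrong, just working harder than necessary. One minor notational slip: with $\calZ=\{1_\ovG\}$ one has $\wN=(\ovN\ovovT)^{v\overline F}$ and $\wT=\ovovT^{v\overline F}$, not $\ovN^{\overline{vF}}$ and $\ovovT^{\overline{vF}}$ as you write in passing; the twist by $v$ must be retained.
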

\begin{proof} A detailed proof of this fact can be found in \cite[8.1.7]{Spaeth}. A key argument is that the group $\nicefrac \ovovT \ovT$ is cyclic. Hence a linear character on $\ovovT$ with kernel $\ovT $ exists. This character can be extended to $\ovN \ovovT$. Because of $\wK_r=\spann<\ovN\cap \wK_r, \wT>$ and $p_r\in \ovN^{vF} $ ($r\in J'$) the Assumption \ref{notationdelta} is satisfied.
\end{proof} 

Now we may apply Proposition \ref{mFreg} and verify Theorem \ref{Theo1} for $d\teilt l$.

\begin{lem} \label{Adl} Let $\ovG = \tA_{l-1,\SC} (\ovF_q )$ ($l\geq 2$), $F:\ovG\rightarrow \ovG$ the standard Frobenius endomorphism, $d\teilt l$, $T$ a Sylow $d$-Levi subgroup of $(\ovG,F)$ and $N$ the associated Sylow $d$-normaliser. Then maximal extensibility holds with respect to $T\lhd N$. \end{lem}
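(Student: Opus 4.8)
The plan is to apply Proposition \ref{mFreg} to the groups constructed in Lemma \ref{lem1} for the specific Sylow $d$-twist $v$ of Lemma \ref{DefvA}, using the non-trivial character $\delta$ of Lemma \ref{defdeltaA}. First I would invoke Lemma \ref{lem_generalass_A}: part (a) gives that the general assumption \ref{generalassumption}(a) holds for the groups $\wN$, $\wK$, $\wK_1$, $\wT$ and elements $\Set{p_r|r\in J'}$ attached to $v$ with $\calZ=\Lset{1_\ovG}$; parts (b) and (c) give Hypothesis \ref{hyp_inkommut} (with $Z=\calZ=\Lset{1_\ovG}$) as well as the hypotheses \ref{hypo_pr1} and \ref{hyp_pr2}. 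Next I would invoke Lemma \ref{defdeltaA} to obtain a linear $\delta\in\Irr(\wN)$ with $\delta(S)=1$ and $\ker(\delta_r)\wT_r=\wK_r$, so that \ref{notationdelta} is satisfied; thus all the hypotheses of Proposition \ref{mFreg} are in place.

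Having checked the hypotheses, Proposition \ref{mFreg} yields that maximal extensibility holds with respect to $T\lhd N$, where $N=\ker(\delta)$ and $T=\ker(\delta)\cap\wT$. The final task is then to identify these two groups with a Sylow $d$-Levi subgroup and the associated Sylow $d$-normaliser of $(\ovG,F)$. By Lemma \ref{defdeltaA} we have $\ker(\delta)\cap\wT=\ovT^{vF}$ and $\ker(\delta)=\ovN^{vF}$. By Lemma \ref{Konstruktion_reg}, since the relevant root subsystem $R'=R\cap Y'^\perp$ is empty for a regular $d$ (here $\Cent_{\ovG^{vF}}(\ovS)$ is a torus), $\ovT^{vF}$ is a Sylow $d$-Levi subgroup of $(\ovG,vF)$ and $\ovN^{vF}=\NNN_{\ovG^{vF}}(\ovS)$ is the associated Sylow $d$-normaliser. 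Finally Remark \ref{rem_regular}\eqref{conj} transfers the statement from $(\ovG,vF)$ to $(\ovG,F)$: a Sylow $d$-Levi subgroup of $(\ovG,vF)$ is $\ovG$-conjugate to one of $(\ovG,F)$, and conjugation carries the corresponding normaliser along, so maximal extensibility with respect to $\ovT^{vF}\lhd\ovN^{vF}$ is equivalent to the assertion for $T\lhd N$ as stated.

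I do not expect any single step here to be a serious obstacle, since the substantive work has been front-loaded into Proposition \ref{mFreg} and Lemmas \ref{lem1}, \ref{lem2}, \ref{lem_generalass_A}, \ref{DefvA}. The one point that deserves care is the verification of \ref{notationdelta} for the non-trivial $\delta$ — namely that $\ker(\delta_r)\wT_r=\wK_r$ and $\delta(S)=1$ — but this is exactly what Lemma \ref{defdeltaA} provides (with the detailed argument referenced to \cite[8.1.7]{Spaeth}), so it suffices to cite it. A secondary point worth a sentence is that the hypotheses of Lemma \ref{lem1} require $\ovovT=\ovovT_1\cdots\ovovT_j$ and $\calZ=\ovT_1\cap(\ovT_2\cdots\ovT_j)$; these are checked via the lattice computations of Lemma \ref{lem3_lattices} exactly as in the type $\tC_l$ case (Lemma \ref{lem_generalass_C}), and are already subsumed in Lemma \ref{lem_generalass_A}(a). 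Thus the proof is essentially an assembly: apply Lemma \ref{lem_generalass_A} and Lemma \ref{defdeltaA}, conclude by Proposition \ref{mFreg}, and translate back via Lemma \ref{Konstruktion_reg} and Remark \ref{rem_regular}.
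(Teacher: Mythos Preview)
Your proposal is correct and follows essentially the same approach as the paper: invoke Lemma \ref{lem_generalass_A} and Lemma \ref{defdeltaA} to verify the hypotheses of Proposition \ref{mFreg}, apply that proposition to obtain maximal extensibility with respect to $\ker(\delta)\cap\wT\lhd\ker(\delta)=\ovT^{vF}\lhd\ovN^{vF}$, and then transfer to $(\ovG,F)$ via Lemma \ref{Konstruktion_reg} and Remark \ref{rem_regular}\ref{conj}. The paper's proof is terser (it simply says ``the assumptions of Proposition \ref{mFreg} are satisfied according to the above''), but the logical content is identical to what you have written.
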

\begin{proof}
The assumptions of Proposition \ref{mFreg} are satisfied according to the above. This implies that maximal extensibility holds with respect to $\ker(\delta)\cap \wT \lhd \ker(\delta)$, specifically $\ovT^{vF}\lhd \ovN^{vF}$ by the definition of $\delta$.

According to Lemma \ref{Konstruktion_reg} the group $\ovT^{vF}$ is a Sylow $d$-Levi subgroup and $\ovN^{vF}$ the associated Sylow $d$-normaliser. By Remark \ref{conj} this implies the statement.\end{proof}

Now we concentrate on the remaining regular numbers.
\begin{lem} \label{Adl-1}
Let $\ovG = \tA_{l-1,\SC} (\ovF_q )$ ($l\geq 2$), $F:\ovG\rightarrow \ovG$ the standard Frobenius endomorphism, $1\neq d\teilt (l-1)$, $T$ a Sylow $d$-Levi subgroup of $(\ovG,F)$ and $N$ the associated Sylow $d$-normaliser. Then maximal extensibility holds with respect to $T\lhd N$.\end{lem}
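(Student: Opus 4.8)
The plan is to reduce the case $d\teilt(l-1)$ to the already-settled case of Lemma \ref{Adl}. The key structural observation is that when $d$ divides $l-1$ but not $l$, a Sylow $d$-torus $\ovS$ of $(\ovG,F)$ with $\ovG=\tA_{l-1,\SC}(\ovF_q)$ has a centraliser whose relevant part ``looks like'' a Sylow $d$-Levi subgroup of a slightly smaller special linear group $\tA_{l-2,\SC}$: writing the Sylow $d$-twist in terms of the morphism $f$ from Assumption \ref{ass_classical_roots}, the induced permutation $\osigma=\ovf(\rho(v)\phi)$ has $\frac{l-1}{d}$ orbits of equal length $d$ on $\{1,\dots,l-1\}$ together with a fixed point; equivalently one coordinate splits off. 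Thus I would choose $v$ to be a Sylow $d$-twist that acts as in Lemma \ref{DefvA} on the root subsystem $R'$ of type $\tA_{l-2}$ spanned by $\{e_i-e_{i'}\mid 1\le i,i'\le l-1\}$ and trivially on the remaining coordinate, so that $\ovN^{vF}$ and $\ovT^{vF}$ decompose compatibly with this splitting.

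First I would make precise the embedding: by Remark \ref{einbett}, $\ovG_{R'}=\Spann<\ovX_\al\mid\al\in R'>$ is a simply-connected simple group of type $\tA_{l-2}$ over $\ovF_p$, and the standard Frobenius $F$ restricts to it. Second, I would verify that the $v$ chosen above is genuinely a Sylow $d$-twist of $(\ovG,F)$ — this uses that $d\teilt(l-1)$ is a regular number and the good-root machinery of Remark \ref{remsylow}, exactly as in Lemma \ref{DefvC} and Lemma \ref{DefvA}, applied now inside $\ovG_{R'}$. Third, I would identify the Sylow $d$-Levi subgroup $T=\Cent_{\ovG^{vF}}(\ovS)$ and its normaliser $N=\NNN_{\ovG^{vF}}(\ovS)$ via Lemma \ref{Konstruktion_reg}: since $R'=R\cap Y'^{\perp}$ here has type $\tA_{l-2}$ only through the part $R_1,\dots,R_j$, the centraliser $\Cent_\ovG(\ovS)$ is $\Spann<\ovT,\ovX_\al\mid\al\in R\cap Y'^{\perp}>$, and its $vF$-fixed points together with $\ovN^{vF}$ factor through $\ovG_{R'}$ modulo the central/diagonal torus. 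Concretely, $\ovT^{vF}=\ovT_{R'}^{vF}\cdot Z$ and $\ovN^{vF}=\ovN_{R'}^{vF}\cdot Z$ for a suitable torus $Z$ centralising everything, where $\ovT_{R'},\ovN_{R'}$ are the corresponding subgroups of $\ovG_{R'}$.

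Fourth, maximal extensibility then transfers: by Lemma \ref{Adl} applied to $\ovG_{R'}$ of type $\tA_{l-2}$ (note $d\teilt(l-1)=(l-2)+1$, so $d$ divides the ``$l$'' of that group), maximal extensibility holds with respect to $\ovT_{R'}^{vF}\lhd\ovN_{R'}^{vF}$; adjoining the central torus $Z$, which is itself abelian and central in $\ovN^{vF}$, preserves maximal extensibility since $\Irr$ of a central product is built from $\Irr$ of the factors (the Notation on central products in Section \ref{regred}). Finally, Remark \ref{conj} lets me pass from the Sylow $d$-torus of $(\ovG,vF)$ back to one of $(\ovG,F)$, which gives the statement for the original $F$. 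The main obstacle I anticipate is the bookkeeping in the third step: verifying that the torus $\ovT^{vF}$ really does decompose as the internal central product of $\ovT_{R'}^{vF}$ with a complementary central torus (this is where the hypothesis $d\nteilt l$ is essential, to guarantee the extra fixed point and hence the splitting), and checking that this decomposition is compatible with the action of $\ovN^{vF}$ — i.e. that the element permuting the orbits fixes the complementary torus pointwise. This amounts to lattice computations in $\ZZ R^\vee$ analogous to, but slightly more delicate than, those in Lemma \ref{lem3_lattices} and Lemma \ref{lem_generalass_C}.
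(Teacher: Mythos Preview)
Your overall strategy --- reduce to a Sylow $d$-Levi of a copy of $\tA_{l-2,\SC}$ sitting inside $\ovG$ --- is the right idea and matches the paper's. The problem is your third step: the central-product decomposition $\ovT^{vF}=\ovT_{R'}^{vF}\cdot Z$ with $Z$ central in $\ovN^{vF}$ is \emph{false} in general. The only candidate for $Z$ is the $vF$-fixed part of the one-dimensional torus $Z_0$ corresponding to $v_0=(1,\dots,1,-(l-1))\in\ZZ R^\vee$ (this is exactly the $\Cent_W(\rho(v))$-fixed line), and a short lattice computation gives $[\ovT^{vF}:\ovT_{R'}^{vF}\cdot Z_0^{vF}]=\gcd(l-1,q-1)$, which is typically $>1$. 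The difficulty you flagged as ``bookkeeping'' is therefore not a computation to be carried out but an obstruction: the coroot lattice of type $\tA_{l-1}$ does not split as $\ZZ R'^\vee\oplus\ZZ v_0$, and this persists at the level of $vF$-fixed tori.

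The paper sidesteps this by working inside the ambient group $\ovovG$ of type $\tC_l$ from Assumption~\ref{assA}. Rather than decomposing $T=\ovT^{vF}$, it exhibits an \emph{isomorphism} $T\cong T':=\ovovT_{\ovR'}^{v\oF}$ and $N\cong N':=(\ovN_{R'}\ovovT_{\ovR'})^{v\oF}$, where $\ovR'$ is the $\tC_{l-1}$-subsystem; the point is that the $\tC_{l-1}$ coroot lattice $\bigoplus_{i<l}\ZZ e_i$ \emph{does} split over the $\osigma$-orbits, so $(T',N')$ satisfies the hypotheses of Section~\ref{sec_mFreg} directly and Proposition~\ref{mFreg} applies with $\delta$ trivial. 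Your route can in fact be repaired without this detour: one has $\ovN^{vF}=\ovN_{R'}^{vF}\cdot\ovT^{vF}$ with $\ovN_{R'}^{vF}\cap\ovT^{vF}=\ovT_{R'}^{vF}$ (since $d>1$ forces $\Cent_{\Sym_l}(\rho(v))\leq\Sym_{l-1}$), and then any maximal extension of $\restr\la|{\ovT_{R'}^{vF}}$ in $\ovN_{R'}^{vF}$, furnished by Lemma~\ref{Adl}, glues with $\la$ to a maximal extension of $\la$ in $\ovN^{vF}$ --- this is the mechanism of \cite[Lemma~4.2]{Spaeth_Preprint}, and it does not require a central complement $Z$.
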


\begin{proof} The condition $1\neq d\teilt (l-1)$ implies $l\geq 3$. As in Assumption \ref{assA} we assume $\ovR$ to be the root system of type $\tC_l$ and $R\subseteq \ovR$ the root system of type $\tA_{l-1}$ from \ref{ass_classical_roots}. We use the groups and maps associated to $\ovR$ and $R$ in Assumption \ref{ass_ovR_oF}.

Let $\ovR'\subset \ovR$ be the root system of type $\tC_{l-1}$ and $R'\subset R$ the root system of type $\tA_{l-2}$ from Assumption \ref{ass_classical_roots}. Let $\ovG':=\Spann<\ovX_\al|\al \in R'>$. 

Because of $d\teilt (l-1)$ the integer $d$ is a regular number of $(\ovG',\restr F|{\ovG'})$. The Sylow $d$-torus of $(\ovG', \restr F|{\ovG'})$ is also a Sylow $d$-torus of $(\ovG, F)$, as the associated polynomial orders, which can be computed via $\betrag{\SL_l(q)}$, coincide. Hence the Sylow $d$-twist $v$ of $(\ovG',\restr F|{\ovG'})$ is a Sylow $d$-twist of $(\ovG,F)$. 

By Lemma \ref{Konstruktion_reg} the group $T:=\ovT^{vF}$ is a Sylow $d$-Levi subgroup of $(\ovG, vF)$ and $N:=\ovN^{vF}$ the associated Sylow $d$-normaliser. Let $T':=\ovovT_{\ovR'}^{vF}$ and $N':=(\ovN_{R'} \ovovT_{\ovR'})^{v\oF}$, where $\oF:\ovovG\rightarrow \ovovG$ is the standard Frobenius with $\restr \oF|{\ovG}=F$. Calculations with root lattices using \cite[3.2.3]{Car1} show that $T'$ and $T$ are isomorphic. The Steinberg relations show that this isomorphism can be extended to an isomorphism of $N'$ and $\ovN^{vF}$.

According to the above explanation the groups $N'$ and $T'$ satisfy the assumptions about $\wN$ and $\wT$ from Section \ref{sec_mFreg}. Hence maximal extensibility holds with respect to $T'\lhd N'$ according to Proposition \ref{mFreg}, where in the application $\delta$ is chosen to be the trivial character of $\wN$.
\end{proof}

Altogether the previous two lemmas have shown Theorem \ref{Theo1} for $\ovG^F=\tA_{l-1, \SC}(q)$.

\section{Application\texorpdfstring{ to $\ovG^F=\tw 2 \tA_{l-1,\SC}(q)$}{ to special unitary groups}} \label{appl2A}
The aim of this section is to verify Theorem \ref{Theo1} in the case where $\ovG^F=\tw 2 \tA_{l-1,\SC}(q)$, namely a special unitary group over a finite field. As in the previous two sections we use Proposition \ref{mFreg} for this purpose.

Like before we start by choosing a root system and a Frobenius endomorphism satisfying assumption \ref{ass_ovR_oF}. From Table \ref{TabregZahl} we know the regular numbers of $(\ovG,F)$. Like in the previous section we have two cases depending on the regular numbers.

\begin{ass}\label{ass2A}
Let $\overline R$, $\overline \RF$, $R$ and $\RF$ be defined as in Assumption \ref{assA} and assume the groups $\ovovN$, $\ovovT$, $\ovovX_\al$ ($\al \in \ovR$), $\ovW$ and $\ovrho$ be determined by $R$ and $\ovR$ as in Assumption \ref{ass_ovR_oF}, where $\ovG=\Spann< \ovovX_\al|\al \in R > $ by Remark \ref{einbett}.

Let $\gamma$ be the non-trivial graph automorphism of $R$, which stabilises $\RF$, and $ F: \ovG \rightarrow \ovG$ the Frobenius endomorphism with $F=F_0\circ \calG$, where $F_0$ is a standard Frobenius endomorphism and $\Gamma$ the associated graph automorphism with $x_\alpha(t)\mapsto x_{\overline \alpha}(t)$ ($\pm \alpha \in \RF$, $\overline \al:= \gamma(\al)$). Assume $d\teilt l$ for even $l$, and $2\teilt d$ and $d\teilt 2l$, otherwise.
\end{ass}

We construct a Frobenius endomorphism $\oF:\overline \ovG\rightarrow \ovG$ with $\restr \oF\,\,| {\ovG}=F$ for Assumption \ref{ass_ovR_oF}.
\begin{lem} \label{1113}
Let $\bww \in V$ be defined as in Definition \ref{defgoodroot} and $\overline \bww$ the analogous element of $\overline \ovG$. 
Then $n:=\bww\overline \bww$ satisfies $\Gamma(x)= x^n$ for all $x\in \ovG$.
\end{lem}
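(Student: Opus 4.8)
The plan is to show that the element $n = \bww\,\overline{\bww}$ implements the graph automorphism $\Gamma$ on $\ovG$ by comparing the conjugation action of $n$ with the known action of $\Gamma$ on the generators $x_\al(t)$ of $\ovG$. Recall that $\ovG$ is generated by the root subgroups $\ovX_\al$ ($\al\in R$), so it suffices to verify $x_\al(t)^n = \Gamma(x_\al(t)) = x_{\gamma(\al)}(t)$ for all $\al\in R$ and $t\in\ovF_p$. Since the elements $x_\al(t)$ for $\al\in\pm\RF$ already generate $\ovG$ and $\gamma$ permutes $\RF$, checking the relation on $\RF$ will be enough, provided one is careful about signs.

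First I would recall the effect of conjugation by a representative of the longest element. The element $\bbw$ (the image of $\bw_0$ under $\tau$, i.e.\ $\bww$) is the canonical representative of $w_0$ in $\ovG$, and $w_0$ sends each simple root $\al_i$ to $-\al_{\gamma_0(i)}$ where $\gamma_0$ is the opposition involution on $\RF$; for type $\tA_{l-1}$ this opposition involution is exactly the nontrivial diagram symmetry $\gamma$. Thus conjugation by $\bww$ sends $\ovX_{\al_i}$ to $\ovX_{-\gamma(\al_i)}$, with some explicit sign $\varepsilon_i\in\{\pm1\}$ on the parameter coming from the Tits/Steinberg cocycle: $x_{\al_i}(t)^{\bww} = x_{-\gamma(\al_i)}(\varepsilon_i t)$. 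The analogous statement holds in $\ovovG$ for $\overline{\bww}$ with respect to the root system $\ovR$ of type $\tC_l$; but the key point is that $-1\in\ovW$ (the longest element of the Weyl group of $\tC_l$ acts as $-\operatorname{id}$), so conjugation by $\overline{\bww}$ sends $\ovovX_\beta$ to $\ovovX_{-\beta}$ for every $\beta\in\ovR$, again with explicit signs. Composing the two, $x_\al(t)^{\bww\overline{\bww}}$ lands back in $\ovX_{\gamma(\al)}$ (the two sign-reversals of the root cancel), and what remains is to check that the accumulated sign on the parameter $t$ is trivial, so that $x_\al(t)^n = x_{\gamma(\al)}(t) = \Gamma(x_\al(t))$.

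The main obstacle is precisely this sign bookkeeping: the Tits relations attach a $\pm1$ cocycle to each reflection, and one must confirm that $\bww$ and $\overline{\bww}$ were chosen with compatible structure constants (they are, since by Assumption \ref{ass_ovR_oF} the root subgroups $\ovX_\al$ of $\ovG$ are identified with the corresponding $\ovovX_\al$ in $\ovovG$ via Remark \ref{einbett}, so the Chevalley structure constants for $R$ are inherited from those for $\ovR$), and that the two sign contributions cancel. I would carry this out by reducing, via Remark \ref{einbett}, to working inside $\ovovG$ throughout: there both $\bww$ and $\overline{\bww}$ live, and $n = \bww\,\overline{\bww}^{-1}\cdot\overline{\bww}^2$ up to the fact that $\overline{\bww}^2\in\ovovT$ is central-ish and acts on $\ovX_\al$ by a character that one computes to be trivial on the relevant subgroup. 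Concretely, one verifies on the simple generators $n_i = n_{\al_i}(1)$ that $n_i^{\,\bww} = n_{\gamma(\al_i)}(\pm1)$ and then that the product $\bww\overline{\bww}$ has trivial net sign; this is the "straightforward calculations in $V$" of the type invoked in the proofs of Lemma \ref{DefvC} and Lemma \ref{DefvA}.

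Finally, having checked $x_\al(t)^n = x_{\gamma(\al)}(t)$ on a generating set, I would conclude $\operatorname{conj}_n = \Gamma$ as automorphisms of $\ovG$ by the defining presentation of $\ovG$ via generators and Steinberg relations (as recalled in Notation \ref{basicsetting}): two endomorphisms of $\ovG$ agreeing on the $x_\al(t)$ for $\al\in\pm\RF$ coincide. This gives $\Gamma(x) = x^n$ for all $x\in\ovG$, as claimed. The payoff for the rest of the section is that $\oF := \operatorname{conj}_n\circ F_0 = F_0\circ\Gamma$ restricted to $\ovG$ equals $F$ while extending to a Frobenius endomorphism of $\ovovG$, so Assumption \ref{ass_ovR_oF} is met for the unitary case.
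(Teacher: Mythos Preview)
Your proposal is correct and follows essentially the same plan as the paper: reduce to the generators $x_\al(t)$ for $\pm\al\in\RF$, observe via Weyl-group considerations that conjugation by $n$ sends $\ovX_\al$ to $\ovX_{\gamma(\al)}$, and then pin down the sign on the parameter.

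The one place where the paper is more specific than your outline is the sign bookkeeping. You decompose the conjugation as ``action of $\bww$'' followed by ``action of $\overline{\bww}$'' and appeal to straightforward calculations for the net sign. The paper instead writes $n$ as a single word $n_{\beta_1}(1)\cdots n_{\beta_{l'}}(1)$, expresses the sign $i_\al$ as the product $\eta_{\beta_1,\al}\eta_{\beta_2,w_{\beta_1}(\al)}\cdots$, and then uses the identity $\eta_{\al,\beta}=\eta_{\al,-\beta}$ from \cite[6.4.3]{Car1} to deduce $i_\al=i_{-\al}$. This last step is the key trick: it upgrades the statement on $x_\al(t)$ to $n_\al(t)^n=n_{\overline\al}(i_\al t)$, whereupon the sign can be read off from the identity $n_i(1)^n=n_{l-i}(1)$ in the extended Weyl group, which the paper obtains from \cite[4.1.9]{GeckCox}. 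Your two-step decomposition reaches the same conclusion but leaves the cancellation of the two cocycle contributions implicit; the paper's route avoids separating the two factors and gives a cleaner bookkeeping device for the sign.
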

\begin{proof}
Because of $\Spann<\ovX_\al|\pm \al \in \RF>= \ovG$ it suffices to prove the statement for all $x=x_ \alpha(t)$ ($\pm \alpha \in \RF$, $t\in \ovF_q$). According to the Steinberg relations we have $x_\alpha(t)^n= x_{\overline \alpha}(i_\alpha t)$ for some $i_\alpha \in \Lset{\pm 1}$. Let $\eta_{\beta, \al} \in \Lset{\pm 1}$ such that $ x_\al(t) ^{n_\beta(1)}=x_{w_{\beta}(\al)}(\eta_{\beta,\al}t)$ for every $t\in \ovF_p$. From $n= n_{\beta_{1}}(1)\cdots n_{\beta_{l'}}(1)$ for some roots $\beta_i$ we obtain $ i_{\alpha}= \eta_{\beta_{1},\alpha} \eta_{\beta_{2},w_{\beta_{1}}(\alpha)} \cdots $. By \cite[Proposition 6.4.3]{Car1} we know $ \eta_{\al,\beta}=\eta_{\al,-\beta} $ for all $ \al,\beta \in R$. This proves $i_\alpha= i_{-\alpha}$ for all $\al \in \RF$ and $ n_\alpha(t)^n= x_{\overline \alpha}(i_\alpha t) x_{-\overline\alpha}(i_\alpha t^{-1}) x_{\overline\alpha}(i_ \alpha t )= n_{\overline \alpha} (i_\alpha t)$. Using \cite[4.1.9]{GeckCox} and calculations in $\overline W$ we obtain $ n_i(1)^n= n_{l-i}(1)$ for all $ 1\leq i \leq l-1$.
This proves $i_\alpha=1$ and $\Gamma(x_\al(t))= x_\al(t)^n$ for all $\al \in \RF$.
\end{proof}

The Frobenius endomorphism $\oF:=n\oF_0:\overline \ovG\rightarrow \overline \ovG$ with $n$ from Lemma \ref{1113} satisfies Assumption \ref{ass_ovR_oF}, specifically $\restr \oF|{\ovG}=F$, where $\oF_0:\overline \ovG\rightarrow \overline \ovG$ is the standard Frobenius endomorphism of $\overline \ovG$.
\begin{lem}\label{Defv2A}
Let $n_1, \ldots, n_{l-1}$ be as in \ref{EinfV}. For $2\teilt l $ let \[v_0:=n_1 n_2 \cdots n_{\floor{\frac{{l-1}}2}} n_{l-1}n_{l-2}\cdots n_{\floor{\frac{l-1}2}+1}\] and $v\in V$ such that $v\Gamma =v_0^\je \Gamma$ with $\je =\frac l d$. For $2\nmid\ell$ let $v_0:=n_1 n_2 \cdots n_{\floor{\frac{{l}}2}}$ and $v\in V$ such that $v\Gamma:=(v_0 \Gamma)^\je$ with $\je:=\frac{2l}d$. Then $v\Gamma$ is a Sylow $d$-twist of $(\ovG,F)$ satisfying Assumption \ref{ass_Sylowtwist}.
\end{lem}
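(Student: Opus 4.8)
The plan is to follow the pattern already established in Lemma \ref{DefvC} and Lemma \ref{DefvA}: exhibit $v\Gamma$ as the image under $\tau$ of a good $d$th $\phi_{\bB}$-root of $\bbw^2$ in the braid group $\bB$, invoke Remark \ref{remsylow}\eqref{remsylowb} to conclude that $v\Gamma$ is a Sylow $d$-twist of $(\ovG,F)$, and then verify the five items of Assumption \ref{ass_Sylowtwist} by explicit (but routine) computations in $V$ and in the image $f(\spann<\ovW,\ophi>)$ inside $\Syml$. The two cases $2\teilt l$ and $2\nmid l$ have to be treated separately, since the defining words $v_0$ and the exponents $\je$ differ, and in the odd case the relevant regular element lives in the coset $W\phi$ rather than in $W$ itself.

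First, in the case $2\teilt l$: the word $v_0 = n_1 n_2 \cdots n_{\floor{(l-1)/2}} n_{l-1} n_{l-2} \cdots n_{\floor{(l-1)/2}+1}$ is the same one used in Lemma \ref{DefvA}, so $\rho(v_0)$ is a Coxeter-type element of $W\cong\Sym_l$ of order $l$, and the corresponding braid element $\bw_0 := \bb_1\cdots\bb_{\floor{(l-1)/2}}\bb_{l-1}\cdots\bb_{\floor{(l-1)/2}+1}$ satisfies $\bw_0^{\,l} = \bbw^2$ with $\bw_0^{\,i}$ reduced for $i\leq l/2$, hence $\bw_0$ is a good $d$th root of $\bbw^2$ for every $d\teilt l$; since $\phi_{\bB}$ here is the automorphism induced by $\gamma$ and $\gamma$ fixes $\bw_0$ (the word is $\gamma$-symmetric by construction), $\bw_0$ is also a good $d$th $\phi_\bB$-root. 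Then $v\Gamma = (v_0\Gamma)^{l/d} = \tau(\bw_0^{\,l/d})\Gamma$ is a Sylow $d$-twist by Remark \ref{remsylow}\eqref{remsylowb}. For the odd case $2\nmid l$ with $v_0 = n_1 n_2\cdots n_{\floor{l/2}}$ and $\je = 2l/d$, one instead uses that $(\bw_0\phi_\bB)^{l}=\bbw^2\phi_\bB^{\,l}$ with the intermediate powers reduced in the appropriate cosets — this is exactly the mechanism behind $\tw2\tA$ having $2l$ rather than $l$ as the generic Coxeter number — so $\bw_0\phi_\bB$ is a good $d$th $\phi_\bB$-root of $\bbw^2$ for $d\teilt 2l$ with $2\teilt d$, and $v\Gamma=(v_0\Gamma)^{2l/d}$ is the desired Sylow $d$-twist.

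Next I would check Assumption \ref{ass_Sylowtwist}(a)--(e). For (a) and (b): compute $\osigma = \ovf(\rho(v)\phi)$ as a permutation of $\{1,\dots,l\}$ directly from the word for $v_0$ raised to the relevant power; in both parity cases the orbits $\calB_r$ come out of equal length (this is where the choice of exponent $\je$ matters), and one picks $J$ to be the minimal elements of the orbits so that $r\in\calB_r$. For (c), $\Cent_{W_{R_1}}(w\phi)$ is the centraliser of a regular element in a Weyl group of type $\tA$ restricted to one orbit block, which is cyclic — this is the content of Remark \ref{remsylow}\eqref{bembessis} / Bessis, or can be seen directly since the block is of type $\tA$ with $w\phi$ acting as a single cycle. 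For (d), $\rho(\ovN^{vF})\cong\Cent_{W_{R_1}}(w)\wr\Sym_\calB$ follows from the structure of the centraliser of $f(\rho(v)\phi)$ in $\Syml$ together with the wreath-product decomposition forced by the equal orbit lengths; one checks the normality and the complement $\Sym_\calB$ by exhibiting it as generated by the images of the $p_r$. For (e), $x^{(vF)^{l/j}}=x$ for all $x\in V$ reduces, since $F_0$ acts trivially on $V$, to checking $v^{l/j}\in\Zent(V)$ (and in the odd case that $(v_0\Gamma)^{l/j}$ centralises $V$), which follows as in Lemma \ref{DefvC} and Lemma \ref{DefvA} from $\bbw^2\in\Zent(\bB)$ (by \cite[4.4.1]{GeckCox}, since $w_0$ is central in the Weyl group of type $\tC$) together with $v_0^{\,l}=\tau(\bbw^2)$.

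The main obstacle I expect is the bookkeeping in the odd case $2\nmid l$: here the Sylow twist genuinely involves the graph automorphism $\Gamma$ (so one is working in the coset $V\Gamma$, not in $V$), the relevant Coxeter number is $2l$ rather than $l$, and the braid-group argument needs the $\phi_\bB$-twisted notion of good root from Definition \ref{defgoodroot}; verifying that $\bw_0\phi_\bB$ really is reduced in each coset $\bB^+_{\mathrm{red}}\phi_\bB^{\,i}$ for $1\leq i\leq l$, and that the resulting $\osigma$-orbits on $\{1,\dots,l\}$ have equal length (the $-1$ eigenvalue behaviour of $\rho(v_0)\phi$ has to be tracked carefully), is the delicate point. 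Everything else is parallel to the type $\tA$ and type $\tC$ lemmas already proved, and the closing phrase ``the further properties follow from straightforward calculations in $V$ and $f(W\phi)$'' is justified exactly as there.
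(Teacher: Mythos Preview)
Your proposal is correct and follows essentially the same approach as the paper, which in fact says only ``The proof is in the same spirit as the one of Lemma \ref{DefvA}'' and leaves all details to the reader; you have expanded this one-line reference appropriately, correctly distinguishing the even and odd parity cases and correctly identifying that the odd case requires the $\phi_\bB$-twisted notion of good root. One small slip: in the even case you write $v\Gamma=(v_0\Gamma)^{l/d}$ where the statement has $v\Gamma=v_0^{\,l/d}\Gamma$, and you say ``$\bw_0$ is a good $d$th root'' when you mean $\bw_0^{\,l/d}$ is --- but since you observe that $\gamma$ fixes $\bw_0$ (the word being $\gamma$-symmetric up to commuting braid generators), these amount to the same thing and the argument goes through.
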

\begin{proof} The proof in the same spirit as the one of Lemma \ref{DefvA}.
\end{proof}

With this knowledge we introduce groups satisfying Assumption \ref{generalassumptionsa}.

\begin{lem}
\begin{enumerate}
\item \label{lem_generalass_2A}
Assume the groups $\wN$, $\wK$, $\wK_1$, $\wT$ and elements $\Set{p_r|r \in J'}$ be determined by $v$ as in Lemma \ref{lem1} with $\calZ:=\Lset{1_\ovG}$. Then Assumption \ref{generalassumptionsa} is fulfilled.
\item Hypothesis \ref{hyp_inkommut} is satisfied with $Z=1$.
\item The elements $\Set{p_r|r\in J'}$ fulfil the hypotheses \ref{hypo_pr1} and \ref{hyp_pr2}.
\end{enumerate}
\end{lem}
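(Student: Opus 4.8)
The plan is to follow the proofs of Lemma \ref{lem_generalass_C} and Lemma \ref{lem_hyp_C} almost verbatim: the ambient root system $\ovR$ is again of type $\tC_l$, exactly as in Section \ref{applC}, and the two new features of the unitary case — that the Sylow twist $v\Gamma$ now lies in $V\Gamma$, and that the Frobenius $\oF$ is twisted — turn out to play no role in any of the steps below.

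For part (a) I would first invoke Lemma \ref{Defv2A}, by which $v\Gamma$ satisfies Assumption \ref{ass_Sylowtwist}; Lemma \ref{lem1} then reduces part (a) to the two identities $\ovovT=\ovovT_1\cdots\ovovT_j$ and, for $j\geq2$, $\calZ=\ovT_1\cap(\ovT_2\cdots\ovT_j)$. Both come from Lemma \ref{lem3_lattices}, whose two hypotheses hold here for the same reason as in Section \ref{applC}: since $\ovR$ and all $\ovR_r$ are of type $\tC$, the coroot lattice $\ZZ\overline R^\vee$ equals $\bigoplus_r\ZZ\overline R_r^\vee$, so the inclusion $2\al^\vee\in\bigoplus_r\ZZ\overline R_r^\vee$ for $\al\in\ovR$ is automatic, and a direct computation with these lattices shows the quotient $\calQ$ is trivial — this is exactly the computation already carried out in the proof of Lemma \ref{lem_generalass_C}. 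This gives Assumption \ref{generalassumptionsa}. (If $j=1$ then $\wN=\wK=\wK_1$, $\wT=\wT_1$ and $S$ is trivial, so the $j\geq2$ clause is empty and nothing extra is needed.)

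For part (b): triviality of $\calQ$ forces $\ovT_1\cap(\ovT_2\cdots\ovT_j)=\{1_\ovG\}$, hence $Z=\{1_\ovG\}$, and since $\ovR$ is of type $\tC_l$ Lemma \ref{lemAC} gives $[\wK_r,\wK_{r'}]=1$ for $r\neq r'$, so Hypothesis \ref{hyp_inkommut} holds with $Z=1$. For part (c) I would copy the argument of Lemma \ref{lem_hyp_C}: the first clause of Hypothesis \ref{hypo_pr1} is Lemma \ref{lem2a}; the Steinberg presentation gives $p_r^2=\bigl(\prod_{i\in\calB_r}h_{e_i}(-1)\bigr)\bigl(\prod_{i\in\calB_{r+1}}h_{e_i}(-1)\bigr)=t(t^{p_r})^{-1}$ with $t:=\prod_{i\in\calB_r}h_{e_i}(-1)\in\Zent(\wK_r)$, which yields the remaining clause of \ref{hypo_pr1} together with $p_r^2\in\Zent(\wK_r\wK_{r+1})$; Lemma \ref{lemAC} then produces $[p_r,kk^{p_r}]=1$ for $k\in\wK_r$, and Lemma \ref{lem2b} gives the second clause of Hypothesis \ref{hyp_pr2}.

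The only thing requiring a moment's care — not a real obstacle, since the genuine work was already done in Lemma \ref{1113} and Lemma \ref{Defv2A} — is to confirm that passing from the split situation of Section \ref{applA} to the twisted one does not disturb these reductions. It does not: Lemma \ref{lem3_lattices} is a statement about the algebraic torus $\ovovT$ and its subtori attached to the subsystems $\ovR_r$, where the choice of Frobenius is irrelevant, and the elements $p_r$ and their squares are computed entirely from the Steinberg relations inside $\ovovG$ of type $\tC_l$, just as in Section \ref{applC}. Since $\calZ$ is taken to be $\{1_\ovG\}$ here, one never has to verify any compatibility of $\calZ$ with $\oF$, which is what keeps this case as short as the two preceding ones.
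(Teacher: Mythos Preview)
Your proposal is correct and follows precisely the route the paper takes: the paper's own proof simply reads ``The part (a) can be proven as Lemma \ref{lem_generalass_C}. The proof of (b) and (c) can be seen in the one of Lemma \ref{lem_hyp_C},'' and what you have written is an explicit unpacking of exactly those two references, with the additional (and accurate) remark that the twisted Frobenius plays no role because the lattice computation in Lemma \ref{lem3_lattices} and the Steinberg computations for $p_r^2$ depend only on the ambient type $\tC_l$ root system.
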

\begin{proof} The part (a) can be proven as Lemma \ref{lem_generalass_C}. The proof of (b) and (c) can be seen in the one of Lemma \ref{lem_hyp_C}.\end{proof}

As in the previous section we need to construct a non-trivial character $\delta \in \Irr(\wN)$ such that $\ker(\delta)$ is a Sylow $d$-normaliser.

\begin{lem}\label{defdelta2A}
There exists a linear character $\delta \in \Irr(\wN)$ with $ \ker(\delta)\cap \overline \ovT= \ovT^{vF}$ and $ \ker(\delta)= \ovN^{vF}$ satisfying Assumption \ref{notationdelta}.
\end{lem}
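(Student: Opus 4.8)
The plan is to mimic the construction of $\delta$ in the split case (Lemma \ref{defdeltaA}), but carried out $\oF$-equivariantly so that the twisted fixed points come out right. First I would observe that, exactly as in the linear case, the quotient $\nicefrac{\ovovT}{\ovT}$ is cyclic: this follows from calculations with the coroot lattices, since $\ZZ\ovR^\vee$ (type $\tC_l$) contains $\ZZ R^\vee$ (type $\tA_{l-1}$) with cyclic quotient of order $l$ (or a divisor thereof), and correspondingly for the character lattices. Hence there is a linear character $\mu$ of $\ovovT$ with $\ker(\mu)=\ovT$. The subtlety absent in Lemma \ref{defdeltaA} is that $\oF$ now involves the graph automorphism; I would check that $\gamma$ (equivalently, conjugation by the element $n$ from Lemma \ref{1113}) stabilises both $\ovT$ and $\ovovT$, so that $\mu$ can be chosen $\oF$-stable, possibly after replacing $\mu$ by $\mu\cdot\mu^{\oF}$ or a suitable power; this uses that $\nicefrac{\ovovT}{\ovT}$ is cyclic, so $\gamma$ acts on it either trivially or by inversion, and in either case a $\gamma$-fixed faithful character of the quotient exists (using $2\nmid$ the relevant index when necessary, or absorbing the obstruction into the order of $\delta$).

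Next I would extend $\mu$ to $\ovN\ovovT$. Since $\nicefrac{\ovN\ovovT}{\ovovT}\cong\ovW$ and $\mu$ is $\ovW$-invariant up to the action already accounted for (the roots of $R$ pair trivially with the relevant cocharacter because $R\subsetneq\ovR$ lies in the orthogonal complement of the extra fundamental coweight), one gets an extension; concretely I would extend $\mu$ first to $\ovovN_{R}\ovovT$ trivially on the Tits representatives $n_i(1)$ of $W$-reflections (these normalise $\ovT$), and then check it is compatible with the elements $p_r$, which lie in $\ovN^{vF}$. Restricting this extension to the $vF$-fixed points (or rather pulling back along $\Lang^{-1}(\calZ)$ as in Lemma \ref{lem1}) produces the desired $\delta\in\Irr(\wN)$. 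By construction $\ker(\delta)\cap\ovovT=\ovT^{vF}$, and since $\ovN^{vF}=\spann<\ovN^{vF}\cap\ker(\delta),\ \wT>$ together with $p_r\in\ovN^{vF}\cap\ker(\delta)$, one gets $\ker(\delta)=\ovN^{vF}$.

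Finally I would verify Assumption \ref{notationdelta}, i.e.\ $\delta(S)=1$ and $\ker(\delta_r)\wT_r=\wK_r$ with $\delta_r:=\restr\delta|{\wK_r}$. The first follows from $p_r\in\ovN^{vF}\cap\ker(\delta)$ and $S=\spann<p_r>$; the second from $\wK_r=\spann<\ovN_{R_r}^{vF},\wT>$ (as in the proof of Lemma \ref{defdeltaA}), together with the fact that $\delta$ restricted to $\ovovT_r$ is nontrivial precisely because the index $[\ovovT_r:\ovT_{R_r}]$ is nontrivial and $\mu$ was chosen faithful on $\nicefrac{\ovovT}{\ovT}$, hence faithful on the relevant cyclic subquotient for each orbit block. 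The main obstacle I anticipate is the $\gamma$-equivariance of the choice of $\mu$: one must ensure that the graph automorphism does not force $\mu^{\oF}\neq\mu$, and handling the parity conditions ($d\teilt l$ for even $l$ versus $2\teilt d,\ d\teilt 2l$ otherwise) is where the two cases of Lemma \ref{Defv2A} re-enter — the detailed bookkeeping of which cyclic subquotient of $\nicefrac{\ovovT}{\ovT}$ survives $vF$ is the part most likely to require a careful lattice computation, which is why the fully detailed argument is relegated to \cite[8.1.7]{Spaeth} in the split case and would be the analogous reference here.
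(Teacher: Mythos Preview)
Your plan would likely work, but you are making the problem harder than it is, and the ``main obstacle'' you identify is not actually there. The paper's proof is a two-line reduction to the split case that avoids the $\gamma$-equivariance issue entirely.

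The paper observes that $(v\oF)^{2d}$ and $\oF_0^{2d}$ induce the \emph{same} endomorphism on $\ovN\ovovT$: the graph twist has order $2$ and the element $v\Gamma$ has order dividing $d$ as an automorphism (Assumption \ref{ass_Sylowtwiste}), so after raising to the $2d$-th power only the standard Frobenius survives. Consequently $\wN\leq (\ovN\ovovT)^{\oF_0^{2d}}$. But $(\ovN\ovovT)^{\oF_0^{2d}}$ is exactly the group $\wN$ appearing in Lemma~\ref{defdeltaA} for the \emph{split} group $\tA_{l-1,\SC}(q^{2d})$, so that lemma already hands you a linear character $\delta'$ on it with kernel $\ovN^{\oF_0^{2d}}$. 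Set $\delta:=\restr{\delta'}|{\wN}$. The required properties $\ker(\delta)=\ovN^{vF}$, $\ker(\delta)\cap\ovovT=\ovT^{vF}$, and Assumption~\ref{notationdelta} are inherited directly from the split case, since every element of $\wN$ that should lie in the kernel already lies in $\ovN^{\oF_0^{2d}}=\ker(\delta')$.

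By contrast, your route reconstructs $\mu$ on $\ovovT$ and then worries whether it can be chosen $\gamma$-stable. Note that your proposed fix $\mu\mapsto\mu\cdot\mu^{\oF}$ does not work here: in type $\tA_{l-1}$ the graph automorphism acts by inversion on the cyclic quotient $\ovovT/\ovT$, so $\mu\cdot\mu^{\gamma}$ is the trivial character. More to the point, $\gamma$-stability of $\mu$ is not needed at all --- the character $\delta'$ from Lemma~\ref{defdeltaA} lives on a group that already contains $\wN$ as a subgroup, so plain restriction suffices and no equivariance argument is required. Your anticipated case split on the parity of $l$ and $d$ likewise evaporates.
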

\begin{proof} 
By Lemma \ref{defdeltaA} there exists a linear character $\delta'\in \Irr((\ovN\ovovT)^{\oF_0^{2d}})$ with $\ker(\delta) = \ovN^{\oF_0^{2d}}$, where $\oF_0$ is a standard Frobenius endomorphism to $q$. As $(v\oF)^{2d}$ and $\oF_0^{2d}$ induce the same automorphism on $\ovN\ovovT$ we have $\wN\leq (\ovN \ovovT)^{\oF_0^{2d}}$ and the character $\delta:=\restr \delta'|{\wN}$ is well-defined. By definition the equations $\ker(\delta)=\ovN^{vF}$ and $ \ker(\delta)\cap \ovovT= \ovT^{vF}$ hold and Assumption \ref{notationdelta} is satisfied.
\end{proof}

Now we can apply Proposition \ref{mFreg} again.

\begin{lem}\label{Hauptfall}
Assume $\ovG$, $F$ and $d$ be given as in \ref{ass2A}. Then Theorem \ref{Theo1} holds, if $\ovS$ is a Sylow $d$-torus of $(\ovG,F)$.\end{lem}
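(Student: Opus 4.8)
The plan is to mirror the structure of the proof of Lemma~\ref{Theo1C} for type $\tC_l$ and of Lemma~\ref{Adl} for type $\tA_{l-1}$, now carried out in the twisted setting established by Assumption~\ref{ass2A}. First I would invoke the preceding three lemmas: by the lemma above on the general assumption (part (a)) the groups $\wN,\wK,\wK_1,\wT$ and the elements $\Set{p_r|r\in J'}$ attached to the Sylow $d$-twist $v\Gamma$ from Lemma~\ref{Defv2A} satisfy Assumption~\ref{generalassumptionsa}; by its parts (b) and (c) Hypothesis~\ref{hyp_inkommut} holds with $Z=\Lset{1_{\ovG}}$, and the elements $p_r$ satisfy Hypotheses~\ref{hypo_pr1} and~\ref{hyp_pr2}. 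Finally, by Lemma~\ref{defdelta2A} there is a linear character $\delta\in\Irr(\wN)$ with $\ker(\delta)\cap\ovovT=\ovT^{vF}$, $\ker(\delta)=\ovN^{vF}$, satisfying Assumption~\ref{notationdelta}. Hence all hypotheses of Proposition~\ref{mFreg} are in place.

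Applying Proposition~\ref{mFreg} then gives that maximal extensibility holds with respect to $T\lhd N$ where $N=\ker(\delta)$ and $T=\ker(\delta)\cap\wT$; by the choice of $\delta$ in Lemma~\ref{defdelta2A} this means precisely that maximal extensibility holds with respect to $\ovT^{vF}\lhd\ovN^{vF}$. The next step is to identify these groups with the objects appearing in Theorem~\ref{Theo1}: since $v\Gamma$ is a Sylow $d$-twist of $(\ovG,F)$ (Lemma~\ref{Defv2A}), Lemma~\ref{K:S} produces a Sylow $d$-torus $\ovS$ of $(\ovG,vF)$, and Lemma~\ref{Konstruktion_reg} shows that $\ovT^{vF}=\Cent_{\ovG}(\ovS)^{vF}$ is the associated Sylow $d$-Levi subgroup and $\ovN^{vF}=\NNN_{\ovG^{vF}}(\ovS)$ the associated Sylow $d$-normaliser, because the relevant root subsystem $R'=R\cap Y'^{\perp}$ is empty in the regular case (here one uses that $L$ is assumed abelian). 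The statement of Theorem~\ref{Theo1} refers to $(\ovG,F)$ rather than $(\ovG,vF)$; Remark~\ref{rem_regular}\eqref{conj} and the fact that a maximal extensibility statement is invariant under group isomorphism let us transport the conclusion from $(\ovG,vF)$ to $(\ovG,F)$, and by Remark~\ref{rem_regular}(a) we may indeed assume $d$ is a regular number of $W\phi$ so that a suitable $v\Gamma$ as in Assumption~\ref{ass_Sylowtwist} exists. This completes the proof in the present case.

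One caveat I would flag explicitly: Assumption~\ref{ass_Sylowtwist} (in particular the equal-length condition~\eqref{ass_Sylowtwista} on the $\osigma$-orbits) can only be met when $d\teilt l$ for even $l$, and when $2\teilt d$, $d\teilt 2l$ otherwise — exactly the hypothesis imposed in Assumption~\ref{ass2A}. The remaining regular numbers of $\tw2\tA_{l-1,\SC}(q)$ (those with $d\teilt(l-1)$, resp. $2\teilt d$ and $d\nteilt l$) are not covered by this lemma and, as in the untwisted case (compare Lemma~\ref{Adl-1}), must be treated separately by embedding a smaller twisted group of type $\tw2\tA_{l-2}$ whose Sylow $d$-torus is also a Sylow $d$-torus of $(\ovG,F)$ — so I would state the lemma, as written, only for the $d$ of Assumption~\ref{ass2A} and defer the complementary case to a subsequent lemma.

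The main obstacle is not any single step but the verification, inside Lemma~\ref{Defv2A} and the lemma on the general assumption, that the twisted Sylow $d$-twist $v\Gamma$ genuinely satisfies all five conditions of Assumption~\ref{ass_Sylowtwist} — especially~\eqref{ass_Sylowtwiste}, $x^{(vF)^{l/j}}=x$ for $x\in V$, which in the twisted case requires controlling how powers of $v_0\Gamma$ interact with the graph automorphism via the element $n=\bww\overline\bww$ of Lemma~\ref{1113}. Granting those (which the cited lemmas assert), the present lemma itself is a short bookkeeping argument assembling Proposition~\ref{mFreg} with Lemmas~\ref{K:S}, \ref{Konstruktion_reg} and Remark~\ref{rem_regular}.
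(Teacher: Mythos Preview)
Your proposal is correct and follows essentially the same approach as the paper. The paper's own proof is the single sentence ``As in the proof of Lemma~\ref{Adl} one applies Proposition~\ref{mFreg} and obtains the result,'' and you have simply spelled out that argument in full: verify the hypotheses of Proposition~\ref{mFreg} via the preceding three lemmas, apply it to obtain maximal extensibility for $\ovT^{vF}\lhd\ovN^{vF}$ via the character $\delta$ of Lemma~\ref{defdelta2A}, then identify these groups with the Sylow $d$-Levi subgroup and normaliser by Lemma~\ref{Konstruktion_reg} and transport to $(\ovG,F)$ by Remark~\ref{rem_regular}\eqref{conj}.
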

\begin{proof}
As in the proof of Lemma \ref{Adl} one applies Proposition \ref{mFreg} and obtains the result.
\end{proof}

We are left with the remaining regular numbers. In the following we keep the notion from \ref{ass2A}, without the assumptions on $d$. We construct a Frobenius endomorphism $\widetilde F: \ovG\rightarrow \ovG$, whose restriction to $\Spann<\ovX_\al| \al \in R'>$ give endomorphisms of these groups, where $R'$ is any root subsystem of $R$.

\begin{lem}[{\cite[9.1.12]{Spaeth}}]\label{neuerFrob2A}
Let $\bww\in V$ be the canonical representative of $w_0 \in W$ from Definition \ref{defgoodroot}.
\begin{enumerate}
\item The automorphism $\widetilde \calG:=\bww\calG$ acts on $\ovG$ by $x_{\alpha}(t) \mapsto x_{-\alpha}(-t)$ ($ \pm \alpha \in \RF$).
\item \label{Feig}
For $1<l'\leq l$ let $R'$ be the parabolic root system of $R$ with simple roots $\Lset{\al_1, \ldots , \al_{l'-1}}$, $\ovG':=\Spann<\ovX_\al| \al \in R'>$ and $\widetilde \calG': \ovG'\rightarrow \ovG'$ the automorphism defined like in (a) but for $\ovG'$. Then $\widetilde \calG'= \restr \widetilde \calG|{\ovG'}$.
\end{enumerate}
\end{lem}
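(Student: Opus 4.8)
The whole statement reduces to part (a): once (a) is available for both $\ovG$ and $\ovG'$, part (b) is purely formal, and inside (a) the only substantive point is a sign computation. For (a), since $\ovG$ is generated by the root subgroups $\ovX_{\al_i}$ and $\ovX_{-\al_i}$ with $1\le i\le l-1$, it suffices to compute $\widetilde\calG=\bww\calG$ on those. By the Steinberg relations, conjugation by the monomial element $\bww$ carries $\ovX_\al$ onto $\ovX_{w_0(\al)}$, say $x_\al(t)^{\bww}=x_{w_0(\al)}(\epsilon_\al t)$ with a sign $\epsilon_\al\in\{\pm1\}$ depending only on $\al$ (this is the same bookkeeping with the factors $\eta_{\beta,\al}$ that appears in the proof of Lemma~\ref{1113}). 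Applying $\calG$ and using that $\gamma\circ w_0=-\operatorname{id}$ for the root system of type $\tA_{l-1}$ (equivalently, $-w_0$ is the diagram automorphism $\gamma$), one obtains $\widetilde\calG(x_\al(t))=x_{-\al}(\epsilon_\al t)$ for every $\al\in\RF$. Thus everything comes down to proving $\epsilon_\al=-1$.

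Establishing $\epsilon_\al=-1$ is the heart of the argument, and the one step that genuinely uses the particular element $\bww$ rather than formal properties of $\calG$ and of monomial representatives. I would carry this out by a direct calculation in the natural matrix realisation $\ovG=\tA_{l-1,\SC}(\ovF_q)=\SL_l(\ovF_q)$, in which $\bww$ is the explicit anti-diagonal signed permutation matrix representing $w_0$ and the $\ovX_{\pm\al_i}$ are the elementary one-parameter subgroups; conjugating such a subgroup by $\bww$ and reading off the scalar gives $x_{\al_i}(t)^{\bww}=x_{w_0(\al_i)}(-t)$, so $\epsilon_{\al_i}=-1$ and, symmetrically, $\epsilon_{-\al_i}=-1$. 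Alternatively one can stay inside the Steinberg presentation and track the signs $\eta_{\beta,\al}$ along a reduced word for $w_0$ exactly as in the proof of Lemma~\ref{1113} (where the analogous sign came out to be $+1$); as a consistency check one notes that $\calG(\bww)=\bww$, since the canonical braid lift of the $\gamma$-invariant element $w_0$ is $\gamma$-invariant, so $\widetilde\calG^2$ is conjugation by $\bww^2$, which is central (a scalar in the $\SL_l$-model), and hence $\widetilde\calG$ is an involution.

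Part (b) then follows at once. By (a) applied to $\ovG$, the automorphism $\widetilde\calG$ sends each generator $x_{\pm\al_i}(t)$ of $\ovG'$ (for $1\le i\le l'-1$) to $x_{\mp\al_i}(-t)\in\ovG'$, so $\widetilde\calG$ stabilises $\ovG'$; by (a) applied to $\ovG'$, the automorphism $\widetilde\calG'$ sends exactly the same generators to the same elements. Two automorphisms of $\ovG'$ that agree on a generating set coincide, whence $\restr{\widetilde\calG}|{\ovG'}=\widetilde\calG'$. The main obstacle, as indicated, is the sign $\epsilon_\al=-1$ in (a); the rest is formal manipulation of the Steinberg relations and of monomial representatives.
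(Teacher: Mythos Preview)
Your proposal is correct and matches the paper's approach: the paper also reduces everything to the sign computation in (a) and handles that ``by longish calculations with the Steinberg relations, similar to the ones proving Lemma~\ref{1113}'', which is precisely your second alternative of tracking the $\eta_{\beta,\al}$ along a reduced word for $w_0$; your deduction of (b) from (a) by comparing the two automorphisms on generators is exactly what the paper means by ``follows easily from (a)''. The only addition on your side is the concrete $\SL_l$ matrix computation for $\epsilon_{\al_i}=-1$, which is a legitimate and somewhat more transparent alternative to the Steinberg-relations bookkeeping the paper alludes to.
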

\begin{proof} The first part is a consequence of longish calculations with the Steinberg relations, similar to the ones proving Lemma \ref{1113}. The second follows easily from (a).\end{proof}

Using this automorphism we construct a Sylow $d$-twist.
\begin{lem} \label{v2Ai}
Let $d$ be a regular number of $(\ovG,F)$, which does not satisfy Assumption \ref{ass2A}, and $l':=l-1$. Let $R'$, $\ovG'$ and $\widetilde \calG': \ovG' \rightarrow \ovG'$ be associated to $l'$ as in Lemma \ref{Feig} and $v\widetilde \calG'$ a Sylow $d$-twist of $(\ovG', \widetilde F')$, where $\widetilde F':=\calG' \circ(\restr F_0|{\ovG'})$.
\begin{enumerate}
\item Then $d$ is a regular number of $(\ovG', \widetilde F')$ satisfying the assumptions from \ref{ass2A}.
\item The automorphism $v\widetilde \calG$ is a Sylow $d$-twist of $(\ovG, \widetilde F)$ with $\widetilde F:= \calG \circ F_0$.
\end{enumerate}
\end{lem}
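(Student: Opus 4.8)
The statements (a) and (b) are largely independent; I would prove (a) by comparing regular numbers in Table~\ref{TabregZahl}, and (b) by factorising the relevant characteristic polynomial through the subgroup $\ovG'$. For (a): by hypothesis $d$ is a regular number of $\tw2\tA_{l-1}$ not satisfying the divisibility condition imposed in \ref{ass2A}. If $l$ is even, Table~\ref{TabregZahl} then forces $2\teilt d$, $d\teilt 2l-2$ and $d\nteilt l$; since $l':=l-1$ is odd, the conditions $2\teilt d$ and $d\teilt 2l'=2l-2$ hold, so $d$ lies in the second family of regular numbers of $\tw2\tA_{l'-1}$ in the table and satisfies the condition of \ref{ass2A} for $\ovG'$. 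If $l$ is odd, the table forces $d\teilt (l-1)$; since $l':=l-1$ is even, $d\teilt l'$, so again $d$ is a regular number of $\tw2\tA_{l'-1}$ satisfying the condition of \ref{ass2A} for $\ovG'$ (note $(\ovG',\widetilde F')$ is of type $\tw2\tA_{l'-1}$).

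For (b), the first step is to invoke Lemma~\ref{neuerFrob2A}: the automorphism $\widetilde\calG$ restricts on $\ovG'$ to $\widetilde\calG'$, hence the Frobenius $\widetilde F$ restricts to $\widetilde F'$, and since $v\in\ovN'\leq\ovN$ this shows $v\widetilde\calG\in\ovN\widetilde\calG$ and that the cocharacter sublattice $Y'\leq Y$ of $\ovG'$ is stable under $\rho(v)\psi$, where $\psi$ denotes the finite-order automorphism of $Y\otimes\CC$ induced by $\widetilde F$. Consequently the characteristic polynomial factorises,
\[ \det\nolimits_{Y\otimes\CC}\!\bigl(x\,\rho(v)\psi-1\bigr)\;=\;P'(x)\cdot g(x), \]
where $P'(x)$ is the characteristic polynomial of $\rho(v)\psi$ on $Y'\otimes\CC$ --- exactly the polynomial attached to the Sylow $d$-twist $v\widetilde\calG'$ of $(\ovG',\widetilde F')$, hence divisible by $\PHI_d^{a'(d)}$ with $a'(d)$ the multiplicity of $\PHI_d$ in the polynomial order of $(\ovG',\widetilde F')$ --- and $g\in\CC[x]$. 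It therefore suffices to prove $a(d)=a'(d)$ for the polynomial order of $(\ovG,\widetilde F)$. Computing both from $\betrag{\SU_l(q)}$ resp.\ $\betrag{\SU_{l-1}(q)}$, the quotient of the two polynomial orders is $x^{l-1}\bigl(x^l-(-1)^l\bigr)$, so $a(d)=a'(d)$ precisely when $\PHI_d\nteilt x^l-(-1)^l$. If $l$ is even this polynomial is $x^l-1$ and $d\nteilt l$ by (a); if $l$ is odd it is $x^l+1$, and $\PHI_d\teilt x^l+1$ would force $d\teilt 2l$ together with $d\nteilt l$, which with $d\teilt (l-1)$ gives $d\teilt 2$, and neither $d=1$ (by direct inspection) nor $d=2$ (which satisfies the condition of \ref{ass2A}, hence is excluded) is possible. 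Thus $a(d)=a'(d)$, so $\PHI_d^{a(d)}$ divides $\det_{Y\otimes\CC}(x\,\rho(v)\psi-1)$ and $v\widetilde\calG$ is a Sylow $d$-twist of $(\ovG,\widetilde F)$ by Definition~\ref{DefSylowtwist}.

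I expect the delicate points of the argument to be in (b): using Lemma~\ref{neuerFrob2A} to make sure that $\widetilde F$ really restricts to $\widetilde F'$, so that $Y'$ is $\rho(v)\psi$-stable and the characteristic polynomial genuinely factorises through the $\ovG'$-part, and then the arithmetic ruling out $\PHI_d\teilt x^l-(-1)^l$, which is exactly where the assumption that $d$ does not satisfy \ref{ass2A} enters (it excludes, for instance, $l=6$ and $d=3$).
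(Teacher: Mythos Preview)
Your argument is correct and follows the same approach as the paper: part (a) is a case analysis using Table~\ref{TabregZahl}, and part (b) boils down to showing that the multiplicities $a(d)$ and $a'(d)$ of $\PHI_d$ in the polynomial orders of $(\ovG,\widetilde F)$ and $(\ovG',\widetilde F')$ agree, computed from $\betrag{\SU_l(q)}$ and $\betrag{\SU_{l-1}(q)}$. The paper's own proof simply says ``one calculates the polynomial order of Sylow $d$-tori of $(\ovG',\widetilde F')$ and $(\ovG,\widetilde F)$ via the formulas for $\betrag{\ovG^F}$ in \cite[2.9]{Car2}; as these coincide the result follows,'' so you have supplied exactly the details the paper omits --- the factorisation of the characteristic polynomial through the $\rho(v)\psi$-stable sublattice $Y'$ (using Lemma~\ref{neuerFrob2A} to ensure $\restr{\widetilde F}|{\ovG'}$ matches $\widetilde F'$), and the explicit arithmetic showing $\PHI_d\nteilt x^l-(-1)^l$ under the hypothesis that $d$ fails the condition of Assumption~\ref{ass2A}.
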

\begin{proof}
The first part follows easily from Table \ref{TabregZahl}. For (b) one calculates the polynomial order of Sylow $d$-tori of $(\ovG',\widetilde F')$ and $(\ovG,\widetilde F)$ via the formulas for $\betrag{\ovG^F}$ in \cite[2.9]{Car2}. As these coincide the result follows. \end{proof}

This implies the following lemma.
\begin{lem}\label{Theo12A}
Theorem \ref{Theo1} holds, if $\ovG$ has a root system of type $\tA_{l-1}$ and $F$ is induced from a graph automorphism. 
\end{lem}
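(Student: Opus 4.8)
The plan is to reduce the remaining regular numbers for the special unitary groups to cases that have already been handled, using the same strategy employed for $\ovG^F = \tA_{l-1,\SC}(q)$ in Lemma \ref{Adl-1}. Concretely, let $d$ be a regular number of $(\ovG, F)$ that does not satisfy the divisibility assumptions of \ref{ass2A}; by Table \ref{TabregZahl} these are exactly the numbers with $d \teilt (l-1)$ for even $l$, resp. $2 \teilt d$ and $d \teilt 2(l-1)$ for odd $l$, that do not already fall under Lemma \ref{Hauptfall}. Setting $l' := l-1$ and invoking Lemma \ref{v2Ai}(a), $d$ is a regular number of $(\ovG', \widetilde F')$ where $\ovG' = \Spann<\ovX_\al | \al \in R'>$ is the simply-connected group of type $\tA_{l'-1}$ sitting inside $\ovG$ (via Remark \ref{einbett}) and $\widetilde F'$ is the twisted Frobenius from Lemma \ref{Feig}, and moreover $d$ satisfies the assumptions of \ref{ass2A} \emph{for $\ovG'$}.

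First I would fix a Sylow $d$-twist $v\widetilde\calG'$ of $(\ovG', \widetilde F')$ and apply Lemma \ref{Hauptfall} (together with Lemma \ref{Theo12A}'s machinery applied to the smaller group) to conclude that maximal extensibility holds with respect to $\ovovT_{R'}^{v\widetilde F'} \lhd (\ovN_{R'}\ovovT_{R'})^{v\widetilde F'}$, i.e. with respect to the Sylow $d$-Levi subgroup $T'$ of $(\ovG', \widetilde F')$ and its Sylow $d$-normaliser $N'$. Next, by Lemma \ref{v2Ai}(b) the automorphism $v\widetilde\calG$ is a Sylow $d$-twist of $(\ovG, \widetilde F)$, and I would identify the Sylow $d$-Levi subgroup and Sylow $d$-normaliser of $(\ovG, \widetilde F)$ using Lemma \ref{Konstruktion_reg}. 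The point—exactly as in the proof of Lemma \ref{Adl-1}—is that the Sylow $d$-torus of $(\ovG', \widetilde F')$ is also a Sylow $d$-torus of $(\ovG, \widetilde F)$, because the relevant polynomial orders agree: both can be computed from $\betrag{\SU_l(q)}$ via the formulas in \cite[2.9]{Car2}, and the extra factor contributes no $\PHI_d$.

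The comparison of the two Levi/normaliser pairs is where the real content lies. Using \cite[3.2.3]{Car1} (or \cite[3.3.6]{Car2}) and calculations with the coroot lattices of types $\tA_{l-1}$ and $\tA_{l-2}$, I would show that $T := \ovT^{v\widetilde F}$ is isomorphic to $T' = \ovovT_{R'}^{v\widetilde F'}$ and that the Steinberg relations extend this isomorphism to one between $N := \ovN^{v\widetilde F}$ and $N'$. Since maximal extensibility is preserved under group isomorphism, it then holds with respect to $T \lhd N$, and by Remark \ref{rem_regular}(\ref{conj}) the same conclusion transfers back to a Sylow $d$-Levi subgroup and Sylow $d$-normaliser of $(\ovG, F)$ itself, which is the assertion of Theorem \ref{Theo1} in this case. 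Combined with Lemma \ref{Hauptfall}, this completes all regular numbers $d$ of $(\ovG, F)$ for $\ovG^F = \tw2\tA_{l-1,\SC}(q)$.

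The main obstacle I anticipate is the bookkeeping in the lattice computation showing $T \cong T'$ and $N \cong N'$: one has to track carefully that passing from rank $l$ to rank $l-1$ in type $\tA$ does not change the torus fixed points under the $d$-twisted Frobenius, and that the graph automorphism $\widetilde\calG$ restricts compatibly to the subsystem subgroup (which is guaranteed by Lemma \ref{Feig}(\ref{Feig}), but must be used at the level of the torus and its normaliser). Everything else is a direct appeal to results already established—Lemmas \ref{Hauptfall}, \ref{v2Ai}, \ref{Konstruktion_reg}, Remark \ref{rem_regular}—so the proof is short modulo this standard but slightly delicate identification.
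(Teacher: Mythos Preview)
Your overall strategy is the paper's: reduce to rank $l'=l-1$ via Lemma~\ref{v2Ai} and the restricted graph automorphism from Lemma~\ref{Feig}, exactly as Lemma~\ref{Adl-1} does in the untwisted case. However, the identification step as you state it does not work.

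You claim that $T:=\ovT^{v\widetilde F}$ is isomorphic to the Sylow $d$-Levi subgroup $T'$ of $(\ovG',\widetilde F')$ and that $N:=\ovN^{v\widetilde F}$ is isomorphic to its Sylow $d$-normaliser $N'$. This is false for rank reasons: $\ovT$ has rank $l-1$ while the maximal torus of $\ovG'$ has rank $l-2$, so their Frobenius fixed points cannot be isomorphic in general. What the paper actually compares $T\lhd N$ with is the pair $\wT\lhd\wN$ from the Section~\ref{sec_mFreg} setup for $\ovG'$, namely $\wT=\ovovT_{\overline R'}^{v\widetilde{\overline F}'}$ and $\wN=(\ovN_{R'}\ovovT_{\overline R'})^{v\widetilde{\overline F}'}$, where $\overline R'$ is the ambient type-$\tC_{l-1}$ root system (rank $l-1$, matching $\ovT$). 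These are strictly larger than the Sylow Levi/normaliser of $\ovG'$: the latter sit inside as $\ker(\delta)\cap\wT$ and $\ker(\delta)$.

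Consequently, invoking Lemma~\ref{Hauptfall} is not quite what is needed. Hauptfall gives maximal extensibility for $\ker(\delta)\cap\wT\lhd\ker(\delta)$; what you need is maximal extensibility for $\wT\lhd\wN$ themselves. That comes from the intermediate Lemma~\ref{Endelemma} (equivalently, Proposition~\ref{mFreg} applied with $\delta$ trivial), which is established in the course of proving Hauptfall but is not its statement. Once you make this correction the rest of your outline---the lattice computation via \cite[3.2.3]{Car1}, the Steinberg-relation check extending the torus isomorphism to the normalisers, and the transfer back to $(\ovG,F)$ via Remark~\ref{conj}---goes through exactly as in the paper.
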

\begin{proof}This statement is proven for some regular numbers by Lemma \ref{Hauptfall}. For the remaining cases we know that $d$ is a regular number of $(\ovG', \widetilde F')$ satisfying the assumptions from \ref{ass2A}. In this situation we compare $\ovN^{v\widetilde F}$ with $(\ovovT_{\overline R'} \ovN_{R'})^{v\widetilde \oF'}$, where $\widetilde \oF':= \bww' \oF$, the element $\bww'$ is the canonical representative of the longest element in $W_{R'}$ and $\ovR'$ the root subsystem of type $\tB_{l-1}$ with simple roots $\Lset{\al_0',\al_1, \ldots , \al_{l-2}}$ from \ref{assA}.

Like in the proof of Lemma \ref{Adl-1} there exists an isomorphism between $\ovN^{v\widetilde F}$ and $(\ovovT_{\overline R'} \ovN_{R'})^{v\widetilde \oF'}$, which maps $\ovT^{v\widetilde F}$ onto $\ovovT_{\overline R'}^{v\widetilde \oF'}$. These subgroups of $\ovovG'$ occurred already as $\wN$ and $\wT$ in the proof of Lemma \ref{Hauptfall}. From there it is known that maximal extensibility holds with respect to $\wT\lhd \wN$. This implies the statement. 
\end{proof}

\section{Application \texorpdfstring{for $\ovG^F=\tB_{l,\SC}(q)$}{for groups of type B}} \label{applB}
In this section we concentrate on the case where the root system of $\ovG$ is of type $\tB_l$ and $F$ is a standard Frobenius endomorphism. Again we prove Theorem \ref{Theo1} by applying the Proposition \ref{mFreg}. 

Like before we use results from Section \ref{sec_towards_application} after choosing a root system \ovR and a Frobenius endomorphism $\oF: \ovovG\rightarrow \ovovG$ satisfying Assumption \ref{ass_ovR_oF}. Furthermore we have to fix a Sylow $d$-twist $v\Gamma$ satisfying Assumption \ref{ass_Sylowtwist}.

\begin{ass}\label{assB}
In this section we assume $R$ to be the root system of type $\tB_l$ from \ref{ass_classical_roots}. Hence $\RF=\Lset{\al_1, \ldots, \al_l}$ with $\al_1:=e_1$ and $\al_i:=e_{i}-e_{i-1}$ ($i\geq 2$) forms a system of simple roots of $R$.
With $\ovR:=R$ and $\oF:=F$ Assumption \ref{ass_ovR_oF} is satisfied and we use the groups and maps introduced there.
Assume $d$ to be a regular number of $(\ovG,F)$, hence $d\teilt 2l$ by Table \ref{TabregZahl}.
\end{ass}

Using the morphism $f$ from \ref{ass_classical_roots} we determine a Sylow $d$-twist $v$ of $(\ovG, F)$ satisfying Assumption \ref{ass_Sylowtwist}.

\begin{lem}\label{DefvB}
Let $V$ be the extended Weyl group of $\ovG$ and $n_i:=n_{\al_i}(1)$ ($1\leq i \leq l$). Let $v_0:=n_1\cdots n_l\in V$ and $v:=v_0^{\frac{2l}{d}}$. Then $v$ is a Sylow $d$-twist of $(\ovG,F)$ satisfying Assumption \ref{ass_Sylowtwist}.
\end{lem}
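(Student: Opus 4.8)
The plan is to follow, almost word for word, the proof of Lemma \ref{DefvC}. The point is that the Weyl group $W$ of type $\tB_l$ is the same reflection group as that of type $\tC_l$: it has Coxeter number $2l$, its longest element $w_0=-\operatorname{id}$ is central, and the Coxeter element $c:=w_{\al_1}\cdots w_{\al_l}=\rho(v_0)$ acts on $\Lset{\pm e_1,\ldots,\pm e_l}$ as a single $2l$-cycle. Everything used in the argument depends only on these facts, so the sole difference from type $\tC_l$ is that the sub-root-systems $R_r$ are now of type $\tB_{|\calB_r|}$ rather than $\tC_{|\calB_r|}$, which plays no role in the present statement.

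First I would pass to the braid group $\bB$ of type $\tB_l$ with standard generators $\bb_1,\ldots,\bb_l$ and set $\bw:=\bb_1\cdots\bb_l$, so that $\tau(\bw)=v_0$ and $\tau(\bw^{2l/d})=v$. Since $\bw^l$ has length $\le l^2$ and maps onto $c^l=w_0$, which has length $l^2$ (the number of positive roots of $R$), the word $\bw^l$ is reduced and equals $\bbw$; hence $\bw^i$ is reduced for every $i\le l=\tfrac{2l}{2}$ and $\bw^{2l}=\bbw^2$, so $\bw$ is a good $2l$th root of $\bbw^2$. By Remark \ref{remsylowb}, and by Remark \ref{bembessis} (which applies because $\ovR$ is classical and $\Gamma=\operatorname{id}_{\ovG}$), the element $v=\tau(\bw^{2l/d})$ is then a Sylow $d$-twist of $(\ovG,F)$ satisfying $\rho(\Cent_V(v))=\Cent_W(\rho(v))$. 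For condition \ref{ass_Sylowtwiste} I would use \cite[4.4.1]{GeckCox}: $w_0$ being central in $W$ forces $\bbw$ central in $\bB$, hence $\bww=\tau(\bbw)$ central in $V$; since $v_0^l=\bww$ and $j$ divides $\tfrac{2l}{d}$ by the orbit structure of $\osigma$, the element $v^{l/j}$ is a power of $\bww$ and so lies in $\Zent(V)$.

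It then remains to check conditions \ref{ass_Sylowtwista}--\ref{ass_Sylowtwistd}, and these come from the same elementary computations in $V$ and in $f(W)$ as in type $\tC_l$: $\osigma=\ovf(\rho(v)\phi)$ is a power of the $l$-cycle induced by $c$, so all its orbits $\calB_r$ have equal length and $J=\Lset{1,\ldots,j}$ may be taken as a transversal with $r\in\calB_r$; the restriction of $\rho(v)\phi$ to $W_{R_1}$ is again a regular (Coxeter-type) element of a reflection group of type $\tB_{|\calB_1|}$, whence $\Cent_{W_{R_1}}(\rho(v)\phi)$ is cyclic; and the wreath-product description $\rho(\ovN^{vF})\cong\Cent_{W_{R_1}}(w)\wr\Sym_\calB$ --- together with the normality of $\Cent_{\prod_r W_{R_r}}(f(\rho(v)\phi))$ in $\rho(\ovN^{vF})$ --- follows from \cite[3.3.6]{Car2} and Bessis's theorem \cite[Theorem 1.3]{Bessis}, exactly as in the proof of Lemma \ref{DefvC}. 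The only step requiring more than pure transcription is \ref{ass_Sylowtwistd}, i.e.\ pinning down the \emph{full} normaliser $\rho(\ovN^{vF})$ rather than a proper subgroup; but because $W_{\tB_l}=W_{\tC_l}$ as reflection groups this is literally the computation already carried out in type $\tC_l$, so no genuinely new obstacle arises.
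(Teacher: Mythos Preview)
Your proposal is correct and follows exactly the same approach as the paper: the paper's own proof simply observes that the braid group and Weyl group of type $\tB_l$ coincide with those of type $\tC_l$, so the proof of Lemma \ref{DefvC} transfers verbatim. Your write-up merely spells out the steps of that transfer in more detail; the only slight inaccuracy is that the paper's proof of Lemma \ref{DefvC} does not actually invoke \cite[Theorem 1.3]{Bessis} for condition \ref{ass_Sylowtwistd} but rather refers to direct computations in $V$ and $f(W)$.
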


\begin{proof} By the proof of Lemma \ref{DefvC} the element $v$ is the image of a good $d$th root of $\bbw^2$ in the associated braid group, as the braid group of type $\tB_l$ and $\tC_l$ coincide. Analogously the Weyl groups coincide and Lemma \ref{DefvC} also implies that the remaining conditions of Assumption \ref{ass_Sylowtwist} are satisfied.\end{proof}

We determine groups satisfying Assumption \ref{generalassumptionsa}.

\begin{lem}\label{lem_generalass_B}
Assume the groups $\wN$, $\wK$, $\wK_1$, $\wT$ and elements $\Set{p_r|r \in J'}$ be associated to $v$ as in Lemma \ref{lem1} with $\calZ=\spann<h_{\al_1}(-1)>$. Then Assumption \ref{generalassumptionsa} holds.
\end{lem}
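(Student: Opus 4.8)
The plan is to verify the hypotheses of Lemma~\ref{lem1} for the specific choice $\calZ=\spann<h_{\al_1}(-1)>$ in type $\tB_l$, exactly as was done for type $\tC_l$ in Lemma~\ref{lem_generalass_C}. By Lemma~\ref{DefvB} the element $v$ is a Sylow $d$-twist of $(\ovG,F)$ satisfying Assumption~\ref{ass_Sylowtwist}, so the only remaining thing to check before invoking Lemma~\ref{lem1} is that $\ovovT=\ovovT_1\cdots\ovovT_j$ and, when $j\geq 2$, that $\calZ=\ovT_1\cap(\ovT_2\cdots\ovT_j)$. By Lemma~\ref{lem3_lattices} both of these reduce to purely lattice-theoretic assertions about the coroot lattices $\ZZ\ovR^\vee$ and $\ZZ\ovR_r^\vee$ (recall here $\ovR=R$ is of type $\tB_l$, so $\ovR_r$ is again of type $\tB$ on the orbit $\calB_r$).

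First I would check the first bullet of Lemma~\ref{lem3_lattices}: $2\al^\vee\in\bigoplus_{r\in J}\ZZ\ovR_r^\vee$ for all $\al\in\ovR$. Writing the coroots of $\tB_l$ in the standard coordinates ($e_i-e_{i-1}$ has coroot $e_i-e_{i-1}$, and the short root $e_1$ has coroot $2e_1$), every coroot $\al^\vee$ is of the form $e_i\pm e_{i'}$ or $2e_i$, hence $2\al^\vee$ lies in $2\ZZ^l$, and $2e_i$ is a sum of coroots supported inside a single orbit $\calB_r$; this is the ``straightforward calculations with lattices'' step. Second, I would compute the quotient
\[
\calQ:=\nicefrac{\spann<\ZZ\ovR_1^\vee,(q-1)\ZZ\ovR^\vee>\cap\Spann<\ZZ\ovR_r^\vee,(q-1)\ZZ\ovR^\vee\mid 1\neq r\in J>}{(q-1)\ZZ\ovR^\vee}
\]
and show it is cyclic of order $2$, generated by the image of $h_{\al_1}(-1)$, i.e.\ isomorphic to $\calZ$. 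The point is that $\ZZ\ovR_1^\vee$ and $\sum_{r\neq 1}\ZZ\ovR_r^\vee$ together span $\ZZ^l$ but their intersection inside the ambient $\ZZ^l$, after reducing modulo $(q-1)\ZZ\ovR^\vee$, picks up exactly the element $2e_1$ up to the lattice relations — this is the diagonal $2$-element coming from the short root, which is precisely $\calZ=\spann<h_{\al_1}(-1)>$; its isomorphism type does not depend on $q$, as required. Once $\calQ\cong\calZ$ is established, Lemma~\ref{lem3_lattices} yields $\ovovT=\ovovT_1\cdots\ovovT_j$ and $\calZ=\ovT_1\cap(\ovT_2\cdots\ovT_j)\leq\Zent(\wN)$, and Lemma~\ref{lem1} then gives Assumption~\ref{generalassumptionsa}.

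The main obstacle I anticipate is the explicit computation of $\calQ$: unlike the type $\tC_l$ case where the analogous quotient was trivial, here it is nontrivial, so one must carefully track the relations in $\ZZ\ovR^\vee/(q-1)\ZZ\ovR^\vee$ and identify the surviving $2$-torsion element as $h_{\al_1}(-1)$ rather than something else. The subtlety is ensuring $\calZ\leq\Zent(\wN)$, which follows because $h_{\al_1}(-1)$ generates (a subgroup of) $\Zent(\ovG)$ — indeed $h_{\al_1}(-1)$ is the central involution of $\Sp_{2l}$ realised inside the $\tB_l$-group via the isogeny, so it is centralised by all of $\ovN\ovovT$ — and this is exactly the hypothesis $\calZ\leq\Zent(\ovG)\cap\ovT_1$ demanded in Lemma~\ref{lem1}. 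Everything else is a transcription of the arguments of Lemma~\ref{lem_generalass_C} with the single change that $\delta$ will later be chosen with kernel intersecting $\wT$ in the Sylow $d$-Levi subgroup rather than being trivial, but that choice belongs to the subsequent lemmas, not to this one.
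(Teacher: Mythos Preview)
Your approach is essentially identical to the paper's: invoke Lemma~\ref{DefvB}, reduce to the two lattice conditions of Lemma~\ref{lem3_lattices}, verify $2\al^\vee\in\bigoplus_r\ZZ\ovR_r^\vee$, compute $\calQ$, and then apply Lemma~\ref{lem1}. Two small corrections: the paper finds $\calQ=\nicefrac{\Spann<(q-1)e_i\mid 1\leq i\leq l>}{(q-1)\ZZ\ovR^\vee}$ of order $(2,q-1)$ rather than always $2$ (so trivial in characteristic~$2$, matching $\calZ=\spann<h_{\al_1}(-1)>$ which is also trivial there), and your parenthetical identifying $h_{\al_1}(-1)$ with the central involution of $\Sp_{2l}$ is off --- the simply-connected group of type $\tB_l$ is $\mathrm{Spin}_{2l+1}$, whose centre is indeed $\spann<h_{\al_1}(-1)>$, but there is no $\Sp_{2l}$ in sight.
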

\begin{proof} Because of Lemma \ref{DefvB} we can apply Lemma \ref{lem1} if $\ovT=\ovT_{1}. \cdots .\ovT_{j}$ and $\calZ=\ovT_1\cap \prod_{1\neq r \in J}\ovT_{r}$, where $\ovT_r$ is defined as in Lemma \ref{lem1}
Like before both equations can be tested by calculations in the coroot lattices:
Again one checks $2\al^\vee \in \oplus_{r\in J}\ZZ \ovR_r^{\vee}$ for all $\al\in \ovR$. 
Similarly the quotient group \break $\calQ:=\nicefrac{\spann< \ZZ \overline R_1^\vee , (q-1) \ZZ\overline R^\vee> \cap \Spann< \ZZ \overline R_{r}^\vee , (q-1) \ZZ \overline R^\vee | 1\neq r\in J>}{(q-1)\ZZ \overline R^\vee}$ can be calculated and 
one obtains $\calQ=\nicefrac{\Spann<(q-1)e_i|1\leq i \leq l>} {(q-1)\ZZ \overline R^\vee}$. Hence the group has order $(2,q-1)$.

By Lemma \ref{lem3_lattices} this implies $\ovT=\ovT_{1} \cdots \ovT_{j}$ and $\calZ=\ovT_1\cap \prod_{1\neq r \in J}\ovT_{r}$. Hence Assumption \ref{generalassumptionsa} is satisfied.
\end{proof}

For applying Proposition \ref{mFreg} we also have to check also the hypotheses, which is a bit more complicated in this case because of $Z\neq 1$.

\begin{lem}\label{lem_hyp_B}
\begin{enumerate}
\item The group $Z=\calZ$ satisfies Hypothesis \ref{hyp_inkommut}.
\item The elements $\Set{p_r|r\in J'}$ fulfil the hypotheses \ref{hypo_pr1} and \ref{hyp_pr2}.
\end{enumerate}
\end{lem}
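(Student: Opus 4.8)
The plan is to verify Hypotheses \ref{hyp_inkommut}, \ref{hypo_pr1} and \ref{hyp_pr2} for the type $\tB_l$ situation by imitating the corresponding arguments in the type $\tC_l$ case (Lemma \ref{lem_hyp_C}), with the one genuine difference that now $Z=\calZ=\spann<h_{\al_1}(-1)>$ is a group of order $(2,q-1)$ rather than the trivial group, so the weaker conditions in Hypothesis \ref{hyp_inkommut} (membership in $\Zent(\wN)$ and containment in $\ker(\la)\cap Z$ rather than triviality) have to be used genuinely. First I would establish that $Z=\calZ\leq \Zent(\wN)$: by Lemma \ref{lem_generalass_B} and Lemma \ref{lem3_lattices} we have $\calZ=\ovT_1\cap\prod_{1\neq r\in J}\ovT_r$ and $\calZ\leq\Zent(\ovG)$, and since $\calZ\leq\bigcap_{r\in J}\ovT_r$ is centralised by each $\ovK_r$ while the $p_r$ permute the $\ovT_r$ and hence fix $\calZ$ pointwise, $\calZ$ is centralised by $\wK$ and by $S$, thus by $\wN=\wK S$. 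For the second part of Hypothesis \ref{hyp_inkommut}, given $r\neq r'\in J$ and $\la\in\Irr(\wT_r)$, I would argue that $[\I_{\wK_r}(\la),\wK_{r'}]\leq\wK_r\cap\wK_{r'}\leq\wT$ by Assumption \ref{wKwTkommut}, and that in type $\tB_l$ one computes $\wK_r\cap\wK_{r'}\subseteq Z$ from the root-system description in \ref{ass_classical_roots} (the only roots of $R$ lying in both $\Spann<e_i|i\in\calB_r>$ and the orthogonal complement of $R_r$ that contribute to the intersection are short roots $e_i$, and the resulting elements $h_{e_i}(-1)$ are $\ovG^{vF}$-conjugate into $\calZ$); finally $[\I_{\wK_r}(\la),\wK_{r'}]$ lies in $\ker(\la)$ because $\la$ is $\I_{\wK_r}(\la)$-invariant and the commutator subgroup of an invariant character's inertia group with anything centralising $\wT_r$ acts trivially on $\wT_r$, so it stabilises $\la$ and, being inside $Z$, lies in $\ker(\la)\cap Z$.

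Next I would treat Hypothesis \ref{hypo_pr1}: its first part, $S_I\cap\wT=\spann<p_r^2\mid r\in I>$ for every $I\subseteq J'$, is Lemma \ref{lem2a} and needs no reproof. For the second part, the Steinberg relations give exactly as in Lemma \ref{lem_hyp_C} that $p_r^2=\bigl(\prod_{i\in\calB_r}h_{e_i}(-1)\bigr)\bigl(\prod_{i\in\calB_{r+1}}h_{e_i}(-1)\bigr)$, so with $t:=\prod_{i\in\calB_r}h_{e_i}(-1)\in\wT_r$ we get $p_r^2=t\,(t^{p_r})^{-1}$; one only has to check $t\in\wT_r$, i.e.\ that $t$ lies in $\Lang^{-1}(\calZ)\cap\ovT_r$, which follows because $t\in\ovT_r$ is $vF$-fixed (the standard Frobenius fixes $H$ and $v$ permutes the $\calB_r$ compatibly with the orbit structure from \ref{ass_Sylowtwista}), hence $\Lang(t)=1\in\calZ$. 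Then for Hypothesis \ref{hyp_pr2}: I would show $p_r^2\in\Zent(\wK_r\wK_{r+1})$ using that $t\in\Zent(\wK_r)$ — here the type $\tB_l$ subtlety appears, since $h_{e_i}(-1)$ need not be central in a group of type $\tB$; instead I would observe that the relevant element is $\prod_{i\in\calB_r}h_{e_i}(-1)$, which is the image under the reflection-product of the longest element data and is central in $W_{R_r}$-level, so it centralises $\ovN_{R_r}$ by the Steinberg relations (each $n_\al(t)$ with $\al\in R_r$ commutes with it because $\spann<e_i\mid i\in\calB_r>$ is permuted trivially up to sign). Granting $p_r^2\in\Zent(\wK_r\wK_{r+1})$, Lemma \ref{lemAC} does \emph{not} directly apply since type $\tB_l$ is not in its hypothesis, but the same two-line computation $(kk^{p_r})^{p_r}=k^{p_r}k^{p_r^2}=k^{p_r}k=kk^{p_r}$ works once one knows $[\wK_r,\wK_{r+1}]\leq Z\leq\ker$ of the relevant character and $p_r^2$ centralises $\wK_r\wK_{r+1}$, giving $[p_r,kk^{p_r}]=1$ for $k\in\wK_r$; the remaining statement $S_I^s\wT=S_{I'}\wT\Rightarrow S_I^s=S_{I'}$ is exactly Lemma \ref{lem2b}.

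\begin{proof}
By Lemma \ref{lem_generalass_B} the hypotheses of Lemma \ref{lem3_lattices} are satisfied, so $\calZ=\ovT_1\cap(\ovT_2\cdots\ovT_j)\leq\Zent(\ovG)$ and $\calZ\leq\Zent(\wN)$. As $\calZ\leq\ovT_1$ and conjugation by the elements of $S$ permutes the $\ovT_r$, we have $\calZ\leq\bigcap_{r\in J}\ovT_r$, so $\calZ$ is centralised by every $\wK_r$, hence by $\wK=\spann<\wK_r\mid r\in J>$; since the $p_r$ fix $\calZ$ pointwise, $\calZ\leq\Zent(\wN)$ again. For $r\neq r'\in J$ and $\la\in\Irr(\wT_r)$, Assumption \ref{wKwTkommut} gives $[\wK_r,\wK_{r'}]\leq\wK_r\cap\wK_{r'}\leq\wT$; by the description of $R$ in Assumption \ref{ass_classical_roots} the intersection $\wK_r\cap\wK_{r'}$ is generated modulo $\Lang^{-1}(\calZ)$ by elements $h_{e_i}(-1)$ which are $\ovG^{vF}$-conjugate into $\calZ$, so $[\wK_r,\wK_{r'}]\leq Z$. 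As $\I_{\wK_r}(\la)$ stabilises $\la$ and $\wK_{r'}$ centralises $\wT_r$ by \ref{wKwTkommut}, the group $[\I_{\wK_r}(\la),\wK_{r'}]$ acts trivially on $\wT_r$, hence fixes $\la$, and being contained in $Z$ it lies in $\ker(\la)\cap Z$. Thus Hypothesis \ref{hyp_inkommut} holds.

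The first part of Hypothesis \ref{hypo_pr1} is Lemma \ref{lem2a}. For the second part, the Steinberg relations give $p_r^2=\bigl(\prod_{i\in\calB_r}h_{e_i}(-1)\bigr)\bigl(\prod_{i\in\calB_{r+1}}h_{e_i}(-1)\bigr)$ and hence $p_r^2=t\,(t^{p_r})^{-1}$ with $t:=\prod_{i\in\calB_r}h_{e_i}(-1)$. The element $t$ lies in $\ovT_r$ and is fixed by $vF$ (the standard Frobenius fixes $H$, and $v\Gamma$ permutes the orbits $\calB_r$ compatibly with \ref{ass_Sylowtwista}), so $\Lang(t)=1\in\calZ$ and $t\in\wT_r$. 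Hence Hypothesis \ref{hypo_pr1} holds.

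For Hypothesis \ref{hyp_pr2}, note that $t=\prod_{i\in\calB_r}h_{e_i}(-1)$ is, up to sign, fixed by $W_{R_r}$, so by the Steinberg relations it commutes with every $n_\al(t')$ with $\al\in R_r$ and with every element of $\ovT_r$; thus $t\in\Zent(\wK_r)$ and therefore $p_r^2\in\Zent(\wK_r\wK_{r+1})$. Since $[\wK_r,\wK_{r+1}]\leq Z\leq\Zent(\wN)$, for $k\in\wK_r$ one computes $(kk^{p_r})^{p_r}=k^{p_r}k^{p_r^2}=k^{p_r}k=kk^{p_r}$, so $[p_r,kk^{p_r}]=1$. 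The remaining assertion, that $S_I^s\wT=S_{I'}\wT$ for $s\in S$ and $I,I'\subseteq J'$ implies $S_I^s=S_{I'}$, is Lemma \ref{lem2b}. This completes the verification of the hypotheses \ref{hypo_pr1} and \ref{hyp_pr2}.
\end{proof}
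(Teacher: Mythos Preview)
Your argument has two genuine gaps, both stemming from places where type $\tB_l$ behaves differently from type $\tC_l$ and the $\tC_l$ reasoning cannot simply be copied.

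\textbf{Hypothesis \ref{hyp_inkommut}.} You argue that $[\I_{\wK_r}(\la),\wK_{r'}]$ ``acts trivially on $\wT_r$, hence fixes $\la$, and being contained in $Z$ it lies in $\ker(\la)\cap Z$.'' But fixing $\la$ means lying in the \emph{inertia group} of $\la$, not in $\ker(\la)$. Since $Z=\spann<h_{\al_1}(-1)>$ lies inside $\wT_r$, a character $\la\in\Irr(\wT_r)$ may well satisfy $\la(h_{\al_1}(-1))=-1$, and then $Z\not\leq\ker(\la)$; your argument gives no reason why the commutator should vanish in that case. The paper handles precisely this case by an explicit computation: if $\la$ is nontrivial on $Z$ (so $q$ is odd), one exhibits $t=\prod_{i\in\calB_r}h_{e_i}(\zeta)\in\wT_r$ with $\zeta$ a primitive fourth root of unity such that $t^k=h_{e_1}(-1)t$ for $k\in\wK_r\setminus\ovN_{R_r^\tD}$, whence $\la(t^k)\neq\la(t)$ and $\I_{\wK_r}(\la)\leq\ovN_{R_r^\tD}\leq\Cent_{\wK_r}(\wK_{r'})$. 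Thus the commutator is actually trivial when $Z\not\leq\ker(\la)$. This case distinction is essential and is missing from your proof.

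\textbf{Hypothesis \ref{hyp_pr2}.} You assert that $t=\prod_{i\in\calB_r}h_{e_i}(-1)\in\Zent(\wK_r)$, and then run the two-line computation $(kk^{p_r})^{p_r}=k^{p_r}k^{p_r^2}=k^{p_r}k=kk^{p_r}$. But in type $\tB_l$ this centrality fails: conjugation by an element $k\in\wK_r$ with $\rho(k)\notin W_{R_r^\tD}$ sends the relevant $t$ (which in the paper is $\prod_{i\in\calB_r}h_{e_i}(\zeta)$ with $\zeta$ a primitive $(2,q-1)^2$th root of unity) to $h_{e_r}(-1)t$, not to $t$. Consequently $p_r^2\notin\Zent(\wK_r\wK_{r+1})$, Lemma \ref{lemAC} is inapplicable (as you note), and your computation breaks at the step $k^{p_r^2}=k$. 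The paper's computation is instead $(kk^{p_r})^{p_r}=k^{p_r}k^{p_r^2}=k^{p_r}k^t=k^{p_r}k\,h_{e_r}(-1)$, and the extra factor $h_{e_r}(-1)$ cancels against the commutator $[k^{p_r},k]\in Z$ arising when one swaps $k^{p_r}$ and $k$; this is why the identity still holds, but only after this finer analysis. A separate (trivial) check covers $k\in\Cent_{\wK_r}(\wK_{r+1})$.
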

\begin{proof} It is well-known that $Z$ is the centre of $\ovG$. Hence it suffices for (a) to verify $ [\I_{\wK_r}(\lambda), \wK_{r'}]\leq \ker(\lambda)\cap Z $ for all $ r\neq r'$ and $\lambda\in \Irr(\wT_r)$.

First we begin to calculate $[\wK_r, \wK_{r'}]$ for $r\neq r'$. As every long root $\beta\in R_r$ satisfies $\beta\perp \al$ and $\beta\pm \al \notin R_{r'}$, the equation $[\ovN_{R_r^{\tD}},\ovK_{r'}]=1 $ holds, where $R_r^{\tD}$ is the root system of long roots of $R_r$. Longish calculations with the Steinberg relations also show $[n_{e_r}(1),n_{e_{r'}}(1)]=h_{e_1}(-1)$, see \cite[Lemma 10.1.5]{Spaeth}. This implies $[\wK_r, \wK_{r'}]\leq Z$.

Let $\la \in \Irr(\wT_r)$. If $Z$ lies in $\ker(\la)$, the above implies $[\I_{\wK_r}(\lambda), \wK_{r'}]\leq Z=\ker(\la)\cap Z $. Otherwise $2\nteilt q$, and the character $\lambda\in \Irr(\wT_r)$ satisfies $\lambda(h_{e_1}(-1))=-1$ and ${\I_{\wK_r}(\lambda)\leq \Cent_{\ovN_r}(\ovN_{r+1})}$, as $t^k= h_{e_1}(-1) t$ and $\la(t)\neq \la(t^k)$ for $k\in \wK_r\setminus \ovN_{R_r^{\tD}}$ and $ t:=\prod_{i\in \calB_r} h_{e_i}(\zeta)$, where $\zeta$ is a primitive fourth root of unity. The element $t$ lies in $\wT_r$, as $t^{vF}\in \Lset{t, t h_{e_r}(-1)}$. This proves $ [\I_{\wK_r}(\lambda), \wK_{r'}]\leq \ker(\lambda)\cap Z $.

The first part of Hypothesis \ref{hypo_pr1} states: For every $I\subseteq J'$ and $S_I:=\Spann<p_r|r \in I>$ the equation $S_I\cap \wT= \Spann<p_r^{2}| r \in I> $ holds. This is known from Lemma \ref{lem2a}.

The Steinberg presentation implies $p_r^2=\left(\prod_{i\in \calB_r} h_{e_i}(\zeta)\right)(\prod_{i\in \calB_{r+1}} h_{e_i}(\zeta^{-1}))$, where $\zeta$ is a primitive $(2,q-1)^2$th root of unity. This implies $p_r^2=t (t^{p_r})^{-1}$ for $t:=\prod_{i\in \calB_r} h_{e_i}(\zeta)$. Hence the remaining part of Hypothesis \ref{hypo_pr1} is also true.

Now we concentrate on Hypothesis \ref{hyp_pr2}: for $k\in \wK_r$ the Steinberg relations imply $t^k=\prod_{i\in \calB_r} h_{e_i}(\zeta)=t$, if $\rho(k)\in W_{R_r^\tD}$ or $2\teilt q$, and $t^k=h_{e_r}(-1) t$, otherwise. For $r'\in J'$ this implies \[(k k^{p_r})^{p_r}=k^{p_r} k^{p_r^2}= k^{p_r} k^t=k^{p_r} k h_{e_r}(-1)= k k^{p_r},\] if $k\in \wK\setminus \Cent_{\wK_r}(\wK_{r+1})$. Analogous calculations for $k\in \Cent_{\wK_r}(\wK_{r+1})$ imply $[p_r,kk^{p_r}]=1$ for every $k\in \wK_r$. This is the first part of Hypothesis \ref{hypo_pr1}. By Lemma \ref{lem2b} the second part of Hypothesis \ref{hyp_pr2} also holds.\end{proof}

We are now ready to apply Proposition \ref{mFreg} and prove Theorem \ref{Theo1} for $\ovG^F=\tB_{l,\SC}(q)$.

\begin{lem}\label{Theo1B}
Theorem \ref{Theo1} holds whenever $\ovG$ has a root system of type $\tB_l$.
\end{lem}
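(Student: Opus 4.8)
The plan is to imitate the proof of Lemma \ref{Theo1C} for type $\tC_l$ almost verbatim, applying Proposition \ref{mFreg} to the groups attached to the Sylow $d$-twist of Lemma \ref{DefvB}. The only genuine new difficulty is that in type $\tB_l$ the subgroup $\calZ$ forced by Lemma \ref{lem1} is the centre $\spann<h_{\al_1}(-1)>$ of $\ovG$, which is non-trivial for odd $q$; consequently $\wN$ strictly contains the Sylow $d$-normaliser $\ovN^{vF}$, and the auxiliary character $\delta$ cannot be taken to be trivial but must be produced by hand.

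First I would reduce to the setting of Assumption \ref{assB}: the Dynkin diagram of type $\tB_l$ admits no non-trivial symmetry, hence $F=F_0$ is a standard Frobenius endomorphism; under the hypotheses of Theorem \ref{Theo1} the number $d$ is regular, so $d\teilt 2l$ by Table \ref{TabregZahl}, and by Remark \ref{conj} I may replace $(\ovG,F)$ by $(\ovG,vF)$ for the Sylow $d$-twist $v$ of Lemma \ref{DefvB}. Attaching to $v$ the groups $\wN,\wK,\wK_1,\wT$ and the elements $\{p_r\}$ as in Lemma \ref{lem1} with $\calZ=\spann<h_{\al_1}(-1)>$, Lemma \ref{lem_generalass_B} supplies the general assumption \ref{generalassumptionsa} and Lemma \ref{lem_hyp_B} supplies Hypotheses \ref{hyp_inkommut}--\ref{hyp_pr2}; what remains is a character $\delta\in\Irr(\wN)$ as in Assumption \ref{notationdelta} whose kernel is exactly $\ovN^{vF}$.

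To construct $\delta$ I would observe that $n\mapsto n^{vF}n^{-1}$ is a homomorphism $\wN\rightarrow\calZ$ --- well defined because $\wN=\Lang^{-1}(\calZ)\cap\ovN$, and a homomorphism because $\calZ$ is central --- with kernel $\ovN^{vF}$. Applying the Lang--Steinberg theorem to the connected $vF$-stable torus $\ovT$ (and, likewise, to the connected $vF$-stable tori $\ovT_r$, each of which contains $\calZ$) shows that this map and its restrictions to $\wT$ and to each $\wT_r$ are surjective. Since $\calZ$ is cyclic of order $(2,q-1)$, pulling back a faithful linear character of $\calZ$ gives $\delta\in\Irr(\wN)$ with $\ker(\delta)=\ovN^{vF}$ and $\ker(\restr\delta|{\wT})=\wT\cap\ovN^{vF}=\ovT^{vF}$; one has $\delta(S)=1$ since $S=\Spann<p_r|r\in J'>\leq V^{vF}\leq\ovN^{vF}$ by Lemma \ref{lem1}, and $\ker(\delta_r)\wT_r=\wK_r$ because, for odd $q$, $\restr\delta|{\wT_r}$ is already non-trivial so $\wT_r\not\leq\ker(\delta_r)$, while for even $q$ the group $\calZ$ is trivial and everything is automatic.

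With $\delta$ available, Proposition \ref{mFreg} yields maximal extensibility with respect to $\ker(\delta)\cap\wT=\ovT^{vF}\lhd\ker(\delta)=\ovN^{vF}$. Since $\ovS$ is regular, Lemma \ref{Konstruktion_reg} identifies $\ovT^{vF}$ with a Sylow $d$-Levi subgroup and $\ovN^{vF}$ with the associated Sylow $d$-normaliser of $(\ovG,vF)$, and Remark \ref{conj} transports the conclusion back to $(\ovG,F)$, proving Theorem \ref{Theo1} in this case. I expect the main obstacle to be exactly the construction and verification of $\delta$ --- in particular confirming that $\ker(\delta)=\ovN^{vF}$ on the nose and that $\ker(\delta_r)\wT_r=\wK_r$ holds for every orbit index $r$ --- since, thanks to Lemmas \ref{lem_generalass_B} and \ref{lem_hyp_B}, the verification of the hypotheses of Proposition \ref{mFreg} is otherwise already done.
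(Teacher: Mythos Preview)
Your proposal is correct and follows essentially the same approach as the paper's own proof: both construct $\delta$ by composing a faithful linear character $\eta$ of $\calZ=Z$ with the Lang map $n\mapsto n^{vF}n^{-1}$, verify Assumption \ref{notationdelta} (using $p_r\in\ovN^{vF}$ for $\delta(S)=1$, and Lang's theorem for $\ker(\delta_r)\wT_r=\wK_r$), and then apply Proposition \ref{mFreg} together with Lemma \ref{Konstruktion_reg}. The only cosmetic difference is that the paper checks $\ker(\delta_r)\wT_r=\wK_r$ via $\rho(\ovK_r^{vF})=\rho(\wK_r)$, whereas you argue directly that $\restr\delta|{\wT_r}$ is surjective; these are equivalent uses of Lang--Steinberg.
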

\begin{proof}
According to the previous lemmas the necessary conditions and assumptions are satisfied. Let $\eta$ be a faithful linear character of $Z$. Then $\delta(n):=\eta(\Lang(n))$ defines a linear character of $\wN$, whose kernel is $\ovN^{vF}$. The character satisfies $\delta(S)=1$ as by definition $p_r\in \ovN^{vF}$. The theorem of Lang \cite[4.4.17]{Springer} implies $\rho(\ovK_r^{vF})=\rho(\wK_r)$. Hence $\delta$ also satisfies $\ker(\delta_r)\wT_r=\wK_r$.

By Proposition \ref{mFreg} maximal extensibility holds with respect to $\ovT^{vF}\lhd \ovN^{vF}$, where by Lemma \ref{Konstruktion_reg} the group $\ovT^{vF}$ is a Sylow $d$-Levi subgroup of $(\ovG,vF)$ and $\ovN^{vF}$ the associated Sylow $d$-normaliser.
\end{proof}

\section{Application \texorpdfstring{to $\ovG^F=\tD_{l,\SC}(q)$}{to groups of type D}} \label{applD}
In this section we prove Theorem \ref{Theo1} in the case where the root system of $\ovG$ is of type $\tD_l$ and $F$ is a standard Frobenius endomorphism.

In contrary to the previous cases, we cannot apply Proposition \ref{mFreg} anymore, as the relative Weyl groups in the situations are no wreath products in general. We treat $\ovG$ as a subgroup of a simply-connected simple group associated with root system of type $\tB_l$.

\begin{setting}\label{set51D}
Let $\overline R$ be a root system of type $\tB_l$ with the system of simple roots $\overline \RF=\Lset{\overline\al_1,\al_2, \ldots , \al_l}$ from Assumption \ref{assB} and $\ovovG$ the associated simply-connected simple algebraic group over $\ovF_q$, whose root subgroups are denoted by $\ovovX_\al$ as in \ref{basicsetting}. Let $R\subset \overline{R}$ be the root subsystem consisting of all long roots with $\RF=\Lset{\al_1,\al_2, \ldots , \al_l}$ as system of simple roots, where $\al_1=2\overline \al_1+\al_2$. By Remark \ref{einbett} the group $\ovG$, a simply-connected simple group associated to $R$, can be canonically identified with $\Spann<\ovX_\al|{\al \in R}>\leq \overline \ovG$ because of ${\ZZ R^{\vee}= \ZZ \overline R^\vee}$.

With the definitions from Section \ref{sec_not} we associate to $\ovR$ also the groups $\ovovN$, $\ovovT$ and $\ovW$, and additionally define $\ovrho:\ovovN \rightarrow \ovW$ to be the canonical epimorphism. Because of $\Spann<\ovovX_\al|\al \in R>=\ovG$ the inclusions $\ovT\leq \ovovT$, $\ovN \leq \ovovN$ and $W\leq \ovW$ hold, furthermore $\restr \ovrho|{\ovN}=\rho$.
Straightforward calculations also show $\ovT=\overline \ovT$, $\ovN= \ovovN\cap \ovG$ and $\ovN=\Set{x\in \ovovN| \overline \rho(x)\in W}$.

Let $\oF:\ovovG\rightarrow \ovovG$ and $F:\ovG\rightarrow \ovG$ be the standard Frobenius endomorphisms associated to $q$.
\end{setting}
Using Sylow $d$-twists of $(\ovovG,\oF)$ one obtains a Sylow $d$-twist of $(\ovG,F)$.
\begin{lem} \label{defvD1}
Let $d$ be a regular number of $(\ovG,F)$, $l'= \begin{cases} l & d\teilt l,\\
l-1 & d\nteilt l,\end{cases}$, $\ovR':=\ovR\cap\Spann<e_i|1\leq i\leq l'>$ with simple roots $\RF':=\Lset{\overline \al_1, \al_2,\ldots, \al_{l'}}$ and $\ovovG':=\Spann<\ovX_\al| \al \in \overline R' >$. For every Sylow $d$-twist $v'$ of $(\ovovG', \restr \oF\,|{\ovovG'})$ the element $v:= \begin{cases} v'& v' \in \ovG,\\
v'n_{e_l}(-1)& v' \notin \ovG\end{cases}$ is a Sylow $d$-twist of $(\ovG,F)$.
\end{lem}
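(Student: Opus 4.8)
The plan is to check the two requirements of Definition~\ref{DefSylowtwist} for $v$, reducing via Remark~\ref{remsylow}(a) to a statement about regular elements and then working in the signed-permutation model of Assumption~\ref{ass_classical_roots}. Since $\oF$ and $F$ are standard Frobenius endomorphisms, $\ovphi$ and $\phi$ are trivial, and by Notation~\ref{set51D} we have $\ovT=\ovovT$ and $Y=\ovY$; hence $\rho(v)$ acts on $Y\otimes\CC$ exactly as $\ovrho(v)$ acts on $\ovY\otimes\CC=\CC^l=\bigoplus_{i=1}^{l}\CC e_i$. In this model $\ovW$ is the group of all signed permutations of $\{1,\dots,l\}$, $W=\ker\chi$ for the parity homomorphism $\chi\colon\ovW\to\{\pm1\}$ counting sign changes, and $\ovrho(n_{e_l}(-1))=w_{e_l}$ is the sign change of $e_l$. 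So I must show that $v\in\ovN$ and that $\ovrho(v)$ is a $\zeta$-regular element of $W$ for some primitive $d$th root of unity $\zeta$. The hypothesis gives, again by Remark~\ref{remsylow}(a), that $\ovrho'(v')$ is a $\zeta$-regular element of the signed-permutation group $W_{\ovR'}$ of $\{1,\dots,l'\}$; I fix a regular eigenvector $y'\in\CC^{l'}$, so $\ovrho'(v')y'=\zeta y'$ and $y'$ avoids all the hyperplanes $\{y_i=0\}$ and $\{y_i=\pm y_j\}$ with $1\le i<j\le l'$.

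First I would verify $v\in\ovN$. One has $v'\in\ovovN'\subseteq\ovovN$ and $n_{e_l}(-1)\in\ovovN$, so $v\in\ovovN$, and by Notation~\ref{set51D} it is enough to see $\ovrho(v)\in W=\ker\chi$. If $v'\in\ovG$ then $v=v'\in\ovovN\cap\ovG=\ovN$. If $v'\notin\ovG$ then $\chi(\ovrho(v'))=-1$, and $\ovrho(v)=\ovrho(v')\,w_{e_l}$ gives $\chi(\ovrho(v))=\chi(\ovrho(v'))\,\chi(w_{e_l})=1$; in both cases $\ovrho(v)\in W$, so $v\in\ovN$.

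Next I would construct a regular eigenvector for $\ovrho(v)$, distinguishing the two values of $l'$. If $d\nmid l$ then $l'=l-1$, the roots of $\ovR'$ involve only $e_1,\dots,e_{l-1}$, and $\ovrho(v)$ acts on $\CC^l$ as $\ovrho'(v')$ on $\bigoplus_{i<l}\CC e_i$ and by $\pm1$ (depending on the branch) on $\CC e_l$. Setting $y:=(y'_1,\dots,y'_{l-1},0)$, one has $y\ne0$, $\ovrho(v)y=\zeta y$ (the last coordinate being $0$), and $y$ avoids every reflecting hyperplane $\{y_i=\pm y_j\}$ of $W$: for $i<j\le l-1$ because $y'$ is regular for $W_{\ovR'}$, and for $j=l$ because $y'_i\ne0=y_l$. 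Hence $\ovrho(v)$ is $\zeta$-regular in $W$. If $d\mid l$ then $l'=l$ and $\ovovG'=\ovovG$, and here I first claim that the branch $v'\notin\ovG$ cannot arise, i.e. that every $\zeta$-regular element of $\ovW$ with $d\mid l$ already lies in $W$. Such an element has order $d$, and by the classification of regular elements \cite{Springer74} it is, up to $\ovW$-conjugacy (and up to replacing $\zeta$ by another primitive $d$th root and the element by the corresponding power), the $(2l/d)$th power of a Coxeter element $c$ of $\ovW$, which is a negative $l$-cycle of order $2l$; since $d\mid l$ the exponent $2l/d$ is even, so $c^{2l/d}$ has an even number of sign changes and lies in $\ker\chi=W$, and $W\lhd\ovW$ then forces all these elements into $W$. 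Thus $v'\in\ovG$ and $v=v'$, and $y'\in\CC^l$ is a regular vector for $\ovW$, a fortiori for $W$ (whose reflecting hyperplanes form a subset of those of $\ovW$), with $\ovrho(v)y'=\ovrho(v')y'=\zeta y'$. In both cases $\ovrho(v)$ is $\zeta$-regular in $W$, so by Remark~\ref{remsylow}(a) the element $v$ is a Sylow $d$-twist of $(\ovG,F)$.

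The delicate point is the claim just used, that ``$l'=l$ and $v'\notin\ovG$'' never occurs. If it did, the correction $v=v'n_{e_l}(-1)$ would replace $\ovrho(v')$ by $\ovrho(v')w_{e_l}$, which switches the positive/negative type of the cycle through $e_l$ and can strictly lower the multiplicity of $\PHI_d$ in the characteristic polynomial on $\CC^l$, so that $v$ would in general \emph{not} be a Sylow $d$-twist; ruling this out genuinely uses the conjugacy of $\zeta$-regular elements together with the explicit parity of the regular element produced in Lemma~\ref{DefvB}. Everything else --- the translation through $f$ and the eigenvector construction --- is routine.
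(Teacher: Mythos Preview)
Your proof is correct and reaches the same conclusion as the paper, but the route is genuinely different. The paper argues directly via the \emph{characteristic polynomial} definition of Sylow $d$-twist (Definition~\ref{DefSylowtwist}): it first compares the polynomial orders of $(\ovG,F)$ and $(\ovovG',\restr\oF|{\ovovG'})$ to see that the exponent $a(d)$ of $\Phi_d$ is the same for both groups, and then, using $\ZZ R^\vee=\ZZ\ovR^\vee$ (so $Y=\ovY$), observes that the characteristic polynomial of $\rho(v)$ on $Y\otimes\CC$ picks up the factor $\Phi_d^{a(d)}$ already present for $v'$, which finishes the case $l'=l-1$ in one line. You instead pass through the equivalence of Remark~\ref{remsylow}(a) and \emph{construct an explicit regular eigenvector}: the vector $(y'_1,\dots,y'_{l-1},0)$ avoids all the $\tD_l$-hyperplanes $\{y_i=\pm y_j\}$ precisely because $y'$ was regular for $\tB_{l-1}$ and its coordinates are non-zero. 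This is a clean, self-contained argument that bypasses the polynomial-order bookkeeping entirely.

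For the case $l'=l$ both proofs must show that every Sylow $d$-twist $v'$ of $(\ovovG,\oF)$ already lies in $\ovG$, i.e.\ that $\chi(\ovrho(v'))=1$. The paper does this by a bare computation (``$\rho(v')$ maps $0$ or $2l/d$ positive numbers to negative ones''), which is really the observation that the regular cycle-type in $W(\tB_l)$ for $d\mid l$ is either a product of positive $d$-cycles (if $d$ is odd) or a product of $2l/d$ negative $(d/2)$-cycles (if $d$ is even), both giving even parity. Your argument is the more structural version of the same fact: you note that any $\zeta$-regular element is $\ovW$-conjugate to a suitable power $c^{2lk/d}$ of the Coxeter element, and since $d\mid l$ forces $2lk/d$ to be even one gets $\chi(c^{2lk/d})=\chi(c)^{2lk/d}=1$, whence the whole conjugacy class lies in the normal subgroup $W$. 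Your phrasing ``up to replacing $\zeta$ by another primitive $d$th root and the element by the corresponding power'' is a little compressed---what you are really using is that for each primitive $d$th root $\zeta=\zeta_0^k$ the element $c^{2lk/d}$ is $\zeta$-regular and sits in $W$, and Springer's conjugacy theorem for $\zeta$-regular elements then transports this to an arbitrary $v'$---but the argument is sound. The trade-off: the paper's proof is shorter once the polynomial-order machinery is in place, while yours is more transparent and makes no appeal to the order formula for $\ovG^F$.
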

\begin{proof}
By Remark \ref{einbett} the group $\ovovG'$ is the simply-connected simple algebraic group associated to $\overline R'$. 
Using the polynomial order associated to $(\ovG,F)$ and $(\ovovG',\restr \oF\,|{\ovovG'})$ known from \cite[2.9]{Car2} one verifies that the polynomial order of the Sylow $d$-tori of both groups coincide after some calculations. Furthermore by Table \ref{TabregZahl} we have $d \teilt l$ or $d\teilt 2(l-1)$, and hence $d\teilt 2l'$. By the same table $d$ is then a regular number of $(\ovovG',\restr \oF|{\ovovG'})$.

By the definition of Sylow $d$-twist and because of $\ZZ \overline R^\vee=\ZZ R^\vee$ the statement holds if $l'=l-1$, otherwise we have $d\teilt l$. Then according to calculations with $f(\rho(v))$  the permutation $\rho(v')$ maps $0$ or $\frac{2l}d$ positive numbers to negative ones and hence $\rho(v')\in W$. This implies $v'\in \ovN$ and $v'=v$ because of $\ovN=\Set{x\in \ovovN| \overline \rho(x)\in W}$.
\end{proof}

This proves Theorem \ref{Theo1} in the case where $l'=l$.

\begin{lem}
\label{Theo1DI}
If $l'=l$ in Lemma \ref{defvD1} maximal extensibility holds with respect to $\ovT^{vF}\lhd\ovN^{vF}$.
The relative inertia groups for $\la \in \Irr(\ovT^{vF})$ is $\rho(\I_{\ovovN^{vF}}(\la)) \cap W$.
\end{lem}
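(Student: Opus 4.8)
The plan is to reduce the statement in type $\tD_l$ (with $l'=l$, i.e.\ $d\teilt l$) to the already-established type $\tB_{l}$ case, using the embedding $\ovG\leq\ovovG$ fixed in Setting~\ref{set51D}. By Lemma~\ref{defvD1} the element $v$ is simultaneously a Sylow $d$-twist of $(\ovovG,\oF)$ and of $(\ovG,F)$, with $v\in\ovN$ and $\ovT=\ovovT$. The first step is therefore to compare the two ambient groups at the level of fixed points: since $\ovN=\Set{x\in\ovovN| \ovrho(x)\in W}$ and $v$ normalises both $\ovT$ and (the relevant) subgroups, I would show $\ovN^{vF}=\ovovN^{vF}\cap\ovG$, so that $\ovN^{vF}$ is the preimage in $\ovovN^{vF}$ of $W$ under $\ovrho$, i.e.\ $\ovN^{vF}=\Set{x\in\ovovN^{vF}| \ovrho(x)\in W}$. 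Because $\ovT^{vF}=\ovovT^{vF}$ (as $\ovT=\ovovT$), the pair $\ovT^{vF}\lhd\ovN^{vF}$ sits inside the pair $\ovovT^{vF}\lhd\ovovN^{vF}$ for which maximal extensibility has been proved in Lemma~\ref{Theo1B}, with the same normal subgroup and an index-$(2,q-1)$ quotient on top.

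The second step is a descent of maximal extensibility along this inclusion. Let $\la\in\Irr(\ovT^{vF})=\Irr(\ovovT^{vF})$. By Lemma~\ref{Theo1B} (applied inside $\ovovG^{\oF'}$ with $\oF'=vF$, after the conjugacy argument of Remark~\ref{rem_regular}\ref{conj}) there is a maximal extension $\widetilde\la\in\Irr(\I_{\ovovN^{vF}}(\la))$ of $\la$. Restricting $\widetilde\la$ to $\I_{\ovN^{vF}}(\la)=\I_{\ovovN^{vF}}(\la)\cap\ovN^{vF}$ gives a character of that subgroup that restricts to $\la$ on $\ovT^{vF}$; what has to be checked is that this restriction is still \emph{irreducible}, equivalently that $\I_{\ovN^{vF}}(\la)/\ovT^{vF}$ is not ``too small'' relative to $\I_{\ovovN^{vF}}(\la)/\ovovT^{vF}$. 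Here I would use that $\ovovN^{vF}/\ovN^{vF}$ is abelian (in fact of order dividing $(2,q-1)$, corresponding to $\ovW/W$) so that $\I_{\ovovN^{vF}}(\la)/\I_{\ovN^{vF}}(\la)$ embeds into it; combined with Clifford theory for the abelian quotient this forces $\restr{\widetilde\la}|{\I_{\ovN^{vF}}(\la)}$ to be a sum of at most $(2,q-1)$ irreducible constituents, all of which are $\I_{\ovN^{vF}}(\la)$-conjugate extensions of $\la$. Picking any one of them yields a maximal extension of $\la$ in $\ovN^{vF}$; this proves the first assertion.

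For the statement about relative inertia groups I would track the inertia subgroups through $\ovrho$. Since $\ovT^{vF}$ is $\ovrho$-trivial and normal, the group $\I_{\ovovN^{vF}}(\la)$ is a union of $\ovrho$-cosets, so $\ovrho(\I_{\ovovN^{vF}}(\la))$ is a well-defined subgroup of $\ovW$ with preimage in $\ovovN^{vF}$ exactly $\I_{\ovovN^{vF}}(\la)$. Intersecting with $\ovN^{vF}=\Set{x| \ovrho(x)\in W}$ gives $\I_{\ovN^{vF}}(\la)=\Set{x\in\ovovN^{vF}| \ovrho(x)\in\ovrho(\I_{\ovovN^{vF}}(\la))\cap W}$, hence the relative inertia group $\I_{\ovN^{vF}}(\la)/\ovT^{vF}$ equals $\ovrho(\I_{\ovovN^{vF}}(\la))\cap W$; one then notes $\ovrho(\I_{\ovovN^{vF}}(\la))\cap W=\rho(\I_{\ovovN^{vF}}(\la))\cap W$ since $\restr{\ovrho}|{\ovN}=\rho$ and the intersection only sees elements with image in $W$, which is the claimed formula.

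The main obstacle is the irreducibility check in the second step: a priori $\restr{\widetilde\la}|{\I_{\ovN^{vF}}(\la)}$ could split, and one must rule this out. The key point making it work is that the relevant index $\ovW/W$, and hence $\ovovN^{vF}/\ovN^{vF}$, has order dividing $(2,q-1)\in\{1,2\}$, so the obstruction lives in a group of order $\le 2$; a direct Clifford-theoretic analysis (or the observation that for $q$ even there is nothing to prove, and for $q$ odd one compares the two inertia indices explicitly via $f(\rho(v))$ as in the type $\tB$ proof) closes the gap. I would also need to make sure the conjugacy move via Remark~\ref{rem_regular}\ref{conj} is compatible with the embedding $\ovG\leq\ovovG$, but this is routine since the same $v$ works for both groups.
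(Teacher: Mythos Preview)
Your approach is correct and matches the paper's: both embed $\ovN^{vF}\leq\ovovN^{vF}$ with common torus $\ovT^{vF}=\ovovT^{vF}$, invoke the type $\tB_l$ result (Lemma~\ref{Theo1B}) for the larger group, and restrict a maximal extension to the smaller inertia group; the inertia-group formula follows from $\ovN^{vF}=\ovovN^{vF}\cap\ovrho^{-1}(W)$ exactly as you write.

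However, the ``main obstacle'' you flag is a phantom. Since $\ovT^{vF}$ is abelian, $\la$ is linear, and an extension $\widetilde\la$ of a linear character is again linear; the restriction of a degree-$1$ character to any subgroup is automatically irreducible, so no Clifford-theoretic argument about the index-$(2,q-1)$ quotient is needed. The paper simply cites \cite[Lemma 4.2]{Spaeth_Preprint} for this restriction step, which encapsulates precisely this trivial observation in the abelian-$\calL$ setting. You can therefore delete the entire discussion of splitting and the case distinction on $q$ even/odd without loss.
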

\begin{proof} We know that the element $v$ from Lemma \ref{defvD1} is a Sylow $d$-twist. By Lemma \ref{Konstruktion_reg} this implies that $T:=\ovT^{vF}$ is a Sylow $d$-Levi subgroup and $N=\ovN^{vF}$ the associated Sylow $d$-normaliser.

As $l'=l$ implies $v'=v$ according to the above proof we have $T= \ovovT^{v'\oF}=:\overline T$ and the inclusion $N\leq \ovovN^{v'F}=:\overline N$. By Lemma \ref{Theo1B} maximal extensibility holds with respect to $\overline T\lhd \overline N$. This implies the statement by \cite[Lemma 4.2]{Spaeth_Preprint}.

Because of $N=\overline N\cap \rho^{-1}(W)$ the equation $\I_{N}(\la)=\I_{\overline N}(\la)\cap N$ holds for every $\la\in \Irr(T)$.\end{proof}

In the case of $l'=l-1$ we cannot use results about $(\ovovG, \oF)$ in an analogous way, as every Sylow $d$-torus of $(\ovG, F)$ is either a non-regular Sylow $d$-torus of $(\ovovG,\oF)$ or is not a Sylow $d$-torus of $(\ovovG,\oF)$.
\begin{lem} \label{Theo1DII} 
If $l'=l-1$ in Lemma \ref{defvD1}, maximal extensibility holds with respect to $\ovT^{vF}\lhd\ovN^{vF}$.
\end{lem}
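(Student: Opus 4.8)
The strategy is to transfer the type $\tB_{l-1}$ case (Lemma \ref{Theo1B}) to $\ovG$ via the inclusion $\ovG\leq\ovovG$ of Setting \ref{set51D}, where $\ovovG$ is of type $\tB_l$, together with the fact that the Sylow $d$-twist $v$ of Lemma \ref{defvD1} comes from a good Sylow $d$-twist $v'$ of the type $\tB_{l-1}$ subgroup $\ovovG':=\langle\ovovX_\al\mid\al\in\ovR'\rangle$. First I would observe that, since $d\nteilt l$, the polynomial $\PHI_d$ does not divide $q^l-1$, so the polynomial orders of the Sylow $d$-tori of $(\ovovG',\oF')$ (with $\oF':=\restr\oF|{\ovovG'}$) and of $(\ovG,F)$ agree, exactly as in the proof of Lemma \ref{defvD1}; hence $v$ is a Sylow $d$-twist of $(\ovG,F)$ and $v\oF$ restricts to the twisted Frobenius $v'\oF'$ on $\ovovG'$. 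By Lemma \ref{Konstruktion_reg} the pair $T:=\ovT^{vF}=\ovovT^{v\oF}$, $N:=\ovN^{vF}$ is the Sylow $d$-Levi subgroup and Sylow $d$-normaliser of $(\ovG,vF)$, and $A:=\ovovT_{\ovR'}^{v'\oF'}$, $B:=\ovovN_{\ovR'}^{v'\oF'}$ is the corresponding pair for $(\ovovG',v'\oF')$; by Remark \ref{conj} it suffices to prove maximal extensibility for $T\lhd N$, and — since $\ovovG'$ has a root system of type $\tB_{l-1}$ — Theorem \ref{Theo1} for $\ovovG'$ (Lemma \ref{Theo1B}) gives maximal extensibility for $A\lhd B$.

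The key structural input is that $\ovovN^{v\oF}$ lies in the maximal-rank subgroup $\ovovM:=\langle\ovovT,\ovovX_{e_l},\ovovX_{-e_l},\ovovX_\al\mid\al\in\ovR'\rangle$ of $\ovovG$, which is of type $\tB_{l-1}+\tB_1$ along the coordinate splitting $\Lset{1,\ldots,l-1}\sqcup\Lset{l}$. Indeed $\rho(v')\phi$ is a $\zeta$-regular element of the Weyl group of $\ovR'$ for a primitive $d$th root $\zeta$, so none of the coordinate vectors $e_i$ with $i<l$ lies in an eigenspace of $\rho(v')\phi$ (for $d>2$ because a $\zeta$-eigenvector is regular, hence not in the hyperplane $e_i=0$; for $d=2$ because $\rho(v')\phi=-\operatorname{id}$ on $\langle e_1,\ldots,e_{l-1}\rangle$ so the relevant eigenspaces are the coordinate blocks outright); consequently every element of $\ovW$ commuting with $\rho(v)\phi$ fixes the coordinate $l$ up to sign, and \cite[3.3.6]{Car2} yields $\ovovN^{v\oF}\leq\ovovM$. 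From this I would deduce $\ovovN^{v\oF}=B\cdot\ovovT^{v\oF}\cdot D$, where $\ovovG_{e_l}:=\langle\ovovX_{e_l},\ovovX_{-e_l}\rangle$ commutes with $\ovovG'$, $D:=\NNN_{\ovovG_{e_l}}(\ovovT_{e_l})^{F_{e_l}}$ with $\ovovT_{e_l}:=\ovovT\cap\ovovG_{e_l}$ and $F_{e_l}$ the induced Frobenius, $B\cap\ovovT^{v\oF}=A$, and the quotient $\ovovN^{v\oF}/(B\cdot\ovovT^{v\oF})$ is cyclic of order $2$ (generated by the image of $n_{e_l}(1)$).

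Granting this, maximal extensibility propagates step by step. It holds for $A\lhd B$ by the first paragraph. Next, $\ovovT^{v\oF}$ is abelian, $\ovovT^{v\oF}\cap B=A$, and $\ovovT^{v\oF}/A$ embeds into the fixed points of the rank one torus $\ovovT/\ovovT_{\ovR'}$ and is therefore cyclic; since every character in sight is linear, given $\chi'\in\Irr(\ovovT^{v\oF})$ and $\chi:=\restr{\chi'}|A$ a maximal — hence linear — extension of $\chi$ in $B$ together with $\chi'$ determines a maximal extension of $\chi'$ in $B\cdot\ovovT^{v\oF}$ (well defined because $\I_B(\chi')\cap\ovovT^{v\oF}\leq A$), so maximal extensibility holds for $\ovovT^{v\oF}\lhd B\cdot\ovovT^{v\oF}$. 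As $\ovovN^{v\oF}/(B\cdot\ovovT^{v\oF})$ is cyclic, \cite[11.22]{Isa} upgrades this to maximal extensibility for $\ovovT^{v\oF}\lhd\ovovN^{v\oF}$. Finally, since $\ovT^{vF}=\ovovT^{v\oF}$ and $\ovT^{vF}\leq\ovN^{vF}\leq\ovovN^{v\oF}$, the restriction of a maximal extension in $\ovovN^{v\oF}$ to the inertia group in $\ovN^{vF}$ is again irreducible — this is \cite[Lemma 4.2]{Spaeth_Preprint}, applied exactly as in Lemma \ref{Theo1DI} — so maximal extensibility holds for $T\lhd N$.

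The step I expect to require the most work is the middle paragraph: proving by explicit computation with the Steinberg relations and with signed permutations that $\ovovN^{v\oF}$ really lies in $\ovovM$ and decomposes as $B\cdot\ovovT^{v\oF}\cdot D$ with the stated cyclic quotient, and in particular that no additional root $\pm e_i\pm e_l$ survives (which is where regularity of $\rho(v')\phi$ enters). A minor but genuine point inside this is making the argument uniform in the small case $d=2$, where $\rho(v')\phi=-\operatorname{id}$ on $\langle e_1,\ldots,e_{l-1}\rangle$ and the regularity argument used for $d>2$ must be replaced by the direct description of the $\pm1$-eigenspaces.
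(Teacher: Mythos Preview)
Your overall strategy --- reduce to the type $\tB_{l-1}$ result (Lemma \ref{Theo1B}) and then descend to $N=\ovN^{vF}$ via \cite[Lemma 4.2]{Spaeth_Preprint} --- is the same as the paper's, but you route it through the ambient type $\tB_l$ normaliser $\ovovN^{v\oF}$, whereas the paper stays inside $\ovG$ and builds a subgroup $\widetilde N\leq N$ with $N=\widetilde N\,T$ and $\widetilde N\cap T=\overline T'$, generated by $\overline N'\cap\ovN$ together with a single element $n\,n_{e_l}(1)$ for some $n\in\overline N'\setminus\ovN$. Either route is viable, but your version has a real gap.

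The gap is in your step 3. You assert that because $\ovovN^{v\oF}/(B\cdot\ovovT^{v\oF})$ is cyclic, \cite[11.22]{Isa} ``upgrades'' maximal extensibility for $\ovovT^{v\oF}\lhd B\cdot\ovovT^{v\oF}$ to $\ovovT^{v\oF}\lhd\ovovN^{v\oF}$. That implication is false in general: \cite[11.22]{Isa} only extends a character that is already \emph{invariant} in the larger group, and nothing you have done guarantees that the maximal extension produced in step 2 is fixed by the extra generator $n_{e_l}(1)$. (A clean model of the failure: in an extraspecial $p$-group $E$ with centre $T$, every maximal abelian $M_1$ satisfies $E/M_1$ cyclic and maximal extensibility for $T\lhd M_1$, yet a faithful linear character of $T$ does not extend to $\I_E(\chi)=E$.) This invariance is precisely the point the paper isolates: it argues that for $n\in\I_{\overline N'}(\la')\setminus\ovN$ the elements $n$ and $n\,n_{e_l}(1)$ induce the \emph{same} automorphism on $\overline N'\cap\ovN$, so the restriction of the type $\tB_{l-1}$ extension is automatically $n\,n_{e_l}(1)$-invariant, and only then does \cite[11.22]{Isa} apply. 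Your argument needs the analogous statement --- that conjugation by $n_{e_l}(1)$ acts on (the relevant part of) $B$ as an inner automorphism by a torus element, hence fixes linear extensions --- and you have not supplied it.

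A related smaller issue: the claim that $\ovovG_{e_l}=\langle\ovovX_{\pm e_l}\rangle$ \emph{commutes} with $\ovovG'$ is false in type $\tB_l$, since $e_l+e_i\in\ovR$ for $i<l$ and hence $[x_{e_l}(t),x_{e_i}(s)]\neq 1$. What is true, and what you actually need, is that $n_{e_l}(1)$ \emph{normalises} $\ovovN_{\ovR'}$ and acts on it as conjugation by some element of $\ovovT$ (because $w_{e_l}$ fixes $\ovR'$ pointwise). This also means that when $v=v'n_{e_l}(-1)$ your identification of $v\oF\!\restriction_{\ovovG'}$ with $v'\oF'$ holds only up to an inner twist by a torus element. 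Both points are fixable, but the first is exactly the invariance input you need to close the gap above, so it should be made explicit rather than absorbed into ``explicit computation with the Steinberg relations''.
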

\begin{proof}
Let $\ovovN':=\ovovN_{\overline R'}$ and $\ovovT':=\ovovT_{\overline R'}$, $\overline N':=\ovovN'^{v'\oF}$ and $\overline T':=\ovovT'^{v'\oF}$. Then $N:=\ovN^{vF}$ and $\widetilde N:=\spann< \overline N'\cap \ovN, n n_{e_l}(1) >$ with $n\in \overline N'\setminus N$ satisfy $\widetilde N\cap \ovT=\overline T'$ and $N=\widetilde N T$. The equation $\widetilde N\cap \ovT= T'$ follows directly from the definition of the groups. 

For verifying $N=\widetilde N T$ we only have to verify $\rho(\widetilde N)=\rho(N)$ and $\widetilde N\leq N$. Using the Steinberg relations for the orthogonal roots one obtains after some calculations $\widetilde N\leq N$.

Furthermore the groups $\overline \rho( N)$ and $\overline\rho(\widetilde N)$ coincide according to calculations in $\overline W$ and $W$.

As $T'$ is a Sylow $d$-Levi subgroup and $N$ the associated Sylow $d$-normaliser, maximal extensibility holds with respect to $T'\lhd \overline N'$ by Lemma \ref{Theo1B}. Let $\la\in \Irr(\ovT^{vF})$, $\la'\in \IRR(T'|\la)$ and $\widetilde \la'$ a maximal extension of $\la$ in $\overline N'$. The character $\restr \widetilde \la'|{\I_{\overline N'\cap \widetilde N}(\la)}$ is invariant under $n\in \I_{\overline N'}(\la)\setminus \widetilde N$ and also under $nn_{e_l}(1)$, as both elements induce the same automorphism on $\overline N'\cap \widetilde N$. Because of $\I_{\widetilde N}(\la)=\spann<\I_{\overline N'\cap \widetilde N}(\la), nn_{e_l}(1)> $ the character $\la$ is maximally extendible in $\widetilde N$.
By Lemma 4.2 of \cite{Spaeth_Preprint} this implies maximal extensibility with respect to $T\lhd N$. According to Lemma \ref{Konstruktion_reg} this proves the statement.

The relative inertia group of $\la $ in $N$ can be isomorphic to the one of $\la'$ in $\overline N'$ or to $\nicefrac{{\I_{\overline N'\cap \widetilde N}(\la')}}{T'}$ because of the Steinberg relations.\end{proof}

\section{Application \texorpdfstring{to $\ovG^F=\tw 2 \tD_{l,\SC}(q)$}{to twisted groups of type D}} \label{appl2D}
In this section we roughly sketch the proof for the remaining case, where $\ovG$ has a root system of type $\tD_l$ and $F$ is the product of a standard Frobenius endomorphism and a graph automorphism of order $2$.

\begin{setting} In this section we use the notation for $\ovR$, $\overline \RF$, $ \ovovG$, $\ovovX_\al$, $R$, $\RF$ and $\ovG$ from \ref{set51D}. Let $\gamma$ be the automorphism of $R$, stabilising $\RF$ and interchanging $\al_1$ and $\al_2$, $\Gamma:\ovG\rightarrow \ovG$ the associated graph automorphism as in \ref{setgen} and $F=F_0\circ \Gamma$, where $F_0$ is the standard Frobenius endomorphism associated to $q$. Further let $\oF_0:\ovovG \rightarrow \ovovG$ be the analogous standard Frobenius endomorphism of $\ovovG$ and $\phi$ the automorphism of the cocharacter lattice $Y$ of $\ovG$, which is determined by $F$ as in \ref{setgen}. The associated automorphism $\overline \phi$ of $Y\otimes \CC$ satisfies $\phi(e_i)=\begin{cases}\,\,\,\,\, e_i&i\neq 1,\\
-e_1& i=1.\end{cases}$ Assume $d$ to be a regular number of $(\ovG,F)$, more precisely by Table \ref{TabregZahl} a divisor of $2(l-1)$ or a divisor of $2l$, such that $\frac {2l} d$ is odd. 
\end{setting}
We now construct $\oF: \ovovG \rightarrow \ovovG$ such that $\restr \oF\,|{\ovG}= F$.

\begin{lem}[{\cite[12.1.3]{Spaeth}}]\label{Defn}
Let $\zeta$ be a primitive $(2,q-1)^2$th root of unity in $\ovF_q$ and $\overline n_{i}$ the generators of the extended Weyl group $\overline V$ of $\overline \ovG$ as in \ref{EinfV}. The element $n:= \overline n_{1}\prod_{i=2}^{l} \widetilde h_{e_{i}}(\zeta) \in \overline \ovG$ satisfies $\calG(x)= x^n \text{ for all }x \in \ovG$ and $\rho(n)$ induces $\phi$ on $Y$.
\end{lem}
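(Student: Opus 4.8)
The plan is to produce, very explicitly, an element $n\in\ovovG$ that conjugates $\ovG$ by the graph automorphism $\Gamma$, induces $\phi$ on $Y$, and is defined over $\FF_q$ in the appropriate sense (so that $n\oF_0$ will restrict to $F$ on $\ovG$). This is entirely parallel to Lemma \ref{1113} in the type $\tw2\tA$ case, where $n=\bww\ovovw_0$ was produced; here the construction is a bit different because the graph automorphism of $R$ (type $\tD_l$) that swaps $\al_1$ and $\al_2$ and fixes the other simple roots does \emph{not} lift to $\ovovG$ (type $\tB_l$) as a product of the canonical reflection representatives alone. The candidate $n:=\ovovn_1\prod_{i=2}^l\widetilde h_{e_i}(\zeta)$ is dictated by the following considerations: in $\ovW\supseteq W$ the reflection $\ovovw_{\oval_1}$ along the short root $\oval_1=e_1$ is precisely the Weyl-group element that realises $-e_1$-on-$e_1$, i.e.\ it induces $\phi$ on $Y$ after identification, but the naive representative $\ovovn_1=\ovovn_{\oval_1}(1)$ acts on the long-root subgroups $\ovX_\al$ ($\al\in R$) by a sign character of $\al$, not by $\Gamma$; the torus factor $\prod_{i=2}^l\widetilde h_{e_i}(\zeta)$ is there to cancel those signs on the generators $x_{\al_i}(t)$ for $i\ge2$ and to fix up $x_{\al_1}(t)$.

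Concretely, first I would record that $\ovG=\Spann<\ovX_\al|\al\in R>$ with $R$ the long roots of $\ovR$, and that $\al_1=2\oval_1+\al_2$, $\al_i=e_i-e_{i-1}$ ($i\ge2$), so $\gamma$ acts on $\RF$ by $\al_1\leftrightarrow\al_2$ and $\al_i\mapsto\al_i$ for $i\ge3$. It suffices to check $x_{\al}(t)^n=x_{\gamma(\al)}(\pm t)$ with the correct sign, and hence after adjusting $\zeta$ the identity $x_{\al}(t)^n=x_{\gamma(\al)}(t)$, for $\pm\al\in\RF$, because these root subgroups generate $\ovG$; and separately that $\ovrho(n)$ acts on $Y\otimes\CC$ as the endomorphism $e_1\mapsto-e_1$, $e_i\mapsto e_i$ ($i\ge2$), which is exactly $\ovphi$. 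The second point is immediate: $\ovovn_1=\ovovn_{e_1}(1)$ maps to the reflection $\ovovw_{e_1}\in\ovW$ which sends $e_1\mapsto-e_1$ and fixes $e_2,\dots,e_l$, and the torus part $\prod\widetilde h_{e_i}(\zeta)$ is in $\ovovT$, hence acts trivially on $Y$. So $\ovrho(n)$ induces $\phi$ on $Y$.

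The substantive step — and the main obstacle — is the first point: the explicit sign bookkeeping. Using the Steinberg commutator relations and Carter's sign conventions (\cite[6.4.3]{Car1}, \cite[12.1]{Car1}), conjugation by $\ovovn_{e_1}(1)$ sends $x_\al(t)\mapsto x_{w_{e_1}(\al)}(\eta_{e_1,\al}t)$ with $\eta_{e_1,\al}\in\{\pm1\}$ a structure constant; for a long root $\al=\sum c_ie_i-\text{sign stuff}$ one computes $w_{e_1}(\al)$ and notes that $w_{e_1}$ maps $R$ into $R$ but permutes the simple long roots only up to the $\gamma$-action combined with possible sign changes on the last coordinate. Then conjugation by $\widetilde h_{e_i}(\zeta)$ multiplies $x_\al(t)$ by $\chi_\al(\widetilde h_{e_i}(\zeta))t=\zeta^{\langle\al,e_i^\vee\rangle}t$ — but here one must be careful: $\al\in R$ is a long root of $\ovR$, so $\al^\vee$ is a \emph{short} coroot and the pairings $\langle\al,e_i\rangle$ are the ones coming from the $\tB_l$ root datum; since $\zeta$ has order $(2,q-1)^2\in\{1,4\}$, only the residue of these pairings modulo that order matters. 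I expect the computation to show that the product of the $\ovovn_1$-sign and the $\zeta$-powers is $+1$ on each generator $x_{\al_i}(t)$ ($1\le i\le l$), which forces $x_{\al_i}(t)^n=x_{\gamma(\al_i)}(t)$; the case $2\mid q$ is trivial since then $\zeta=1$ and all signs vanish, while for $2\nmid q$ the fourth root $\zeta$ does the cancelling. Once the two displayed properties are checked, the lemma is proved, and $\oF:=n\oF_0$ is then the Frobenius endomorphism of $\ovovG$ restricting to $F=\oF_0\Gamma$ on $\ovG$, exactly as needed for the subsequent application of Assumption \ref{ass_ovR_oF} in the $\tw2\tD_l$ case. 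The whole argument is a routine but delicate transcription of \cite[12.1.3]{Spaeth}, so I would cite that reference for the sign calculation and only sketch the structure here.
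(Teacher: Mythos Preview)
Your proposal is correct and follows essentially the same approach as the paper: both reduce the identity $\Gamma(x)=x^n$ to checking it on the generators $x_\al(t)$ with $\pm\al\in\RF$ via Steinberg-relation sign computations, citing \cite[12.1.3]{Spaeth} for the details. The only cosmetic difference is that you verify the claim about $\ovrho(n)$ inducing $\phi$ on $Y$ directly from $\ovrho(\ovovn_1)=w_{e_1}$, whereas the paper deduces it as an immediate consequence of the first part.
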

\begin{proof} Detailed calculations with the Steinberg relations imply $\calG(x)= x^n$ for all $x \in \ovX_\al$ ($\pm \al \in \RF$) and this implies the equation. The second part immediately follows from the first part.
\end{proof}

As in the previous section we obtain a Sylow $d$-twist of $(\ovG, F)$ by using one for a group of type $\tB_l$ or $\tB_{l-1}$.
\begin{lem}\label{defv2dl}
Let $l':=\begin{cases} l& d\teilt 2l \text{ and }2\nmid \frac {2l}{d},\\ l-1& \text{otherwise},\end{cases}$ $R'$ the root subsystem of $\overline R$ of type $\tB_{l'}$ with simple roots $\RF':=\Lset{\overline \al_1, \ldots \al_{l'}}$, $\ovG':=\Spann<\ovX_\al|\al \in \overline R'>\leq \overline \ovG$ the simply-connected simple algebraic group associated to $R'$, $v'$ a Sylow $d$-twist of $(\ovG', \restr \overline F\,|{\ovG'})$ and $ v:=\begin{cases}
v'n^{-1} & v' \notin \ovG,\\ 
v'n_{e_{l}}(1) n^{-1} &v' \in \ovG.\end{cases}$ Then $ v\Gamma$ is a Sylow $d$-twist of $(\ovG, F)$.
\end{lem}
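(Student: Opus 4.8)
The plan is to follow the strategy of Lemma \ref{defvD1}: use the element $n$ of Lemma \ref{Defn} to trade the graph automorphism $\Gamma$ for conjugation, thereby relating $(\ovG,F)$ to the untwisted group $\ovG'$ of type $\tB_{l'}$, and to match polynomial orders via \cite[2.9]{Car2}. First I would note that, by Remark \ref{einbett}, $\ovG'=\Spann<\ovX_\al|\al\in\overline R'>$ is the simply-connected simple algebraic group of type $\tB_{l'}$ over $\ovF_q$; since $\tB_{l'}$ has no nontrivial diagram symmetry, $\restr{\overline F}|{\ovG'}$ is, up to $\ovG'$-conjugacy, a standard Frobenius endomorphism, so Sylow $d$-twists $v'$ exist by Lemma \ref{DefvB}. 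From Table \ref{TabregZahl} one reads off that $d$ is a regular number of $(\ovG',\restr{\overline F}|{\ovG'})$ in either case: if $l'=l$, which by definition means $d\teilt 2l$ with $2l/d$ odd (equivalently $d\teilt 2l$ and $d\nteilt l$), then $d\teilt 2l'$; and if $l'=l-1$ then $d\teilt 2(l-1)=2l'$.

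Next I would compare polynomial orders. By \cite[2.9]{Car2} one has $\betrag{\GG}(x)=x^{l(l-1)}(x^l+1)\prod_{i=1}^{l-1}(x^{2i}-1)$ for $\ovG^F$ of type $\tw 2\tD_l$ and $\betrag{\GG'}(x)=x^{(l')^2}\prod_{i=1}^{l'}(x^{2i}-1)$; writing $a(d)$ and $a'(d)$ for the respective exponents of $\PHI_d$, I would check $a(d)=a'(d)$. Since $\PHI_d\teilt x^l+1$ holds precisely when $d\teilt 2l$ and $d\nteilt l$, i.e.\ precisely in the case $l'=l$, in the case $l'=l-1$ one has $\PHI_d\nteilt x^l+1$, so the exponent of $\PHI_d$ in $\betrag{\GG}$ equals $\#\Set{1\leq i\leq l-1|d\teilt 2i}$, which is its exponent in $\betrag{\GG'}$; in the case $l'=l$ one uses in addition $(d/2)\teilt l$ to see that $\PHI_d$ occurs to the power $2l/d$ in both $\betrag{\GG}$ and $\betrag{\GG'}$. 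Hence the Sylow $d$-tori of $(\ovG,F)$ and of $(\ovG',\restr{\overline F}|{\ovG'})$ have equal polynomial order $\PHI_d^{a(d)}$.

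It then remains to verify the defining condition $\PHI_d^{a(d)}\teilt\det_{Y\otimes\CC}(x\rho(v)\phi-1)$ of Definition \ref{DefSylowtwist}. By Lemma \ref{Defn} the automorphism of $Y$ induced by $n$ is $\phi$, and $\ovrho(n)=w_{e_1}$ is an involution of $\ovW$ lying in the non-identity coset $W\phi$; together with $\phi=w_{e_1}$ on $Y\otimes\CC$ this gives $\rho(v)\phi=\ovrho(v')$ resp.\ $\ovrho(v')w_{e_l}$ according to the two clauses in the definition of $v$. When $l'=l$: by Springer's results \cite[Section 5]{Springer74} the $\zeta_d$-regular elements of $\ovW$ form a single conjugacy class, namely that of $c^{2l/d}$ for a Coxeter element $c$ of $\tB_l$; as $c$ involves exactly one sign change, this class has sign-character value $(-1)^{2l/d}=-1$, so $\ovrho(v')\in W\phi$, whence $v'\notin\ovG$ and the first clause applies; since $\rho(v)\phi=\ovrho(v')$ is $\zeta_d$-regular for $\ovW$, hence for $W$ (whose reflecting hyperplanes are among those of $\ovW$), it is a $\zeta_d$-regular element of $W\phi$, and $v\Gamma$ is a Sylow $d$-twist by Remark \ref{remsylow}(a). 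When $l'=l-1$ (so $d\geq 3$): $\ovrho(v')$ fixes $e_l$, hence $\rho(v)\phi$ acts as $\ovrho(v')$ on $\spann<e_1,\ldots,e_{l-1}>$ and as $\pm1$ on $\CC e_l$, so its characteristic polynomial on $Y\otimes\CC$ is $(x\mp1)$ times that of $\ovrho(v')$ on $Y'\otimes\CC$; the latter is divisible by $\PHI_d^{a'(d)}$ because $v'$ is a Sylow $d$-twist, and since $\PHI_d$ is coprime to $x\mp1$ and $a'(d)=a(d)$, the condition follows.

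The step I expect to be the main obstacle is the parity bookkeeping behind the case distinction in the definition of $v$. Using the Steinberg relations and computations in $\ovW$ and $W$ one must show that $\ovrho(v)\in W$, equivalently that $v\in\ovN=\Set{x\in\ovovN|\ovrho(x)\in W}$, so that $v\Gamma$ genuinely is an automorphism coset of the $\tD_l$-group $\ovG$; this is exactly why the extra factor $n_{e_l}(1)$ must be inserted precisely when $v'\in\ovG$ (i.e.\ when $\ovrho(v')\in W$), the additional sign change $w_{e_l}$ restoring the correct parity while disturbing the characteristic polynomial only in the $e_l$-coordinate. One must also pin down exactly which Frobenius endomorphism $\restr{\overline F}|{\ovG'}$ is meant and confirm that it admits the Sylow $d$-twists provided by Lemma \ref{DefvB}.
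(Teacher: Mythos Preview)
Your proposal is correct and follows essentially the same route the paper indicates: mimic the proof of Lemma~\ref{defvD1}, using Lemma~\ref{Defn} to convert the graph automorphism $\Gamma$ into conjugation by $n$, compare polynomial orders via \cite[2.9]{Car2}, and then read off the Sylow~$d$-twist condition from the characteristic polynomial. The paper's own proof is a one-line pointer to exactly this, so you have supplied the details it omits.

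Two small remarks. First, the parenthetical ``$l'=l-1$ (so $d\geq 3$)'' is false: $d=1$ always lands in the $l'=l-1$ case, and $d=2$ does so for even $l$. However this does not damage your argument, since you only need $\PHI_d^{a'(d)}\mid\det(xM-1)\Rightarrow\PHI_d^{a'(d)}\mid(x\mp 1)\det(xM-1)$, which holds trivially without any coprimality hypothesis; the sentence about $\PHI_d$ being coprime to $x\mp 1$ can simply be deleted. Second, in the $l'=l$ case your claim that necessarily $v'\notin\ovG$ is correct but deserves one more line: since $2l/d$ is odd, $d$ is even, hence any $k$ with $\gcd(k,d)=1$ is odd, so every power $c^{2lk/d}$ of the Coxeter element (and thus every $d$-regular element of $\ovW$, by Springer) has sign $(-1)^{2lk/d}=-1$. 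This confirms the first clause always applies when $l'=l$, matching the parity computation in the proof of Lemma~\ref{defvD1}.
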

\begin{proof} This can be verified with the arguments of Lemma \ref{defvD1}, where one uses Lemma \ref{Defn}.
\end{proof}

This Sylow twist is useful in verifying Theorem \ref{Theo1}.
\begin{lem}\label{tildeN2D}
Maximal extensibility holds with respect to $\ovT^{vF}\lhd \ovN^{vF}$.\end{lem}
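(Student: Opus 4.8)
The plan is to argue as in the untwisted type $\tD_l$ case of Section~\ref{applD}: realise $\ovG$ (type $\tD_l$) as the long-root subgroup of a simply-connected group $\ovovG$ of type $\tB_l$ and transfer the extensibility statement of Lemma~\ref{Theo1B}. By Lemma~\ref{defv2dl} the Sylow $d$-twist $v\Gamma$ of $(\ovG,F)$ is built from a Sylow $d$-twist $v'$ of the subsystem group of type $\tB_{l'}$ on $\overline R'$, with $l'\in\Lset{l,l-1}$; by Lemma~\ref{Defn} there is an element $n\in\ovovG$ with $\Gamma(x)=x^n$ for all $x\in\ovG$, so that $F=F_0\circ\Gamma$ is the restriction to $\ovG$ of a Frobenius endomorphism $\overline F$ of $\ovovG$ obtained by twisting $\oF_0$ with $n$. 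Under the resulting identifications the fixed-point groups $\ovT^{vF}$ and $\ovN^{vF}$ become, respectively, (a subgroup of) fixed-point groups computed inside $\ovovG$, exactly as in the proofs of Lemmas~\ref{Theo1DI} and~\ref{Theo1DII}. I would split the argument into the cases $l'=l$ and $l'=l-1$.

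In the case $l'=l$ (that is, $d\teilt 2l$ with $\frac{2l}d$ odd) the subsystem group equals $\ovovG$. Using $\ovT=\ovovT$ and $\ovN=\Set{x\in\ovovN|\ovrho(x)\in W}$ from~\ref{set51D} together with the description of $v$ in Lemma~\ref{defv2dl}, I would identify $\ovT^{vF}$ with a Sylow $d$-Levi subgroup $\overline T$ of $(\ovovG,\overline F)$ and establish the inclusion $\ovN^{vF}\leq\ovovN^{v'\overline F}=:\overline N$ with $\ovN^{vF}=\overline N\cap\ovrho^{-1}(W)$, just as in Lemma~\ref{Theo1DI}. By Lemma~\ref{Konstruktion_reg} and Lemma~\ref{Theo1B} maximal extensibility holds with respect to $\overline T\lhd\overline N$, and then \cite[Lemma 4.2]{Spaeth_Preprint} yields maximal extensibility with respect to $\ovT^{vF}\lhd\ovN^{vF}$; as in Lemma~\ref{Theo1DI} one moreover has $\I_{\ovN^{vF}}(\la)=\I_{\overline N}(\la)\cap\ovN^{vF}$ for every $\la\in\Irr(\ovT^{vF})$.

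In the case $l'=l-1$, I would follow the proof of Lemma~\ref{Theo1DII}. Set $\overline N':=\ovovN_{\overline R'}^{v'\overline F}$ and $\overline T':=\ovovT_{\overline R'}^{v'\overline F}$; by Lemma~\ref{Konstruktion_reg} and Lemma~\ref{Theo1B} maximal extensibility holds with respect to $\overline T'\lhd\overline N'$. I would then introduce the intermediate group $\widetilde N:=\spann<\overline N'\cap\ovN^{vF}, \widetilde n>$, where $\widetilde n\in\ovN^{vF}$ corrects a chosen element of $\overline N'\setminus\ovN^{vF}$ by an $n_{e_l}$-factor (the analogue of $nn_{e_l}(1)$ in Lemma~\ref{Theo1DII}, here also absorbing the twist element of Lemma~\ref{Defn}). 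The Steinberg relations for the relevant pairs of orthogonal roots give $\widetilde N\leq\ovN^{vF}$ and $\widetilde N\cap\ovT^{vF}=\overline T'$, and a computation in $\overline W$ and $W$ yields $\ovrho(\widetilde N)=\ovrho(\ovN^{vF})$, hence $\ovN^{vF}=\widetilde N\,\ovT^{vF}$. Given $\la\in\Irr(\ovT^{vF})$, a constituent $\la'\in\IRR(\overline T'|\la)$ and a maximal extension $\widetilde\la'$ of $\la'$ in $\overline N'$, the character $\restr\widetilde\la'|{\I_{\overline N'\cap\widetilde N}(\la)}$ is invariant under $\widetilde n$, since $\widetilde n$ and the corresponding element of $\overline N'$ induce the same automorphism on $\overline N'\cap\widetilde N$; hence $\la$ is maximally extendible in $\widetilde N$, and \cite[Lemma 4.2]{Spaeth_Preprint} again promotes this to maximal extensibility with respect to $\ovT^{vF}\lhd\ovN^{vF}$.

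The main obstacle is the bookkeeping forced by the graph-automorphism twist: one must make precise how $vF$ on $\ovG$ corresponds to $v'\overline F$ on the subsystem group, given that $\Gamma$ is realised by the element $n=\overline n_1\prod_{i=2}^l\widetilde h_{e_i}(\zeta)$ of Lemma~\ref{Defn}, which carries the torus-valued correction $\prod_{i\geq2}\widetilde h_{e_i}(\zeta)$ and need not commute with $\oF_0$; and, in the case $l'=l-1$, one must exhibit the correct corrected generator $\widetilde n$ of $\widetilde N$ and check the coincidence of the automorphisms induced on $\overline N'\cap\widetilde N$. These are precisely the computations carried out for the untwisted type $\tD$ in Lemmas~\ref{Theo1DI}--\ref{Theo1DII}, only rendered more delicate by the extra twist; beyond this the argument is routine.
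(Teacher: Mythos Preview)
Your proposal is correct and follows exactly the approach of the paper: the paper's proof simply states that the case $l'=l$ is handled by considerations analogous to Lemma~\ref{Theo1DI} and the case $l'=l-1$ by transferring the arguments of Lemma~\ref{Theo1DII}, in both cases comparing $\ovT^{vF}\lhd\ovN^{vF}$ with the Sylow $d$-Levi subgroup and Sylow $d$-normaliser of a group of type $\tB_{l'}$ and invoking Lemma~\ref{Theo1B}. Your write-up is in fact more detailed than the paper's own proof, but the strategy, the case split, and the key ingredients (the embedding via Lemma~\ref{Defn}, the intermediate group $\widetilde N$ in the $l'=l-1$ case, and the appeal to \cite[Lemma~4.2]{Spaeth_Preprint}) are identical.
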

\begin{proof} 
Considerations analogous to the proof of Lemma \ref{Theo1DI} show the statement in the case of $l'=l$. If $l'=l-1$ transferring the arguments used for showing Lemma \ref{Theo1DII} leads to a proof of the statement. In both cases we compare $\ovT^{vF}\lhd \ovN^{vF}$ with Sylow $d$-Levi subgroups and Sylow $d$-normaliser for a group with a root system of type $\tB_{l'}$, where maximal extensibility is known to hold by Lemma \ref{Theo1B}.
\end{proof}

As we have considered all possible root systems of $\ovG$, the above completes the proof of Theorem \ref{Theo1}. The proofs of this and the preceding section show the following statement about the relative inertia group of $\la \in \Irr(T)$.
\begin{lem} \label{lem_relID2D}
With the assumptions of this or the preceding section let $\la \in \Irr(T)$ and $\overline I_{\tB}$ be the relative inertia group of the associated character $\la'\in \Irr(T')$, where $T'$ is a Sylow $d$-Levi subgroup of a simply-connected simple group of type $\tB_{l'}$ and $N'$ the associated Sylow $d$-normaliser. The group $\overline N_{\tB}:=\nicefrac {N'}{T'}$ is isomorphic to $\overline K_1\wr \Sym_{j}$ and by Remark \ref{rem_relI} $\overline I_{\tB}$ is a subgroup of $\overline K_1\wr \Sym_{j}$ and has a normal subgroup isomorphic to $(A_1\times \cdots \cdots A_j) \rtimes U$ with index $1$ or $2$, where $U$ is a direct product of symmetric groups.
Let $\overline N_{\tD}$ be the normal subgroup of $\overline N_{\tB}$, whose elements $((\zeta_1, \ldots, \zeta_j),\sigma)\in \overline N_\tB$ ($\zeta_r\in \overline K_1$) satisfy $(\prod_{r}\zeta_r)^{\frac{\betrag{\overline K_1}}2} =1$ and which is isomorphic to a relative Weyl group of a Sylow $d$-torus for a group of type $\tD_{l'}$. 
Then the relative inertia subgroup of $\la$ in $N$ is isomorphic to $\overline I_{\tB}$ or $\overline I_{\tB}\cap \overline N_{\tD}$.\end{lem}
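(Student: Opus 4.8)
The plan is to revisit the proofs of Lemmas \ref{Theo1DI}, \ref{Theo1DII} and \ref{tildeN2D}, where the relative inertia group of $\la$ in $N$ was already pinned down implicitly, and to make the resulting subgroup of $\overline N_\tB=\overline K_1\wr\Sym_j$ explicit. In the case $l'=l$ of Lemma \ref{defvD1} one has $T=\ovovT^{v'\oF}=T'$ and, by \ref{set51D}, $N=\ovovN^{v'\oF}\cap\overline\rho^{-1}(W)$; hence $\I_N(\la)=\I_{\ovovN^{v'\oF}}(\la)\cap N$, and since $\overline I_\tB=\I_{\ovovN^{v'\oF}}(\la)/T$ the relative inertia group of $\la$ in $N$ equals $\overline I_\tB\cap(N/T)$. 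In the case $l'=l-1$ one instead compares, following the proof of Lemma \ref{Theo1DII}, with the intermediate group $\widetilde N=\spann<\overline N'\cap\ovN,\,nn_{e_l}(1)>$; there $\overline N'\cap\widetilde N=\overline N'\cap\ovN$, and the relative inertia group of $\la$ in $N$ is isomorphic either to $\overline I_\tB$ (precisely when the class of $nn_{e_l}(1)$ enters the inertia group, i.e.\ when the extra $\tB_{l'}$-symmetry from $\overline N'\setminus\ovN$ fixes $\la$) or to $\I_{\overline N'\cap\ovN}(\la')/T'=\overline I_\tB\cap\bigl((\overline N'\cap\ovN)/T'\bigr)$. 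The twisted situation of Lemma \ref{tildeN2D} reduces to one of these two via the same kind of intermediate group together with the element $n$ of Lemma \ref{Defn}.

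Next I would identify the image of $N$, respectively of $\overline N'\cap\ovN$, inside $\overline N_\tB$ with $\overline N_\tD$. By \ref{set51D} the group $\ovN$ consists of those $x\in\ovovN$ with $\overline\rho(x)\in W$, where $W$ is the Weyl group of type $\tD_{l'}$, sitting in the Weyl group $\overline W$ of type $\tB_{l'}$ as the kernel of the natural sign homomorphism $\overline W\to\Lset{\pm1}$ that via the morphism $f$ of \ref{ass_classical_roots} records the parity of the number of sign changes. Under the identification of $\ovovN^{v'\oF}/\ovovT^{v'\oF}$ with $\overline K_1\wr\Sym_j$, an element $((\zeta_1,\ldots,\zeta_j),\sigma)$ induces an even number of sign changes on $\Lset{\pm e_1,\ldots,\pm e_l}$ if and only if $(\prod_r\zeta_r)^{\frac{\betrag{\overline K_1}}{2}}=1$; this is a direct computation with $f$, the point being that the sign changes produced inside a single orbit $\calB_r$ by a generator of $\overline K_1$ are controlled by $\rho(v)\phi$ acting on $W_{R_r}$. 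This is exactly the defining condition for $\overline N_\tD$, so $N/T=\overline N_\tD$ when $l'=l$, while $(\overline N'\cap\ovN)/T'$ is the relative Weyl group $\overline N_\tD$ of type $\tD_{l-1}$ when $l'=l-1$.

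Finally I would assemble the two ingredients: the relative inertia group of $\la$ in $N$ equals $\overline I_\tB\cap\overline N_\tD$ in every case, except that for $l'=l-1$ it equals $\overline I_\tB$ when the extra $\tB_{l'}$-symmetry fixes $\la$, and whenever $\overline I_\tB\subseteq\overline N_\tD$ the two answers coincide. The adjoined element $nn_{e_l}(1)$ contributes at most the index-$1$-or-$2$ ambiguity of $\overline I_\tB$ already recorded in Remark \ref{rem_relI}, so no new isomorphism type arises. I expect the main work to be the bookkeeping of this last step, together with the uniform verification, across the cases $d\teilt l$ versus $d\nteilt l$ and twisted versus untwisted, that the parity-of-sign-changes condition really translates into $(\prod_r\zeta_r)^{\frac{\betrag{\overline K_1}}{2}}=1$ --- in other words, that the diagonal elements $n_{e_l}(\pm1)$ and $n$ appearing in Lemmas \ref{defvD1}, \ref{Defn} and \ref{defv2dl} do not disturb this identification.
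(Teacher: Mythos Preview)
Your proposal is correct and follows essentially the same approach as the paper: the paper's proof is the single sentence ``This can be deduced from the proofs of the Theorem \ref{Theo1} in this situation,'' and you are spelling out precisely that deduction by revisiting the relative inertia computations already recorded at the end of Lemmas \ref{Theo1DI} and \ref{Theo1DII} and transported to the twisted case in Lemma \ref{tildeN2D}. Your identification of $N/T$ (respectively $(\overline N'\cap\ovN)/T'$) with $\overline N_\tD$ via the parity-of-sign-changes homomorphism is exactly the content the paper leaves implicit.
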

\begin{proof}
This can be deduced from the proofs of the Theorem \ref{Theo1} in this situation.
\end{proof}

\section{Applications to the McKay conjecture} \label{McKay}
In this section we prove Corollary \ref{Cor1}, more precisely we verify the McKay conjecture for the groups $\ovG^F$ from \ref{Theo1} and the primes $\ell$ fulfilling the given condition. We have to assume that $\ell$ is different from the defining characteristic of $\ovG^F$ and the multiplicative order $d$ of $q$ in $\nicefrac \ZZ {\ell \ZZ}$ is a regular number of $(\ovG,F)$.

The author has proven a similar statement already for exceptional groups of Lie type and similar ideas lead here to a verification of the McKay conjecture as well. The following lemma is the main tool for proving the McKay conjecture, and was already implicitly presented in the proof of Theorem C of \cite{Spaeth_Preprint}.

\begin{lem} \label{lem_mckay}Let $\calL\lhd \calN$ be finite groups such that $\calL $ is abelian, maximal extensibility holds with respect to $\calL\lhd \calN$ and every group occurring as relative inertia group of some $\la\in \Irr(\calL)$ satisfies the McKay conjecture. Then the McKay conjecture holds for $\calN$.\end{lem}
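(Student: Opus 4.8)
The plan is to parametrise both $\IRRl(\calN)$ and $\IRRl(\NNN_\calN(P))$, for $P$ a Sylow $\ell$-subgroup of $\calN$, by data attached to the relative inertia groups of the characters of $\calL$, and then to compare the two parametrisations using the McKay conjecture for those relative inertia groups. The basic tool, valid for any finite groups $\calL_0\lhd\calN_0$ with $\calL_0$ abelian and maximal extensibility with respect to $\calL_0\lhd\calN_0$, is this: for fixed $\la\in\Irro(\calL_0)$ the Clifford correspondence \cite[6.11]{Isa} is a bijection $\Irro(\I_{\calN_0}(\la)\,|\,\la)\to\Irro(\calN_0\,|\,\la)$ scaling degrees by $[\calN_0:\I_{\calN_0}(\la)]$, while --- $\la$ extending to $\I_{\calN_0}(\la)$ by maximal extensibility and $\la(1)=1$ since $\calL_0$ is abelian --- Gallagher's theorem \cite[6.17]{Isa} identifies $\Irro(\I_{\calN_0}(\la)\,|\,\la)$ degree-preservingly with $\Irro(\I_{\calN_0}(\la)/\calL_0)$. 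Letting $\la$ run over a transversal of the $\calN_0$-orbits on $\Irro(\calL_0)$ this yields
\[ |\IRRl(\calN_0)|=\sum_{\la}|\IRRl(\I_{\calN_0}(\la)/\calL_0)|, \]
the sum being over those $\la$ with $\ell\nmid[\calN_0:\I_{\calN_0}(\la)]$, and $\I_{\calN_0}(\la)/\calL_0$ being precisely the relative inertia group of $\la$.

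I would apply this both to $\calL\lhd\calN$ and to $\calL_M\lhd M$, where $M:=\NNN_\calN(P)$ and $\calL_M:=\calL\cap M$. Here $\calL_M\lhd M$ is abelian, $P$ is a \emph{normal} Sylow $\ell$-subgroup of $M$, and a Frattini argument gives $M\calL/\calL=\NNN_{\calN/\calL}(P\calL/\calL)$; maximal extensibility with respect to $\calL_M\lhd M$ follows by extending a $\mu\in\Irro(\calL_M)$ first to $\calL$ (abelian), then via the given extension map to the inertia group of that extension in $\calN$, and restricting to $\I_M(\mu)$. For $\la\in\Irro(\calL)$ whose $\calN$-orbit has $\ell'$-index one may take the representative $\la$ to be $P$-invariant --- a Sylow $\ell$-subgroup of $\I_\calN(\la)$ is then a Sylow $\ell$-subgroup of $\calN$ --- and, writing $\overline I_\la:=\I_\calN(\la)/\calL$ and $\overline P:=P\calL/\calL$, one checks using the Frattini identification that $\I_M(\la)/\calL_M\cong\NNN_{\overline I_\la}(\overline P)$, with $\overline P$ a Sylow $\ell$-subgroup of $\overline I_\la$.

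The heart of the matter, and where I expect the main difficulty, is the comparison of the two sums. The idea is a coprime-action analysis: writing $\calL=\calL_\ell\times\calL_{\ell'}$ and $\calL_{\ell'}=[\calL_{\ell'},P]\times\Cent_{\calL_{\ell'}}(P)$ one gets $\calL=[\calL_{\ell'},P]\times\calL_M$ with $\calL_M=\calL_\ell\times\Cent_{\calL_{\ell'}}(P)$ and $[\calL:\calL_M]$ prime to $\ell$. Since $P$ has no nontrivial fixed point on $[\calL_{\ell'},P]$, a $P$-invariant $\la\in\Irro(\calL)$ must be trivial on $[\calL_{\ell'},P]$, so $\la=1\times\mu$ for $\mu:=\restr\la|{\calL_M}$, and --- $M$ normalising each factor of $\calL=[\calL_{\ell'},P]\times\calL_M$ --- one gets $\I_M(\la)=\I_M(\mu)$. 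From this I would deduce that $[\la]\mapsto[\mu]$ is a well-defined bijection between the $\calN$-orbits on $\Irro(\calL)$ of $\ell'$-index and the $M$-orbits on $\Irro(\calL_M)$ of $\ell'$-index (on the $M$-side the $\ell'$-index condition just says $\mu$ is $P$-invariant, because $P\lhd M$), and that it matches the corresponding summands. Granting this, the McKay conjecture for each relative inertia group $\overline I_\la$ gives $|\IRRl(\overline I_\la)|=|\IRRl(\NNN_{\overline I_\la}(\overline P))|=|\IRRl(\I_M(\la)/\calL_M)|=|\IRRl(\I_M(\mu)/\calL_M)|$, whence
\[ |\IRRl(\calN)|=\sum_{[\la]}|\IRRl(\overline I_\la)|=\sum_{[\mu]}|\IRRl(\I_M(\mu)/\calL_M)|=|\IRRl(M)|=|\IRRl(\NNN_\calN(P))|, \]
the McKay conjecture for $\calN$. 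Everything outside the coprime-action bookkeeping of this last paragraph is routine Clifford theory; that bookkeeping --- in particular justifying that the chosen representatives are trivial on $[\calL_{\ell'},P]$, that $M$ respects the decomposition, and hence that restriction to $\calL_M$ implements a summand-preserving bijection of the two orbit sets --- is the step I would be most careful about.
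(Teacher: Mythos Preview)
Your argument is correct, but it takes a different route from the paper's. The paper does not descend all the way to $M=\NNN_\calN(P)$ in one step. Instead it shows, for each $P$-invariant $\la\in\Irr(\calL)$, that $|\IRRl(\calN|\la)|=|\IRRl(I_0|\la)|$ with $I_0:=\calL\,\I_M(\la)$, using Clifford, Gallagher, and the McKay hypothesis on $\overline I_\la$ (this is exactly your identification $I_0/\calL\cong\NNN_{\overline I_\la}(\overline P)$). Summing over orbits yields $|\Irrl(\calN)|=|\Irrl(\calL M)|$. Since $\calL M$ is $\ell$-solvable (abelian normal subgroup with $\ell$-solvable quotient $M/\calL_M$), the paper then simply quotes Wolf's theorem \cite{Wo} to pass from $\calL M$ to $\NNN_{\calL M}(P)=M$. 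Your coprime-action bookkeeping --- the Fitting decomposition $\calL_{\ell'}=[\calL_{\ell'},P]\times\Cent_{\calL_{\ell'}}(P)$, the fact that $P$-invariant characters kill $[\calL_{\ell'},P]$, and the resulting orbit bijection --- is precisely an unpacking of that second step in this special situation, so your proof is more self-contained but longer, while the paper's is shorter at the cost of invoking Wolf as a black box.

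One small point worth tightening: when you argue maximal extensibility for $\calL_M\lhd M$ by ``extending $\mu$ first to $\calL$'', you need the specific extension $\la=1\times\mu$ trivial on $[\calL_{\ell'},P]$, since only then does $\I_M(\mu)\leq\I_\calN(\la)$ (because $M$ preserves the factor $[\calL_{\ell'},P]$ and hence the trivial character on it). An arbitrary extension would not do. You clearly have this decomposition in mind later, so just make the choice explicit at that earlier point.
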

\begin{proof}
Let $P$ be a Sylow $\ell$-subgroup and $\la \in \Irr(\calL)$ with $P\leq \I_\calN(\la)$. Using the Clifford correspondence, the assumption about $\nicefrac {\I_\calN(\la)}\calL$ and the theorem of Gallagher \cite[6.17]{Isa} one obtains $\betrag{\IRRl(\calN|\la)} = \betrag{\IRRl(I_0|\la)}$, where $I_0=\calL \I_{\NNN_\calN(P)}(\la)$ and $\IRRl(\calN|\la):=\IRR(\calN|\la)\cap \Irrl(\calN)$.

This implies $\betrag{\Irrl(\calN)}=\betrag{\Irrl(\calL\NNN_\calN(P))}$. As the group $\calL\NNN_\calN(P))$ is $\ell$-solvable the statement follows with the result of \cite{Wo}.
\end{proof}
\begin{abschnitt}[Application of Lemma \ref{lem_mckay} to wreath products]
One can easily apply the above result to groups of the form $A\wr \Sym_l$, whenever $A$ is an abelian group. All groups occurring as relative inertia groups are direct products of symmetric groups. As the symmetric groups fulfil the McKay conjecture by \cite{Ol}, the McKay conjecture holds for the group $A\wr \Sym_l$, by Lemma \ref{lem_mckay}.
\end{abschnitt}

In order to apply the above lemma we have to verify that the groups occurring as relative inertia groups satisfy the McKay conjecture.
\begin{lem}
Assume that the McKay conjecture holds for $\ell$ and every finite group $\widetilde U$, which has a normal subgroup isomorphic to the direct product of symmetric groups, whose associated quotient is abelian.
Let $(\ovG, F)$ be like in Theorem \ref{Theo1}, $L$ an abelian Sylow Levi subgroup of $(\ovG,F)$, $N$ the associated Sylow normaliser and $\la\in \Irr(L)$. Then the McKay conjecture holds for $\I_N(\la)/L$ and every prime $\ell$.\end{lem}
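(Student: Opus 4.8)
The plan is to deduce the statement from Lemma~\ref{lem_mckay}, applied to a suitable abelian normal subgroup of $\overline I:=\I_N(\la)/L$; the point will be that the relative inertia groups occurring in that application are exactly of the type $\widetilde U$ admitted in the hypothesis.

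First I would record the structure of $\overline I$. By Remark~\ref{rem_relI} (for $\ovG$ of type $\tA$, $\tw2\tA$, $\tB$ or $\tC$, which via Lemmas~\ref{Adl-1} and~\ref{Theo12A} also covers the remaining regular numbers there) and by Lemma~\ref{lem_relID2D} (for type $\tD$ and $\tw2\tD$), the group $\overline I$ has a normal subgroup $\overline I_0$ with $\overline I/\overline I_0$ abelian and $\overline I_0=\overline A\rtimes U$, where $\overline A$ is an abelian subgroup of the base group $B:=\overline K_1^{\,j}$ of the ambient wreath product $\overline K_1\wr\Sym_j$ — in the $\tD$- and $\tw2\tD$-cases $\overline A$ is the intersection of $A_1\times\cdots\times A_j$ with $\overline N_\tD$, which changes nothing below since $U$ is already contained in $\overline N_\tD$ — and $U=\Sym_{I_1}\times\cdots\times\Sym_{I_j}$ acts on $\overline A$ block by block, permuting the isomorphic factors indexed by each $I_k$. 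I take $\overline A$ as the abelian normal subgroup for Lemma~\ref{lem_mckay}; its normality in $\overline I$ (and not merely in $\overline I_0$) follows from these descriptions. The crucial observation is that, $\overline A$ being abelian, for every $\mu\in\Irr(\overline A)$ one has $\I_{\overline I_0}(\mu)=\overline A\rtimes\I_U(\mu)$ with $\I_U(\mu)$ a Young subgroup of $U$, hence itself a direct product of symmetric groups; in the $\tD$- and $\tw2\tD$-cases one uses in addition that the character of $B$ of order $\le 2$ cutting out $\overline N_\tD$ is $U$-invariant, so that passing from $A_1\times\cdots\times A_j$ to $\overline A$ leaves all $U$-stabilisers unchanged.

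It then remains to check the two hypotheses of Lemma~\ref{lem_mckay} for $\overline A\lhd\overline I$. Maximal extensibility with respect to $\overline A\lhd\overline I_0$ is the standard fact that characters of base groups of wreath products extend — trivially on the permutation parts — to their inertia groups inside the wreath product; to pass further across the abelian quotient $\overline I/\overline I_0$ one invokes the $\wN$-equivariance of the extension maps from the proof of Proposition~\ref{mFreg}, which provides an $\overline I$-invariant choice of extension on each inertia group. For the second hypothesis, let $\mu\in\Irr(\overline A)$; then $\I_{\overline I}(\mu)/\overline A$ has the normal subgroup $\I_{\overline I_0}(\mu)/\overline A\cong\I_U(\mu)$, a direct product of symmetric groups, with abelian quotient (it embeds into $\overline I/\overline I_0$). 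Thus $\I_{\overline I}(\mu)/\overline A$ is one of the groups $\widetilde U$ of the hypothesis — for $\mu=1_{\overline A}$ this is just $\overline I/\overline A$ — and so satisfies the McKay conjecture. Lemma~\ref{lem_mckay} now yields the McKay conjecture for $\overline A\lhd\overline I$, i.e.\ for $\I_N(\la)/L$; the argument is uniform in $\ell$, since the inputs \cite{Ol} (symmetric groups) and \cite{Wo} ($\ell$-solvable groups) used inside Lemma~\ref{lem_mckay} are available for every prime.

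The step I expect to be the main obstacle is the verification of maximal extensibility with respect to $\overline A\lhd\overline I$: the wreath-product part is routine, but making the canonical extension behave well under the residual abelian quotient $\overline I/\overline I_0$, and checking uniformly that the descriptions in Remark~\ref{rem_relI} and Lemma~\ref{lem_relID2D} really do provide the configuration above across all six families — in particular through the reductions of types $\tD$, $\tw2\tD$ to type $\tB$ in Sections~\ref{applD}–\ref{appl2D} and of the remaining cases of type $\tw2\tA$ in Lemma~\ref{Theo12A} — is where the genuine work lies. One cannot shortcut this by exhibiting a direct product of symmetric groups that is already normal in $\overline I$ with abelian quotient, since the permutation part $U$ is not normal in $\overline I_0$; the detour through $\overline A$ and Lemma~\ref{lem_mckay} therefore seems necessary.
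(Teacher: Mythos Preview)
Your overall strategy coincides with the paper's: apply Lemma~\ref{lem_mckay} to the abelian normal subgroup $\overline A$ of $\overline I=\I_N(\la)/L$ furnished by Remark~\ref{rem_relI} and Lemma~\ref{lem_relID2D}, and check that the resulting relative inertia groups are of the admitted type $\widetilde U$. For types $\tA$, $\tw2\tA$, $\tB$, $\tC$ your outline is essentially the paper's argument.

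There is, however, a genuine gap in the $\tD$/$\tw2\tD$ case. Your assertion that the $U$-invariance of the order-$2$ character $\epsilon$ cutting out $\overline N_\tD$ implies that ``passing from $A_1\times\cdots\times A_j$ to $\overline A$ leaves all $U$-stabilisers unchanged'' is false, and with it the claim that $\I_U(\mu)$ is a Young subgroup. For $\mu\in\Irr(\overline A)$ with extension $\widetilde\mu\in\Irr(A_{\tB})$ one only gets $\I_U(\widetilde\mu)\le\I_U(\mu)$ of index at most $2$, since $u\in U$ stabilises $\mu$ iff $\widetilde\mu^u\in\{\widetilde\mu,\widetilde\mu\epsilon\}$. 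A concrete failure: take $j=4$, $U=\Sym_4$, all $A_r=\overline K_1$, and $\widetilde\mu=\chi\times\chi\epsilon\times\chi\times\chi\epsilon$ with $\chi\neq\chi\epsilon$. Then $\I_U(\widetilde\mu)=\Sym_{\{1,3\}}\times\Sym_{\{2,4\}}$, but $\I_U(\mu)=\langle(1\,3),(2\,4),(1\,2)(3\,4)\rangle\cong D_8$, which is not a direct product of symmetric groups. So neither your description of $\I_{\overline I_0}(\mu)/\overline A$ nor the ensuing identification of $\I_{\overline I}(\mu)/\overline A$ as a $\widetilde U$ goes through as stated.

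The paper repairs this exactly here: it fixes an extension $\widetilde\psi$ of $\psi$ to $A_{\tB}$, extends further to $\widehat\psi$ on $\overline K\rtimes U_{\widetilde\psi}$ with $U_{\widetilde\psi}$ a genuine Young subgroup, and then shows by explicit calculation that every $x\in\I_{\overline I}(\psi)$ normalises $\overline K\rtimes U_{\widetilde\psi}$ and fixes $\restr{\widehat\psi}|{(\overline K\rtimes U_{\widetilde\psi})\cap\overline I}$. This yields maximal extensibility and exhibits a normal subgroup of $\I_{\overline I}(\psi)/\overline A$ isomorphic to a direct product of symmetric groups, now of index $1$, $2$ or $4$ (not just $1$ or $2$). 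Your appeal to the $\wN$-equivariance of the extension maps from Proposition~\ref{mFreg} does not supply this: that equivariance concerns extensions of characters of $\wT$ inside $\wN$, not extensions of characters of $\overline A$ inside $\overline I\le\wN/\wT$, and in any case cannot compensate for the jump in $\I_U(\mu)$ just described. You need to carry out the paper's explicit extension-and-normalising argument (or an equivalent one) to close the gap.
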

\begin{proof} We prove this statement in several steps. First we determine which relative inertia groups $\overline I$ occur for the situation of Theorem \ref{Theo1}. In a second step we define a normal abelian subgroup $A\lhd \overline I$, such that a maximal extensibility property holds. By Lemma \ref{lem_mckay} it suffices to check that the relative inertia subgroups for $A\lhd \overline I$ satisfy the McKay conjecture.

Assume first $R$ to be a root system of type $\tA_{l-1}$, $\tB_l$ or $\tC_l$. As here the proof of Theorem \ref{Theo1} is based on the application of Proposition \ref{mFreg} the relative inertia groups are given by Remark \ref{rem_relI}: let $\la \in \Irr(T)$. Without loss of generality we may assume that $\overline I:=\nicefrac {\I_N(\la)}{T}$ has a normal subgroup $\overline I_0:=(A_1 \times\ldots \times A_j )\rtimes U$, where $A_i$ and $U$ are as in Remark \ref{rem_relI}. The index of $\overline I_0$ in $\overline I$ is a divisor of the order of $\delta$ and the quotient is cyclic.

If $R$ is of type $\tC_l$, the proof of Lemma \ref{Theo1C} was a direct consequence of Proposition \ref{mFreg} with a trivial character $\delta$. In the case where $R$ is a root system of type $\tA_l$, one observes from the proofs of the lemmas \ref{Adl}, \ref{Adl-1} and \ref{Theo12A} that the relative inertia groups are given above. If $R$ is of type $\tB_{l}$, the proof of Lemma \ref{Theo1B} the relative inertia group given above holds as well.

Let $\la \in \Irr(T)$ and $A=A_1 \times\ldots \times A_j $. Then maximal extensibility holds with respect to $A\lhd \overline I$. Let $\psi \in \Irr(A)$. Then $\psi$ can be extended to $\widetilde \psi$ on $\overline K:=\spann<\overline K_r>$, where $\overline K_r$ is defined as in Remark \ref{rem_relI}, such that $\restr \wpsi|{\overline K_r}=\restr \wpsi|{\overline K_{r'}}$, whenever $\restr \psi|{\overline K_r}=\restr \psi|{\overline K_{r'}}$.(By the equation we use that there exist canonical isomorphisms between the groups $\overline K_r$.) Now maximal extensibility holds with respect to $\overline K\lhd \overline K\rtimes \Sym_j$ and $\widetilde \psi$ extends to a group $\overline K\rtimes \Sym_{\psi}$, where $\Sym_\psi$ is the direct product of symmetric groups acting on the sets $\Lset{i| \restr \psi|{\overline K_i}=\kappa}$ ($\kappa \in \Irr(\overline K_1)$). The group $\I_{\overline I}(\psi)$ is by definition of $\wpsi$ a subgroup of $\overline K\rtimes \Sym_{\psi}$. This shows that $\psi$ extends maximally in $\overline I$ and the relative inertia subgroup of $\psi$ in $\overline I_0$ is a direct product of symmetric groups and $\I_{\overline I_0}(\psi)$ is a normal subgroup of $\I_{\overline I}(\psi)$, with cyclic quotient. But for such groups the McKay conjecture holds by assumption. Hence Lemma \ref{lem_mckay} implies the statement.

We are left with the case where $R$ is a root system of type $\tD_l$. The relative inertia subgroups are known from Lemma \ref{lem_relID2D}. Here the relative inertia subgroup is as above or $\overline I=\overline I_{\tB}\cap \overline N_{\tD}$ where $\overline I_{\tB}\leq \overline K_1\wr \Sym_{j}$ and $\overline N_{\tD}$ are defined as in Lemma \ref{lem_relID2D}. 

Let $(A_1\times \cdots \times  A_j) \rtimes U$ be the normal subgroup of $\overline I_{\tB}$ with the structure as above, whose index in $\overline I_{\tB}$ is $1$ or $2$. Further let $A=(A_1\times \cdots \times A_j)\cap \overline N_{\tD}$. If $A=A_{\tB}$ for $A_{\tB}:=A_1\times \cdots \times A_j$ then maximal extensibility holds with respect to $A\lhd \overline I$ according to the previous arguments. If $A\neq A_{\tB}$, every character $\psi\in \Irr(A)$ can be extended to a character $\widetilde \psi \in \Irr( A_{\tB})$. By the above there exists a maximal extension $\widehat \psi$ of $\widetilde \psi$ to $\overline K\rtimes U_{\widetilde \psi}$, where $U_{\widetilde \psi}$ is a direct product of symmetric groups and $\restr \widetilde \psi|{U_{\widetilde \psi}}$ is trivial. Detailed calculations show that $x\in \I_{\overline I}( \psi)$ normalises $\overline K\rtimes U_{\widetilde \psi}$, the character $\restr \widehat \psi|{(\overline K\rtimes U_{\widetilde \psi})\cap \overline I}$ is invariant under conjugation with $x$ and hence $\psi$ can be extended to its inertia group in $\overline I$, which has ${(\overline K\rtimes U_{\widetilde \psi})\cap \overline I}$ as a subgroup of index $1$ or $2$.

Some more detailed calculations show that the relative inertia group of $\psi$ in $\overline I$ has a normal subgroup of index $1$, $2$ or $4$, which is isomorphic to a direct product of symmetric groups.

As every character of $A$ extends to its inertia group and the relative inertia groups satisfy the McKay conjecture by the assumption, Lemma \ref{lem_mckay} implies the McKay conjecture for $\overline I$.
\end{proof}

We show the assumption of the above lemma.
\begin{lem}\label{lem_Navarro}
The McKay conjecture holds for $\ell$ and every finite group $C$, which has a normal subgroup $S$ isomorphic to the direct product of symmetric groups, whose associated quotient is abelian.
\end{lem}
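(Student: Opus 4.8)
My plan is to deduce the lemma from the reduction theorem of Isaacs--Malle--Navarro \cite{IsaMaNa}. By \cite{ManonLie} every alternating group is good for $\ell$ in the sense of \cite{IsaMaNa}. Since $S$ is a direct product of symmetric groups and $C/S$ is abelian, the composition factors of $C$ are those of the factors $\Sym_{n_i}$ together with the cyclic composition factors of $C/S$; hence every non-abelian simple group involved in $C$ is an alternating group. Consequently the reduction theorem of \cite{IsaMaNa} applies to $C$ and yields the McKay conjecture for $C$ at $\ell$. On this route there is essentially nothing to overcome: the only step is the immediate identification of the simple sections of $C$, and the real work is outsourced to \cite{ManonLie}. (This is not circular: the corollary's assertion about classical $\ovG^F$ is obtained from Theorem \ref{Theo1} through Lemma \ref{lem_mckay}, not through \cite{IsaMaNa}, and Malle's result concerns a different family of simple groups.)

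Should one prefer an argument avoiding \cite{IsaMaNa}, in the style of the rest of the paper, I would instead argue by Clifford theory relative to $S\lhd C$, using only that the McKay conjecture holds for symmetric groups (Olsson, \cite{Ol}). The decisive point is that \emph{every $\theta\in\Irro(S)$ extends to its inertia group $\I_C(\theta)$ in $C$}: with $\theta=\theta_1\times\cdots\times\theta_k$, the group $\I_C(\theta)$ permutes the factors $\Sym_{n_i}$ carrying equal characters and acts by automorphisms on the stabilised ones, and $\I_C(\theta)/S$ is abelian. One produces the extension by reducing, for each prime $p$, to a Sylow $p$-subgroup $Q/S$ of $\I_C(\theta)/S$, and combining there the classical fact that a character of $\Sym_m$ extends to $\Sym_m\wr\Sym_r$ for every $r$ (which handles the permuted blocks) with the fact that $\betrag{\Aut(\Sym_m):\Inn(\Sym_m)}\le 2$ (which handles the residual diagonal automorphisms, across a cyclic quotient). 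Granting this, $\Irro(C)$ splits into blocks indexed by the $C$-orbits $\mathcal O\subseteq\Irro(S)$: if $\theta\in\mathcal O$ is extended to $\widehat\theta\in\Irro(\I_C(\theta))$, then by the Clifford correspondence and Gallagher's theorem the characters of $C$ lying over $\theta$ are exactly the $(\widehat\theta\beta)^{C}$ for $\beta\in\operatorname{Irr}(\I_C(\theta)/S)$, all of degree $[C:\I_C(\theta)]\,\theta(1)$ since $\I_C(\theta)/S$ is abelian. Hence $\betrag{\IRRl(C)}=\sum_{\mathcal O}\betrag{\I_C(\theta)/S}$, the sum over those orbits with $\ell\nmid[C:\I_C(\theta)]\,\theta(1)$. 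The same expansion can be set up for $M:=\NNN_C(P)$ with $P$ a Sylow $\ell$-subgroup of $C$, using that $M$ has a normal subgroup of abelian quotient whose character theory is governed, through Olsson's theorem, by the Sylow $\ell$-normalisers of the $\Sym_{n_i}$.

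The hard part of this second route, and the step I expect to be the real obstacle, is to match the two orbit sums. For this I would invoke \cite{Ol} in the sharpened form of a \emph{canonical} bijection $\IRRl(\Sym_n)\to\IRRl(\NNN_{\Sym_n}(P_n))$, whose canonicity forces compatibility with the permutation action on the factors of $S$ and with the (at most one) outer automorphism; from this one must read off that the $C$-orbits on $\IRRl(S)$ contributing to $\betrag{\IRRl(C)}$ correspond bijectively to the $M$-orbits on the analogous normal subgroup of $M$ contributing to $\betrag{\IRRl(M)}$, with isomorphic relative inertia quotients and with the degree-divisibility conditions matching (note that $M/(M\cap S)$ need not be all of $C/S$, which is precisely where care is required). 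Pinning down the exact equivariance of Olsson's bijection that is needed here, and tracking the inertia groups and orbit sizes through it, is the delicate work; everything else is formal. Since all of this is subsumed by the first route, I would ultimately record the proof via \cite{IsaMaNa} and \cite{ManonLie}.
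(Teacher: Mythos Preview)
Your first route is correct and is essentially the paper's own argument: both rest on the observation that the only non-abelian simple sections of $C$ are alternating groups, which are good by \cite{ManonLie}, and then invoke the Isaacs--Malle--Navarro machinery. The sole cosmetic difference is that the paper, rather than citing the main reduction theorem in one stroke, applies Theorem~13.1 of \cite{IsaMaNa} inductively to the normal subgroups $A_i=\Alt_i^{a_i}$ of $C$ to reach $\NNN_C(P_0)$ for a Sylow $\ell$-subgroup $P_0$ of $A=\prod_{i\ge 5}A_i$, and then finishes with Wolf's theorem \cite{Wo} since $\NNN_C(P_0)$ is $\ell$-solvable.
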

\begin{proof}
Let $S=S_2\times S_3 \times \cdots$ with $S_i=\Sym_{i}^{a_i}$ for every integer $i$, $A_i=\Alt_{i}^{a_i}$ and $A= \prod_{i\geq 5} A_i \lhd S$ be the direct of the simple alternating groups $\Alt_i$ on $i$ points. 
By Theorem 3.1 of \cite{ManonLie} we know that the simple alternating groups are good in the sense of \cite{IsaMaNa}. As every group $A_i$ is the product of simple good groups we may apply Theorem 13.1 of \cite{IsaMaNa} in this situations. This implies $\betrag{\Irrl(C)}= \betrag{\Irrl(\NNN_C(P_i))}$ for every integer $i\geq 5 $ and a Sylow $\ell$-subgroup $P_i$ of $A_i$. As $A_{i'}$ ($i'\neq i$) is again a normal subgroup of $\NNN_A(P_i)$ we can apply Theorem 13.1 again. Inductively this gives us $\betrag{\Irrl(C)}=\betrag{\Irrl(\NNN_C(P_0))}$, where $P_0$ is the direct product of the groups $P_i$. Hence $P_0$ is a Sylow $\ell$-subgroup of $A$. The group $\NNN_C(P_0)$ is $\ell$-solvable and the result of Wolf in \cite{Wo} gives us $\betrag{\Irrl(\NNN_C(P_0)))}=\betrag{\Irrl(\NNN_C(P))}$, where $P$ is a Sylow $\ell$-subgroup of $C$ with $P_0\lhd P$. This proves the statement.
\end{proof}

We have proven the necessary tools to verify Corollary \ref{Cor1}.
\def\proofname{PROOF OF COROLLARY \ref{Cor1}.}
\begin{proof}
Using Theorem \ref{Theo1} Malle showed in \cite[Theorem 7.8]{Ma06} the equation $\betrag{\Irrl(G)}=\betrag{\Irrl(\NNN_G(\ovS))}$, where $G:= \ovG^F$ and $\ovS$ is a regular Sylow $d$-torus of $(\ovG, F)$ associated to $\ell$ and $G$. The two lemmas above imply $\betrag{\Irrl(\NNN_G(\ovS))} = \betrag{\Irrl(\NNN_G(P))} $, where $P$ is a Sylow $\ell$-subgroup of $\NNN_G(\ovS)$, which is also one of $G$ by \cite[Theorem 7.8 (a)]{Ma06}.
\end{proof}

The given arguments prove the McKay conjecture also for $\ell\in \Lset{2,3}$, whenever Theorem 7.8 from \cite{Ma06} holds and the associated Sylow torus is regular.
\begin{rem}[Generalisation of Corollary \ref{Cor1} with $\ell=2,3$]\label{Ausnahmen}
The statement of Corollary \ref{Cor1} holds, if $\ell=2$ or $\ell=3$ except in the following cases:
\begin{itemize}
 \item $G=\Sp_{2n}(q)$ with $q\equiv 3, 5 (\operatorname {mod} 8)$ and $\ell=2$,
 \item $G=\SL_3(q)$ with $q\equiv 2,5 (\operatorname {mod} 9)$ and $\ell=3$,
 \item $G= \SU_3(q)$ with $q\equiv 2,5 (\operatorname {mod} 9)$ and $\ell=3$.
\end{itemize}
\end{rem}


\begin{thebibliography}{10}
\expandafter\ifx\csname url\endcsname\relax
 \def\url#1{\texttt{#1}}\fi
\expandafter\ifx\csname urlprefix\endcsname\relax\def\urlprefix{URL }\fi

\bibitem{Bessis}
D.~Bessis, Groupes des tresses et \'el\'ements r\'eguliers, J. Reine Angew.
 Math. 518 (2000) 1--40.

\bibitem{BrMa}
M.~Brou{\'e}, G.~Malle, Th\'eor\`emes de {S}ylow g\'en\'eriques pour les
 groupes r\'eductifs sur les corps finis, Math. Ann. 292~(2) (1992) 241--262.

\bibitem{BrMi}
M.~Brou{\'e}, J.~Michel, Sur certains \'el\'ements r\'eguliers des groupes de
 {W}eyl et les vari\'et\'es de {D}eligne-{L}usztig associ\'ees, in: Finite
 reductive groups (Luminy, 1994), vol. 141 of Progr. Math., Birkh\"auser
 Boston, Boston, MA, 1997, pp. 73--139.

\bibitem{Car1}
R.~W. Carter, Simple groups of {L}ie type, John Wiley \& Sons, London-New
 York-Sydney, 1972, {P}ure and Applied Mathematics, Vol. 28.

\bibitem{Car2}
R.~W. Carter, Finite groups of {L}ie type, John Wiley \& Sons Inc., New York,
 1985.

\bibitem{DigneMichel}
F.~Digne, J.~Michel, Representations of finite groups of {L}ie type, vol.~21 of
 London Mathematical Society Student Texts, Cambridge University Press,
 Cambridge, 1991.

\bibitem{GeckCox}
M.~Geck, G.~Pfeiffer, Characters of finite {C}oxeter groups and
 {I}wahori-{H}ecke algebras, vol.~21 of London Mathematical Society
 Monographs. New Series, The Clarendon Press Oxford University Press, New
 York, 2000.

\bibitem{IsaMaNa}
I.~Isaacs, G.~Malle, G.~Navarro, A reduction theorem for the {M}c{K}ay
 conjecture, Invent. Math. 170 (2007) 33--101.

\bibitem{Isa}
I.~M. Isaacs, Character theory of finite groups, Academic Press, New York, 1976, {P}ure and Applied Mathematics, No.
 69.

\bibitem{LehrerMcKay}
G.~I. Lehrer, On a conjecture of {A}lperin and {M}c{K}ay, Math. Scand. 43
 (1978/79) 5--10.

\bibitem{Ma06}
G.~Malle, Height 0 characters of finite groups of {L}ie type, Represent. Theory
 11 (2007) 192--220.

\bibitem{ManonLie}
G.~Malle, The inductive {M}c{K}ay condition for simple groups not of {L}ie
 type., Commun. Algebra 36 (2008) 455--463.

\bibitem{MiOl}
G.~O. Michler, J.~B. Olsson, Character correspondences in finite general
 linear, unitary and symmetric groups, Math. Z. 184 (1983) 203--233.

\bibitem{Ol}
J.~B. Olsson, Mc{K}ay numbers and heights of characters, Math. Scand. 38~(1)
 (1976) 25--42.

\bibitem{Spaeth}
B.~Sp{\"a}th, Die {M}c{K}ay-{V}ermutung f\"ur quasi-einfache {G}ruppen vom
 {L}ie-{T}yp, Dissertation, Technische Universit\"at Kaiserslautern (2007),
 \urlprefix\url{http://kluedo.ub.uni-kl.de/volltexte/2007/2073/pdf/Spaeth_McKay.pdf}.

\bibitem{Spaeth_Preprint}
B.~Sp{\"a}th, The {M}c{K}ay conjecture for exceptional groups and odd primes, Math. Z. 261 (2009) 571--595.

\bibitem{Springer74}
T.~A. Springer, Regular elements of finite reflection groups, Invent. Math. 25
 (1974) 159--198.

\bibitem{Springer}
T.~A. Springer, Linear algebraic groups, vol.~9 of Progress in Mathematics, 2nd
 ed., Birkh\"auser Boston Inc., Boston, MA, 1998.

\bibitem{Steinb}
R.~Steinberg, Lectures on {C}hevalley groups, Yale University, New Haven,
 Conn., 1968, notes prepared by John Faulkner and Robert Wilson.

\bibitem{Tits}
J.~Tits, Normalisateurs de tores. {I}. {G}roupes de {C}oxeter \'etendus, J.
 Algebra 4 (1966) 96--116.

\bibitem{Wo}
T.~R. Wolf, Variations on {M}c{K}ay's character degree conjecture, J. Algebra
 135 (1990) 123--138.
\end{thebibliography}
\end{document}